\newtheorem{dfn}{Definition}[section]
\newtheorem{thm}[dfn]{Theorem}
\newtheorem{lem}[dfn]{Lemma}
\newtheorem{cor}[dfn]{Corollary}
\newtheorem{conj}[dfn]{Conjecture}
\newtheorem{pro}[dfn]{Problem}
\theoremstyle{definition}
\newtheorem{exl}[dfn]{Example}
\newtheorem{rem}[dfn]{Remark}
\def\R{{\mathbb R}}
\def\Sph{{\mathbb S}}
\def\H{{\mathbb H}}
\def\cN{\mathcal{N}}
\def\cF{\mathcal{F}}
\def\cC{\mathcal{K}}
\def\epsilon{\varepsilon}
\def\a{\operatorname{area}}
\def\vol{\operatorname{vol}}
\def\area{\operatorname{area}}
\def\im{\operatorname{im}}
\def\phi{\varphi}
\def\emptyset{\varnothing}
\def\pos{\operatorname{pos}}
\def\relint{\operatorname{relint}}
\def\span{\operatorname{span}}
\def\ind{\operatorname{ind}}
\def\ir{\operatorname{ir}}
\def\co{\operatorname{co}}
\def\int{\operatorname{int}}
\def\aff{\operatorname{aff}}
\def\rank{\operatorname{rk}}
\def\conv{\operatorname{conv}}
\def\clir{\operatorname{clir}}
\def\one{\mathbf 1}
\def\zero{\mathbf 0}
\def\Ch{\operatorname{Ch}}
\def\GL{\operatorname{GL}}
\def\per{\operatorname{per}}
\def\dist{\operatorname{dist}}
\def\supp{\operatorname{supp}}
\def\grad{\operatorname{grad}}
\def\core{\operatorname{core}}
\def\Vert{\operatorname{Vert}}
\def\wT{\widetilde{T}}
\def\wir{\widetilde{\ir}}
\def\wclir{\widetilde{\clir}}
\def\wco{\widetilde{\co}}
\def\cl{\operatorname{cl}}
\title{Shapes of polyhedra, mixed volumes, and hyperbolic geometry}
\author{Fran\c{c}ois Fillastre}
\address{University of Cergy-Pontoise\\ UMR CNRS 8088\\ departement of mathematics\\
F-95000 Cergy-Pontoise\\ FRANCE}
\email{francois.fillastre@u-cergy.fr}
\author{Ivan Izmestiev}
\thanks{Supported by the European Research Council under the European Union's Seventh Framework Programme (FP7/2007-2013)/\allowbreak ERC Grant agreement no.~247029-SDModels}
\address{Institut f\"ur Mathematik \\
Freie Universit\"at Berlin \\
Arnimallee 2 \\
D-14195 Berlin \\
 GERMANY}
\email{izmestiev@math.fu-berlin.de}
\date{\today}
\begin{document}

\begin{abstract}
We are generalizing to higher dimensions the Bavard-Ghys construction of the hyperbolic metric on the space of polygons with fixed directions of edges.

The space of convex $d$-dimensional polyhedra with fixed directions of facet normals has a decomposition into type cones that correspond to different combinatorial types of polyhedra. This decomposition is a subfan of the secondary fan of a vector configuration and can be analyzed with the help of Gale diagrams.

We construct a family of quadratic forms on each of the type cones using the theory of mixed volumes. The Alexandrov-Fenchel inequalities ensure that these forms have exactly one positive eigenvalue. This introduces a piecewise hyperbolic structure on the space of similarity classes of polyhedra with fixed directions of facet normals.
We show that some of the dihedral angles on the boundary of the resulting cone-manifold are equal to $\frac{\pi}2$.
\end{abstract}

\maketitle

\setcounter{tocdepth}{2}
\tableofcontents

\section*{Introduction}
\subsection*{Motivation}
In \cite{Thu98}, Thurston put a complex hyperbolic structure on the space $C(\alpha)$ of Euclidean metrics on the sphere with fixed cone angles $\alpha = (\alpha_1, \ldots, \alpha_n)$ by showing that the area of the metric is a Hermitian form with respect to some natural choices of coordinates on the space of metrics. This provided a more elementary approach to the Deligne-Mostow examples of non-arithmetic complex Coxeter orbifolds.

Bavard and Ghys \cite{BG92} adapted Thurston's construction to the planar case by considering the space of convex polygons with fixed angles or, equivalently, with fixed directions of edges. On this space, the area of a polygon turns out to be a real quadratic form (with respect to the edge lengths) of the signature $(+, -, \ldots, -)$. This turns the space of polygons with fixed edge directions into a hyperbolic polyhedron. By computing the dihedral angles of this polyhedron, Bavard and Ghys were able to obtain all hyperbolic Coxeter orthoschemes from the list previously drawn by Im Hof \cite{ImHof1}. For more details, see \cite{FillEM}. The Bavard-Ghys polyhedron can be viewed as a subset of Thurston's space, since gluing together two copies of a polygon along the boundary yields a Euclidean cone-metric on the sphere.

In the present paper, we are generalizing the construction of Bavard and Ghys to higher dimensions. Namely, we consider the space of $d$-dimensional polyhedra with fixed directions $V$ of the facet normals and exhibit a family of quadratic forms that makes it to a hyperbolic cone-manifold $M(V)$ with polyhedral boundary. We also obtain partial information about the cone angles in the interior and dihedral angles on the boundary of $M(V)$.

\subsection*{Outline of the paper}
The study of the space of $d$-dimensional polyhedra with fixed facet normals poses some problems that are missing in the case of polygons $d=2$. First, the facet normals don't determine the combinatorial structure of the polyhedron anymore. In order to analyze the space of polyhedra, we employ machinery from discrete geometry, namely Gale diagrams and secondary polyhedra \cite{GKZ94}. This constitutes Section \ref{sec:Shapes} of our paper.

Second, one needs to introduce a quadratic form and show that it has a hyperbolic signature. For $d=3$ one can still take the surface area, but for $d > 3$ a new construction is needed. This is provided by mixed volumes, which yield a whole family of quadratic forms of hyperbolic signature, even for $d=3$. The signature of the quadratic form is ensured by the Alexandrov-Fenchel inequalities. This aspect is discussed in Section \ref{sec:MixVol}.

Finally, the dihedral angles are now more difficult to compute. Positive and negative results in this direction are contained in Section \ref{sec:HypGeom}.

\begin{figure}[ht]
\centering
\includegraphics[width=\textwidth]{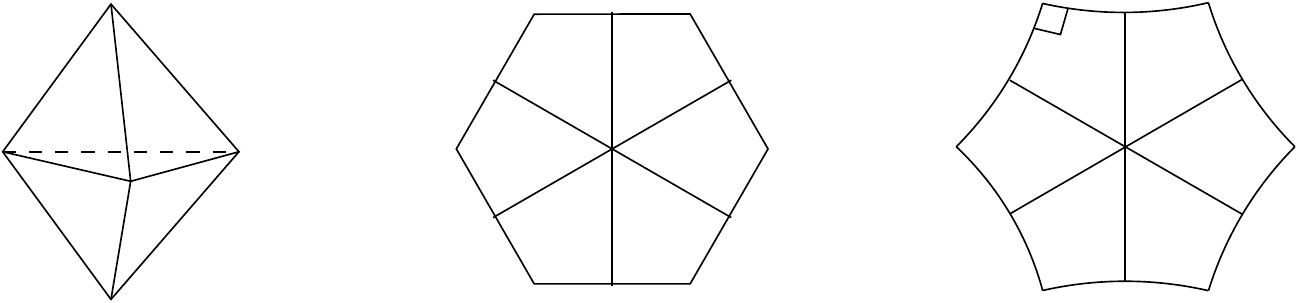}
\caption{The space of polyhedra with face normals parallel to those of a triangular bipyramid is a right-angled hyperbolic hexagon.}
\label{fig:Example}
\end{figure}

All along the paper we are analyzing several examples. One of them is given by the space of $3$-dimensional polyhedra with $6$ faces whose normals are parallel to those of a triangular bipyramid, Figure \ref{fig:Example} left. Translating each face independently yields $6$ different combinatorial types, and the space of all polyhedra up to similarity forms a $2$-dimensional polyhedral complex on Figure \ref{fig:Example} middle. The surface area is a quadratic form of signature $(+,-,-)$ on the space of polyhedra, which puts a hyperbolic metric on the complex making it a right-angled hyperbolic hexagon, Figure \ref{fig:Example} right.

\subsection*{Related work}
A different generalization (and dualization) of the Bavard-Ghys construction was given by Kapovich and Millson \cite{KM95,KM96} who considered the space of polygonal lines in $\R^3$ with fixed edge lengths.

By Alexandrov's theorem \cite{Al42}, every Euclidean metric with cone angles $\alpha_i < 2\pi$ is the intrinsic metric on the boundary of a unique convex polyhedron in $\R^3$. Thus Thurston's space is the space of all convex polyhedra with fixed solid exterior angles at the vertices. A link between our construction and that of Thurston is discussed in Section \ref{sec:RelWork} that contains also a number of other connections to discrete and hyperbolic geometry.

% % \section*{Introduction}
% % 
% % 
% % [[The space of compact convex polygons with given parallel edges endowed with the area form is,
% % up to similarity, isometric to a hyperbolic polyhedron. This was partially noted by different authors untill
% % the decisive construction in \cite{BG92} (note that they don't restrict themsleves to convex objects). 
% % See \cite{FillEM} for references, which should be updated adding \cite{Lop10}.
% % The  starting point of the present paper is that the result of \cite{BG92} can also be seen as a byproduct
% % of the mixed volume theory, and in this way, generalized in all dimensions.
% % 
% % The construction of \cite{BG92} was though as a particular case of a construction of 
% % W.P.~Thurston \cite{Thu98} for convex polytopes in $\R^3$. In this dimension, the construction of the present paper is different, although
% % a relation can be made, see Subsection~\ref{sub:th}. Some of the numerous references related to \cite{Thu98}
% % and close to the preoccupations of the present paper are listed in \cite{FillEM}, which should be updated adding
% % \cite{GL11,NO}]]
% % 
% % 
% % 
% % 

\subsection*{Acknowledgements}
The authors thank Haruko Nishi, Arnau Padrol, Francisco Santos, and Raman Sanyal for interesting discussions.
Part of this work was done during the visit of the first author to FU Berlin and the visit of the second author to the university of Cergy-Pontoise. We thank both institutions for hospitality.

% % The present paper was announced under the title ``Construct hyperbolic polyhedra from 
% % {A}lexandrov--{F}enchel Theorem'' in \cite{FillEM}.

\section{Shapes of polyhedra}
\label{sec:Shapes}
\subsection{Normally equivalent polyhedra and type cones}
\label{sec:TypeCones}
\subsubsection{Convex polyhedra, polytopes, cones}
A \emph{convex polyhedron} $P \subset \R^d$ is an intersection of finitely many closed half-spaces. A bounded convex polyhedron is called \emph{polytope}; equivalently, a polytope is the convex hull of finitely many points.

By $\aff(P)$ we denote the affine hull of $P$, that is the smallest affine subspace of $\R^d$ containing $P$. The \emph{dimension} of a convex polyhedron is the dimension of its affine hull.

A hyperplane $H$ is called a \emph{supporting hyperplane} of a convex polyhedron $P$ if $H \cap P \ne \emptyset$ while $P$ lies in one of the closed half-spaces bounded by $H$. The intersection $P \cap H$ is called a \emph{face} of $P$ (sometimes it makes sense to consider $\emptyset$ and $P$ as faces of $P$, too). Faces of dimensions $0$, $1$, and $\dim P - 1$ are called vertices, edges, and \emph{facets}, respectively. A $d$-dimensional polytope is called \emph{simple}, if each of its vertices belongs to exactly $d$ facets (equivalently, to exactly $d$ edges).

A \emph{convex polyhedral cone} is the intersection of finitely many closed half-spaces whose boundary hyperplanes pass all through the origin. Equivalently, it is the positive hull
$$
\pos\{w_1, w_2, \ldots, w_k\} := \left\{\sum_{i=1}^k \lambda_i w_i \mid \lambda_i \ge 0,\, i = 1, \ldots, k\right\}
$$
of finitely many vectors. A cone is called \emph{pointed}, if it contains no linear subspaces except for $\{0\}$; equivalently, if it has $\{0\}$ as a face. A cone $C$ is called \emph{simplicial} if it is the positive hull of $k$ linearly independent vectors. In this case the intersection of $C$ with an appropriately chosen hyperplane is a $(k-1)$-simplex.

See \cite{Ewald96} or \cite{Zie95} for more details on polyhedra, cones, and fans.

\subsubsection{Normal fans and normally equivalent polyhedra}
\begin{dfn}
\label{dfn:Fan}
A \emph{fan} $\Delta$ in $\R^d$ is a collection of convex polyhedral cones such
that
\begin{enumerate}
\item
if $C \in \Delta$, and $C'$ is a face of $C$, then $C' \in \Delta$;
\item
if $C_1, C_2 \in \Delta$, then $C_1 \cap C_2$ is a face of both $C_1$ and $C_2$.
\end{enumerate}
\end{dfn}
By $\Delta^{(k)}$ we denote the collection of $k$-dimensional cones of the fan $\Delta$. The \emph{support} of a fan is the union of all of its cones:
$$
\supp(\Delta) := \bigcup_{\sigma \in \Delta} \sigma
$$
A fan $\Delta$ is called \emph{complete} if $\supp(\Delta) = \R^d$, and it is called \emph{pointed}, respectively \emph{simplicial} if all of its cones are pointed, respectively simplicial. Complete simplicial fans in $\R^d$ are in 1-1 correspondence with 
geodesic triangulations of $\Sph^{d-1}$.

For every proper face $F$ of a convex polyhedron $P$ there is a supporting hyperplane through $F$. The set of outward normals to all such hyperplanes spans a convex polyhedral cone, the \emph{normal cone} at $F$. Below is a more formal definition.

\begin{dfn}
\label{dfn:NormFan}
Let $P \subset \R^d$ be a convex polyhedron, and let $F$ be a non-empty face of $P$. The \emph{normal cone} $N_F(P)$ of $P$ at $F$ is defined as
$$
N_F(P) := \{v \in \R^d \mid \max_{x \in P} \langle v, x \rangle = \langle v, p \rangle \ \forall p \in F\},
$$
where $\langle \cdot, \cdot \rangle$ is the standard scalar product in $\R^d$.
The \emph{normal fan} of $P$ is the collection of all normal cones of $P$:
$$
\cN(P) := \{N_F(P) \mid F \text{ a proper face of }P\}
$$
\end{dfn}
Note that Definition \ref{dfn:NormFan} makes sense also when $\dim P < d$. In this case every $N_F(P)$ contains the linear subspace $\aff(P)^\perp$.
The following are some simple facts about the normal fan.
\begin{itemize}
\item
$\supp(\cN(P))$ is a convex polyhedral cone in $\R^n$ positively spanned by the normals to the facets of $P$.
\item
If $F$ is a face of $G$, which is a face of $P$, then $N_G(P)$ is a face of $N_F(P)$.
\item
$\dim N_F(P) = d - \dim F$
\item
$P$ is a polytope $\Leftrightarrow \cN(P)$ is complete
\item
$\dim P = d \Leftrightarrow \cN(P)$ is pointed
\item
$P$ is a simple $d$-polytope $\Leftrightarrow \cN(P)$ is complete and simplicial
\end{itemize}

% \begin{exl}
% Let $P$ be the cube $[0,1]^d$. Then the maximal cones of the normal fan $\cN(P)$ are the orthants.
% 
% More generally, let $P$ be a zonotope, that is the Minkowski sum of a finite number of segments $[-z_1,z_1] + \cdots + [-z_m,z_m]$. Then the normal fan of $P$ is generated by the arrangement of hyperplanes $\langle x, z_i \rangle = 0$: these hyperplanes cut $\R^d$ into pieces which are exactly the maximal cones of $\cN(P)$. \marginpar{What is the hyperbolic polyhedron defined by a zonotope?}
% \end{exl}

\begin{dfn}
Two convex polyhedra $P, Q \subset \R^d$ are called \emph{normally equivalent} if they have the same normal fan:
$$
P \simeq Q \Leftrightarrow \cN(P) = \cN(Q)
$$
For a fan $\Delta$, we denote by $\wT(\Delta)$ the set of all convex polyhedra with the normal fan $\Delta$:
$$
\wT(\Delta) := \{P \mid \cN(P) = \Delta\},
$$
and by $T(\Delta)$ the set of all such polyhedra modulo translation:
$$
T(\Delta) := \wT(\Delta)/\sim, \quad \text{where }P \sim P+x \ \forall x \in \R^d
$$
The set $T(\Delta)$ is called the \emph{type cone} of $\Delta$.
\end{dfn}
The set $T(\Delta)$ is a cone in the sense that scaling a convex polyhedron does not change its normal fan. Later we will see that the closure of $T(\Delta)$ is in fact a convex polyhedral cone.

Not every fan $\Delta$ is the normal fan of some convex polyhedron, examples are shown on Figure \ref{fig:notfan}.
If it is, that is $T(\Delta) \ne \emptyset$, then $\Delta$ is called \emph{polytopal}.

Normally equivalent polyhedra are also called ``analogous''~\cite{Ale37} or ``strongly isomorphic'' \cite{McM73, McM96}. The term ``normally equivalent'' is used in \cite{LRS10}.

\subsubsection{Support numbers}
\label{sec:SuppNumb}
Let $P \subset \R^d$ be a $d$-dimensional convex polyhedron. Then $1$-dimensional cones of $\cN(P)$ correspond to facets of $P$. Denote by $V := (v_1, \ldots, v_n)$
the collection of unit vectors that generate $\cN(P)^{(1)}$, and by $F_i$ the facet of $P$ with the outward unit normal $v_i$. Then $P$ is the solution set of a system of linear inequalities $\langle v_i, x \rangle \le h_i$, $i = 1, \ldots, n$ for some $h \in \R^n$. We will express this as $P = P(V,h)$, where
\begin{equation}
\label{eqn:SuppNumb}
P(V,h) := \{x \in \R^d \mid Vx \le h\}, \quad V \in \R^{n \times d},\, h \in \R^n
\end{equation}
Here, by abuse of notation, $V$ denotes the $n \times d$-matrix whose $i$-th row is~$v_i$. By a repeated abuse of notation, we will also write $V = \cN(P)^{(1)}$. Thus $V$ stands for any of the following objects:
\begin{itemize}
\item
a collection of $n$ rays in $\R^d$ starting at the origin;
\item
a collection of $n$ unit vectors in $\R^d$;
\item
an $n \times d$-matrix with rows of norm $1$.
\end{itemize}

From $\|v_i\| = 1$ it follows that $h_i$ is the signed distance from the coordinate origin to the affine hull of $F_i$, see Figure \ref{fig:SuppNumb}. The numbers $h_i$ are called the \emph{support numbers} of the polyhedron $P$, and $h = (h_1, \ldots, h_n) \in \R^n$ the \emph{support vector} of $P$.

\begin{figure}[ht]
\centering
\begin{picture}(0,0)%
\includegraphics{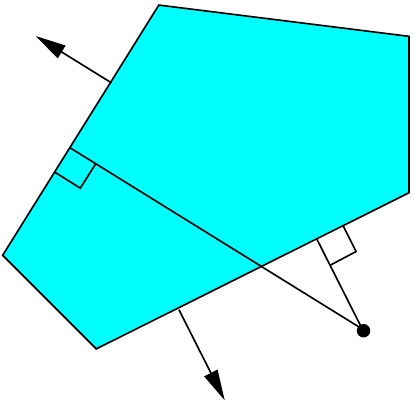}%
\end{picture}%
\setlength{\unitlength}{4144sp}%
\begingroup\makeatletter\ifx\SetFigFont\undefined%
\gdef\SetFigFont#1#2#3#4#5{%
  \reset@font\fontsize{#1}{#2pt}%
  \fontfamily{#3}\fontseries{#4}\fontshape{#5}%
  \selectfont}%
\fi\endgroup%
\begin{picture}(1882,1831)(79,-1154)
\put(1699,-644){\makebox(0,0)[lb]{\smash{{\SetFigFont{10}{12.0}{\familydefault}{\mddefault}{\updefault}{\color[rgb]{0,0,0}$h_j$}%
}}}}
\put(915,-254){\makebox(0,0)[lb]{\smash{{\SetFigFont{10}{12.0}{\familydefault}{\mddefault}{\updefault}{\color[rgb]{0,0,0}$h_i$}%
}}}}
\put(1111,-988){\makebox(0,0)[lb]{\smash{{\SetFigFont{10}{12.0}{\familydefault}{\mddefault}{\updefault}{\color[rgb]{0,0,0}$\nu_j$}%
}}}}
\put(179,325){\makebox(0,0)[lb]{\smash{{\SetFigFont{10}{12.0}{\familydefault}{\mddefault}{\updefault}{\color[rgb]{0,0,0}$\nu_i$}%
}}}}
\put(1809,-951){\makebox(0,0)[lb]{\smash{{\SetFigFont{10}{12.0}{\familydefault}{\mddefault}{\updefault}{\color[rgb]{0,0,0}$0$}%
}}}}
\end{picture}%
\caption{Support numbers: $h_i > 0$, $h_j < 0$.}
\label{fig:SuppNumb}
\end{figure}

For any pointed polytopal fan $\Delta$, the support vector determines an embedding
$$
\wT(\Delta) \to \R^n, \quad P(V,h) \mapsto h,
$$
where $V = \Delta^{(1)}$. Due to
\begin{equation}
\label{eqn:PTransl}
P(V, h) + t = P(V, h + Vt)
\end{equation}
the equivalence classes modulo translation correspond to points in $\R^n/\im V$. Thus we have
$$
T(\Delta) \subset \R^n/\im V,
$$
and $\wT(\Delta) = \pi^{-1}(T(\Delta))$, where
$$
\pi \colon \R^n \to \R^n/\im V
$$
is the canonical projection.

\subsubsection{Support function}
\begin{dfn}
Let $P \subset \R^d$ be a convex polyhedron. Its \emph{support function} is defined as
\begin{gather*}
h_P \colon \supp(\cN(P)) \to \R\\
h_P(v) := \max_{x \in P} \langle v, x \rangle
\end{gather*}
\end{dfn}
In particular, if $P$ is a polytope, then $h_P$ is defined on the whole $\R^d$.

The support function is positively homogeneous and convex, that is $h_P(\lambda v) = \lambda h_P(v)$ for $\lambda > 0$, and
$$
h_P(\lambda v + (1-\lambda) w) \le \lambda h_P(v) + (1-\lambda) h_P(w)
$$
for all $\lambda \in [0,1]$, $v,w \in \supp(\cN(P))$. The support function can be defined for any closed convex set, and there is a 1-1 correspondence between closed convex sets and positively homogeneous convex functions defined on a convex cone in $\R^d$.

\begin{lem}
\label{lem:SuppFunc}
The support function of a convex polyhedron $P$ is given by
$$
h_P(v) = \langle v, x \rangle \quad \text{for } v \in N_F(P) \text{ and } x \in F
$$
Thus the support function of a polyhedron is linear on every normal cone.
\end{lem}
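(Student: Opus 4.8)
The plan is simply to unwind the definitions; the statement is essentially a restatement of Definition~\ref{dfn:NormFan}, so there is no genuine obstacle. First I would recall that
$$
N_F(P) = \Bigl\{v \in \R^d \;\Big|\; \max_{x \in P}\langle v, x\rangle = \langle v, p\rangle \ \text{for all } p \in F\Bigr\},
$$
and that $h_P(v) = \max_{x \in P}\langle v, x\rangle$. Since $N_F(P)$ is one of the cones of the normal fan $\cN(P)$, it lies in $\supp(\cN(P))$, hence $h_P$ is defined on it and the maximum defining $h_P(v)$ is attained for every $v \in N_F(P)$ (it is attained on $F$, among other points).

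Next I would read off the first assertion directly from the above: for $v \in N_F(P)$ and any $x \in F$, membership of $v$ in $N_F(P)$ means precisely that $\max_{x' \in P}\langle v, x'\rangle = \langle v, x\rangle$, that is, $h_P(v) = \langle v, x\rangle$; in particular the right-hand side does not depend on the choice of $x \in F$. For the second assertion I would fix a single point $x_0 \in F$ --- which exists since a face is by definition non-empty --- and deduce from the first part that $h_P(v) = \langle v, x_0\rangle$ for all $v \in N_F(P)$. Thus $h_P$ restricted to the normal cone $N_F(P)$ coincides with the restriction of the linear functional $v \mapsto \langle v, x_0\rangle$, which is exactly the meaning of ``linear on every normal cone''.

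The closest thing to a subtlety is consistency: a vector $v$ typically belongs to several normal cones of $P$ at once, and the formula assigns to it the value $\langle v, x_0\rangle$ for a point $x_0$ in each of the corresponding faces. All of these values coincide, simply because each of them equals $h_P(v)$, so the resulting piecewise-linear description of $h_P$ on $\supp(\cN(P))$ is well defined. This observation requires no computation, which is why I expect the proof to be short.
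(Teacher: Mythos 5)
Your proof is correct and is exactly the intended argument: the paper states this lemma without proof precisely because it is the direct unwinding of Definition \ref{dfn:NormFan} together with the definition of $h_P$, which is what you carry out. Fixing $x_0 \in F$ to exhibit $h_P|_{N_F(P)}$ as the restriction of the linear functional $v \mapsto \langle v, x_0\rangle$, and noting the consistency across overlapping normal cones, covers everything that needs to be said.
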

The formula in Lemma \ref{lem:SuppFunc} implies that the linearity domains of $h_P$ are the normal cones at the vertices of $P$; that is, the nonlinearity locus of $h_P$ is the union of the normal cones at the edges of $P$.

\begin{dfn}
\label{dfn:PLExt}
Let $V = (v_1, \ldots, v_n)$ be a vector configuration in $\R^d$, and $\Delta$ be a fan with $\Delta^{(1)} = V$. For every $h \in \R^n$ denote by
$$
\widetilde{h_\Delta} \colon \supp(\Delta) \to \R^d
$$
the piecewise linear function obtained by extending the map $v_i \mapsto h_i$ linearly to each cone of $\Delta$.
\end{dfn}
Note that if $\Delta$ is not simplicial, then $\widetilde{h_\Delta}$ is defined not for all $h$.

Let $P = P(V,h)$ be a $d$-dimensional convex polyhedron such that $\cN(P)^{(1)} = V$. Since $h_P(v_i) = h_i$, we have
$$
h_P = \widetilde{h_{\cN(P)}}
$$

\begin{cor}
\label{cor:PolFanChar}
Let $\Delta$ be a pointed fan with a convex support. Then $h \in \wT(\Delta)$ if and only if the piecewise linear function $\widetilde{h_\Delta}$ is defined, convex, and has $\Delta^{(d-1)}$ as its non-linearity locus.

Consequently, a pointed fan with convex support is polytopal if and only if there exists an $h$ with the above properties.
\end{cor}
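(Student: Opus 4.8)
The statement consists of an easy implication and a substantive converse, and the plan is to handle both through the dictionary between support functions and convex sets. For the easy direction, suppose $h \in \wT(\Delta)$, so that $P := P(V,h)$ with $V = \Delta^{(1)}$ satisfies $\cN(P) = \Delta$. The discussion preceding the corollary already yields $h_P = \widetilde{h_{\cN(P)}} = \widetilde{h_\Delta}$; hence $\widetilde{h_\Delta}$ is defined on $\supp(\Delta) = \supp(\cN(P))$ and is convex, being a support function, and by Lemma~\ref{lem:SuppFunc} together with $\dim N_F(P) = d - \dim F$ its linearity domains are the normal cones at the vertices of $P$, i.e. the $d$-dimensional cones of $\Delta$, so its nonlinearity locus is $\Delta^{(d-1)}$. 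Nothing further is needed here.

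For the converse, assume $\widetilde{h_\Delta}$ is defined, convex, with nonlinearity locus $\Delta^{(d-1)}$, and put $P := P(V,h)$. The first task is to prove $h_P = \widetilde{h_\Delta}$. Being linear on each cone of $\Delta$, the function $\widetilde{h_\Delta}$ is positively homogeneous, and it is convex on the convex cone $\supp(\Delta)$; by the correspondence between closed convex sets and positively homogeneous convex functions on convex cones there is thus a closed convex set $Q$ with $h_Q = \widetilde{h_\Delta}$. I claim $Q = P$. If $x \in Q$ then $\langle v_i, x\rangle \le h_Q(v_i) = h_i$ for every $i$, so $Q \subseteq P$. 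Conversely, any $v \in \supp(\Delta)$ lies in some cone $\sigma \in \Delta$, which --- since $\Delta$ is pointed --- is positively spanned by the generators $v_i$ it contains; writing $v = \sum_i \lambda_i v_i$ with $\lambda_i \ge 0$ and using the linearity of $\widetilde{h_\Delta}$ on $\sigma$, for $x \in P$ we obtain $\langle v, x\rangle = \sum_i \lambda_i \langle v_i, x\rangle \le \sum_i \lambda_i h_i = \widetilde{h_\Delta}(v)$, so $x \in Q$. Hence $P = Q$ and $h_P = \widetilde{h_\Delta}$; in particular $\supp(\cN(P)) = \supp(\Delta)$.

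It remains to deduce $\cN(P) = \Delta$. Both are fans with support $\supp(\Delta)$. By the remark following Lemma~\ref{lem:SuppFunc}, the linearity domains of $h_P = \widetilde{h_\Delta}$ are the normal cones at the vertices of $P$, i.e. the maximal cones of $\cN(P)$; on the other hand, the hypothesis on the nonlinearity locus says these domains are exactly the closures of the connected components of $\supp(\Delta) \setminus \Delta^{(d-1)}$, which are the $d$-dimensional cones of $\Delta$. Since a fan is recovered from its maximal cones (as the collection of all their faces), $\cN(P) = \Delta$ and therefore $h \in \wT(\Delta)$. The final assertion is then immediate: a pointed fan with convex support is polytopal precisely when $\wT(\Delta) \ne \emptyset$, that is, when there exists an $h$ with the stated properties.

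The delicate point --- and the main obstacle --- is the last step. One must use in an essential way that the nonlinearity locus equals \emph{all} of $\Delta^{(d-1)}$ and not just part of it: this is exactly what forbids $\widetilde{h_\Delta}$ from continuing linearly across a wall of $\Delta$ (which would make $\cN(P)$ a proper coarsening of $\Delta$) and what rules out $\widetilde{h_\Delta}$ being linear along a line through the origin contained in $\supp(\Delta)$ (which would force $P$ to be lower-dimensional and $\cN(P)$ non-pointed, contradicting $\cN(P) = \Delta$). Converting the hypothesis into the usable form ``$\Delta$ is pure of dimension $d$, and its $d$-cones are precisely the linearity domains of $\widetilde{h_\Delta}$'', and dealing with degenerate cases of low-dimensional support, is the part of the argument that needs care, although it stays elementary.
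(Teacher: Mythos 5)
Your argument is correct and takes essentially the route the paper intends: the corollary is stated there without proof, as an immediate consequence of Lemma~\ref{lem:SuppFunc}, the identity $h_P=\widetilde{h_{\cN(P)}}$, and the correspondence between positively homogeneous convex functions on a convex cone and closed convex sets, and your write-up simply fills in those details (the identification $P(V,h)=Q$ and the matching of the linearity domains of $h_P$ with the maximal cones of $\cN(P)$ and the $d$-cones of $\Delta$). The degenerate-case bookkeeping you defer at the end (purity of $\Delta$, full-dimensionality of $P$, walls on $\partial\supp(\Delta)$) reflects an informality already present in the paper's own statement, so nothing essential is missing.
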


\begin{rem}
\label{rem:RegSubdiv}
Corollary \ref{cor:PolFanChar} says that a polytopal fan is the same as a \emph{regular subdivision} of a vector configuration, see \cite[Section 9.5.1]{LRS10}. In a special case, when $\supp(\Delta)$ is contained in an open half-space, the rays of $\Delta$ can be replaced by their intersection points with an affine hyperplane $A$. The convex hull of these points is subdivided by the cones of $\Delta$ into convex polytopes that are the linearity domains of a convex piecewise affine function on $A$. Such a subdivision is called a \emph{regular subdivision} of a point configuration, see \cite[Definition 5.3]{Zie95} and \cite{LRS10}.
\end{rem}

\subsubsection{Fans refinement and the Minkowski sum}
\begin{dfn}
We say that a fan $\Delta'$ \emph{refines} a fan $\Delta$ (or $\Delta$ \emph{coarsens} $\Delta'$) and write $\Delta' \preccurlyeq \Delta$ if every cone of $\Delta$ is a union of cones of $\Delta'$.
\end{dfn}

\begin{lem}
\label{lem:FanRefine}
Every non-simplicial polytopal fan $\Delta$ can be refined to a simplicial polytopal fan $\Delta'$. Besides, for every such $\Delta'$ we have $T(\Delta) \subset \cl T(\Delta')$.
\end{lem}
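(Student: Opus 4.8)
The plan is to prove the two assertions in turn. Fix a convex polyhedron $P$ with $\cN(P)=\Delta$ (possible since $\Delta$ is polytopal); passing to $\span(\supp\Delta)$, we may assume $P$ is full-dimensional, so that $\Delta$ is pointed with convex support and Corollary~\ref{cor:PolFanChar} applies.

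For the existence of a simplicial polytopal refinement, I would note that, by Corollary~\ref{cor:PolFanChar} and Remark~\ref{rem:RegSubdiv}, $\Delta$ is the regular subdivision of the vector configuration $V=\Delta^{(1)}$ induced by the support vector $h$ of $P$. It is standard in the theory of secondary polytopes \cite{GKZ94,LRS10} that every regular subdivision is refined by a regular triangulation on the same point set; read back through Corollary~\ref{cor:PolFanChar}, such a triangulation is a simplicial polytopal fan $\Delta'$ with ${\Delta'}^{(1)}=V$ and $\Delta'\preccurlyeq\Delta$. Concretely, one may instead fix a generic $g\in\R^n$: for all small $t>0$ the polyhedron $P_t:=P(V,h+tg)$ has no $d+1$ of its facet hyperplanes through a common point, so each vertex of $P_t$ lies on exactly $d$ facets and $\Delta':=\cN(P_t)$ is simplicial and polytopal; moreover, each vertex of $P_t$ is cut out by $d$ facet hyperplanes whose $t\to 0$ limit is a point of $P$ lying on the corresponding $d$ facets, so its normal cone is contained in a cone of $\Delta$, whence $\Delta'\preccurlyeq\Delta$.

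For the inclusion, let $\Delta'\preccurlyeq\Delta$ be an arbitrary simplicial polytopal fan. Then $\Delta^{(1)}=V\subseteq V':={\Delta'}^{(1)}$ and $\supp\Delta=\supp\Delta'$, so the support function of $P$ is defined on the rays of $\Delta'$; recording its values there extends the support vector of $P$ to $\R^{n'}$ and embeds $T(\Delta)$ into $\R^{n'}/\im V'$ (in the case $V'=V$ this is the identity). Choose $Q$ with $\cN(Q)=\Delta'$ and consider the Minkowski sums $P+tQ$ for $t>0$, with support function $h_{P+tQ}=h_P+t\,h_Q$. This function is convex; it is linear on each maximal cone of $\Delta'$, since such a cone lies in a maximal cone of $\Delta$ on which $h_P$ is linear (Lemma~\ref{lem:SuppFunc}) and $h_Q$ is linear on the maximal cones of $\cN(Q)=\Delta'$; and across each cone of ${\Delta'}^{(d-1)}$ the jump of its gradient equals the (nonnegative, by convexity) jump for $h_P$ plus $t$ times the (strictly positive, since $\cN(Q)=\Delta'$) jump for $h_Q$, hence is strictly positive. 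So the non-linearity locus of $h_{P+tQ}$ is exactly ${\Delta'}^{(d-1)}$, and Corollary~\ref{cor:PolFanChar} yields $\cN(P+tQ)=\Delta'$ --- equivalently, the normal fan of a Minkowski sum is the common refinement of the two normal fans, which for $\Delta'\preccurlyeq\Delta$ is $\Delta'$. Letting $t\to 0^{+}$, the support vector of $P+tQ$ converges to that of $P$, so the class of $P$ lies in $\cl T(\Delta')$; since $P$ was an arbitrary element of $\wT(\Delta)$, we get $T(\Delta)\subseteq\cl T(\Delta')$.

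The only step with real content is the first assertion: obtaining a refinement that is \emph{at once} simplicial and polytopal. If one quotes the secondary-polytope fact, that is where the difficulty is packaged, and one should make sure to use the version valid for vector configurations whose support need not lie in a halfspace; if one argues by perturbation, the point needing care is the verification that $\Delta'\preccurlyeq\Delta$, i.e.\ that a small generic change of the support numbers merely subdivides the normal fan without moving its walls --- this follows from continuity of faces under small perturbations but should be spelled out. Granted Corollary~\ref{cor:PolFanChar}, the second assertion is soft, using only Lemma~\ref{lem:SuppFunc} and the additivity of support functions under Minkowski sums.
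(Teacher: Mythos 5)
Your proof is correct and follows essentially the paper's own argument: the first assertion is obtained, as in the paper, by a small generic perturbation $h\mapsto h+tg$ of the support vector (the paper describes the induced subdivision of each cone via the lifted piecewise-linear function, you via simplicity of the perturbed polyhedron --- two views of the same construction), and the second by passing to $h+ty$ with $y\in\wT(\Delta')$, i.e.\ to the Minkowski sums $P+tQ$ with $\cN(Q)=\Delta'$, and letting $t\to 0^+$. The additional care you take (extending support vectors when $\Delta'$ has more rays than $\Delta$, and spelling out why the perturbed fan refines $\Delta$) merely fills in details the paper leaves implicit.
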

Indeed, let $h \in \wT(\Delta) \subset \R^n$. Choose a generic $y \in \R^n$ and consider $h' = h + ty$ with $t$ sufficiently small. The extension $\widetilde{h'_\Delta}$ is not defined, but it becomes so if every $\sigma \in \Delta$ is appropirately subdivided (by a convex hull construction). This yields a simplicial fan $\Delta'$ such that $\widetilde{h'_{\Delta'}}$ is convex, (the convexity across $\Delta^{(d-1)}$ is preserved for small $t$). As for the second statement of the lemma, by choosing $h \in \wT(\Delta)$ and $y \in \wT(\Delta')$ we see in a similar way that $h + ty \in \wT(\Delta')$ for all $t > 0$.

The geometric picture behind this argument is that translating facets of a non-simple polyhedron generically and by small amounts makes the polyhedron simple without destroying any of its faces.

\begin{dfn}
The Minkowski sum of two sets $K,L \subset \R^d$ is defined as
$$
K+L := \{x+y \mid x \in K, y \in L\}
$$
\end{dfn}

The additive structure on the space of support vectors is related to the Minkowski addition. However, one should be careful when the summands are not normally equivalent.

\begin{dfn}
For two fans $\Delta_1$ and $\Delta_2$ with the same support denote by $\Delta_1 \wedge \Delta_2$ their coarsest common refinement.
Explicitely,
$$
\Delta_1 \wedge \Delta_2 = \{\sigma_1 \cap \sigma_2 \mid \sigma_i \in \Delta_i, i = 1,2\}
$$
\end{dfn}

\begin{lem}
\label{lem:FanMink}
Let $P$ and $Q$ be two convex polyhedra such that $\supp(\cN(P)) = \supp(\cN(Q))$. Then we have:
$$
\cN(P+Q) = \cN(P) \wedge \cN(Q)
$$
\end{lem}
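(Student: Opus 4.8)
The plan is to translate the statement into the language of support functions and exploit the principle, recorded in the discussion following Lemma~\ref{lem:SuppFunc}, that the normal fan of a convex polyhedron is exactly the decomposition of $\supp(\cN(P))$ into the maximal linearity domains of the support function $h_P$, together with all their faces. So the whole identity $\cN(P+Q)=\cN(P)\wedge\cN(Q)$ should follow once I understand the linearity domains of $h_{P+Q}$.

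\textbf{Step 1 (support function of a Minkowski sum).} I would first record that, writing $C:=\supp(\cN(P))=\supp(\cN(Q))$, a one-line computation gives $h_{P+Q}(v)=\max_{x\in P,\,y\in Q}\langle v,x+y\rangle=h_P(v)+h_Q(v)$ for every $v$, and that the left-hand side is finite precisely when both terms on the right are (use a point of $Q$ to shift an unbounded sequence in $P$). Hence $\supp(\cN(P+Q))=C$ as well, and $h_{P+Q}=h_P+h_Q$ as functions on $C$.

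\textbf{Step 2 (linearity domains of a sum of convex functions).} The key observation is: if $f,g\colon C\to\R$ are convex and $f+g$ is affine on a convex set $U$, then $f$ and $g$ are each affine on $U$ --- indeed, for $x,y\in U$ and $t\in[0,1]$ the convexity inequalities for $f$ and for $g$ add up to the convexity inequality for $f+g$ at $tx+(1-t)y$, and since the latter is forced to be an equality, so are both summands. Applied to $f=h_P$, $g=h_Q$, this shows that the maximal linearity domains of $h_{P+Q}$ are exactly the inclusion-maximal sets of the form $\sigma\cap\tau$ with $\sigma$ a maximal cone of $\cN(P)$ and $\tau$ a maximal cone of $\cN(Q)$ (the easy inclusion --- that these intersections \emph{are} linearity domains --- is immediate; Step~2 supplies the reverse one). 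By the standard description of the common refinement, for two fans with equal support the pairwise intersections form a fan whose cells of each dimension are exactly the intersections of cells of that dimension, so these maximal $\sigma\cap\tau$ are precisely the maximal cones of $\cN(P)\wedge\cN(Q)$. Combining with Step~1 and the support-function principle, the maximal cones of $\cN(P+Q)$ coincide with those of $\cN(P)\wedge\cN(Q)$; since a fan with given support is determined by its maximal cones (the lower ones being their faces), this yields $\cN(P+Q)=\cN(P)\wedge\cN(Q)$.

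I expect the substance of the argument to sit entirely in Step~2: that the linearity domains of $h_P+h_Q$ refine those of $h_P$ and of $h_Q$ is soft, but that nothing further collapses --- that the domains are \emph{exactly} the common refinement --- genuinely uses the convexity of the support functions and would fail for arbitrary piecewise-linear functions; geometrically this is the cancellation law for faces of Minkowski sums ($F_v(P+Q)=F_v(P)+F_v(Q)$, with the two summand faces recoverable). A secondary, purely bookkeeping point is that $P$ and $Q$ need not be full-dimensional or bounded, so $C$ may be a proper cone with nontrivial lineality; but every step above takes place inside $C$ and never uses $C=\R^d$, so I would simply note this rather than treat it separately.
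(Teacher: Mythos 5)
Your argument is correct. The paper states Lemma~\ref{lem:FanMink} without proof, treating it as standard, and your route is precisely the standard one, built from tools the paper itself records: $h_{P+Q}=h_P+h_Q$ (Lemma~\ref{lem:MinkSum}), the identification of the maximal cones of $\cN(P)$ with the maximal linearity domains of $h_P$ (the discussion after Lemma~\ref{lem:SuppFunc}), and your key observation that a sum of convex functions can only be affine on a set on which each summand is affine, which pins the maximal linearity domains of $h_{P+Q}$ down to the inclusion-maximal intersections $\sigma\cap\tau$ of maximal cones. Only one parenthetical is imprecise: in $\cN(P)\wedge\cN(Q)$ a $k$-dimensional cone need not be an intersection of two $k$-dimensional cones; but the statement you actually use --- that the maximal cones of the common refinement are exactly the inclusion-maximal intersections of maximal cones of the two fans --- is correct and follows since every cone of a fan lies in (indeed is a face of) a maximal one.
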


Fix $V$ and denote $P(h) := P(V,h)$.
Lemma \ref{lem:FanMink} implies that in general
$$
P(h+h') \ne P(h) + P(h'),
$$
because $P(h) + P(h')$ can have more facets than $P(h+h')$. See Figure~\ref{fig:SuppAdd} showing fragments of 3-dimensional polyhedra. (However, $P(h+h') \supset P(h) + P(h')$ holds always.)

\begin{figure}[ht]
\centering
\begin{picture}(0,0)%
\includegraphics{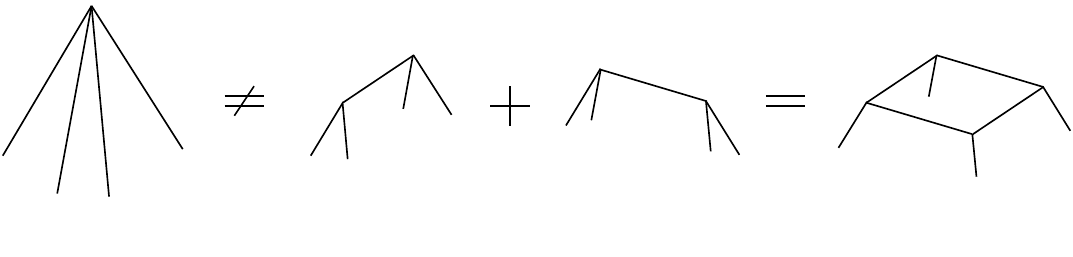}%
\end{picture}%
\setlength{\unitlength}{4144sp}%
\begingroup\makeatletter\ifx\SetFigFont\undefined%
\gdef\SetFigFont#1#2#3#4#5{%
  \reset@font\fontsize{#1}{#2pt}%
  \fontfamily{#3}\fontseries{#4}\fontshape{#5}%
  \selectfont}%
\fi\endgroup%
\begin{picture}(4905,1159)(235,-1651)
\put(451,-1591){\makebox(0,0)[lb]{\smash{{\SetFigFont{10}{12.0}{\rmdefault}{\mddefault}{\updefault}{\color[rgb]{0,0,0}$P(h+h')$}%
}}}}
\put(1801,-1591){\makebox(0,0)[lb]{\smash{{\SetFigFont{10}{12.0}{\rmdefault}{\mddefault}{\updefault}{\color[rgb]{0,0,0}$P(h)$}%
}}}}
\put(3016,-1591){\makebox(0,0)[lb]{\smash{{\SetFigFont{10}{12.0}{\rmdefault}{\mddefault}{\updefault}{\color[rgb]{0,0,0}$P(h')$}%
}}}}
\put(4186,-1591){\makebox(0,0)[lb]{\smash{{\SetFigFont{10}{12.0}{\rmdefault}{\mddefault}{\updefault}{\color[rgb]{0,0,0}$P(h)+P(h')$}%
}}}}
\end{picture}%
\caption{Minkowski addition does not always correspond to the addition of support numbers.}
\label{fig:SuppAdd}
\end{figure}

Instead, we have the following.
\begin{lem}
\label{lem:MinkSum}
The support function of the Minkowski sum of two convex bodies is the sum of their support functions:
$$
h_{K+L} = h_K + h_L
$$
\end{lem}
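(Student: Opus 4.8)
The plan is to obtain the identity directly from the definition of the support function together with the bilinearity of the scalar product; no machinery beyond what is in the excerpt is needed. First I would note that $K$ and $L$ are convex bodies, hence compact, so the maxima defining $h_K$, $h_L$, and $h_{K+L}$ are all attained and finite. The key structural observation is that, by the very definition of the Minkowski sum, every point of $K+L$ is of the form $x+y$ with $x \in K$ and $y \in L$, and these two choices range independently over $K \times L$.

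Then, fixing a direction $v$ and using that $\langle v, \cdot \rangle$ is linear, one writes
$$
h_{K+L}(v) = \max_{z \in K+L} \langle v, z \rangle = \max_{x \in K,\ y \in L} \langle v, x + y \rangle = \max_{x \in K,\ y \in L} \bigl( \langle v, x \rangle + \langle v, y \rangle \bigr).
$$
Since in the last expression the term $\langle v, x \rangle$ does not depend on $y$ and $\langle v, y \rangle$ does not depend on $x$, the maximum of the sum over the product domain separates into the sum of the two individual maxima, which are precisely $h_K(v)$ and $h_L(v)$.

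The only point requiring a word of justification is this separation of maxima, and it is elementary: if $x^\ast \in K$ maximizes $\langle v, \cdot \rangle$ over $K$ and $y^\ast \in L$ maximizes it over $L$, then $x^\ast + y^\ast \in K+L$ realizes the value $h_K(v) + h_L(v)$, giving $h_{K+L}(v) \ge h_K(v) + h_L(v)$; conversely, for any $x \in K$, $y \in L$ one has $\langle v, x+y \rangle = \langle v, x \rangle + \langle v, y \rangle \le h_K(v) + h_L(v)$, which yields the reverse inequality. I do not anticipate any genuine obstacle here — the statement is a standard fact recorded for later use when comparing Minkowski addition with the additive structure on support vectors (cf.\ Lemma \ref{lem:FanMink} and the discussion of $P(h+h')$ above). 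One could alternatively deduce it from the stated $1$-$1$ correspondence between closed convex sets and positively homogeneous convex functions, but the direct computation above is shorter and self-contained.
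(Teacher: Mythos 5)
Your proof is correct and is the standard argument: the paper states Lemma \ref{lem:MinkSum} without proof, treating it as a classical fact, and your direct computation (linearity of $\langle v,\cdot\rangle$ plus separation of the maximum over the product $K\times L$ into the two individual maxima) is exactly the argument one would supply. No gaps.
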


\begin{cor}
\label{cor:MinkLin}
If $\cN(P(h)) \preccurlyeq \cN(P(h'))$, then $P(h) + P(h') = P(h + h')$.
\end{cor}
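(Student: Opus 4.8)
The inclusion $P(h)+P(h') \subseteq P(h+h')$ holds for all $h,h'$ (if $Vx \le h$ and $Vy \le h'$ then $V(x+y) \le h+h'$), as already noted in the text, so I would concentrate on the reverse inclusion. The natural tool is the additivity of support functions from Lemma~\ref{lem:MinkSum}:
\[
h_{P(h)+P(h')} = h_{P(h)} + h_{P(h')} .
\]

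The next step is to pin down the normal fan of the Minkowski sum. By Lemma~\ref{lem:FanMink} we have $\cN(P(h)+P(h')) = \cN(P(h)) \wedge \cN(P(h'))$, and the hypothesis $\cN(P(h)) \preccurlyeq \cN(P(h'))$ says exactly that $\cN(P(h))$ already refines $\cN(P(h'))$; since it also refines itself, it is the coarsest common refinement, so $\cN(P(h)+P(h')) = \cN(P(h))$. Consequently the facet normals of $P(h)+P(h')$ are exactly the vectors of $\cN(P(h))^{(1)} = V$, and its support number in direction $v_i$ is $h_{P(h)+P(h')}(v_i)$. Evaluating the displayed identity at each $v_i$ and using that $v_i$ is a facet normal of both $P(h)$ and $P(h')$ — so that $h_{P(h)}(v_i) = h_i$ and $h_{P(h')}(v_i) = h'_i$ — gives support number $h_i + h'_i$. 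Hence $P(h)+P(h') = P(V, h+h') = P(h+h')$, which in particular yields the missing inclusion.

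The argument is entirely formal once Lemmas~\ref{lem:MinkSum} and~\ref{lem:FanMink} are in hand; the one point that genuinely needs attention is the equality $h_{P(h')}(v_i) = h'_i$. This relies on $v_i$ actually indexing a facet of $P(h')$, i.e.\ on $\cN(P(h'))^{(1)} = V$ (which, together with the hypothesis, also forces $\cN(P(h))^{(1)} = V$): if some inequality $\langle v_i,\cdot\rangle \le h'_i$ were redundant for $P(h')$ one could have $h_{P(h')}(v_i) < h'_i$, and then $P(h)+P(h')$ would be a proper subset of $P(h+h')$. So I would make explicit the standing assumption that $V$ is the set of facet normals of the polyhedra under consideration, under which the corollary is precisely the computation above.
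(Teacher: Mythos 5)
Your proof is correct and is essentially the paper's own justification: the corollary is stated there without a printed proof, as an immediate consequence of Lemma~\ref{lem:MinkSum} combined with Lemma~\ref{lem:FanMink}, which is exactly the computation you carry out (additivity of support functions, $\cN(P(h)+P(h'))=\cN(P(h))\wedge\cN(P(h'))=\cN(P(h))$, then read off the support numbers $h_i+h'_i$). Your closing caveat is well placed, though slightly stronger than necessary: all the argument really needs is the paper's standing convention that $h$ and $h'$ are the support vectors of $P(h)$ and $P(h')$, i.e.\ $h_i=h_{P(h)}(v_i)$ and $h'_i=h_{P(h')}(v_i)$ for all $i$, since then every facet normal of $P(h)+P(h')$ is a ray of $\cN(P(h))$, hence some $v_i$, and the facet inequalities of the sum are exactly $\langle v_i,x\rangle\le h_i+h'_i$, so one does not need every $v_i$ to define a facet of the coarser polyhedron $P(h')$.
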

We also clearly have $h_{\lambda K} = \lambda h_K$ and $P(\lambda h) = \lambda P(h)$ for $\lambda > 0$. This is false for $\lambda < 0$. Taking differences of the support functions embeds the space of convex polytopes in the vector space of the so called virtual polytopes, see \cite{McM89}, \cite{PK92}.

A subset of a euclidean space is called \emph{relatively open}, if it is open as a subset of its affine hull. For example, an open half-line in $\R^2$ is relatively open.
\begin{lem}
\label{lem:LinTypeCone}
For every polytopal fan $\Delta$, the type cone $T(\Delta)$ is convex and relatively open.
\end{lem}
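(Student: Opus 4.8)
The plan is to work with the support vectors and use the characterization from Corollary \ref{cor:PolFanChar}. Recall that $\wT(\Delta)$ embeds in $\R^n$ as the set of those $h$ for which the piecewise linear extension $\widetilde{h_\Delta}$ is defined, convex, and has exactly $\Delta^{(d-1)}$ as its nonlinearity locus; and $T(\Delta) = \pi(\wT(\Delta))$ where $\pi \colon \R^n \to \R^n/\im V$. Since $\pi$ is linear and surjective, it suffices to prove that $\wT(\Delta)$ is convex and relatively open in $\R^n$: the linear image of a convex relatively open set is convex and relatively open.

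For convexity of $\wT(\Delta)$, I would first reduce to the simplicial case, where $\widetilde{h_\Delta}$ is defined for every $h$ and depends linearly on $h$. The condition ``$\widetilde{h_\Delta}$ is convex'' is a conjunction, over the walls $\tau \in \Delta^{(d-1)}$, of linear inequalities on $h$ expressing convexity across $\tau$ (for a wall $\tau$ separating maximal cones $\sigma_1, \sigma_2$ with $\sigma_i = \pos(\tau, v_{k_i})$, the convexity-across-$\tau$ condition is a single linear inequality in the $h_i$, coming from comparing the two linear pieces on a point just beyond $\tau$). The ``nonlinearity locus is exactly $\Delta^{(d-1)}$'' requires each of these inequalities to be \emph{strict}. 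Hence $\wT(\Delta)$ is the intersection of finitely many open half-spaces, i.e.\ an open convex polyhedral cone; in particular convex and (relatively) open. For a non-simplicial $\Delta$, the condition that $\widetilde{h_\Delta}$ be \emph{defined} at all is itself a system of linear equations (each non-simplicial cone $\sigma$ forces a linear relation among the $h_i$ with $v_i \in \sigma$, so that $v_i \mapsto h_i$ extends linearly over $\sigma$); imposing convexity and strictness across walls again adds open half-space conditions. So $\wT(\Delta)$ is the intersection of an affine subspace with finitely many open half-spaces — again convex and relatively open.

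The one point that needs care is the passage through $\pi$: $T(\Delta)$ is relatively open in $\R^n/\im V$, but one should note this is a genuine quotient statement, namely $\pi(\wT(\Delta))$ is relatively open in $\R^n/\im V$ because open maps send open sets to open sets and $\pi$ restricted to the affine hull of $\wT(\Delta)$ is still open onto the affine hull of the image; equivalently, $\wT(\Delta) = \pi^{-1}(T(\Delta))$ is a union of cosets of $\im V$, so it is ``$\im V$-saturated'', and a saturated relatively open set descends to a relatively open set. The main (minor) obstacle is therefore just the bookkeeping: writing down precisely the linear equalities coming from non-simplicial cones and the strict linear inequalities coming from convexity across walls, and checking that ``nonlinearity locus $= \Delta^{(d-1)}$'' is equivalent to strictness of all the wall inequalities (no wall accidentally becomes a domain of linearity). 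Once that is in place, convexity and relative openness of $T(\Delta)$ are immediate.
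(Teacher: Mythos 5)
Your argument is correct, but it takes a different route from the paper. The paper's proof is a two-line direct argument: convexity follows from the Minkowski identity $P(\lambda h + (1-\lambda)h') = \lambda P(h) + (1-\lambda)P(h')$ for normally equivalent polyhedra (so the combination has the same normal fan $\Delta$), and relative openness from the perturbation statement $h, h' \in \wT(\Delta) \Rightarrow h + \epsilon h' \in \wT(\Delta)$ for $|\epsilon|$ small. You instead use Corollary \ref{cor:PolFanChar} to exhibit $\wT(\Delta)$ explicitly as the solution set of finitely many linear equations (well-definedness of $\widetilde{h_\Delta}$ over non-simplicial cones) and strict linear inequalities (strict convexity across each wall), i.e.\ a relatively open polyhedral cone, and then push this through $\pi$. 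This is essentially an anticipation of Lemmas \ref{lem:AffT} and \ref{lem:TypeConeIneq}, which the paper proves later by the same mechanism; so your proof buys more than the statement asks for (it also explains why $\cl T(\Delta)$ is a convex polyhedral cone), at the cost of invoking the wall-crossing criterion for convexity of piecewise linear functions and the bookkeeping in the non-simplicial case, whereas the paper's argument needs only the compatibility of Minkowski addition with support vectors. Two small points to keep in mind: Corollary \ref{cor:PolFanChar} carries the standing hypotheses ``pointed fan with convex support'' (harmless here, since polytopal fans have convex support and the support-vector parametrization is set up in the pointed case, which is also what the paper's own proof implicitly assumes); and your final passage through $\pi$ is fine either via the saturation $\wT(\Delta) = \pi^{-1}(T(\Delta))$ or via the fact that a surjective affine map of the affine hulls is open — both close the argument.
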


The convexity follows from
$$
P(\lambda h + (1-\lambda) h') = \lambda P(h) + (1-\lambda) P(h')
$$
for $\cN(P(h)) \simeq \cN(P(h'))$ and $\lambda \in [0,1]$,
and the relative openness from
$$
h, h' \in \wT(\Delta) \Rightarrow h + \epsilon h' \in \wT(\Delta),
$$
for $\epsilon$ sufficiently small, whether positive or negative.

\subsubsection{Linear inequalities describing a type cone}
\label{sec:ContrEdge}
% Assume first that $\Delta$ is simplicial.
% Then there will be one inequality for each edge of $P(h) \in T(\Delta)$. Edges of $P$ correspond to $(d-1)$-dimensional cones of $\Delta$.
Let $P \subset \R^d$ be a $d$-polytope with the normal fan $\Delta$. Then $P = P(h)$ for $h \in \wT(\Delta)$ (we omit $V = \Delta^{(1)}$ from the notation $P(V,h)$). For every cone $\sigma \in \Delta$ let $F_\sigma(h)$ be the face of $P(h)$ with the normal cone $\sigma$.

If $\sigma \in \Delta^{(d-1)}$, then $F_\sigma(h)$ is an edge; denote its length by
$$
\ell^\Delta_\sigma(h) := \vol_1(F_\sigma(h))
$$
Of course, all functions $\ell^\Delta_\sigma \colon \wT(\Delta) \to \R$ descend through $\pi \colon \R^n \to \R^n/\im V$ to functions on $T(\Delta)$. However, the proofs of the lemmas in this section are better written in terms of $h$ than $\pi(h)$. Due to $T(\Delta) = \pi(\wT(\Delta))$ the statements of the lemmas are easily translated in terms of $T(\Delta)$.

\begin{lem}
\label{lem:EllLin}
\begin{enumerate}
\item
For every $\sigma \in \Delta^{(d-1)}$ and $h \in \wT(\Delta)$, we have
% \begin{equation}
% \label{eqn:EllGrad}
$$
\ell^\Delta_\sigma(h) = \|\grad (h_P|_{\rho_1}) - \grad (h_P|_{\rho_2})\|,
$$
% \end{equation}
where $\rho_1, \rho_2 \in \Delta^{(d)}$ are the two full-dimensional cones having $\sigma$ as their facet, and $h_P$ is the support function of the polytope $P = P(h)$.
\item
$\ell^\Delta_\sigma \colon \wT(\Delta) \to \R$ extends to a linear function on $\span(\wT(\Delta))$.
\end{enumerate}
\end{lem}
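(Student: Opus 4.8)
The plan is to read both sides of the formula in terms of the support function $h_P$ of $P = P(h)$. By Lemma~\ref{lem:SuppFunc}, on a full-dimensional cone $\rho = N_w(P) \in \Delta^{(d)}$ the restriction $h_P|_\rho$ is the linear function $v \mapsto \langle v, w \rangle$, where $w$ is the vertex of $P$ with $N_w(P) = \rho$; since $\rho$ spans $\R^d$, this identifies $\grad(h_P|_\rho) = w$. Writing $\rho_1 = N_{w_1}(P)$ and $\rho_2 = N_{w_2}(P)$ for the two full-dimensional cones having $\sigma$ as their common facet, the cone $\sigma$ is a face of each $\rho_i$, so $w_1$ and $w_2$ are precisely the two endpoints of the edge $F_\sigma(h)$. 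Hence $\ell^\Delta_\sigma(h) = \vol_1(F_\sigma(h)) = \|w_1 - w_2\| = \|\grad(h_P|_{\rho_1}) - \grad(h_P|_{\rho_2})\|$, which is assertion~(1).

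For~(2) I would first check that the vertex $w_i = w_i(h)$ is a linear function of $h$. The cone $\rho_i$ is $d$-dimensional, hence positively spanned by those rays $\pos(v_j)$ of $\Delta$ that it contains; choose among the corresponding indices a subset $J_i$ of size $d$ with $\{v_j : j \in J_i\}$ a basis of $\R^d$. For every $h \in \wT(\Delta)$ the vertex $w_i(h)$ lies on the facets $F_j(h)$, $j \in J_i$, so it is the unique solution of $\langle v_j, x\rangle = h_j$, $j \in J_i$, that is $w_i(h) = (V_{J_i})^{-1} h_{J_i}$. Since $\cN(P(h)) = \Delta$ for all $h \in \wT(\Delta)$, one and the same $J_i$ serves for every $h$, so $w_i$ is the restriction to $\wT(\Delta)$ of a linear map $\R^n \to \R^d$; consequently $h \mapsto w_1(h) - w_2(h)$ is the restriction of a linear map $L \colon \R^n \to \R^d$. (Alternatively, additivity of $w_i$ follows from $P(h+h') = P(h)+P(h')$, Corollary~\ref{cor:MinkLin}, and the description of vertices under Minkowski sums.)

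It remains to see that $h \mapsto \|L(h)\|$ is linear. The key observation is that the edge $F_\sigma(h)$ has $\sigma$ as its normal cone for every $h$, so $\aff(F_\sigma(h))$ is always a translate of the fixed line $\sigma^\perp$; fix a unit vector $e_\sigma$ with $\sigma^\perp = \R e_\sigma$. Then $L(h) = w_1(h) - w_2(h) \in \sigma^\perp$, hence $L(h) = \langle L(h), e_\sigma \rangle\, e_\sigma$ and $\ell^\Delta_\sigma(h) = |\langle L(h), e_\sigma\rangle|$. The scalar $c_\sigma(h) := \langle L(h), e_\sigma\rangle$ is a linear function of $h$ equal to $\pm\ell^\Delta_\sigma(h)$; since $\ell^\Delta_\sigma(h) > 0$ on the convex — in particular connected — set $\wT(\Delta)$, the sign of $c_\sigma$ is constant there, and after possibly replacing $e_\sigma$ by $-e_\sigma$ we get $\ell^\Delta_\sigma = c_\sigma$ on $\wT(\Delta)$. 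Thus $\ell^\Delta_\sigma$ is the restriction of the linear functional $h \mapsto \langle L(h), e_\sigma\rangle$ on $\R^n$, hence of a linear function on $\span(\wT(\Delta))$, as claimed.

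The only delicate step is this last one: recognising that the a priori norm-like expression $\|L(h)\|$ is genuinely linear. This works because the edge direction $\sigma^\perp$ is independent of $h$ and because the labelling of the two endpoints by $\rho_1, \rho_2$ is consistent across the connected set $\wT(\Delta)$; everything else is a direct application of Lemma~\ref{lem:SuppFunc} and elementary linear algebra.
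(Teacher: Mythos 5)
Your proposal is correct. Part (1) is essentially the paper's argument: identify $\grad(h_P|_{\rho_\alpha})$ with the vertex having normal cone $\rho_\alpha$ via Lemma \ref{lem:SuppFunc}, so the difference of gradients is the edge vector. For part (2) you take a genuinely different route. The paper picks $d-1$ independent generators $v_{j_1},\ldots,v_{j_{d-1}}$ of $\sigma$ together with one generator $v_{i_\alpha}\in\rho_\alpha\setminus\sigma$ on each side, uses the linear dependence (circuit) among these $d+1$ vectors, and pairs it with the support-function identities $h_{i_\alpha}=\langle v_{i_\alpha},p_\alpha\rangle$, $h_{j_\beta}=\langle v_{j_\beta},p_1\rangle=\langle v_{j_\beta},p_2\rangle$ to produce an explicit formula $\ell^\Delta_\sigma=\frac{1}{\lambda_1\langle v_{i_1},e_\sigma\rangle}\bigl(\lambda_1 h_{i_1}+\lambda_2 h_{i_2}+\sum_\beta\mu_\beta h_{j_\beta}\bigr)$, from which linearity is immediate. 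You instead prove that each vertex $w_i(h)$ is itself a linear function of $h$ by inverting a $d\times d$ subsystem $V_{J_i}x=h_{J_i}$ (legitimate, since $\cN(P(h))=\Delta$ is constant on $\wT(\Delta)$, so one index set $J_i$ works for all $h$), observe that the edge direction $e_\sigma=\aff(\sigma)^\perp$ is independent of $h$, and remove the absolute value by a constant-sign argument on the convex set $\wT(\Delta)$ — a point the paper leaves implicit in the sign of its constant denominator. The trade-off: the paper's circuit computation yields an explicit expression for $\ell^\Delta_\sigma$ in terms of support numbers, which is reused later (e.g.\ in the remark on the several formulas available for non-simplicial $\Delta$, and in tying facets of type cones to circuits), whereas your version is more coordinate-free and makes the sign bookkeeping and the extension to all of $\R^n$ completely explicit. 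Both arguments are sound.
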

\begin{proof}
Denote by $e_\sigma$ the unit vector orthogonal to $\aff(\sigma)$ and directed from $\rho_{i_2}$ into~$\rho_{i_1}$. Then we have
\begin{equation}
\label{eqn:EllSigma}
p_1 - p_2 = \ell^\Delta_\sigma e_\sigma,
\end{equation}
where $p_1$ and $p_2$ are vertices of~$P$ with normal cones $\rho_1$ and $\rho_2$, respectively. By Lemma \ref{lem:SuppFunc} we have
$$
h_P(v) = \langle v, p_\alpha \rangle \ \forall v \in \rho_\alpha, \ \alpha = 1,2
$$
and therefore $p_\alpha = \grad(h_P|_{\rho_\alpha})$. Substituting this into \eqref{eqn:EllSigma} proves the first part of the lemma.

For the second part, choose among the unit length generators of $\Delta^{(1)}$ linearly independent vectors
$$
v_{j_1}, \ldots, v_{j_{d-1}} \in \sigma
$$
Choose also $v_{i_1} \in \rho_1 \setminus \sigma$ and $v_{i_2} \in \rho_2 \setminus \sigma$.
Then the function $h_P|_{\rho_\alpha}$, $\alpha = 1,2$, is uniquely determined by its values $h_{j_1}, \ldots, h_{j_{d-1}}, h_{i_\alpha}$.
%It remains to show that the right hand side of \eqref{eqn:EllGrad} is a linear function of $h_{j_1}, \ldots, h_{j_{d-1}}, h_{i_\alpha}$, for $h \in T(\Delta)$.

As every set of $d+1$ vectors in $\R^d$ is linearly dependent, we have
\begin{equation}
\label{eqn:Circuit}
\lambda_1 v_{i_1} + \lambda_2 v_{i_1} + \mu_1 v_{j_1} + \cdots + \mu_{d-1} v_{j_{d-1}} = 0
\end{equation}
for some $\lambda_\alpha, \mu_\beta \in \R$. Besides, $\lambda_1, \lambda_2 \ne 0$. By definition of the support function we have
$$
h_{i_1} = \langle v_{i_1}, p_1 \rangle, \quad h_{j_\beta} = \langle v_{j_\beta}, p_1 \rangle = \langle v_{j_\beta}, p_2 \rangle, \quad h_{i_2} = \langle v_{i_2}, p_2 \rangle
$$
It follows that
\begin{multline*}
\lambda_1 h_{i_1} + \lambda_2 h_{i_1} + \mu_1 h_{j_1} + \cdots + \mu_{d-1} h_{j_{d-1}}\\
= \lambda_1 \langle v_{i_1}, p_1 \rangle + \lambda_2 \langle v_{i_2}, p_2 \rangle + \sum_{\beta=1}^{d-1} \mu_{j_\beta} \langle v_{i_2}, p_2 \rangle\\
= \lambda_1 \langle v_{i_1}, p_1 - p_2 \rangle
\end{multline*}

By substituting \eqref{eqn:EllSigma} we obtain
$$
\ell^\Delta_\sigma = \frac1{\lambda_1 \langle v_{i_1}, e_\sigma \rangle} (\lambda_1 h_{i_1} + \lambda_2 h_{i_1} + \mu_1 h_{j_1} + \cdots + \mu_{d-1} h_{j_{d-1}}),
$$
which shows that $\ell^\Delta_\sigma \colon \wT(\Delta) \to \R$ is a restriction of a linear function.
\end{proof}

\begin{rem}
The function $\ell^\Delta_\sigma$ can be computed as follows. Choose a flag of faces
$$
P \supset F_1 \supset \ldots \supset F_{d-1} = F_\sigma,
$$
where $\dim F_i = d-i$.
Project $0$ orthogonally to $\aff(F_1)$. Then the support numbers of $F_1$, with respect to the projection of $0$, can be written as linear functions of the support numbers of $P$ (the coefficients are trigonometric functions of the dihedral angles between $F_1$ and adjacent facets). Repeat this by expressing the support numbers of $F_2$ as linear functions of the support numbers of $F_1$, and so on. At the end we obtain the support numbers of $F_{d-1}$ (there are two of them, as $F_{d-1}$ is a segment) as linear functions of the support numbers of $P$. But the length of $F_{d-1}$ is just the sum of these two numbers.

For $d=3$, this computation is done in Appendix \ref{sec:App2}.
\end{rem}

Note that if $\Delta$ is not simplicial, then we might have several different choices for $v_{i_\alpha}$, $v_{j_\beta}$ in the proof of Lemma \ref{lem:EllLin}, and hence several formulas for $\ell^\Delta_\sigma$ in terms of $h_i$. This is due to the fact that, for a non-simplicial $\Delta$, the set $\wT(\Delta)$ does not span the space $\R^n$ of support vectors. Thus different linear functions on $\R^n$ have the same restrictions to $\span(\wT(\Delta))$.

\begin{lem}
\label{lem:AffT}
Let $\Delta$ be a complete pointed polytopal fan with $|\Delta^{(1)}| = n$.
\begin{enumerate}
\item
If $\Delta$ is simplicial, then we have
$$
\span(\wT(\Delta)) = \R^{\Delta^{(1)}},
$$
\item
If $\Delta$ is not simplicial, then
\begin{multline*}
\span(\wT(\Delta)) = \{h \in \R^{\Delta^{(1)}} \mid \ell^{\Delta'}_{\sigma'}(h) = 0 \text{ for all } \sigma' \in \Delta'^{(d-1)} \text{ such that }\\
\relint \sigma' \subset \relint \rho \text{ for some } \rho \in \Delta^{(d)}\},
\end{multline*}
where $\Delta'$ is any simplicial refinement of $\Delta$.
\end{enumerate}
\end{lem}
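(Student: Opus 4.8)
The plan is to establish (1) first and then bootstrap it to (2); in both parts the underlying point is that $\wT(\Delta)$ is a nonempty relatively open subset of the linear subspace appearing on the right-hand side, cut out there by finitely many strict inequalities, so that its span is the whole subspace.

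For (1), assume $\Delta$ is simplicial. For each wall $\sigma \in \Delta^{(d-1)}$, with adjacent full-dimensional cones $\rho_1, \rho_2 \in \Delta^{(d)}$, I would introduce the linear functional
$$
c_\sigma \colon \R^{\Delta^{(1)}} \to \R, \qquad c_\sigma(h) := \langle \grad(\widetilde{h_\Delta}|_{\rho_1}) - \grad(\widetilde{h_\Delta}|_{\rho_2}),\, e_\sigma \rangle,
$$
where $e_\sigma$ is the unit normal to $\span \sigma$ pointing into $\rho_1$. This is well defined because $\Delta$ is simplicial (so $\widetilde{h_\Delta}$ is defined for all $h$ and linear on each $\rho_\alpha$), it is linear in $h$ because $h \mapsto \widetilde{h_\Delta}$ is, and by Lemma \ref{lem:EllLin} it agrees with $\ell^\Delta_\sigma$ on $\wT(\Delta)$. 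The key claim is that $\wT(\Delta) = \{h \mid c_\sigma(h) > 0 \text{ for all } \sigma \in \Delta^{(d-1)}\}$: by Corollary \ref{cor:PolFanChar} one inclusion is just strict convexity of $\widetilde{h_\Delta}$ across each wall, and the other follows from the standard fact that a piecewise linear function on a complete fan which is convex across every codimension-one cone is globally convex — here it is moreover strictly convex across every wall, so its non-linearity locus is exactly $\Delta^{(d-1)}$. Since $\Delta$ is polytopal this set is nonempty, and being a finite intersection of open half-spaces it is open, hence spans $\R^{\Delta^{(1)}}$.

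For (2), let $\Delta$ be non-simplicial and, via Lemma \ref{lem:FanRefine}, fix a simplicial refinement $\Delta'$ with $\Delta'^{(1)} = \Delta^{(1)}$; then by (1) and Lemma \ref{lem:EllLin} every $\ell^{\Delta'}_{\sigma'}$, $\sigma' \in \Delta'^{(d-1)}$, is a genuine linear function on $\R^{\Delta^{(1)}}$. Writing $S$ for the set of $\sigma' \in \Delta'^{(d-1)}$ with $\relint \sigma' \subset \relint \rho$ for some $\rho \in \Delta^{(d)}$, let $U$ denote the right-hand side. The first and main step is to identify $U$ with $W := \{h \in \R^{\Delta^{(1)}} \mid \widetilde{h_\Delta} \text{ is defined}\}$, a linear subspace on which $h \mapsto \widetilde{h_\Delta}$ is linear: for any $h$, $\widetilde{h_\Delta}$ is defined iff $\widetilde{h_{\Delta'}}|_\rho$ is linear for every $\rho \in \Delta^{(d)}$ (on each simplicial cone of $\Delta'$ subdividing $\rho$, $\widetilde{h_{\Delta'}}$ is the linear extension of the $d$ values $h_i$ at the rays of that cone), iff the gradient of $\widetilde{h_{\Delta'}}$ has no jump across any wall of $\Delta'$ interior to some $\rho \in \Delta^{(d)}$, iff $\ell^{\Delta'}_{\sigma'}(h) = 0$ for all $\sigma' \in S$; so $U = W$. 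It then remains to show $\span \wT(\Delta) = W$. The inclusion $\span \wT(\Delta) \subseteq W$ is immediate, since $\widetilde{h_\Delta}$ is the support function of $P(h)$, hence defined, whenever $P(h)$ is a polytope with normal fan $\Delta$. For the reverse one repeats the argument of (1) inside $W$: for $h \in W$ the gradient jumps $c_\sigma(h)$ across the walls $\sigma \in \Delta^{(d-1)}$ are well defined and linear on $W$ (with $c_\sigma = \ell^\Delta_\sigma$ on $\wT(\Delta)$), and Corollary \ref{cor:PolFanChar} together with the local-to-global convexity principle gives $\wT(\Delta) = W \cap \bigcap_{\sigma \in \Delta^{(d-1)}} \{c_\sigma > 0\}$, a nonempty (since $\Delta$ is polytopal) relatively open subset of $W$, which therefore spans $W = U$.

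The routine parts are these half-space descriptions and the openness arguments; the step requiring real care is the identification $U = W$, i.e. checking that ``$\widetilde{h_\Delta}$ is defined'' corresponds to the vanishing of exactly those edge-length functions $\ell^{\Delta'}_{\sigma'}$ attached to walls of $\Delta'$ created \emph{inside} the full-dimensional cones of $\Delta$ — and not those created inside the walls of $\Delta$, for which $\ell^{\Delta'}_{\sigma'}$ restricts to a nonzero $\ell^\Delta_\sigma$ on $\wT(\Delta)$. Both parts also rely on the standard fact that local convexity across codimension-one cones implies global convexity for piecewise linear functions on a complete fan, and it is convenient that the refinement provided by Lemma \ref{lem:FanRefine} introduces no new rays, so that $\R^{\Delta'^{(1)}} = \R^{\Delta^{(1)}}$.
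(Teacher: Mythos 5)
Your proposal is correct and takes essentially the same route as the paper: the crux in both is the identification that the vanishing of the edge lengths $\ell^{\Delta'}_{\sigma'}$ attached to walls of $\Delta'$ interior to the maximal cones of $\Delta$ is exactly the condition that $\widetilde{h_\Delta}$ is defined (i.e.\ the $\Delta'$-extension is linear on each $\rho \in \Delta^{(d)}$), combined with the observation that $\wT(\Delta)$ is a nonempty relatively open subset of that subspace because strict convexity across the walls of $\Delta$ is an open condition. The only cosmetic difference is that you package the openness step through the inequality description $\wT(\Delta)=\{h \mid \widetilde{h_\Delta} \text{ defined},\ c_\sigma(h)>0\ \forall \sigma\in\Delta^{(d-1)}\}$ (essentially the paper's Lemma \ref{lem:TypeConeIneq}), whereas the paper perturbs a fixed $h\in\wT(\Delta)$ in directions $y$ lying in the common kernel of the $\ell^{\Delta'}_{\sigma'}$; the mathematical content is the same.
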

\begin{proof}
If $h \in \wT(\Delta)$ for a simplicial $\Delta$, then for every $y \in \R^n$ the piecewise linear extension of $h + ty$ with respect to $\Delta$ is convex and has $\Delta^{(d-1)}$ as its non-linearity locus, provided that $t$ is sufficiently small. This proves the first part of the lemma.

If $\Delta$ is not simplicial, then let us first show
\begin{equation}
\label{eqn:TKernel}
\wT(\Delta) \subset \{h \in \R^n \mid \ell^{\Delta'}_{\sigma'}(h) = 0\},
\end{equation}
with $\Delta'$ and $\sigma'$ as in the statement of the lemma. Let $h \in \wT(\Delta)$. Then the support function $h_P = \widetilde{h_\Delta}$ is linear on $\rho$, since $\rho$ is a normal cone of $P(h)$. On the other hand, let $\rho'_1, \rho'_2 \in \Delta'$ be the $d$-cones adjacent to $\sigma'$. As $\rho'_1, \rho'_2 \subset \rho$, we have $\grad(h_P|_{\rho'_1}) = \grad(h_P|_{\rho'_2})$. As $h \in \cl \wT(\Delta')$ by Lemma \ref{lem:FanRefine}, the value $\ell^{\Delta'}_{\sigma'}(h)$ for $h \in \wT(\Delta)$ is given by the same formula as for $h \in T(\Delta')$. Thus by the first part of Lemma \ref{lem:EllLin} we have $\ell^{\Delta'}_{\sigma'}(h) = 0$.

In the other direction, let $h \in \wT(\Delta)$ and $y \in \ker \ell^{\Delta'}_{\sigma'}$ for all $\sigma'$ as in the lemma. We claim that then $h + ty \in T(\Delta)$ for $t$ sufficiently small. This would mean that
$$
\{h \in \R^n \mid \ell^{\Delta'}_{\sigma'}(h) = 0\} \subset \span(T(\Delta)),
$$
and thus, together with \eqref{eqn:TKernel}, imply the second part of the lemma. So, take any $\rho \in \Delta^{(d)}$ and consider the piecewise linear extension of $y$ with respect to the subdivision of $\rho$ induced by $\Delta'$. This extension is in fact linear, since it is linear across all $(d-1)$-cones of $\Delta'$ whose relative interiors lie in $\rho$. This shows that the extension $\widetilde{y_\Delta}$ exists. For $t$ small, the piecewise linear extension of $h + ty$ with respect to $\Delta$ is convex and has the same non-linearity locus as $\widetilde{h_\Delta}$. Hence $h + ty \in \wT(\Delta)$, and we are done.
\end{proof}

\begin{rem}
For $d = 3$, the condition on $\sigma'$ in the second part of Lemma~\ref{lem:AffT} can be replaced by $\sigma' \notin \Delta$. This is not so for $d > 3$: if the cone $\sigma \in \Delta^{(d-1)}$ is not simplicial, then it gets subdivided into simplicial cones $\sigma'_i \in \Delta'^{(d-1)}$. We thus have $\sigma'_i \notin \Delta$. However, the corresponding edge lengths are not zero. In fact, the edges $F_{\sigma'_i}(h)$, which are different for $h \in \wT(\Delta')$, become one edge $F_\sigma(h)$ for $h \in \wT(\Delta)$. This means that for $h \in \wT(\Delta)$ we have
$$
\ell^{\Delta'}_{\sigma'_i}(h) = \ell^{\Delta'}_{\sigma'_j}(h)
$$
for any $\sigma'_i, \sigma'_j \subset \sigma$. Thus these linear equations hold also on $\span(\wT(\Delta))$. However, they don't enter the description of $\span(\wT(\Delta))$ given in the second part of Lemma \ref{lem:AffT}, as they follow from the other ones (this simply follows from the statement of the lemma).
\end{rem}

\begin{lem}
\label{lem:TypeConeIneq}
Let $\Delta$ be a complete pointed fan in $\R^d$.

If $\Delta$ is simplicial, then $\wT(\Delta)$ is the solution set of the following system of linear inequalities:
$$
\wT(\Delta) = \{h \in \R^n \mid \ell^\Delta_\sigma(h) > 0 \text{ for all } \sigma \in \Delta^{(d-1)}\}
$$

If $\Delta$ is not simplicial, then $\wT(\Delta)$ is the solution set of the following system of linear equations and inequalities:
\begin{multline*}
\wT(\Delta) = \{h \in \R^n \mid \ell^\Delta_\sigma(h) > 0 \text{ for all } \sigma \in \Delta^{(d-1)}, \text{ and}\\
\ell^{\Delta'}_{\sigma'}(h) = 0 \text{ for all } \sigma' \in \Delta'^{(d-1)} \text{ such that }\\
\relint \sigma' \subset \relint \rho \text{ for some } \rho \in \Delta^{(d)}\},
\end{multline*}
where $\Delta'$ is any simplicial fan that refines $\Delta$.

In particular, the closure $\cl T(\Delta)$ of the type cone is a convex polyhedral cone in $\R^n/\im V$.
\end{lem}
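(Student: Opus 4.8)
The plan is to reduce everything to Corollary~\ref{cor:PolFanChar}: a support vector $h$ lies in $\wT(\Delta)$ exactly when the piecewise linear extension $\widetilde{h_\Delta}$ is defined, convex, and has $\Delta^{(d-1)}$ as its non-linearity locus. The bridge between this characterization and the functions $\ell$ is the following refinement of Lemma~\ref{lem:EllLin}: for a \emph{simplicial} fan $\Delta$ and \emph{any} $h \in \R^n$, the extension $\widetilde{h_\Delta}$ is defined, and across a wall $\sigma \in \Delta^{(d-1)}$ with adjacent full-dimensional cones $\rho_1,\rho_2$ the two linear pieces satisfy $\grad(\widetilde{h_\Delta}|_{\rho_1}) - \grad(\widetilde{h_\Delta}|_{\rho_2}) = \ell^\Delta_\sigma(h)\, e_\sigma$, where $e_\sigma$ is the unit vector normal to $\aff(\sigma)$ pointing into $\rho_1$. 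Indeed, the pieces agree on $\aff(\sigma)$, so the gradient difference is a multiple of $e_\sigma$; this multiple is linear in $h$, and the explicit formula derived in the proof of Lemma~\ref{lem:EllLin}(2) (now with $p_\alpha$ replaced by $\grad(\widetilde{h_\Delta}|_{\rho_\alpha})$) shows it coincides with the linear extension of the edge-length functional $\ell^\Delta_\sigma$. Hence $\widetilde{h_\Delta}$ is convex and bends strictly across $\sigma$ iff $\ell^\Delta_\sigma(h) > 0$, and convex and affine across $\sigma$ iff $\ell^\Delta_\sigma(h) = 0$.

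\emph{Simplicial case.} Here $\widetilde{h_\Delta}$ is defined for every $h$. I would invoke the standard local criterion for convexity of a piecewise linear function that is linear on the maximal cones of a complete fan (the characterization of regular subdivisions by wall-crossing inequalities, cf.\ Remark~\ref{rem:RegSubdiv}): such a function is convex iff it is convex across every wall, and then its non-linearity locus is the union of the closed walls across which it bends strictly. Together with the bridge above, $\widetilde{h_\Delta}$ is convex with non-linearity locus exactly $\Delta^{(d-1)}$ if and only if $\ell^\Delta_\sigma(h) > 0$ for all $\sigma \in \Delta^{(d-1)}$, and Corollary~\ref{cor:PolFanChar} identifies this set with $\wT(\Delta)$. (If $\Delta$ is not polytopal the same computation shows the right-hand side is empty, so the equality persists.)

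\emph{Non-simplicial case.} Fix a simplicial refinement $\Delta'$ of $\Delta$ (Lemma~\ref{lem:FanRefine}). Every wall $\sigma' \in \Delta'^{(d-1)}$ has its relative interior in the relative interior of a unique cone of $\Delta$: either a top cone $\rho \in \Delta^{(d)}$, in which case I call $\sigma'$ \emph{interior}, or a wall $\sigma \in \Delta^{(d-1)}$, in which case $\aff(\sigma') = \aff(\sigma)$ and the $\Delta'$-top-cones adjacent to $\sigma'$ lie in the two $\Delta$-top-cones adjacent to $\sigma$. Applying the simplicial case to $\Delta'$: the equations $\ell^{\Delta'}_{\sigma'}(h) = 0$ over all interior $\sigma'$ hold precisely when $\widetilde{h_{\Delta'}}$ is affine on each $\rho \in \Delta^{(d)}$, i.e.\ precisely when $\widetilde{h_\Delta}$ is defined (and then $\widetilde{h_\Delta} = \widetilde{h_{\Delta'}}$); by Lemma~\ref{lem:AffT}(2) these are exactly the equations cutting out $\span(\wT(\Delta))$. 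When they hold, refining inside the two top cones adjacent to $\sigma$ does not change the gradients of $\widetilde{h_{\Delta'}}$ there, so for a non-interior $\sigma' \subset \sigma$ one has $\ell^{\Delta'}_{\sigma'}(h) = \ell^\Delta_\sigma(h)$ (the content of the remark following Lemma~\ref{lem:AffT}). Thus, under these equations, asking $\ell^\Delta_\sigma(h) > 0$ for all $\sigma \in \Delta^{(d-1)}$ is the same as asking $\widetilde{h_{\Delta'}}$ to be convex across every $\Delta'$-wall and to bend strictly exactly across the non-interior ones; since the non-interior walls of $\Delta'$ cover $\Delta^{(d-1)}$, the simplicial case for $\Delta'$ shows this is the same as $\widetilde{h_\Delta} = \widetilde{h_{\Delta'}}$ being convex with non-linearity locus $\Delta^{(d-1)}$, i.e.\ $h \in \wT(\Delta)$ by Corollary~\ref{cor:PolFanChar}. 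The reverse inclusion is immediate: for $h \in \wT(\Delta)$ the numbers $\ell^\Delta_\sigma(h)$ are genuine edge lengths, hence positive, and $h \in \span(\wT(\Delta))$. Finally, the functionals $\ell^\Delta_\sigma$ and $\ell^{\Delta'}_{\sigma'}$ are invariant under $h \mapsto h + Vt$, since translating a polyhedron (recall \eqref{eqn:PTransl}) changes neither edge lengths nor gradient differences; so they descend to linear functionals on $\R^n/\im V$, and $T(\Delta) = \pi(\wT(\Delta))$ is the solution set there of finitely many linear equations and strict linear inequalities. Being non-empty, its closure is obtained by relaxing the strict inequalities to non-strict ones, hence is a convex polyhedral cone.

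\emph{Main difficulty.} The two delicate points are: first, verifying that the linear extension of the edge-length functional $\ell^\Delta_\sigma$ really computes the gradient jump $\grad(\widetilde{h_\Delta}|_{\rho_1}) - \grad(\widetilde{h_\Delta}|_{\rho_2})$ for \emph{every} $h$, not only for support vectors of honest polytopes --- this requires rereading the computation in the proof of Lemma~\ref{lem:EllLin}(2) with $p_\alpha$ reinterpreted as $\grad(\widetilde{h_\Delta}|_{\rho_\alpha})$; and second, in the non-simplicial case, the combinatorial bookkeeping matching the walls of $\Delta'$ to those of $\Delta$ and certifying that the corresponding edge lengths agree. The local-to-global convexity criterion for fan-linear piecewise linear functions is standard and only needs to be cited.
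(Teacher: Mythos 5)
Your proof is correct and follows essentially the same route as the paper's: the inequalities $\ell^\Delta_\sigma(h)>0$ express strict convexity of $\widetilde{h_\Delta}$ across the walls, the equations $\ell^{\Delta'}_{\sigma'}(h)=0$ express the existence of the extension $\widetilde{h_\Delta}$ (exactly as in the proof of Lemma~\ref{lem:AffT}), and Corollary~\ref{cor:PolFanChar} turns this into membership in $\wT(\Delta)$. Your write-up merely makes explicit the gradient-jump interpretation of $\ell^\Delta_\sigma$ for arbitrary $h$, the wall bookkeeping between $\Delta'$ and $\Delta$, and the final descent to $\R^n/\im V$, all of which the paper leaves implicit.
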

Note that the function $\ell^\Delta_\sigma(h)$ is well-defined only on $\span(\wT(\Delta))$. Due to Lemma \ref{lem:AffT}, the above description of $\wT(\Delta)$ in the non-simplicial case is unambiguous.
\begin{proof}
First, let $\Delta$ be simplicial. Then the inequality $\ell_\sigma^\Delta(h) > 0$ just means that the piecewise linear function $\widetilde{h_\Delta}$ is strictly convex across the $(d-1)$-cone $\sigma$. It follows that these inequalities are necessary and sufficient for $h$ to belong to $\wT(\Delta)$.

Second, let $\Delta$ be non-simplicial. Then, similarly to the proof of Lemma \ref{lem:AffT}, the equations $\ell^{\Delta'}_{\sigma'}(h) = 0$ ensure that the linear extension $\widetilde{h_\Delta}$ exists. And, as in the simplicial case, $\ell^\Delta_\sigma(h) > 0$ is equivalent to the strict convexity of this function across $\sigma$.
The lemma is proved.
\end{proof}

\subsection{Secondary fans}
Everywhere in this section $V = (v_1, \ldots, v_n)$ is a linearly spanning vector configuration in $\R^d$. As usual, $V$ will also stand for the $n \times d$ matrix with rows $v_i$.

\subsubsection{The compatibility and the irredundancy domains}
As in Section \ref{sec:SuppNumb}, for an $h \in \R^n$ denote by $P(h)$ the solution set of the system $Vx \le h$. It can happen that the set $P(h)$ is empty; and even when it is non-empty, it can happen that some of the inequalities $\langle v_i, x \rangle \le h_i$ can be removed without changing $P(h)$. This leads us to consider the following two subsets of $\R^n$.
$$
\wco(V) := \{h \in \R^n \mid Vx \le h \text{ is compatible}\},
$$
that is all those $h$ for which $P(h) \ne \emptyset$.
$$
\wir(V) := \{h \in \R^n \mid Vx \le h \text{ is compatible and irredundant}\},
$$
where \emph{irredundant} means that removing any inequality from the system makes the solution set bigger.

As in the case with a type cone and a lifted type cone, equation \eqref{eqn:PTransl} implies that both $\wco(V)$ and $\wir(V)$ are invariant under translation by $Vt$ for any $t \in \R^d$. Therefore it suffices to study their quotients under the map
\begin{equation}
\label{eqn:Pi}
\pi \colon \R^n \to \R^n/\im V
\end{equation}

\begin{dfn}
\label{dfn:CoIr}
Let $V \in \R^{n \times d}$ be of rank $d$. The set
$$
\co(V) := \wco(V)/\im V
$$
is called the \emph{compatibility domain}, and the set
$$
\ir(V) := \wir(V)/\im V
$$
the \emph{irredundancy domain} for $V$, respectively. Further, we denote by $\clir(V)$ the closure of $\ir(V)$.
\end{dfn}

For every polytopal fan $\Delta$ with $\Delta^{(1)} = V$ we have $T(\Delta) \subset \ir(V)$, and we will later see that type cones decompose the interior of $\ir(V)$. Moreover, this can be extended to a subdivision of $\co(V)$, the so called \emph{secondary fan}.

Most of the material presented here can be found in \cite{McM73, McM79, BFS90, BGS93, LRS10}. The notation $\ir$ was introduced in \cite{McM73}, meaning ``\emph{i}nner \emph{r}egion'', but in some later works of the same author the terminology was changed to the ``\emph{ir}redundancy domain''.

\subsubsection{Gale duality, linear dependencies and evaluations}
For the moment, we don't require all $v_i$ to have norm $1$ (in fact, we even allow $v_i = 0$ or $v_i = v_j$ for $i \ne j$).

% Let $V = (v_1, \ldots, v_n)$ be a linearly spanning vector configuration in $\R^d$. The matrix $V$ with rows $v_i$ determines an injective linear homomorphism $\R^d \to \R^n$ that is extended by the map \eqref{eqn:Pi} to a short exact sequence. Choosing a basis in $\R^n/\im V$  yields a linearly spanning vector configuration in $\R^{n-d}$.
\begin{dfn}
\label{dfn:Gale}
Choose a linear isomorphism $\R^n/\im V \cong \R^{n-d}$ and denote by $\bar V^\top$ the matrix representation of the projection \eqref{eqn:Pi}:
$$
\R^d \stackrel{V}{\longrightarrow} \R^n \stackrel{\bar V^\top}{\longrightarrow} \R^{n-d}
$$
Then the \emph{Gale transform} or \emph{Gale diagram} of $(v_1, \ldots, v_n)$ is the collection of $n$ vectors $(\bar v_1, \ldots, \bar v_n)$ that form the rows of the $n \times (n-d)$-matrix $\bar V$.
\end{dfn}

\begin{lem}
The vector configuration $(\bar v_1, \ldots, \bar v_n)$ is well-defined up to a linear transformation of $\R^{n-d}$. Besides, if $(\bar v_1, \ldots, \bar v_n)$ is a Gale diagram for $(v_1, \ldots, v_n)$, then also $(v_1, \ldots, v_n)$ is a Gale diagram of $(\bar v_1, \ldots, \bar v_n)$.
\end{lem}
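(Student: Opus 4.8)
The plan is to prove both assertions at once by exhibiting $(v_1,\dots,v_n)$ explicitly as a Gale diagram of $(\bar v_1,\dots,\bar v_n)$; the "well-defined up to linear transformation" part then comes for free from unwinding the construction. Recall the defining exact sequence
$$
\R^d \stackrel{V}{\longrightarrow} \R^n \stackrel{\bar V^\top}{\longrightarrow} \R^{n-d},
$$
in which $V$ has rank $d$ and $\bar V^\top$ is a choice of matrix representing the quotient projection $\pi\colon\R^n\to\R^n/\im V\cong\R^{n-d}$. The first point to nail down is that $\bar V^\top$ has rank $n-d$ and that $\ker(\bar V^\top)=\im V$, i.e. the displayed sequence is exact in the middle: this is immediate, since $\bar V^\top$ is by construction the composition of the surjection $\pi$ with a linear isomorphism, so its kernel is exactly $\im V$ and its image is all of $\R^{n-d}$. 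Equivalently, in matrix terms: $V$ is an $n\times d$ matrix of rank $d$, $\bar V$ is an $n\times(n-d)$ matrix of rank $n-d$, and $\bar V^\top V = 0$ (the columns of $V$ lie in $\ker\bar V^\top$) together with the rank count forces $\im V = \ker\bar V^\top$.

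First I would prove the ambiguity statement. Any two choices of the isomorphism $\R^n/\im V\cong\R^{n-d}$ differ by an element $A\in\GL(n-d)$, so the two resulting matrices $\bar V^\top$ and $\bar V'^\top$ satisfy $\bar V'^\top = A\,\bar V^\top$, hence $\bar V' = \bar V\,A^\top$; passing to rows, $\bar v_i' = A\,\bar v_i$ for all $i$. Thus the Gale diagram is well-defined up to a common linear automorphism of $\R^{n-d}$, as claimed.

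For the duality statement, the key observation is the symmetry of the situation. Having chosen $\bar V$ as above, I claim that $(v_1,\dots,v_n)$ — i.e. the rows of $V$, viewed again as an $n\times d$ matrix — is a Gale diagram of $(\bar v_1,\dots,\bar v_n)$. To see this, apply Definition~\ref{dfn:Gale} with the configuration $\bar V\in\R^{n\times(n-d)}$ in place of $V$: I must produce a matrix representing the quotient map $\R^n\to\R^n/\im\bar V\cong\R^{d}$. But $\im\bar V = \ker(V^\top)$, which follows from $\bar V^\top V=0$ and the rank count exactly as above (now reading the relation as "the columns of $\bar V$ lie in $\ker V^\top$", with $\dim\ker V^\top = n-d = \rank\bar V$). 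Therefore $V^\top\colon\R^n\to\R^d$ is surjective with kernel $\im\bar V$, so it induces an isomorphism $\R^n/\im\bar V\xrightarrow{\ \sim\ }\R^d$, and with respect to this isomorphism the matrix of the projection $\R^n\to\R^n/\im\bar V$ is precisely $V^\top$ — i.e. $(V^\top)^\top=V$ has rows $v_i$, which is by definition the Gale transform of $\bar V$. Combined with the ambiguity statement, this holds for every valid choice of $\bar V$.

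The only genuinely delicate point is the bookkeeping around exactness: one must be careful to distinguish $\R^n$ from its dual and to check that the single relation $\bar V^\top V = 0$ plus the two rank equalities $\rank V = d$, $\rank\bar V = n-d$ really do pin down $\im V = \ker\bar V^\top$ and $\im\bar V = \ker V^\top$ (rather than mere inclusions). This is a standard linear-algebra fact — an $n\times d$ block and an $n\times(n-d)$ block of complementary ranks whose product vanishes are each other's full orthogonal complements in $\R^n$ — and once it is recorded, everything else is formal. I expect this rank/kernel lemma to be the main (and essentially only) obstacle, and it is a minor one.
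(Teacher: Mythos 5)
Your proposal is correct and follows essentially the same route as the paper: the ambiguity statement comes from the freedom in choosing the isomorphism $\R^n/\im V \cong \R^{n-d}$ (your $A \in \GL(n-d)$ bookkeeping), and involutivity comes from transposing the exact sequence, i.e.\ from $\bar V^\top V = 0$ together with the complementary rank count, which shows $V^\top$ represents the projection $\R^n \to \R^n/\im \bar V \cong \R^d$. The rank/kernel lemma you flag as the "delicate point" is exactly the exactness-in-the-middle fact the paper implicitly invokes, so there is no gap.
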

Indeed, the uniqueness up to a linear transformation follows from the freedom in the choice of the isomorphism $\R^n/\im V \cong \R^{n-d}$. The involutivity of the Gale transform follows by transposing the short exact sequence in Definition \ref{dfn:Gale}. It allows also to call the vector configuration $(\bar v_i)$ \emph{Gale dual} to $(v_i)$.

\begin{rem}
If $V$ contains the vectors of the standard basis of $\R^d$, then its Gale dual is very easy to compute:
$$
V =
\begin{pmatrix}
E_d\\
A
\end{pmatrix}
\Rightarrow
\bar V =
\begin{pmatrix}
-A^\top\\
E_{n-d}
\end{pmatrix},
$$
where $E_k$ denotes the $k \times k$-unit matrix.
\end{rem}

As an immediate consequence of Definition \ref{dfn:Gale} we have
\begin{equation}
\label{eqn:ValDep}
\ker V^\top = \im \bar V
\end{equation}
The elements of $\ker V^\top$ are the \emph{linear dependencies} between the vectors $(v_1, \ldots, v_n)$:
$$
\lambda \in \ker V^\top \Leftrightarrow \sum_{i=1}^n \lambda_i v_i = 0,
$$
while the elements of $\im \bar V$ are the \emph{evaluations of linear functionals} on $(\bar v_1, \ldots, \bar v_n)$:
$$
\lambda \in \im \bar V \Leftrightarrow \lambda_i = \langle \mu, \bar v_i \rangle \text{ for some }\mu \in \R^{n-d}
$$
Therefore \eqref{eqn:ValDep} can be phrased as ``dependencies of $V$ equal evaluations on~$\bar V$''. Of course, the same holds with $V$ and $\bar V$ exchanged.

% \begin{equation}
% \label{eqn:GaleDual}
% V^\top \bar V = 0, \quad \rank V + \rank \bar V = n
% \end{equation}

In the next lemma, the rank of a vector configuration means the dimension of its linear span, so that a rank $d$ configuration is the same as a linearly spanning configuration. We also use the notation $[n] := \{1, \ldots, n\}$, and for any subset $I \subset [n]$ we denote by $V_I$ the vector configuration $(v_i \mid i \in I)$.

\begin{lem}
\label{lem:GaleProp}
Let $V = (v_1, \ldots, v_n)$ be a rank $d$ configuration of $n$ vectors in $\R^d$, and let $\bar V$ be its Gale dual.
\begin{enumerate}
\item
The vector configuration $V$ is positively spanning if and only if there is $\mu \in \R^{n-d}$ such that $\langle \bar \mu, v_i \rangle > 0$ for all $i \in [n]$.
\item
Subconfiguration $V_I$ is linearly independent if and only if the subconfiguration $\bar V_{[n] \setminus I}$ has full rank. In particular, $V_I$ is a basis of $\R^d$ if and only if $\bar V_{[n] \setminus I}$ is a basis of $\R^{n-d}$.
\item
Configuration $\bar V$ contains a zero vector $\bar v_i = 0$ if and only if the subconfiguration $V \setminus \{v_i\}$ has rank $d-1$. In particular, if $V$ is positively spanning, then $\bar v_i \ne 0$ for all $i$.
\item
Configuration $\bar V$ contains two collinear vectors $\bar v_i = c \bar v_j$ (one or both of which may be zero) if and only if the subconfiguration $V \setminus \{v_i, v_j\}$ has rank at most $d-1$.
\end{enumerate}
\end{lem}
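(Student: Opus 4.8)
The plan is to deduce all four statements from the single duality principle \eqref{eqn:ValDep}: the linear dependencies $\lambda$ of $V$ are exactly the evaluation vectors $\lambda_i = \langle \mu, \bar v_i\rangle$, $\mu \in \R^{n-d}$, of the Gale dual, together with the symmetric statement coming from the involutivity of the Gale transform (so that $V$ is itself a Gale dual of $\bar V$). I would also record up front one auxiliary fact that makes the degenerate cases painless: since $\bar V^\top$ represents the surjection \eqref{eqn:Pi}, the matrix $\bar V$ has rank $n-d$; equivalently the rows $\bar v_1,\dots,\bar v_n$ span $\R^{n-d}$, so no nonzero $\mu$ is orthogonal to all of them.

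For part (1) I would use the standard characterisation that a linearly spanning $V$ is positively spanning if and only if $\ker V^\top$ contains a strictly positive vector: a positive dependency $\sum_i \lambda_i v_i = 0$ with $\lambda_i > 0$ lets one add a large multiple of $\lambda$ to the coefficients of any representation and make them nonnegative, while conversely, if $\pos V = \R^d$, writing $-\sum_i v_i = \sum_i \mu_i v_i$ with $\mu_i \ge 0$ gives the positive dependency $\sum_i (1+\mu_i) v_i = 0$. By \eqref{eqn:ValDep} the elements of $\ker V^\top$ are precisely the evaluation vectors, so a strictly positive one exists exactly when some $\mu$ satisfies $\langle \mu, \bar v_i\rangle > 0$ for all $i$. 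For part (2), the key translation is that $V_I$ is linearly dependent iff there is a nonzero $\lambda \in \ker V^\top$ with support contained in $I$, which by \eqref{eqn:ValDep} means there is $\mu \ne 0$ with $\langle \mu, \bar v_i\rangle = 0$ for all $i \notin I$ --- the nonvanishing of the resulting $\lambda$ being automatic by the auxiliary fact, since such a $\mu$ cannot be orthogonal to every $\bar v_k$ and is orthogonal to those indexed outside $I$. Thus $V_I$ is linearly dependent iff $\bar V_{[n]\setminus I}$ fails to span $\R^{n-d}$, i.e. is not of full rank; the contrapositive is the first assertion of (2), and the basis statement follows by counting, since $|I| = d$ iff $|[n]\setminus I| = n-d$.

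Parts (3) and (4) I would obtain by applying part (2) to the Gale-dual pair $(\bar V, V)$. For (3), with $I = \{i\}$: the singleton $\{\bar v_i\}$ is linearly independent, i.e. $\bar v_i \ne 0$, iff $V_{[n]\setminus\{i\}}$ has full rank $d$; since $V$ has rank $d$ the subconfiguration $V \setminus \{v_i\}$ has rank $d$ or $d-1$, so $\bar v_i = 0$ iff $\rank(V \setminus \{v_i\}) = d-1$, and the ``in particular'' clause is immediate from part (1) since $\langle \mu, \bar v_i\rangle > 0$ forces $\bar v_i \ne 0$. For (4), with $I = \{i,j\}$: the pair $\{\bar v_i, \bar v_j\}$ is linearly independent iff $V_{[n]\setminus\{i,j\}}$ has full rank $d$, and negating this gives that $\bar v_i, \bar v_j$ are collinear (with the convention that this includes the cases where one or both vanish) iff $\rank(V \setminus \{v_i, v_j\}) \le d-1$. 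There is no serious obstacle here; the only point demanding care is the bookkeeping in part (2) --- matching ``$\lambda \in \ker V^\top$ nonzero and supported on $I$'' with ``$\bar V_{[n]\setminus I}$ spans $\R^{n-d}$'' --- which is exactly what the rank-$(n-d)$ property of $\bar V$ supplies, and it is also what lets the degenerate possibilities allowed in (3) and (4) (zero vectors, repeated vectors) be absorbed by phrasing everything in terms of ranks of subconfigurations rather than explicit proportionality constants.
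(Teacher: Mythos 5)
Your proof is correct and follows essentially the same route as the paper: everything is reduced to the dependence--evaluation duality \eqref{eqn:ValDep}, part (1) via the positive-dependence characterization, part (2) via support-restricted dependencies (with your explicit use of $\rank \bar V = n-d$ making the nonvanishing of $\lambda$ transparent), and parts (3)--(4) by applying part (2) to the dual pair. The only minor deviation is that you deduce the ``in particular'' of (3) from part (1) rather than from the paper's observation that $\rank(V\setminus\{v_i\})=d-1$ forces $-v_i\notin\pos(V)$, which is an equally valid shortcut.
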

\begin{proof}
A configuration $V$ is positively spanning if and only if it has rank $d$ and is positively dependent:
\begin{equation}
\label{eqn:PosDep}
\rank V = d, \quad \sum_{i=1}^n \lambda_i v_i = 0 \text{ so that } \lambda_i > 0 \text{ for all }i
\end{equation}
Indeed, if \eqref{eqn:PosDep} holds, then every linear combination of $v_i$ can be made positive by adding a positive multiple of $\sum_i \lambda_i v_i$, so that $\rank V = d$ implies that $V$ is positively spanning. For the inverse implication, express $-v_1$ as a positive linear combination of $v_i$. On the other hand, by the dependence-evaluation duality \eqref{eqn:ValDep} a positive dependence of $V$ corresponds to a linear functional positively evaluating on $\bar V$. This proves the first part of the lemma.

The subconfiguration $V_I$ is linearly dependent if and only if there is $\lambda \in \R^n$, $\lambda \ne 0$ such that
$$
\sum_{i=1}^n \lambda_i v_i = 0 \text{ and } \lambda_i = 0 \text{ for } i \notin I
$$
By the dependence-evaluation duality this is equivalent to the existence of a non-zero linear functional on $\R^{n-d}$ that vanishes on all $\bar v_i$ for $i \ne I$, which is equivalent to $\rank \bar V_{[n]\setminus I} < n-d$. This proves the second part of the lemma.

The third part follows from the second: $\bar v_i = 0$ is equivalent to the set $\{\bar v_i\}$ being linearly dependent, and if $\rank V = d$, then $\rank (V \setminus \{v_i\}) \ge d-1$. Also, if $\rank (V \setminus \{v_i\}) = d-1$, then $-v_i \notin \pos(V)$.

The fourth part is a direct consequence of the second.
\end{proof}

Let $V \in \R^{n \times d}$ be a vector configuration such that its Gale diagram $(\bar v_1, \ldots, \bar v_n)$ contains no zero vectors (a necessary and sufficient condition for this is given in part 3 of Lemma~\ref{lem:GaleProp}). The \emph{affine Gale diagram} is constructed by choosing an affine hyperplane $A \subset \R^{n-d}$ non-parallel to each of $\bar v_i$ and scaling each $\bar v_i$ so that
$$
\bar p_i = \alpha_i v_i \in A
$$
Besides, a point $\bar p_i$ is colored black if $\alpha_i > 0$, and white if $\alpha_i < 0$.

An affine Gale diagram determines the vector configuration $V$ up to independent positive scalings of $v_i$. Indeed, the point $\bar p_i$ together with its color determines $\bar v_i$ up to a positive scaling; and positive scaling $\bar v_i \mapsto \beta_i \bar v_i$ of $\bar V$ corresponds to positive scaling $v_i \mapsto \beta_i^{-1} v_i$ of $V$.

\begin{rem}
\label{rem:AffGale}
By part 1 of Lemma \ref{lem:GaleProp}, a positively spanning vector configuration has an affine Gale diagram consisting of black points only.

By the dependencies-evaluations duality \eqref{eqn:ValDep}, if $\sum_{i=1}^n v_i = 0$, then the vectors $\bar v_i$ lie already in an affine hyperplane.
\end{rem}

\begin{exl}
Let $V = (e_1, e_2, e_3, -e_1, -e_2, -e_3)$ be a configuration of six vectors in $\R^3$. Its Gale dual is $\bar V = (e_1, e_2, e_3, e_1, e_2, e_3)$. The corresponding affine Gale diagram is shown on Figure \ref{fig:GaleExl}.

\begin{figure}[ht]
\centering
\begin{picture}(0,0)%
\includegraphics{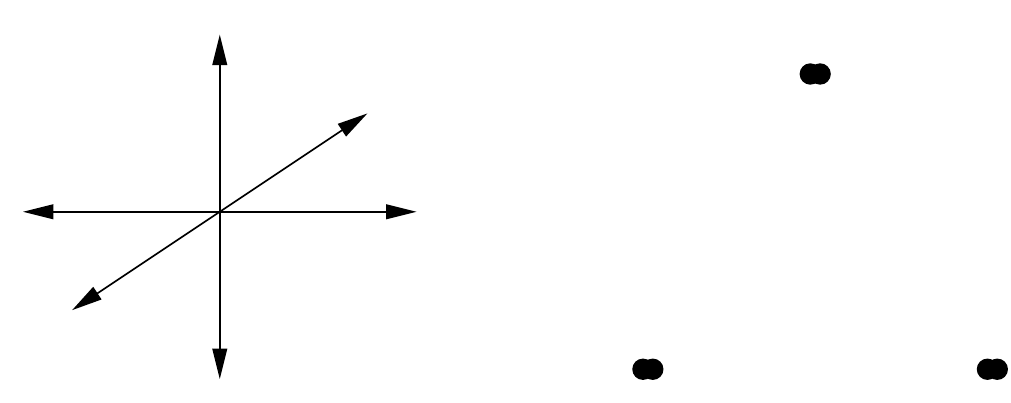}%
\end{picture}%
\setlength{\unitlength}{4144sp}%
\begingroup\makeatletter\ifx\SetFigFont\undefined%
\gdef\SetFigFont#1#2#3#4#5{%
  \reset@font\fontsize{#1}{#2pt}%
  \fontfamily{#3}\fontseries{#4}\fontshape{#5}%
  \selectfont}%
\fi\endgroup%
\begin{picture}(4613,1831)(-104,-935)
\put(1756, 29){\makebox(0,0)[lb]{\smash{{\SetFigFont{10}{12.0}{\rmdefault}{\mddefault}{\updefault}{\color[rgb]{0,0,0}$v_1$}%
}}}}
\put(946,749){\makebox(0,0)[lb]{\smash{{\SetFigFont{10}{12.0}{\rmdefault}{\mddefault}{\updefault}{\color[rgb]{0,0,0}$v_2$}%
}}}}
\put(181,-646){\makebox(0,0)[lb]{\smash{{\SetFigFont{10}{12.0}{\rmdefault}{\mddefault}{\updefault}{\color[rgb]{0,0,0}$v_3$}%
}}}}
\put(-89, 29){\makebox(0,0)[lb]{\smash{{\SetFigFont{10}{12.0}{\rmdefault}{\mddefault}{\updefault}{\color[rgb]{0,0,0}$v_4$}%
}}}}
\put(946,-871){\makebox(0,0)[lb]{\smash{{\SetFigFont{10}{12.0}{\rmdefault}{\mddefault}{\updefault}{\color[rgb]{0,0,0}$v_5$}%
}}}}
\put(1441,434){\makebox(0,0)[lb]{\smash{{\SetFigFont{10}{12.0}{\rmdefault}{\mddefault}{\updefault}{\color[rgb]{0,0,0}$v_6$}%
}}}}
\put(4186,-646){\makebox(0,0)[lb]{\smash{{\SetFigFont{10}{12.0}{\rmdefault}{\mddefault}{\updefault}{\color[rgb]{0,0,0}$\bar v_1 = \bar v_4$}%
}}}}
\put(3376,659){\makebox(0,0)[lb]{\smash{{\SetFigFont{10}{12.0}{\rmdefault}{\mddefault}{\updefault}{\color[rgb]{0,0,0}$\bar v_2 = \bar v_5$}%
}}}}
\put(2611,-646){\makebox(0,0)[lb]{\smash{{\SetFigFont{10}{12.0}{\rmdefault}{\mddefault}{\updefault}{\color[rgb]{0,0,0}$\bar v_3 = \bar v_6$}%
}}}}
\end{picture}%
\caption{A vector configuration and its affine Gale diagram.}
\label{fig:GaleExl}
\end{figure}
\end{exl}

For more details on Gale duality, see \cite{Ewald96,LRS10,Zie95}.

\subsubsection{Positive circuits and $\co(V)$}
\label{sec:PosCirc}
\begin{dfn}
A subset $C \subset [n]$ is called a \emph{circuit} of a vector configuration $V = (v_1, \ldots, v_n)$, if $V_C = (v_i \mid i \in C)$ is an inclusion-minimal linearly dependent subset of $V$.
\end{dfn}

An inclusion-minimal linearly dependent set is one that is linearly dependent, while each of its proper subsets is not.

The minimality condition implies that a linear dependence $\lambda^C \subset \R^C$ between vectors of a circuit $C$ is unique up to a scaling, and that all coefficients $\lambda^C_i$ are nonzero. In particular, it makes sense to speak about the signs of the coefficients (up to a simultaneous change of all signs).

A circuit $C$ is called \emph{positive}, if all coefficients of the corresponding linear dependence can be chosen positive:
$$
\sum_{i \in C} \lambda^C_i v_i = 0, \quad \text{where }\lambda^C_i > 0 \ \forall i \in C
$$
By the dependence-evaluation duality \eqref{eqn:ValDep}, to a positive circuit $C$ there corresponds a linear functional $\mu^C \in \R^{n-d}$ such that
$$
\langle \bar \mu^C, \bar v_i \rangle
\begin{cases}
> 0, &\text{if } i \in C,\\
= 0, &\text{if } i \notin C
\end{cases}
$$
Namely, we have $\lambda^C = \bar V \mu^C$.
Since $\bar V$ has full rank, $\mu^C$ is well-defined up to a scaling.
Besides, the minimality of $C$ implies that $\bar V_{[n] \setminus C}$ is a \emph{maximal non-spanning subconfiguration} of $\bar V$. Such subconfigurations are called \emph{cocircuits}. We thus have
$$
C \text{ is a circuit for } V \Leftrightarrow [n] \setminus C \text{ is a cocircuit for }\bar V
$$
(See also the second part of Lemma \ref{lem:GaleProp}.)

\begin{thm}
\label{thm:CoV}
Let $(v_1, \ldots, v_n)$ be a linearly spanning vector configuration in $\R^d$. Then the compatibility domain $\co(V)$ is a convex polyhedral cone in $\R^{n-d}$. The compatibility domain and its lift $\wco(V)$ to $\R^n$ can be described in the following ways.
\begin{subequations}
\begin{equation}
\label{eqn:CoV1}
\wco(V) = \R_+^n + \im V, \quad \text{where } \R_+^n = \{(h_1, \ldots, h_n) \mid h_i \ge 0 \ \forall i\}
\end{equation}
\begin{equation}
\label{eqn:CoV2}
\co(V) = \pos\{\bar v_1, \ldots, \bar v_n\}, \quad \text{where } \bar V \text{ is the Gale dual of }V
\end{equation}
\begin{equation}
\label{eqn:CoV3}
\wco(V) = \left\{h \in \R^n \mid \langle \lambda^C, h \rangle \ge 0 \text{ for all positive circuits }C\right\}
\end{equation}
\begin{equation}
\label{eqn:CoV4}
\co(V) = \left\{y \in \R^{n-d} \mid \langle \mu^C, y \rangle \ge 0 \text{ for all positive circuits }C \right\}
\end{equation}
\end{subequations}
Here $\lambda^C \in \R^C_+$ are the coefficients of the linear dependence, and $\mu^C \in \R^{n-d}$ is the linear functional associated with the circuit $C$.

Moreover, $V$ is positively spanning if and only if $\co(V)$ is pointed.
\end{thm}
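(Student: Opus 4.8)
The plan is to establish the four descriptions in the chain \eqref{eqn:CoV1} $\Rightarrow$ \eqref{eqn:CoV2} $\Rightarrow$ \eqref{eqn:CoV4} $\Rightarrow$ \eqref{eqn:CoV3}, and to read off polyhedrality and pointedness along the way. Identity \eqref{eqn:CoV1} is the definition unwound: if $h = r + Vt$ with $r \in \R_+^n$ then $x = t$ solves $Vx \le h$, and conversely any solution $x$ of $Vx \le h$ gives $h = (h - Vx) + Vx$ with $h - Vx \in \R_+^n$. Now apply the canonical projection $\pi \colon \R^n \to \R^n/\im V$, which in the coordinates of Definition \ref{dfn:Gale} is the surjection $\bar V^\top$ with kernel $\im V$ and with $\bar V^\top e_i = \bar v_i$. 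It kills the $\im V$ summand in \eqref{eqn:CoV1} and sends $\R_+^n$ onto $\pos\{\bar v_1, \ldots, \bar v_n\}$; this is \eqref{eqn:CoV2}. In particular $\co(V)$ is a finitely generated, hence polyhedral, cone, which is the first assertion of the theorem, and $\wco(V) = \pi^{-1}(\co(V))$.

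For \eqref{eqn:CoV4} I would invoke Minkowski--Weyl duality: the polyhedral cone $\pos\{\bar v_1, \ldots, \bar v_n\}$ is the intersection of the finitely many closed half-spaces $\{y \mid \langle \mu, y\rangle \ge 0\}$ taken over its facet normals $\mu$. The task is to match these facet normals with the functionals $\mu^C$ of positive circuits. Given a facet with inner normal $\mu$, set $C := \{i \mid \langle\mu, \bar v_i\rangle > 0\}$; then $[n] \setminus C = \{i \mid \bar v_i \in \mu^\perp\}$ spans the hyperplane $\mu^\perp$, hence is a maximal non-spanning subconfiguration of $\bar V$, i.e.\ a cocircuit, whence $C$ is a circuit of $V$ by the dependence--evaluation duality \eqref{eqn:ValDep} (cf.\ Lemma \ref{lem:GaleProp}), and it is positive because the dependence $\lambda^C = \bar V\mu$ has $\lambda^C_i = \langle\mu, \bar v_i\rangle > 0$ on $C$. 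Conversely, for a positive circuit $C$ the functional $\mu^C$ satisfies $\langle\mu^C, \bar v_i\rangle > 0$ for $i \in C$ and $= 0$ otherwise, so $\{y \mid \langle\mu^C, y\rangle \ge 0\}$ is a valid supporting half-space of $\co(V)$. Hence the family of positive-circuit half-spaces contains all the facet-defining ones and consists of valid ones only, so its intersection is exactly $\co(V)$; this is \eqref{eqn:CoV4}. Pulling back through $\pi = \bar V^\top$ yields \eqref{eqn:CoV3}: $h \in \wco(V) \iff \bar V^\top h \in \co(V) \iff \langle\mu^C, \bar V^\top h\rangle \ge 0 \iff \langle\bar V\mu^C, h\rangle = \langle\lambda^C, h\rangle \ge 0$ for all positive circuits $C$. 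Alternatively, \eqref{eqn:CoV3} comes directly from the Farkas lemma — $Vx \le h$ is solvable iff $\langle\lambda, h\rangle \ge 0$ for every $\lambda \in \R_+^n \cap \ker V^\top$ — combined with the Gale-dual fact that the extreme rays of the cone $\R_+^n \cap \ker V^\top$ are precisely those spanned by the $\lambda^C$.

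It remains to prove the last sentence. A polyhedral cone is pointed iff its dual cone is full-dimensional, i.e.\ iff there exists $\mu$ with $\langle\mu, \bar v_i\rangle > 0$ for all $i$; by part 1 of Lemma \ref{lem:GaleProp} this holds exactly when $V$ is positively spanning. Combined with \eqref{eqn:CoV2}, this gives $\co(V)$ pointed $\iff$ $V$ positively spanning.

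The one step that I expect to need genuine argument rather than bookkeeping is the circuit/facet matching behind \eqref{eqn:CoV4} — equivalently, the circuit/extreme-ray matching behind the Farkas route to \eqref{eqn:CoV3}: namely the minimality part, that the support set $C$ arising from a facet normal, or from an extreme ray of $\R_+^n \cap \ker V^\top$, is an \emph{inclusion-minimal} dependent set, not merely a dependent one. Concretely, if some $C' \subsetneq C$ carried a further dependence $\nu$ of $V$, then $\nu = \bar V\mu'$ would force $\mu'$ to vanish on the spanning set $\{\bar v_i \mid i \notin C'\} \supseteq \{\bar v_i \mid i \notin C\}$ of $\mu^\perp$, hence $\mu' \propto \mu$ and $\supp \nu = C$, a contradiction. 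Everything else is a definition chase or a standard Minkowski--Weyl / Farkas citation.
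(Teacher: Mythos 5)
Your argument is correct and follows essentially the same route as the paper's: the chain \eqref{eqn:CoV1} $\Rightarrow$ \eqref{eqn:CoV2} $\Rightarrow$ \eqref{eqn:CoV4} $\Rightarrow$ \eqref{eqn:CoV3} via the projection $\pi=\bar V^\top$, the identification of facet-defining half-spaces of $\pos(\bar V)$ with functionals $\mu^C$ of positive circuits, the adjointness relation $\lambda^C=\bar V\mu^C$ for the pull-back, and part 1 of Lemma \ref{lem:GaleProp} for pointedness. The only difference is that you spell out the circuit/facet matching (in particular the inclusion-minimality of the support $C$), which the paper leaves to the circuit--cocircuit duality stated just before the theorem.
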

\begin{proof}
Equation \eqref{eqn:CoV1} follows from
$$
x \in P(h) \Leftrightarrow 0 \in P(h-Vx) \Leftrightarrow h - Vx \in \R^n_+
$$
Thus $P(h) \ne \emptyset$ if and only if $h \in \R^n_+ + \im V$.

\eqref{eqn:CoV1} $\Rightarrow$ \eqref{eqn:CoV2}: By definition of $\bar V$ we have $\pi(e_i) = \bar v_i$, where $e_i$ is a standard basis vector of $\R^n$. Hence
$$
\co(V) = \pi(\wco(V)) = \pi(\R^n_+) = \pos(\bar v_1, \ldots, \bar v_n)
$$

\eqref{eqn:CoV2} $\Rightarrow$ \eqref{eqn:CoV4}: Represent the cone $\pos(\bar v_1, \ldots, \bar v_n)$ as an intersection of half-spaces whose boundary hyperplanes pass through the origin. It suffices to take only those half-spaces whose boundaries are spanned by a subconfiguration of $\bar V$. But every such half-space corresponds to a linear functional vanishing on a maximal non-spanning subset of $\bar V$ and positive otherwise. This yields the representation \eqref{eqn:CoV4}.

\eqref{eqn:CoV4} $\Rightarrow$ \eqref{eqn:CoV3}: Due to $\wco(V) = \pi^{-1}(\co(V))$ we have
$$
\wco(V) = \{h \in \R^n \mid \langle \mu^C, \pi(h) \rangle \ge 0\}
$$
Since $\bar V \colon \R^{n-d} \to \R^n$ is the adjoint of $\pi$ and $\lambda^C = \bar V \mu^C$, this corresponds to the description \eqref{eqn:CoV3}.

% Equation \eqref{eqn:CoV3} is a consequence of \eqref{eqn:CoV2} and of the Gale duality. Indeed, due to \eqref{eqn:ValDep} every positive linear dependence between $(v_i)$ corresponds to a linear functional taking positive values on $(\bar v_i)$. Consequently, every positive circuit (inclusion-minimal dependence with positive coefficients) corresponds to an inward pointing normal to a facet of $\pos(\bar V)$ (linear functional vanishing on an inclusion-maximal subset and taking positive values otherwise). Namely,
% \begin{equation}
% \label{eqn:LamMu}
% V^\top \lambda = 0 \Leftrightarrow \lambda = \bar V \mu \text{ for some }\mu \in \R^{n-d},
% \end{equation}
% and then $\mu$ is a normal to a facet of $\pos(\bar V)$. On the other hand, since $\co(V) = \pi^{-1}(\pos(\bar V))$, the normals to the facets of $\co(V)$ are the images of the normals to the facets of $\pos(\bar V)$ under the map $\bar V$, which is the adjoint of $\pi$. Equation \eqref{eqn:LamMu} shows that every normal $\mu$ is mapped to a positive circuit~$\lambda$.

Finally, by Lemma \ref{lem:GaleProp} $V$ is positively spanning if and only if there exists a linear functional on $\R^{n-d}$ taking on $\bar v_i$ only positive values, which is equivalent to $\pos(\bar V)$ being pointed.

% 
% In other words, linear dependencies of $(v_1, \ldots, v_n)$ correspond to vectors composed of values of linear functionals on $(\bar v_1, \ldots, \bar v_n)$. It follows that \emph{positive circuits} of $V$ correspond to the \emph{facets} of $\pos\{\bar v_1, \ldots, \bar v_n\}$ (since a facet of a cone corresponds to a linear functional that takes non-negative values on all generators and vanishes at an inclusion-maximal subset of generators).
% 
% It follows that
% $$
% \pi(\co(V)) = \{y \in \R^{n-d} \mid \mu(y) \ge 0 \ \forall \mu \text{ s.~t. } \lambda := \bar V \mu \text{ is a positive circuit}\}
% $$
% On the other hand, since the homomorphism $\bar V \colon \R^{n-d} \to \R^n$ is dual to $\pi$, we have
% $$
% \mu(y) = \bar V\mu (h) = \lambda(h) \quad \forall h \in \pi^{-1}(y)
% $$
% It follows that $\co(V)$ consists of support vectors whose pairings with coefficient vectors of positive circuits are non-negative. The lemma is proved.
\end{proof}

Note that if $\pos(V)$ is pointed, then $V$ has no positive circuits, and hence the description \eqref{eqn:CoV4} yields $\co(V) = \R^{n-d}$. This is in full accordance with the dualization of the last statement of the theorem: $\bar V$ is positively spanning if and only if $\pos(v_1, \ldots, v_n)$ is pointed.

\begin{rem}
Parts \eqref{eqn:CoV1} and \eqref{eqn:CoV2} of Theorem \ref{thm:CoV} are due to McMullen, \cite{McM73}. See also \cite[Theorem 4.1.39]{LRS10}.

Equation \eqref{eqn:CoV3} can be seen as a version of Farkas Lemma, \cite[Chapter~1]{Zie95}.

Still another interpretation of \eqref{eqn:CoV3} is in terms of the support function of $P(h)$. We have $P(h) \ne \emptyset$ if and only if there exists a convex positively homogeneous function $\widetilde{h} \colon \R^d \to \R$ such that $\widetilde{h}(v_i) \le h_i$. The epigraph of $\widetilde{h}$ is a convex cone containing all points $(v_i,h_i)$. If we have $\lambda(h) \ge 0$ for all positive circuits, then $\conv\{(v_i,h_i)\}$ ``lies above'' $0$, and therefore $\pos\{(v_i,h_i)\}$ is the epigraph of a convex function.
\end{rem}

\subsubsection{Hyperbolic circuits and $\ir(V)$}
From now on we assume $v_i \ne 0$ and $v_i \ne \lambda v_j$ for $\lambda > 0$.

Our main goal here is to prove an analog of Theorem \ref{thm:CoV} for the space $\ir(V)$. For this, we need some preliminary work.

Recall that $P(h) = \{x \in \R^d \mid Vx \le h\}$ and that $h \in \wco(V) \Leftrightarrow P(h) \ne \emptyset$. Denote
$$
F_i(h) := \{x \in P(h) \mid \langle v_i, x \rangle = h_i\}
$$

\begin{lem}
\label{lem:FiNE}
If $h \in \wir(V)$, then $F_i(h) \ne \emptyset$.
\end{lem}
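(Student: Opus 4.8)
The statement is that if the system $Vx \le h$ is compatible and irredundant, then each inequality $\langle v_i, x\rangle \le h_i$ is actually attained somewhere on $P(h)$, i.e.\ the face $F_i(h)$ is nonempty. The natural approach is by contraposition: I will show that if $F_i(h) = \emptyset$ for some $i$, then the $i$-th inequality is redundant, contradicting $h \in \wir(V)$. So suppose $F_i(h) = \emptyset$, which means $\langle v_i, x\rangle < h_i$ strictly for every $x \in P(h)$ (using that $P(h) \ne \emptyset$ since $h \in \wco(V) \supset \wir(V)$).

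The key step is then to argue that dropping the $i$-th inequality does not enlarge the solution set, i.e.\ $P(h) = P'$, where $P'$ is defined by the remaining inequalities $\langle v_j, x\rangle \le h_j$, $j \ne i$. Clearly $P(h) \subseteq P'$. For the reverse inclusion I would argue by contradiction: suppose there is a point $y \in P' \setminus P(h)$, so $\langle v_i, y\rangle > h_i$. Pick any $x_0 \in P(h)$; then $\langle v_i, x_0\rangle < h_i$. Now move along the segment from $x_0$ toward $y$: the function $t \mapsto \langle v_i, x_0 + t(y-x_0)\rangle$ is affine, starts below $h_i$ and ends above $h_i$, so it equals $h_i$ at some $t^* \in (0,1)$. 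For that parameter, the point $z := x_0 + t^*(y-x_0)$ satisfies $\langle v_i, z\rangle = h_i$. I need $z$ to lie in $P(h)$ as well, which will then contradict $F_i(h) = \emptyset$. To see $z \in P(h)$: for each $j \ne i$, both endpoints satisfy $\langle v_j, \cdot\rangle \le h_j$ (for $x_0$ because $x_0 \in P(h)$, for $y$ because $y \in P'$), and affine functions on a segment stay below their endpoint maxima; hence $\langle v_j, z\rangle \le h_j$. Combined with $\langle v_i, z\rangle = h_i \le h_i$, this gives $z \in P(h)$, so $z \in F_i(h)$, contradiction. Therefore $P' \subseteq P(h)$, so $P' = P(h)$, meaning the $i$-th inequality is redundant, which contradicts $h \in \wir(V)$.

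I expect the argument to be essentially this short; the only point requiring a little care is the degenerate case where $P(h)$ might be contained in a hyperplane $\langle v_i, x\rangle = c$ with $c \ne h_i$ — but this is exactly the strict-inequality statement $\langle v_i, x\rangle < h_i$ on all of $P(h)$, which is precisely what $F_i(h) = \emptyset$ gives us, and it is used in picking $x_0$ with $\langle v_i, x_0\rangle < h_i$. The main (minor) obstacle is just being careful that the convexity/affineness argument on the segment is stated correctly; there is no deep difficulty here. An alternative, perhaps even cleaner, phrasing uses the fact that for a nonempty closed convex set, a supporting inequality is nonredundant iff its associated face is a facet (nonempty of the right dimension), but the elementary segment argument above is self-contained and avoids invoking facet structure.
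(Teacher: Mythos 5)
Your argument is correct and is essentially the paper's proof in contrapositive form: your point $y \in P' \setminus P(h)$ is exactly the paper's witness $x_{out}$ of irredundancy, and the segment/convex-combination step producing a point of $F_i(h)$ is identical. No gap.
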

\begin{proof}
By definition, the $i$-th inequality in $Vx \le h$ is irredundant if and only if there exist $x_{out} \in \R^d$ such that
$$
\langle v_i, x_{out} \rangle > h_i, \quad \langle v_j, x_{out} \rangle \le h_j \ \forall j \ne i
$$
Pick $x_{in} \in P(h)$. Then for an appropriate convex combination of $x_{in}$ and $x_{out}$ we have $\langle v_i, x \rangle = h_i$ and $\langle v_j, x \rangle \le h_j$ for all $j \ne i$. Thus $F_i(h) \ne \emptyset$ and the lemma is proved.
\end{proof}

\begin{rem}
The inverse of Lemma \eqref{lem:FiNE} does not hold. If $F_i(h) \ne \emptyset$ and $\dim F_i(h) < \dim P(h) - 1$, then the $i$-th inequality is still redundant. Even worse, the $i$-th inequality can be redundant also when $\dim F_i(h) = \dim P(h) - 1$, although for this $\dim P(h) < d$ is necessary. A concrete example is the polytope in $\R^2$ given by
$$
y \le 0,\quad -y \le 0,\quad -x \le 0,\quad x+y \le 1,\quad x-y \le 1
$$
This is a segment with the endpoints $(0,0)$ and $(1,0)$, and we have $F_4 = F_5 = \{(1,0)\}$, which is a facet of $P$. Nevertheless, both the fourth and the fifth inequalities are redundant.

Note that in the last example removing both redundant inequalities at once makes the solution set larger. Thus an inclusion-minimal irredundant subsystem is in general not unique.
\end{rem}

The life becomes easier, if we restrict our attention either to the interior $\int\ir(V)$ or to the closure $\clir(V)$ of the irredundancy domain. Note that they are the images under the map \eqref{eqn:Pi} of the interior and of the closure of $\wir(V)$, respectively.

\begin{lem}
\label{lem:IntCoIr}
We have
\begin{subequations}
\begin{equation}
\label{eqn:IntCo}
\int\co(V) = \{\pi(h) \mid \dim P(h) = d\}
\end{equation}
\begin{equation}
\label{eqn:IntIr}
\int\ir(V) = \{\pi(h) \mid \dim P(h) = d,\, \dim F_i(h) = d-1 \ \forall i\}
\end{equation}
\begin{equation}
\label{eqn:ClIr}
\clir(V) = \{\pi(h) \mid F_i(h) \ne \emptyset \ \forall i\}
\end{equation}
\end{subequations}
\end{lem}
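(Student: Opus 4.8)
The plan is to prove the three identities of Lemma~\ref{lem:IntCoIr} by combining the descriptions already established in Theorem~\ref{thm:CoV} with the elementary facts about $P(h)$ and $F_i(h)$ proved in Lemmas~\ref{lem:FiNE} and its remark. Throughout I work with representatives $h \in \R^n$ and note that all three sets on the right-hand sides are translation-invariant under $h \mapsto h + Vt$ (by \eqref{eqn:PTransl}, $P(h)$ is merely translated, which affects neither $\dim P(h)$ nor the dimensions $\dim F_i(h)$, nor the non-emptiness of $F_i(h)$), so each descends to a well-defined subset of $\R^n/\im V$ and it suffices to identify the preimages.

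For \eqref{eqn:IntCo}: by \eqref{eqn:CoV1}, $\wco(V) = \R_+^n + \im V$, so $h \in \int \wco(V)$ iff, after a translation, we may take $h$ with all $h_i > 0$ and $h$ in the interior of this sum, i.e. $0 \in \int P(h)$; conversely $\dim P(h) = d$ means $P(h)$ has nonempty interior, and translating an interior point to the origin gives $h$ with all $h_i > 0$, hence $h$ in the interior of $\R_+^n + \im V$. The only care needed is that $\int(\R_+^n + \im V)$ is precisely $\{h : \exists t,\ h + Vt \in \int \R_+^n\}$, which holds because $\im V$ is a linear subspace and the sum of an open set with a subspace is open; I would spell this out in one line. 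Thus $\pi(h) \in \int\co(V)$ iff $\dim P(h) = d$.

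For \eqref{eqn:IntIr}: if $\pi(h) \in \int\ir(V)$, then $h$ lies in the interior of $\wir(V) \subset \wco(V)$, so in particular $\dim P(h) = d$ by \eqref{eqn:IntCo}, and each inequality is irredundant; then $F_i(h) \ne \emptyset$ by Lemma~\ref{lem:FiNE}, and since $\dim P(h) = d$ an irredundant facet-defining inequality cuts out a face of dimension exactly $d-1$ (the exceptional behaviour in the remark after Lemma~\ref{lem:FiNE} required $\dim P(h) < d$). Conversely, if $\dim P(h) = d$ and every $F_i(h)$ is a genuine facet, then removing the $i$-th inequality enlarges the solution set (a point just outside $\aff F_i(h)$ on the far side, close to a relative interior point of $F_i(h)$, violates only the $i$-th inequality), so $h \in \wir(V)$; to get the \emph{interior} one perturbs $h$ slightly in all $n$ coordinates and in $\im V$ and observes that the facet structure is stable, which is exactly the relative-openness mechanism used for type cones in Lemma~\ref{lem:LinTypeCone}. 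I would phrase this stability argument using support functions: the conditions ``$\dim P(h)=d$'' and ``$\dim F_i(h) = d-1$ for all $i$'' are open conditions on $h$.

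For \eqref{eqn:ClIr}: the inclusion $\clir(V) \subset \{\pi(h) : F_i(h) \ne \emptyset\ \forall i\}$ follows from Lemma~\ref{lem:FiNE} together with the fact that ``$F_i(h) \ne \emptyset$'' is a \emph{closed} condition (it says $h_i = \max_{Vx \le h, \text{other rows}} \langle v_i, x\rangle$, a limit of the corresponding equalities, provided compatibility is maintained, which it is on $\wco(V)$ closed). For the reverse inclusion, suppose every $F_i(h) \ne \emptyset$; I would produce an $h'$ arbitrarily close to $h$ with $h' \in \wir(V)$, for instance by taking $h' = h + \epsilon \one$ for small $\epsilon > 0$: this enlarges $P(h)$ to a full-dimensional body (if $h \in \wco(V)$, which it is) whose facets include perturbed versions of all the $F_i$, so all inequalities become irredundant — here I would need the hypothesis $v_i \ne \lambda v_j$ to rule out two inequalities collapsing to the same facet. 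The main obstacle is precisely this last reverse inclusion in \eqref{eqn:ClIr}: one must check that the perturbation $h + \epsilon\one$ genuinely makes \emph{every} inequality irredundant given only that each $F_i(h)$ is nonempty (not necessarily facet-dimensional), and this is where the standing assumptions $v_i \ne 0$, $v_i \ne \lambda v_j$ ($\lambda > 0$) are used; I expect the cleanest route is to argue that adding $\epsilon \one$ pushes each hyperplane $\{\langle v_i, \cdot\rangle = h_i\}$ outward strictly faster, along its own normal, than any other, so a point of $F_i(h)$ translated slightly along $v_i$ sees only the $i$-th inequality violated.
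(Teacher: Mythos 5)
Your proposal is correct and follows essentially the same route as the paper: \eqref{eqn:IntCo} via $\wco(V)=\R^n_+ +\im V$, \eqref{eqn:IntIr} via the equivalence of irredundancy with facet-definition for full-dimensional polyhedra together with the stability of these conditions under small perturbations of $h$, and \eqref{eqn:ClIr} via Lemma~\ref{lem:FiNE}, closedness of the right-hand side, and the perturbation $h+\epsilon\one$ with points of $F_i(h)$ pushed along $v_i$ using $\langle v_i,v_j\rangle<1$. The only cosmetic difference is that the paper verifies closedness and the perturbation step slightly more explicitly (e.g.\ via $x+\epsilon v_i\in F_i(h+\epsilon\one)$), but your argument is the same in substance.
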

\begin{proof}
From \eqref{eqn:CoV1} we have $\int \wco(V) = \int(\R_+^n + \im V)$. It follows that
$$
h \in \int\co(V) \Leftrightarrow \exists x \colon h - Vx \in \int\R_+^n \Leftrightarrow \{x \mid Vx < h\} \ne \emptyset
$$
Since $\{x \mid Vx < h\} = \int P(h)$, and $\int P(h) \ne \emptyset \Leftrightarrow \dim P(h) = d$, we have \eqref{eqn:IntCo}.

Let $h \in \int\wir(V)$. Since $\wir(V) \subset \wco(V)$, we have $\int\wir(V) \subset \int\wco(V)$, and thus $\dim P(h) = d$. Since every $d$-dimensional convex polyhedron in $\R^d$ is the intersection of the half-spaces determined by its facets \cite[Chapter 2]{Zie95}, the $i$-th inequality is irredundant only if $\dim F_i(h) = d-1$. Thus the left hand side in \eqref{eqn:IntIr} is a subset of the right hand side.

Let us prove that the right hand side of \eqref{eqn:IntIr} is a subset of the left hand side. First, the right hand side is a subset of $\ir(V)$. Indeed, if $\dim F_i(h) = d-1$, then in a neigborhood of $x \in \relint F_i(h)$ there are points for which all inequalities in $Vx \le h$ hold except the $i$-th, that is the $i$-th inequality is irredundant. Further, any small change of $h$ preserves the properties $\dim P(h) = d$ and $\dim F_i(h) = d-1$, and thus the right hand side is a subset of $\int \ir(V)$.

By Lemma \ref{lem:FiNE}, $\ir(V)$ is a subset of the right hand side of \eqref{eqn:ClIr}. Since the right hand side is closed, it contains also $\clir(V)$.
To prove the inverse inclusion, let us show that any $h$ such that $F_i(h) \ne \emptyset$ lies in the closure of $\int\ir(V)$. Indeed, for any $\epsilon > 0$ put $h' = h + \epsilon \one$, where $\one = (1, \ldots, 1)$. Then for any $x \in F_i(h)$ we have
\begin{gather*}
\langle v_i, x + \epsilon v_i \rangle = \langle v_i, x \rangle + \epsilon = h_i + \epsilon\\
\langle v_j, x + \epsilon v_i \rangle = \langle v_j, x \rangle + \epsilon \langle v_i, v_j \rangle < h_j + \epsilon
\end{gather*}
(where we assume $\|v_i\| = 1$ for all $i$).
It follows that $x + \epsilon v_i \in F_i(h')$ and that $\dim F_i(h') = d-1$ for all $i$. Thus $h' \in \int\ir(V)$. This proves \eqref{eqn:ClIr}.
\end{proof}

\begin{lem}
\label{lem:1inIr}
If $(v_1, \ldots, v_n)$ are different unit vectors, then $\one \in \int\ir(V)$. In particular, $\int\ir(V)$ is non-empty.
\end{lem}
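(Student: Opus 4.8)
The plan is to show that when $V=(v_1,\dots,v_n)$ consists of distinct unit vectors, the support vector $\one=(1,\dots,1)$ lies in the interior of $\ir(V)$, by directly verifying the characterization of $\int\ir(V)$ obtained in Lemma~\ref{lem:IntCoIr}, namely \eqref{eqn:IntIr}: it suffices to check that $\dim P(\one)=d$ and that $\dim F_i(\one)=d-1$ for every $i$. The polytope $P(\one)=\{x\in\R^d\mid \langle v_i,x\rangle\le 1\ \forall i\}$ always contains the origin in its interior, since $\langle v_i,0\rangle=0<1$ for all $i$; hence $\dim P(\one)=d$ and $\pi(\one)\in\int\co(V)$ by \eqref{eqn:IntCo}. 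It remains to produce, for each $i$, a point $x$ with $\langle v_i,x\rangle=1$ and $\langle v_j,x\rangle<1$ for all $j\ne i$, witnessing $\dim F_i(\one)=d-1$ as in the argument proving \eqref{eqn:IntIr}.

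The natural candidate is $x=v_i$ itself. Then $\langle v_i,v_i\rangle=\|v_i\|^2=1$, and for $j\ne i$ we have $\langle v_j,v_i\rangle\le\|v_j\|\,\|v_i\|=1$ by Cauchy--Schwarz, with equality precisely when $v_j=v_i$; since the $v_i$ are pairwise distinct, $\langle v_j,v_i\rangle<1$ strictly. Therefore $v_i\in F_i(\one)$ and the $i$-th inequality is strictly violated by no other facet constraint in a neighborhood of $v_i$, so a small neighborhood of $v_i$ inside the hyperplane $\{\langle v_i,x\rangle=1\}$ still satisfies all the remaining strict inequalities $\langle v_j,x\rangle<1$. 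Thus $\dim F_i(\one)=d-1$. Actually this is essentially the computation already performed at the end of the proof of Lemma~\ref{lem:IntCoIr} with $\epsilon=1$ and $h=\one$ built from $h=\zero$ (the solution set of $Vx\le 0$ being the single point $\{0\}$, with $F_i(\zero)=\{0\}\ne\emptyset$), so one may alternatively just invoke that computation. Combining the two facts, $\pi(\one)$ satisfies the defining conditions on the right-hand side of \eqref{eqn:IntIr}, hence $\one\in\int\ir(V)$, and in particular $\int\ir(V)\ne\emptyset$.

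I do not anticipate a genuine obstacle here: the only subtlety worth stating carefully is why $\langle v_j,v_i\rangle<1$ rather than merely $\le 1$, which is exactly where the hypothesis that the $v_i$ are \emph{distinct} unit vectors is used (equality in Cauchy--Schwarz for unit vectors forces $v_j=v_i$). One should also note that distinctness as unit vectors is compatible with — indeed, implies under the running assumption $v_i\ne\lambda v_j$ for $\lambda>0$ — the hypotheses of the section, so no consistency issue arises. The argument is short enough that it can be written in a few lines once Lemma~\ref{lem:IntCoIr} is in place.
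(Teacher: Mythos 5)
Your proof is correct and follows essentially the same route as the paper: one checks $v_i\in F_i(\one)$ and uses that $\langle v_j,v_i\rangle<1$ for $j\ne i$ (strict Cauchy--Schwarz for distinct unit vectors) to get $\dim F_i(\one)=d-1$, then invokes the characterization \eqref{eqn:IntIr} of $\int\ir(V)$. The only blemish is the parenthetical aside that $P(\zero)=\{0\}$, which holds only when $V$ is positively spanning, but that remark is not needed for your argument.
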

The proof is similar to that of the last part of Lemma \ref{lem:IntCoIr}: we have $v_i \in F_i(\one)$ and, since $\langle v_i, v_j \rangle < 1$, 
we have $\dim F_i(\one) = d-1$. The polytope $P(\one)$ is circumscribed about the unit sphere. Its normal fan is related to the Delaunay tessellation, see Section \ref{sec:Variations}.

Note that the assumption $\|v_i\| = 1$ is not too restrictive: scaling all vectors $v_i$ by positive factors scales the support numbers $h_i$ correspondingly. In the context of Gale duality, if $v_i$ is replaced by $\lambda_i v_i$, then it suffices to replace $\bar v_i$ by $\lambda_i^{-1} \bar v_i$.

Two more definitions will be needed.
\begin{dfn}
\label{dfn:Core}
Let $W = (w_1, \ldots, w_n)$ be a vector configuration in $\R^m$, where we allow $w_i = w_j$ for $i \ne j$. The \emph{$k$-core} $\core_k(W)$ of $W$ consists of all $y \in \R^m$ such that any linear functional that takes a non-negative value on $y$ takes a non-negative value on at least $k$ entries of $W$.
\end{dfn}
(The $k$-core is also called the set of vectors of \emph{depth} $k$, see \cite{Wag08}.)

If $w_i \ne 0$ for all $i$, then the $1$-core is the positive hull:
$$
\core_1(W) = \pos(W)
$$
Similarly, the $2$-core can be expressed as
$$
\core_2(W) = \bigcap_{i=1}^n \pos(W \setminus \{w_i\})
$$
For example, if for every $w_i$ there is a $j \ne i$ such that $w_j = w_i$, then $\core_2(W) = \core_1(W) = \pos(W)$.

\begin{dfn}
A circuit $C$ of a vector configuration $V$ is called \emph{hyperbolic}, if one of the coefficients of the corresponding linear dependence is positive, while the rest are negative. Equivalently, a hyperbolic circuit is an index subset $C = \{p(C)\} \cup C^-$ such that
\begin{equation}
\label{eqn:HypCircuit}
v_{p(C)} = \sum_{i \in C^-} \lambda^{C^-}_i v_i, \quad \text{where } \lambda^{C^-}_i > 0 \ \forall i \in C^-
\end{equation}
and every proper subset of $V_C = \{v_i \mid i \in C\}$ is linearly independent.
\end{dfn}
A hyperbolic circuit of cardinality $2$ consists of two non-zero vectors $v_i$ and $v_j$ such that $v_i = \lambda v_j$ for $\lambda > 0$. In this case any of the indices $i$ and $j$ can be declared to be $p(C)$. As we assumed at the beginning of this section $v_i \ne \lambda v_j$ for $\lambda > 0$, every hyperbolic circuit has cardinality at least $3$, and thus the positive index $p(C)$ is well-defined.

Equation \eqref{eqn:HypCircuit} means that we are scaling the coefficients of every hyperbolic circuit $C$ so that
$$
\lambda^C_{p(C)} = 1, \quad \lambda^C_i =
\begin{cases}
-\lambda_i^{C^-}, &\text{if } i \in C^-\\
0, &\text{if } i \notin C
\end{cases}
$$
Due to the dependence-evaluation duality (see \eqref{eqn:ValProp} and the two equations following it), to every hyperbolic circuit there corresponds a unique vector $\mu^C \in \R^{n-d}$ such that
$$
\langle \mu^C, \bar v_{p(C)} \rangle = 1, \quad \langle \mu^C, \bar v_i \rangle =
\begin{cases}
- \lambda_i^{C^-}, &\text{if } i \in C^-\\
0, &\text{if } i \notin \{p(C)\} \cup C^-
\end{cases}
$$

\begin{thm}
\label{thm:IrV}
Let $(v_1, \ldots, v_n)$ be a linearly spanning vector configuration in $\R^d$ such that
$$
v_i \ne 0 \ \forall i \quad \text{and} \quad v_i \ne \lambda v_j \ \forall i \ne j
$$
Then the closure $\clir(V)$ of the irredundancy domain is a convex polyhedral cone in $\R^n$. The set $\clir(V)$ and its lift $\wclir(V)$ to $\R^n$ can be described in the following ways.
\begin{subequations}
\begin{equation}
\label{eqn:IrV1}
\wclir(V) = \bigcap_{i=1}^n (\R_+^{[n] \setminus i} + \im V),
\end{equation}
where $\R_+^{[n] \setminus i} = \{(h_1, \ldots, h_n) \mid h_i = 0,\, h_j \ge 0 \ \forall j \ne i\}$.
\begin{equation}
\label{eqn:IrV2}
\clir(V) = \bigcap_{i=1}^n \pos (\bar V \setminus \{\bar v_i\}) = \core_2(\bar V)
\end{equation}
\begin{multline}
\label{eqn:IrV3}
\wclir(V) = \{ h \in \R^n \mid \langle \lambda^C, h \rangle \ge 0 \text{ for all positive circuits }C, \text{ and}\\
h_{p(C)} \le \langle \lambda^{C^-}, h \rangle \text{ for all hyperbolic circuits } C\}
\end{multline}
\begin{multline}
\label{eqn:IrV4}
\clir(V) = \{ y \in \R^{n-d} \mid \langle \mu^C, y \rangle \ge 0 \text{ for all positive circuits }C, \text{ and}\\
\langle \mu^C, y \rangle  \le 0 \text{ for all hyperbolic circuits } C\}
\end{multline}
\end{subequations}
Moreover, if $V$ is positively spanning, then $\ir(V)$ is pointed.
\end{thm}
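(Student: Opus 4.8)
The plan is to follow the template of the proof of Theorem~\ref{thm:CoV}, taking Lemma~\ref{lem:IntCoIr} as the starting point, pushing the resulting description through the Gale projection $\pi$, and then dualizing; the role played by positive circuits for $\co(V)$ will now be shared between positive circuits and \emph{hyperbolic} ones. First I would prove \eqref{eqn:IrV1}. By \eqref{eqn:ClIr}, $\clir(V)$ consists of the classes $\pi(h)$ with $F_i(h)\neq\emptyset$ for all $i$; since nonemptiness of $F_i(h)$ depends only on $\pi(h)$ (translating $h$ by $Vt$ translates $F_i(h)$ by $t$), the preimage is $\wclir(V)=\{h\mid F_i(h)\neq\emptyset\ \forall i\}$. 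But $F_i(h)\neq\emptyset$ says exactly that some $x$ satisfies $\langle v_i,x\rangle=h_i$ and $\langle v_j,x\rangle\le h_j$ for $j\neq i$, i.e.\ $h-Vx\in\R_+^{[n]\setminus i}$, i.e.\ $h\in\R_+^{[n]\setminus i}+\im V$; intersecting over $i$ gives \eqref{eqn:IrV1}, and since each summand is a polyhedral cone so is $\wclir(V)$.

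Next, each set $\R_+^{[n]\setminus i}+\im V$ is saturated for $\pi$ (it already contains $\ker\pi=\im V$), so $\pi$ commutes with the intersection in \eqref{eqn:IrV1}; using $\pi(e_j)=\bar v_j$ this gives
\[
\clir(V)=\bigcap_{i=1}^n\pi\bigl(\R_+^{[n]\setminus i}\bigr)=\bigcap_{i=1}^n\pos\bigl(\bar V\setminus\{\bar v_i\}\bigr),
\]
the first equality in \eqref{eqn:IrV2}; the identification with $\core_2(\bar V)$ is the formula recorded after Definition~\ref{dfn:Core}, which applies once all $\bar v_i\neq0$, and by part~3 of Lemma~\ref{lem:GaleProp} this holds whenever $V$ is positively spanning. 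In particular $\clir(V)$ is a finite intersection of polyhedral cones, hence a convex polyhedral cone.

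For \eqref{eqn:IrV4} I would argue exactly as in the step \eqref{eqn:CoV2}$\Rightarrow$\eqref{eqn:CoV4}: represent each $\pos(\bar V\setminus\{\bar v_i\})$ as an intersection of half-spaces through the origin whose bounding hyperplanes are spanned by subconfigurations of $\bar V\setminus\{\bar v_i\}$. Such a hyperplane is spanned by a cocircuit of $\bar V$, i.e.\ its complement in $[n]$ is a circuit $C$ of $V$, and the functional cutting it out is a positive multiple of $\mu^C$ or of $-\mu^C$. The sign is forced by the side on which $\bar V\setminus\{\bar v_i\}$ lies: either the functional is $\ge0$ on \emph{all} of $\bar V$, so $C$ is a positive circuit and the inequality is $\langle\mu^C,y\rangle\ge0$; or it is negative on exactly one vector $\bar v_i$ of $\bar V$, so $C$ is a hyperbolic circuit with $p(C)=i$ and the inequality is $\langle-\mu^C,y\rangle\ge0$. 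Conversely each such inequality holds on $\clir(V)$ — those of the first kind already on $\pos(\bar V)$, those of the second on the relevant $\pos(\bar V\setminus\{\bar v_i\})$ — so collecting them over all $i$ yields \eqref{eqn:IrV4}. Pulling back along $\pi$, using that $\bar V$ is the adjoint of $\pi$ and $\lambda^C=\bar V\mu^C$, so $\langle\mu^C,\pi(h)\rangle=\langle\lambda^C,h\rangle$, turns \eqref{eqn:IrV4} into \eqref{eqn:IrV3}; for a hyperbolic circuit the normalization $\lambda^C_{p(C)}=1$, $\lambda^C_i=-\lambda^{C^-}_i$ on $C^-$ rewrites $\langle\lambda^C,h\rangle\le0$ as $h_{p(C)}\le\langle\lambda^{C^-},h\rangle$. (Equivalently one can reach \eqref{eqn:IrV3} directly: the polar of $\R_+^{[n]\setminus i}+\im V$ is $\ker V^\top\cap\{\lambda\mid\lambda_j\ge0\ \forall j\neq i\}$, a pointed cone whose extreme rays are precisely the positive circuits and the hyperbolic circuits with $p(C)=i$; polarizing back and intersecting over $i$ gives \eqref{eqn:IrV3} at once.)

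Finally, for pointedness: if $V$ is positively spanning then $\co(V)=\pos(\bar V)$ is pointed by Theorem~\ref{thm:CoV}, and since $\clir(V)=\bigcap_i\pos(\bar V\setminus\{\bar v_i\})\subseteq\pos(\bar V)$ the cone $\clir(V)$, and a fortiori $\ir(V)\subseteq\clir(V)$, is pointed. The one genuinely non-formal point is the sign bookkeeping in the third step: recognizing that, dually to the positive circuits governing the facets of $\co(V)$, it is exactly the hyperbolic circuits — together with the choice of distinguished index $p(C)$, which is where the standing hypothesis $v_i\neq\lambda v_j$ enters — that supply the extra facets of $\ir(V)$, and keeping the orientations straight so that \eqref{eqn:IrV3}--\eqref{eqn:IrV4} come out with the stated signs.
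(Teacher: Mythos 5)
Your proof is correct, and for \eqref{eqn:IrV1}, the first equality in \eqref{eqn:IrV2}, and the pointedness claim it coincides with the paper's argument; where you genuinely diverge is in the middle step. The paper proves \eqref{eqn:IrV3} first, directly from the definition of irredundancy: the $i$-th irredundancy condition is rewritten as compatibility of the sign-flipped system $V^{(i)}x\le h^{(i)}$ (with $v_i,h_i$ replaced by $-v_i,-h_i$), the already-proved criterion \eqref{eqn:CoV3} is applied to these auxiliary configurations, and their positive circuits are identified as the positive circuits of $V\setminus\{v_i\}$ together with the hyperbolic circuits of $V$ with positive index $i$; \eqref{eqn:IrV4} then follows via $\lambda^C=\bar V\mu^C$. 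You go the other way around: you stay entirely on the Gale-dual side, rerun the supporting-hyperplane/cocircuit analysis of the step \eqref{eqn:CoV2}$\Rightarrow$\eqref{eqn:CoV4} for each deleted configuration $\bar V\setminus\{\bar v_i\}$, and your sign analysis (functional nonnegative on all of $\bar V$ versus negative exactly at $\bar v_i$) correctly identifies the relevant half-spaces with positive circuits and with hyperbolic circuits having $p(C)=i$, after which \eqref{eqn:IrV3} is obtained by pulling back along $\pi$. The paper's route buys economy, reusing Theorem \ref{thm:CoV} as a black box at the cost of introducing the configurations $V^{(i)}$ and a small closure step (replacing strict by weak inequalities in the irredundancy systems); your route avoids both, since you start from \eqref{eqn:ClIr}, but has to redo the cocircuit bookkeeping by hand — which you do correctly. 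One small imprecision: to identify $\bigcap_i\pos(\bar V\setminus\{\bar v_i\})$ with $\core_2(\bar V)$ you justify $\bar v_i\ne0$ by positive spanning, which is not among the theorem's hypotheses (only linear spanning is assumed), so $\bar v_i=0$ can in principle occur; this affects only the cosmetic relabelling as a $2$-core, which the paper itself imports from the remark after Definition \ref{dfn:Core} without further comment, and none of the substantive descriptions \eqref{eqn:IrV1}--\eqref{eqn:IrV4}.
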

\begin{proof}
From the definition of $F_i(h)$ it follows that
$$
x \in F_i(h) \Leftrightarrow h - Vx \in \R_+^{[n] \setminus i}
$$
Therefore
$$
F_i(h) \ne \emptyset \ \forall i \Leftrightarrow h \in \bigcap_{i=1}^n (\R_+^{[n] \setminus i} + \im V)
$$
Since $\wclir(V) = \{h \mid F_i(h) \ne \emptyset \ \forall i\}$, we have \eqref{eqn:IrV1}.

\eqref{eqn:IrV1} $\Rightarrow$ \eqref{eqn:IrV2}: Since $\R_+^{[n] \setminus i}$ is the positive hull of all basis vectors except $e_i$, we have $\pi(\R_+^{[n] \setminus i}) = \pos(\bar V \setminus \{\bar v_i\})$ and hence
$$
\clir(V) = \bigcap_{i=1}^n \pi(\R_+^{[n] \setminus i}) = \bigcap_{i=1}^n \pos (\bar V \setminus \{\bar v_i\}),
$$
(here $\cap$ and $\pi$ commute because $X + \im V$ is a full preimage under $\pi$).

\eqref{eqn:IrV2} $\Rightarrow$ \eqref{eqn:IrV3}: By definition of irredundancy, $h \in \wir(V)$ if and only if along with the system $Vx \le h$ the following $n$ systems:
\begin{equation}
\label{eqn:NewSyst}
\langle v_i, x \rangle > h_i, \ \langle v_j, x \rangle \le h_j \ \forall j \ne i,
\end{equation}
$i = 1, \ldots, n$, are compatible. Consequently, $h \in \wclir(V)$ if and only if all systems \eqref{eqn:NewSyst} are compatible when $>$ is replaced by $\ge$.
The system \eqref{eqn:NewSyst} can be rewritten as $V^{(i)}x \le h^{(i)}$, where
$$
V^{(i)} = (v_1, \ldots, -v_i, \ldots, v_n), \quad h^{(i)} = (h_1, \ldots, -h_i, \ldots, h_n)
$$
According to the criterion \eqref{eqn:CoV3} of compatibility, we have to consider all positive circuits of $V^{(i)}$. Each of them is either a positive circuit of $V \setminus \{v_i\}$ or a hyperbolic circuit of $V$ with $i$ as the positive index. This yields the system of linear inequalities in \eqref{eqn:IrV3}.

\eqref{eqn:IrV3} $\Rightarrow$ \eqref{eqn:IrV4}: This follows from $\lambda^C = \bar V \mu^C$ for any circuit and from our convention for hyperbolic circuits, see the paragraph before the theorem.

Finally, if $V$ is positively spanning, then by Theorem \ref{thm:CoV} the cone $\co(V)$ is pointed, and hence so is $\ir(V) \subset \co(V)$.
\end{proof}

\begin{rem}
Again, part \eqref{eqn:IrV3} of Theorem \ref{thm:IrV} can be interpreted in terms of the support function. Due to \eqref{eqn:ClIr} we have
$$
h \in \clir(V) \Leftrightarrow h_i = \widetilde{h}(v_i),
$$
where $\widetilde{h}$ is the support function of $P(h)$ and $\|v_i\| = 1$ is assumed. Since $\widetilde{h}$ is convex, equality $h_i = \widetilde{h}(v_i)$ implies the inequality $h_i \le \sum_{j \in C^-} \lambda_j^{C^-} h_j$ for every hyperbolic circuit $C$ with $p(C) = i$. In the opposite direction, if $h \notin \clir(V)$, then we have $h_i > \widetilde{h}(v_i)$ for some $i$. This yields a hyperbolic circuit $C$ (where $p(C) = i$ and $C^-$ are the indices of the extremal rays of the normal cone of $P(h)$ containing $v_i$ in its relative interior) that violates the linear inequality in the second line of \eqref{eqn:IrV3}.
\end{rem}

\subsubsection{Chambers and type cones}
\label{sec:Chambers}
Let $V = (v_1, \ldots, v_n)$ be a positively spanning configuration of $n$ different unit vectors in $\R^d$. By results of the previous sections, every polytope $P(h)$ with outer facet normals $v_i$ is represented by a point $y = \pi(h) \in \R^{n-d}$ lying in the interior of the $2$-core of the Gale dual configuration $\bar V = (\bar v_1, \ldots, \bar v_n)$. We now want to describe how the relative position of $y$ with respect to $\bar v_i$ determines the combinatorics of the polytope.

By slightly modifying the notation, we will distinguish between the \emph{geometric} normal fan $\cN(P(h))$ and the \emph{abstract} fan $\Delta$ isomorphic to $\cN(P(h))$. Here $\Delta$ is a collection of subsets of $[n]$ such that
$$
\sigma \in \Delta \Leftrightarrow \pos(V_\sigma) \in \cN(P(h)),
$$
where as usual $V_\sigma = \{v_i \mid i \in \sigma\}$.

\begin{lem}
\label{lem:CombGale}
Let $y = \pi(h) \in \int \core_2(\bar V)$. Then
\begin{equation}
\pos(V_\sigma) \in \cN(P(h)) \Leftrightarrow y \in \relint \pos (\bar V_{[n]\setminus\sigma})
\end{equation}
In other words, $\{v_i \mid i \in \sigma\}$ span a normal cone of $P(h)$ if and only if $\pi(h)$ lies in the relative interior of the cone spanned by $\{\bar v_i \mid i \notin \sigma\}$.
\end{lem}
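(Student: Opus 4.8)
The plan is to reduce both directions of the claimed equivalence to a single concrete condition on $\sigma$ and $h$, which I will call $(\ast)$:
\[
(\ast)\qquad \exists\, x\in\R^d:\quad \langle v_i,x\rangle = h_i\ \text{for}\ i\in\sigma,\qquad \langle v_j,x\rangle < h_j\ \text{for}\ j\notin\sigma.
\]
First I would establish $y\in\relint\pos(\bar V_{[n]\setminus\sigma})\Leftrightarrow(\ast)$, which is pure linear algebra in the spirit of the proofs of Lemma~\ref{lem:IntCoIr} and Theorem~\ref{thm:IrV}: the projection $\pi\colon\R^n\to\R^n/\im V\cong\R^{n-d}$ satisfies $\pi(e_i)=\bar v_i$ and $\ker\pi=\im V$, and the relative interior of the positive hull of finitely many vectors is exactly the set of their strictly positive combinations. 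Hence $\pi(h)\in\relint\pos(\bar V_{[n]\setminus\sigma})$ means $\pi(h)=\pi(g)$ for some $g\in\R^n$ with $g_i=0$ $(i\in\sigma)$ and $g_j>0$ $(j\notin\sigma)$, i.e.\ $h=g+Vx$ for some $x$; reading off the coordinates of $h-Vx=g$ is precisely $(\ast)$, and every step is reversible.

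The bulk of the work is then to identify $(\ast)$ with $\pos(V_\sigma)\in\cN(P(h))$. Throughout I would use that, by Lemma~\ref{lem:IntCoIr}, the hypothesis $y\in\int\core_2(\bar V)=\int\ir(V)$ makes $P(h)$ a full-dimensional polytope whose facets are $F_1(h),\dots,F_n(h)$, together with the standard fact that the normal cone at a face is positively spanned by the normals of the facets containing it. The direction $(\ast)\Rightarrow\pos(V_\sigma)\in\cN(P(h))$ is then quick: the point $x$ of $(\ast)$ lies in $P(h)$, the hyperplane with outer normal $\sum_{i\in\sigma}v_i$ supports $P(h)$ along $F:=\{z\in P(h)\mid\langle v_i,z\rangle=h_i\ \forall i\in\sigma\}\ni x$, and the strict inequalities of $(\ast)$ show that the facets of $P(h)$ through $F$ are exactly those indexed by $\sigma$, so $N_F(P(h))=\pos(V_\sigma)$.

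For the converse I would take a face $G$ of $P(h)$ with $N_G(P(h))=\pos(V_\sigma)$, set $\sigma(G):=\{j\mid G\subseteq F_j(h)\}$ so that also $N_G(P(h))=\pos(V_{\sigma(G)})$, and observe that any $x\in\relint G$ satisfies $(\ast)$ with $\sigma$ replaced by $\sigma(G)$; hence everything comes down to proving $\sigma=\sigma(G)$. The inclusion $\sigma\subseteq\sigma(G)$ is immediate, since $v_i\in N_G(P(h))$ forces $G\subseteq F_i(h)$ for $i\in\sigma$. The reverse inclusion is the crux. If some $j\in\sigma(G)\setminus\sigma$ existed, then $v_j\in\pos(V_\sigma)$, so by Carath\'eodory $v_j=\sum_{i\in I}\nu_i v_i$ with $I\subseteq\sigma$, all $\nu_i>0$ and $V_I$ linearly independent; the standing assumptions $v_i\ne 0$, $v_i\ne\lambda v_j$ then make $C:=\{j\}\cup I$ a hyperbolic circuit with $p(C)=j$ and $C^-=I$. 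Evaluating at any $z\in G$, where $\langle v_i,z\rangle=h_i$ for $i\in I\subseteq\sigma\subseteq\sigma(G)$ and $\langle v_j,z\rangle=h_j$, gives $h_j=\sum_{i\in I}\nu_i h_i$, contradicting the strict hyperbolic-circuit inequality $h_j<\sum_{i\in I}\nu_i h_i$ that holds because $h\in\int\ir(V)$ lies in the interior of the polyhedral cone $\clir(V)$, which is cut out by strict versions of the inequalities in Theorem~\ref{thm:IrV}, equation~\eqref{eqn:IrV3}.

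I expect this reverse inclusion --- ruling out a ``hidden'' facet through $G$ whose index is not in $\sigma$ --- to be the main obstacle, and it is exactly where the hypothesis $y\in\int\core_2(\bar V)$ is indispensable: if $h$ were only in $\clir(V)$, a hyperbolic circuit could hold with equality and the equivalence would fail. Everything else --- the dependence--evaluation duality of Gale diagrams and the description of normal cones --- is routine bookkeeping.
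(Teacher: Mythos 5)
Your proof is correct and follows essentially the same route as the paper's: both reduce the statement to the sign-pattern condition $(\ast)$ (equalities exactly on $\sigma$, strict inequalities off $\sigma$) and translate it through $\pi(e_i)=\bar v_i$ into $y\in\relint\pos(\bar V_{[n]\setminus\sigma})$. The only difference is one of detail: the paper asserts in a single line that $\relint F_\sigma(h)$ is characterized by this sign pattern, whereas you justify the nontrivial half of that assertion (that no facet $F_j$ with $j\notin\sigma$ can contain the face with normal cone $\pos(V_\sigma)$) via Carath\'eodory and the strict hyperbolic-circuit inequalities valid on $\int\ir(V)$ --- a careful elaboration of the same argument rather than a different method.
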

\begin{proof}
Let $F_\sigma(h)$ denote the face of $P(h)$ with the normal cone $\pos(V_\sigma)$. We have
$$
x \in \relint F_\sigma(h) \Leftrightarrow \langle v_i, x \rangle
\begin{cases}
= h_i, &\text{if }i \in \sigma\\
< h_i, &\text{if }i \notin \sigma
\end{cases}
\Leftrightarrow h - Vx \in \relint \R_+^{[n] \setminus \sigma}
$$
By applying the projection $\pi$, we obtain
$$
F_\sigma(h) \ne \emptyset \Leftrightarrow y \in \relint \pos(V_{[n] \setminus \sigma})
$$
But $F_\sigma(h) \ne \emptyset$ is equivalent to $\pos(V_\sigma) \in \cN(P(h))$, and we are done.
\end{proof}

Lemma \ref{lem:CombGale} allows to give a very concise description of the type cones of the vector configuration $V$.

\begin{dfn}
\label{dfn:Cha}
Let $W = (w_1, \ldots, w_n) \subset \R^m$ be a vector configuration. Two vectors $y_1$ and $y_2$ are said to lie in the same \emph{relatively open chamber}, if for every $I \subset [n]$ we have
$$
y_1 \in \pos(W_I) \Leftrightarrow y_2 \in \pos(W_I)
$$
The closure of a relatively open chamber is called a \emph{chamber}, and the collection of all chambers is called the \emph{chamber fan} $\Ch(W)$ of $W$.
% the collection of all intersections
% $$
% \pos(W_{I_1}) \cap \ldots \cap \pos(W_{I_k}), \quad I_1, \ldots, I_k \subset [n],
% $$
% possessing the property: for every $I \subset [n]$ such that the subconfiguration $W_I$ spans $\R^m$ the cone $\pos(W_I)$ either contains $\pos(W_{I_1}) \cap \ldots \cap \pos(W_{I_k})$ or is disjoint with it.
\end{dfn}
Equivalently, a relatively open chamber is an inclusion-minimal intersection of relative interiors of cones generated by $W$, and the chamber fan is the coarsest common refinement of all cones $\pos(W_I)$, $I \subset [n]$.

\begin{cor}
\label{cor:DeltaChamb}
Let $\Delta$ be a complete pointed fan with rays generated by $V$. Then $T(\Delta)$ is a relatively open chamber in the chamber fan $\Ch(\bar V)$.
\end{cor}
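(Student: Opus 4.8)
The plan is to identify $T(\Delta)$ with the relatively open chamber of $\Ch(\bar V)$ through any one of its points, using Lemma~\ref{lem:CombGale} to convert ``normally equivalent to $\Delta$'' into a purely combinatorial condition on the position of $\pi(h)$ relative to the cones $\pos(\bar V_I)$. Throughout I identify the geometric fan $\Delta$ with the collection $\{\sigma\subset[n]\mid\pos(V_\sigma)\in\Delta\}$, and I may assume $T(\Delta)\neq\emptyset$ (i.e.\ $\Delta$ polytopal), otherwise there is nothing to prove.

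First I would record the setup. Since $\Delta$ is complete and pointed and has all of $V$ as its rays, any $h$ with $\cN(P(h))$ equal to $\Delta$ satisfies $\dim P(h)=d$ and $\dim F_i(h)=d-1$ for every $i$; hence by \eqref{eqn:IntIr} we get $\pi(h)\in\int\ir(V)$, and since $\int\ir(V)$ is open and contained in $\clir(V)=\core_2(\bar V)$ (by \eqref{eqn:IrV2}), in fact $T(\Delta)\subset\int\core_2(\bar V)$. Now for any $y=\pi(h)\in\int\core_2(\bar V)$, Lemma~\ref{lem:CombGale} says that the abstract normal fan of $P(h)$ is
$$
\Delta_y:=\{\sigma\subset[n]\mid y\in\relint\pos(\bar V_{[n]\setminus\sigma})\},
$$
and therefore $T(\Delta)=\{y\in\int\core_2(\bar V)\mid\Delta_y=\Delta\}$.

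Next, fix $y_1\in T(\Delta)$, let $\mathcal C$ be the relatively open chamber of $\Ch(\bar V)$ containing it, and show $T(\Delta)=\mathcal C$. By the description of relatively open chambers as inclusion-minimal intersections of relative interiors of cones generated by $\bar V$ (the remark following Definition~\ref{dfn:Cha}), $y_1$ and $y_2$ lie in the same relatively open chamber if and only if they belong to exactly the same cones $\relint\pos(\bar V_I)$, $I\subset[n]$. For $T(\Delta)\subset\mathcal C$: if $y_2\in T(\Delta)$ then $\Delta_{y_1}=\Delta_{y_2}=\Delta$, and setting $I=[n]\setminus\sigma$ this says precisely that $y_1$ and $y_2$ meet the same cones $\relint\pos(\bar V_I)$, hence $y_2\in\mathcal C$. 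For $\mathcal C\subset T(\Delta)$: if $y_2\in\mathcal C$, then $y_2$ lies in $\relint\pos(\bar V_I)$ exactly when $y_1$ does; taking $I=[n]\setminus\{i\}$ and using $\int\core_2(\bar V)=\bigcap_i\relint\pos(\bar V\setminus\{\bar v_i\})$ (which follows from $\core_2(\bar V)=\bigcap_i\pos(\bar V\setminus\{\bar v_i\})$ by monotonicity and openness of the interior, each $\pos(\bar V\setminus\{\bar v_i\})$ being full dimensional by part~2 of Lemma~\ref{lem:GaleProp} since $v_i\neq 0$) we conclude $y_2\in\int\core_2(\bar V)$; then Lemma~\ref{lem:CombGale} applies to $y_2$ and gives $\Delta_{y_2}=\Delta_{y_1}=\Delta$, so $y_2\in T(\Delta)$.

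I expect the only genuinely delicate point to be the inclusion $\mathcal C\subset T(\Delta)$: Lemma~\ref{lem:CombGale} is available only for points of $\int\core_2(\bar V)$, so one must check that $\mathcal C$ does not stick out of $\int\core_2(\bar V)$. This is exactly what the identity $\int\core_2(\bar V)=\bigcap_i\relint\pos(\bar V\setminus\{\bar v_i\})$ takes care of, since it exhibits membership in $\int\core_2(\bar V)$ as a conjunction of conditions of the form $y\in\relint\pos(\bar V_I)$, hence a property constant on each relatively open chamber. Everything else is routine bookkeeping with the two equivalent formulations of ``same relatively open chamber''.
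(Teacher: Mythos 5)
Your proof is correct and follows essentially the same route as the paper's: it converts equality of normal fans into equality of memberships in the relative interiors of the cones $\pos(\bar V_I)$ via Lemma~\ref{lem:CombGale}, and then identifies this with the chamber condition of Definition~\ref{dfn:Cha} using the equivalence between open- and closed-cone memberships. The only difference is that you explicitly verify, via $\int\core_2(\bar V)=\bigcap_i\relint\pos(\bar V_{[n]\setminus i})$, that the whole chamber stays inside $\int\core_2(\bar V)$ so that Lemma~\ref{lem:CombGale} applies to every point of it --- a point the paper leaves implicit.
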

Indeed, two points $y_1, y_2 \in \R^{n-d}$ belong to the same type cone if and only if the corresponding polytopes have the same normal fan. By Lemma \ref{lem:CombGale} this is equivalent to $y_1$ and $y_2$ belonging to the same collections of relatively open cones $\pos(\bar V_I)$. Since relatively open and closed cones are related through the $\cup$ and $\setminus$ operations, this is equivalent to $y_1$ and $y_2$ belonging to the same collections of closed cones.
% \begin{proof}
% It suffices to show that for every $I \subset [n]$ we have either
% $$
% T(\Delta)/\im V \subset \relint \pos (\bar V_I) \text{ or } T(\Delta)/\im V \cap \relint \pos (\bar V_I) = \emptyset
% $$
% If $T(\Delta)/\im V \cap \relint \pos (\bar V_I) \ne \emptyset$, then there exists an $h \in T(\Delta)$ such that $\pi(h) \in \relint \pos (\bar V_I)$. By Lemma \ref{lem:CombGale}, this means that $[n] \setminus I \in \Delta$. Then the same lemma also implies that $T(\Delta)/\im V \subset \relint \pos (\bar V_I)$, and we are done.
% \end{proof}

\begin{lem}
Let $\Delta$ be a complete pointed fan with rays generated by $V$. Then
\begin{equation}
\label{eqn:TChamber}
T(\Delta) = \bigcap_{\sigma \in \Delta} \relint \pos (\bar V_{[n]\setminus\sigma}) = \bigcap_{\rho \in \Delta^{(d)}} \relint \pos (\bar V_{[n]\setminus\rho})
\end{equation}
In particular, the fan $\Delta$ is polytopal if and only if the intersection on the right hand side is non-empty.
\end{lem}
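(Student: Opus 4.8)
The plan is to deduce both equalities in \eqref{eqn:TChamber} from the cycle of inclusions
$$
T(\Delta)\ \subseteq\ \bigcap_{\sigma\in\Delta}\relint\pos(\bar V_{[n]\setminus\sigma})\ \subseteq\ \bigcap_{\rho\in\Delta^{(d)}}\relint\pos(\bar V_{[n]\setminus\rho})\ \subseteq\ T(\Delta),
$$
the middle one being trivial because $\Delta^{(d)}\subseteq\Delta$. Once the equalities are known, the final assertion is immediate: $\Delta$ is polytopal iff $T(\Delta)\neq\emptyset$, which is now the nonemptiness of the right-hand intersection.

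For the first inclusion I would start from $y=\pi(h)\in T(\Delta)$, so that $P(h)$ is a $d$-polytope (since $\Delta$ is pointed and complete) whose normal fan is $\{\pos(V_\sigma):\sigma\in\Delta\}$ and in which every $v_i$ is a genuine facet normal (since $\Delta^{(1)}=V$); hence $\dim P(h)=d$ and $\dim F_i(h)=d-1$ for all $i$, and Lemma~\ref{lem:IntCoIr} together with Theorem~\ref{thm:IrV} gives $y\in\int\ir(V)\subseteq\int\core_2(\bar V)$. Lemma~\ref{lem:CombGale} is then applicable, and since $\pos(V_\sigma)\in\cN(P(h))$ for each $\sigma\in\Delta$ it yields $y\in\relint\pos(\bar V_{[n]\setminus\sigma})$ for every such $\sigma$.

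The substance of the proof is the third inclusion. Let $y$ lie in the intersection over $\Delta^{(d)}$. The first step is to place $y$ in $\int\core_2(\bar V)$, so that Lemma~\ref{lem:CombGale} can again be used. For a fixed $i$ put $R_i=\{\rho\in\Delta^{(d)}:i\in\rho\}$, nonempty by completeness of $\Delta$; invoking the standard fact that in a complete fan a cone equals the intersection of the maximal cones having it as a face, applied to the ray $\pos(v_i)$, one gets $\bigcap_{\rho\in R_i}\pos(V_\rho)=\pos(v_i)$, and then — using that the $v_i$ are pairwise distinct unit vectors — that $[n]\setminus\{i\}=\bigcup_{\rho\in R_i}\bigl([n]\setminus\rho\bigr)$. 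Because $\pos$ of a union of vector sets is the Minkowski sum of the corresponding cones and the relative interior of a finite Minkowski sum of convex sets is the Minkowski sum of the relative interiors, writing $y=\sum_{\rho\in R_i}\tfrac1{|R_i|}y$ and using scale-invariance of the cones shows $y\in\relint\pos(\bar V\setminus\{\bar v_i\})$. Since $\{v_i\}$ is linearly independent, Lemma~\ref{lem:GaleProp}(2) makes $\pos(\bar V\setminus\{\bar v_i\})$ full-dimensional, so $y$ is in its topological interior; intersecting over all $i$ and using $\core_2(\bar V)=\bigcap_i\pos(\bar V\setminus\{\bar v_i\})$ gives $y\in\int\core_2(\bar V)$.

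With this in hand I would fix $h$ over $y$ and set $P=P(h)$: since $\core_2(\bar V)\subseteq\co(V)$, Lemma~\ref{lem:IntCoIr} gives $\dim P=d$, and $\cN(P)$ is complete because $V$ positively spans $\R^d$; by Lemma~\ref{lem:CombGale} we get $\pos(V_\rho)\in\cN(P)$ for every $\rho\in\Delta^{(d)}$. Thus the complete fan $\cN(P)$ contains all maximal cones of the complete fan $\{\pos(V_\sigma):\sigma\in\Delta\}$; as these already tile $\R^d$ and the cones of $\cN(P)$ meet along common faces, they must be exactly the maximal cones of $\cN(P)$, and since a complete fan is the set of all faces of its maximal cones, $\cN(P)=\{\pos(V_\sigma):\sigma\in\Delta\}$, i.e.\ $y\in T(\Delta)$. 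I expect the main obstacle to be exactly the combinatorial bookkeeping in this third inclusion — verifying $[n]\setminus\{i\}=\bigcup_{\rho\in R_i}([n]\setminus\rho)$ from the structure of a complete fan, and the argument that two complete fans, one containing the cones of the other, coincide — rather than anything analytic; everything else reduces to Lemma~\ref{lem:CombGale} and standard facts about relative interiors.
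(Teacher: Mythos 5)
Your proof is correct and follows the paper's argument essentially verbatim: the left-to-middle inclusion via Lemma \ref{lem:CombGale}, the trivial middle-to-right inclusion, and the right-to-left inclusion by first placing $y$ in $\int\core_2(\bar V)=\int\ir(V)$, then applying Lemma \ref{lem:CombGale} to every $\rho\in\Delta^{(d)}$ and observing that these cones already tile $\R^d$, so $\cN(P(h))$ has no other maximal cones. The only deviation is that your Minkowski-sum/relative-interior argument spells out why $y\in\int\pos(\bar V_{[n]\setminus i})$ even when maximal cones of $\Delta$ are non-simplicial, a detail the paper dispatches with the one-line remark that $\rank\bar V_{[n]\setminus i}=n-d$.
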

\begin{proof}
By Lemma \ref{lem:CombGale}, the left hand side of \eqref{eqn:TChamber} is a subset of the middle. Clearly, the middle is a subset of the right hand side.

Let us show that the right hand side is a subset of the left hand side. Let $\pi(h) \in \relint \pos (\bar V_{[n]\setminus\rho})$ for all $\rho \in \Delta^{(d)}$. Since for every $i$ there is $\rho \in \Delta^{(d)}$ such that $i \in \rho$, we have
$$
\bigcap_{\rho \in \Delta^{(d)}} \relint \pos (\bar V_{[n]\setminus\rho}) \subset \bigcap_{i=1}^n \int \pos(\bar V_{[n] \setminus i}) = \int \core_2(\bar V) = \int \ir(V)
$$
(here $\rank \bar V_{[n] \setminus i} = n-d$ for all $i$ allows to replace $\relint$ by the absolute interior).
Hence by \eqref{eqn:IntIr} $P(h)$ is a $d$-polytope with $n$ facets having outer normals $v_1, \ldots, v_n$ and we are in a position to apply Lemma \ref{lem:CombGale}. It implies that $\pos(V_\rho) \in \cN(P(h))$ for all $\rho \in \Delta^{(d)}$. Since the cones $\pos(V_\rho)$ cover $\R^d$, the polytope $P(h)$ has no other full-dimensional normal cones, and thus $\cN(P(h)) = \Delta$.
\end{proof}

Note that if the fan $\Delta$ is simplicial, then we have $|\rho| = d$ for all $\rho \in \Delta^{(d)}$, and hence $|[n] \setminus \rho| = n-d$. According to Lemma \ref{lem:GaleProp}, the vector configuration $\bar V_{[n]\setminus\rho}$ is linearly independent, so that \eqref{eqn:TChamber} represents $T(\Delta)$ as a finite intersection of open simplicial $d$-cones.

\begin{rem}
Chambers of $\Ch(\bar V)$ contained in $\partial \clir(V) \cap \int \co(V)$ can be identified with type cones $T(\Delta)$ for complete pointed fans whose rays are generated by proper subsets of $V$. Chambers contained in $\int \co(V) \setminus \clir(V)$ are linearly isomorphic to $T(\Delta) \times \int \R_+^I$, where $I \subset [n]$ is the index set of redundant inequalities.
\end{rem}

Chambers outside $\int\ir(V)$ can be studied using the operations of \emph{contraction} and \emph{deletion} on vector configurations. It is easy to see that as a Gale dual of $V_{[n] \setminus i}$ (configuration obtained by deletion) one can take projections of vectors of $\bar V_{[n] \setminus i}$ along $\bar v_i$ (configuration obtained by contraction). According to that, the type cones $T(\Delta)$ with $\Delta^{(1)}$ generated by $V_{[n] \setminus i}$ are the chambers of the contracted Gale dual $\bar V_{[n]\setminus i}$. The contraction can be seen as a central projection in $\R^{n-d}$ from $\bar v_i$. This projection maps those chambers of $\Ch(\bar V) \cap \partial \clir(V)$ visible from $\bar v_i$ to the chambers of $\Ch(\bar V_{[n]\setminus i}) \cap \clir(V_{[n] \setminus i})$.

Chambers on the boundary of $\co(V)$ correspond to type cones of polytopes of dimension smaller than $d$. The facet normals of such a polytope $P$ are obtained from $V$ by contraction along vectors $v_i$ orthogonal to $\aff(P)$.

\begin{rem}
The chambers fan $\Ch(\bar V)$ is polytopal. Namely, Lemma \ref{lem:FanMink} and the observation after Definition \ref{dfn:Cha} implies that $\Ch(\bar V)$ is the normal fan of the Minkowski sum of representatives of all normal equivalence types with facet normals in $\bar V$. The fan $\Ch(\bar V)$ is called the \emph{secondary fan} of the vector configuration $V$, and any polyhedron that has $\Ch(\bar V)$ as its normal fan is called a \emph{secondary polyhedron} of $V$, \cite{BFS90, BGS93}.

If the cone $\pos(V)$ is pointed, then by Lemma \ref{lem:GaleProp} $\bar V$ is positively spanning; thus a secondary polyhedron is compact and is called a \emph{secondary polytope}. The interest in secondary polytopes was aroused by their applications in algebraic geometry, see the book \cite{GKZ94} by Gel'fand, Kapranov, and Zelevinsky.
\end{rem}

\begin{rem}
\label{rem:Monotypic}
Polytopes whose facet normals determine their normal equivalence class are called monotypic.
For such polytopes,
$$
\clir(V) = \cl T(\Delta),
$$
for a unique polytopal fan $\Delta$ with $\Delta^{(1)} = V$.
For example, polygons and polygonal prisms (including  parallelepipeds) are monotypic. Monotypic polytopes are studied in \cite{MSS74}. 
\end{rem}

\begin{rem}
\label{rem:AnyType}
%  \marginpar{edit} Question: Given the combinatorics of a convex polyhedral cone, does it exists a fan $\Delta$ such that
%  $\cl T(\Delta)$ has the given combinatorics?
%  For $d=2$ see Example~\ref{ex:poly1}.
%  For $d=3$, if  $\cl T(\Delta)\subset \R^{n-3}$ has non-empty interior, $P(h)$ with $h\in\cl T(\Delta)$
%  is simple that gives the condition $\frac{2}{3}E=V$ added to the Euler formula $F-E+V=2$. With 
%  $F=n$, we get that the number of edges is $3n-6$. By Lemma~\ref{lem:TypeConeIneq},  $\cl T(\Delta)$ has at most 
%  $3n-6$ facets. \marginpar{higher dimensions?   estimations on the number of facets of  $\cl T(\Delta)$.)}
One can show that any convex polyhedral cone is combinatorially (and even affinely) isomorphic to a chamber of some chamber fan $\Ch(W)$. By duplicating each vector in $W$ we obtain $\core_2(W) = \pos(W)$, so that all chambers lie in the $2$-core. It follows that every convex polyhedral cone can be realized as a type cone of some vector configuration.
\end{rem}

\begin{rem}
When the vector configuration $V$ is positively spanning, its Gale dual $\bar V$ can be represented by a point configuration in an affine hyperplane $A \subset \R^{n-d}$, see the paragraphs preceding Remark \ref{rem:AffGale}. Then the linear functionals $\mu$ on $\R^{n-d}$ are replaced by affine functions on $A$, in particular in the definition of the $k$-core.
\end{rem}

\subsubsection{Facets of $\clir(V)$ and truncated polytopes}
Recall that $C \subset [n]$ is a circuit of $V$ if and only if $[n] \setminus C$ is a cocircuit of $\bar V$, that is the index set of a maximal non-spanning subconfiguration. This subconfiguration spans a hyperplane which is the kernel of the linear functional $\mu^C$, see the beginning of Section \ref{sec:PosCirc}.

By \eqref{eqn:IrV4}, every facet of $\clir(V) = \core_2(\bar V)$ is determined by a positive or a hyperbolic circuit of $V$. The corresponding hyperplane in $\R^{n-d}$ spanned by $\bar V_{[n] \setminus C}$ has all of $\bar V_C$ on one of its sides if $C$ is positive, and separates $\bar v_{p(C)}$ from $\bar V_{C^-}$ if $C$ is hyperbolic.

Although every facet corresponds to a cocircuit, not every cocircuit (even not every hyperbolic one) corresponds to a facet of $\core_2(\bar V)$, as the following example shows.

\begin{exl}
The point configuration on Figure \ref{fig:5gonExl}, right has $3$ positive and $7$ hyperbolic cocircuits. None of the positive cocircuits and only $3$ of the hyperbolic cocircuits determine a facet of its 2-core.

\begin{figure}[ht]
\centering
\begin{picture}(0,0)%
\includegraphics{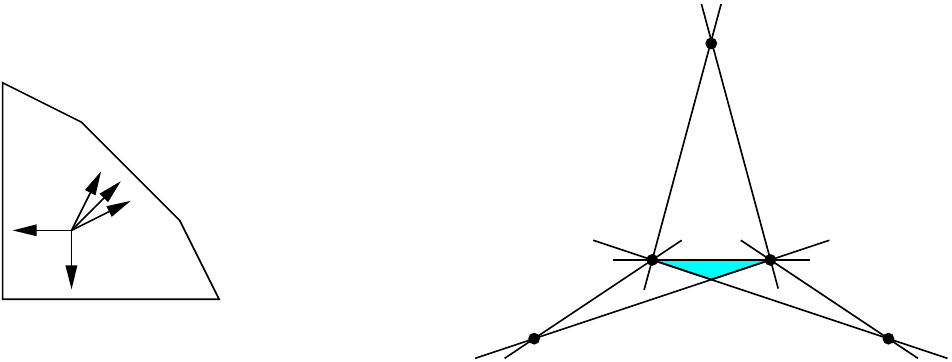}%
\end{picture}%
\setlength{\unitlength}{4144sp}%
\begingroup\makeatletter\ifx\SetFigFont\undefined%
\gdef\SetFigFont#1#2#3#4#5{%
  \reset@font\fontsize{#1}{#2pt}%
  \fontfamily{#3}\fontseries{#4}\fontshape{#5}%
  \selectfont}%
\fi\endgroup%
\begin{picture}(4344,1644)(-191,-613)
\end{picture}%
\caption{A configuration of five vectors in $\R^2$ and its affine Gale dual with a shaded $2$-core.}
\label{fig:5gonExl}
\end{figure}

This point configuration is the affine Gale dual of the five vectors in $\R^2$ shown on Figure \ref{fig:5gonExl}, left. Thus the points of the shaded triangle on the right parametrize the space of pentagons with normals as that on the left.
\end{exl}

In the next example every positive and every hyperbolic cocircuit determines a facet of the $2$-core.

\begin{exl}
The 2-core of the vertices of a dodecahedron is the truncated icosahedron. Every hyperbolic cocircuit gives rise to a hexagonal facet, and every positive cocircuit to a pentagonal facet, see Figure \ref{fig:DodecaCore}.

\begin{figure}[ht]
\centering
\includegraphics{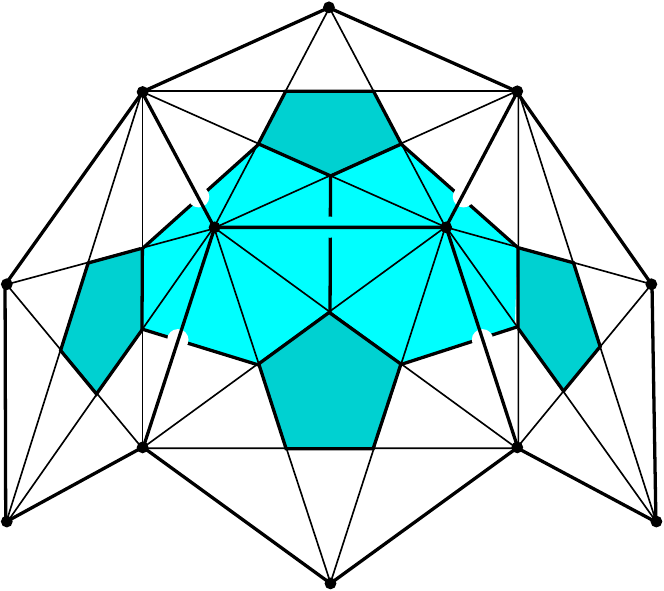}
\caption{A fragment of the $2$-core of the vertices of the dodecahedron.}
\label{fig:DodecaCore}
\end{figure}

Vertices of a dodecahedron correspond to a configuration of $12$ vectors in $\R^4$ which is Gale dual to a configuration of $12$ vectors in $\R^8$.
\end{exl}

\begin{dfn}
\label{dfn:PolTrunc}
Let $P \subset \R^d$ be a $d$-polytope, and $F$ be a proper face of $P$. We say that a polytope $P'$ is obtained from $P$ by \emph{truncating} the face $F$ if
$$
P' = P \cap \{x \in \R^d \mid \langle v, x \rangle \le h_P(v) - \epsilon\},
$$
where $v$ is any unit vector in $\relint N_F(P)$, and $\epsilon > 0$ is sufficiently small (so small that $\Vert(P) \setminus \Vert(F) \subset \Vert(P')$, $\Vert$ denoting the set of vertices).
\end{dfn}

Note that $P \cap \{x \in \R^d \mid \langle v, x \rangle = h_P(v)\} = F$. Thus geometrically truncation means pushing a supporting hyperplane of $P$ at $F$ inwards. The operation of truncating an edge of a $3$-polytope is shown on Figure \ref{fig:TruncEdge}.

\begin{figure}[ht]
\centering
\begin{picture}(0,0)%
\includegraphics{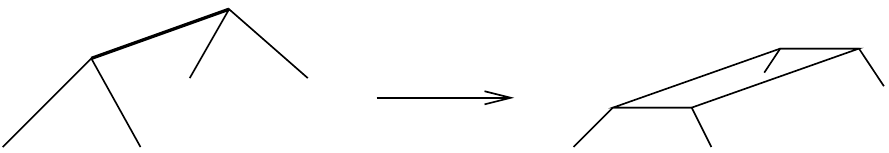}%
\end{picture}%
\setlength{\unitlength}{4144sp}%
\begingroup\makeatletter\ifx\SetFigFont\undefined%
\gdef\SetFigFont#1#2#3#4#5{%
  \reset@font\fontsize{#1}{#2pt}%
  \fontfamily{#3}\fontseries{#4}\fontshape{#5}%
  \selectfont}%
\fi\endgroup%
\begin{picture}(4053,689)(484,-537)
\put(1320,-473){\makebox(0,0)[lb]{\smash{{\SetFigFont{10}{12.0}{\rmdefault}{\mddefault}{\updefault}{\color[rgb]{0,0,0}$P$}%
}}}}
\put(4014,-482){\makebox(0,0)[lb]{\smash{{\SetFigFont{10}{12.0}{\rmdefault}{\mddefault}{\updefault}{\color[rgb]{0,0,0}$P'$}%
}}}}
\put(1081, 29){\makebox(0,0)[lb]{\smash{{\SetFigFont{10}{12.0}{\rmdefault}{\mddefault}{\updefault}{\color[rgb]{0,0,0}$F$}%
}}}}
\end{picture}%
\caption{Truncating an edge of a $3$-polytope.}
\label{fig:TruncEdge}
\end{figure}

For any positive or hyperbolic circuit $C$ of $V$ denote
\begin{equation}
\label{eqn:ClirFacet}
\clir^C(V) = \clir(V) \cap \{y \in \R^{n-d} \mid \langle \mu^C, y \rangle = 0\}
\end{equation}

\begin{lem}
\label{lem:BdryIr}
Let $C = \{p(C)\} \cup C^-$ be a hyperbolic circuit of $V$ such that $\clir^C(V)$ is a facet of $\clir(V)$. Then for every $h \in \R^n$ such that $\pi(h) \in \relint \clir^C(V)$ the following holds.
\begin{enumerate}
\item
$P(h)$ is a $d$-dimensional polytope with outer facet normals $V_{[n] \setminus p(C)}$;
\item
$\pos(V_{C^-}) \in \cN(P(h))$;
\item
$P(h - \epsilon e_{p(C)})$ is obtained from $P(h)$ by truncating the face $F_{C^-}$, provided that $\epsilon > 0$ is sufficiently small.
\end{enumerate}
\end{lem}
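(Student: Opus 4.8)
The plan is to treat the three claims in turn, translating throughout between the primal system $\langle v_i,x\rangle\le h_i$ defining $P(h)$ and the Gale-dual picture inside $\core_2(\bar V)=\clir(V)$; write $H=\{y\in\R^{n-d}\mid\langle\mu^C,y\rangle=0\}$ for the hyperplane containing the facet $\clir^C(V)$. To prove (1), note first that since $C$ is \emph{hyperbolic}, $\bar v_{p(C)}$ and the vectors $\bar v_i$ ($i\in C^-$) lie on opposite sides of $H$ inside $\co(V)=\pos(\bar V)$, so $H$ is not a supporting hyperplane of $\co(V)$; as $\clir^C(V)$ is a facet of $\clir(V)$, a relative interior point of it lies on no proper face of $\clir(V)$ other than $\clir^C(V)$, hence not on $\partial\co(V)$. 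Thus $\pi(h)\in\int\co(V)$ and $\dim P(h)=d$ by \eqref{eqn:IntCo}. Next, $\pi(h)\in\relint\clir^C(V)$ means $h\in\wclir(V)$ with $h_{p(C)}=\sum_{i\in C^-}\lambda^{C^-}_i h_i$ (the facet equation from \eqref{eqn:IrV3}) and all remaining inequalities of \eqref{eqn:IrV3} strict; substituting this into $v_{p(C)}=\sum_{i\in C^-}\lambda^{C^-}_iv_i$ shows the $p(C)$-th inequality follows from those indexed by $C^-$, hence is redundant for $P(h)$. If some $i\ne p(C)$ were also redundant, the affine Farkas lemma would give $v_i=\sum_{j\in S}c_jv_j$ with $c_j>0$ and $\sum_{j\in S}c_jh_j\le h_i$; a minimal-support such $S$ makes $S\cup\{i\}$ a circuit, necessarily hyperbolic with positive index $i$, say $C'$, and then $h_i\ge\langle\lambda^{C'^-},h\rangle$ forces, via \eqref{eqn:IrV3}, equality, so $\pi(h)$ also lies on the face $\clir^{C'}(V)$; but $C'\ne C$ would mean a different supporting hyperplane, contradicting $\pi(h)\in\relint\clir^C(V)$. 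Hence $p(C)$ is the unique redundant index, and together with $\dim P(h)=d$ this gives (1).

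For (2), the key inclusion is $\clir^C(V)=\core_2(\bar V)\cap H\subseteq\pos(\bar V_{[n]\setminus C})$: the factor $\pos(\bar V\setminus\{\bar v_{p(C)}\})$ of $\core_2(\bar V)$ meets $H$ exactly in $\pos(\bar V_{[n]\setminus C})$, since $\langle\mu^C,\bar v_j\rangle<0$ for $j\in C^-$ forces the coefficients of those vectors to vanish. As $[n]\setminus C$ is a cocircuit, $\bar V_{[n]\setminus C}$ spans $H$, so $\pos(\bar V_{[n]\setminus C})$ has affine hull $H$; and $\clir^C(V)$, being a facet, also has affine hull $H$. Therefore $\relint\clir^C(V)\subseteq\relint\pos(\bar V_{[n]\setminus C})$, and by the Gale correspondence used in the proof of Lemma \ref{lem:CombGale} (applied with $\sigma=C$) the set $F_C(h)=\{x\in P(h)\mid\langle v_i,x\rangle=h_i\ \forall i\in C,\ \langle v_j,x\rangle<h_j\ \forall j\notin C\}$ is non-empty. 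A point of $F_C(h)$ lies in the relative interior of a face whose normal cone is $\pos(V_C)$; since $v_{p(C)}=\sum_{i\in C^-}\lambda^{C^-}_iv_i$ with positive coefficients, $v_{p(C)}\in\relint\pos(V_{C^-})$ and hence $\pos(V_C)=\pos(V_{C^-})\in\cN(P(h))$. Denote this face by $F_{C^-}(h)$.

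For (3), I would apply Definition \ref{dfn:PolTrunc} with $v=v_{p(C)}$, which has unit length and lies in $\relint\pos(V_{C^-})=\relint N_{F_{C^-}(h)}(P(h))$. By Lemma \ref{lem:SuppFunc} and the facet equation, $h_{P(h)}(v_{p(C)})=\langle v_{p(C)},x\rangle=\sum_{i\in C^-}\lambda^{C^-}_ih_i=h_{p(C)}$ for any $x\in F_{C^-}(h)$. Since by (1) the $p(C)$-th inequality is redundant, $P(h)$ is cut out by the inequalities $\langle v_j,x\rangle\le h_j$, $j\ne p(C)$, so
$$
P(h)\cap\{x\mid\langle v_{p(C)},x\rangle\le h_{P(h)}(v_{p(C)})-\epsilon\}=P(h-\epsilon e_{p(C)}).
$$
Moreover $F_{C^-}(h)=P(h)\cap\{\langle v_{p(C)},\cdot\rangle=h_{p(C)}\}$ (again because all $\lambda^{C^-}_i>0$), so every vertex of $P(h)$ outside $F_{C^-}(h)$ satisfies $\langle v_{p(C)},\cdot\rangle<h_{p(C)}$ strictly and therefore survives in $P(h-\epsilon e_{p(C)})$ for $\epsilon$ small; this is exactly the genericity condition of Definition \ref{dfn:PolTrunc}, so $P(h-\epsilon e_{p(C)})$ is obtained from $P(h)$ by truncating $F_{C^-}(h)$.

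The step I expect to require the most care is the uniqueness of the redundant index in (1): converting redundancy of an inequality into a hyperbolic circuit via Farkas and then using the facet structure of $\clir(V)$ to force that circuit to be $C$. The dimension bookkeeping in (2) — that $\relint$ respects the inclusion $\clir^C(V)\subseteq\pos(\bar V_{[n]\setminus C})$ — also relies on the observation that both cones have affine hull $H$, without which relative interiors need not be comparable.
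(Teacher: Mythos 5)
Your proposal is correct, and for parts (2) and (3) it is essentially the paper's argument with the implicit steps written out: the inclusion $\relint\clir^C(V)\subset\relint\pos(\bar V_{[n]\setminus C})$ (the paper states it, you justify the equality of affine hulls that makes it legitimate), the adaptation of the computation from Lemma~\ref{lem:CombGale} to a boundary point, and the verification that $v_{p(C)}\in\relint\pos(V_{C^-})$ with $h_P(v_{p(C)})=h_{p(C)}$ matches Definition~\ref{dfn:PolTrunc}. Where you genuinely diverge is part (1). The paper argues directly on the dual side: for $\dim P(h)=d$, the $i$-th inequality defines a facet iff $\pi(h)\in\int\pos(\bar V_{[n]\setminus i})$ (its equation \eqref{eqn:IntPos}), and membership in $\relint\clir^C(V)$ yields this for all $i\neq p(C)$ and denies it for $i=p(C)$. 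You instead work on the primal side: redundancy of the $p(C)$-th inequality from the facet equation, and, for $i\neq p(C)$, an affine Farkas certificate converted into a hyperbolic circuit $C'$ with positive index $i$, whose \eqref{eqn:IrV3}-inequality must then be tight, forcing $\pi(h)$ onto a second face and contradicting relative interiority. Both routes work; the paper's is shorter because the criterion \eqref{eqn:IntPos} does all the bookkeeping at once, while yours makes visible the combinatorial mechanism (each redundant index produces its own circuit hyperplane), which is closer in spirit to how \eqref{eqn:IrV3} was derived in Theorem~\ref{thm:IrV}.

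One step of yours needs a short patch: the claim that a minimal-support Farkas certificate $v_i=\sum_{j\in S}c_jv_j$ with $\sum_{j\in S}c_jh_j\le h_i$ automatically makes $V_S$ independent (hence $S\cup\{i\}$ a circuit). Support reduction along a dependence $\sum_{j\in S}a_jv_j=0$ can fail if the only admissible direction has all $a_j$ of one sign; that degenerate case is excluded precisely because $-a$ would then be a nonnegative dependence of $V$ with $\langle -a,h\rangle\le 0$, and $\pi(h)\in\co(V)$ together with \eqref{eqn:CoV3} forces $\langle -a,h\rangle=0$, after which the reduction goes through (equivalently, take a basic optimal solution of the linear program minimizing $\sum_j c_jh_j$ over nonnegative representations of $v_i$, bounded below again by \eqref{eqn:CoV3}). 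With that two-line addendum your part (1) is complete.
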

\begin{proof}
It is easy to show that for a hyperbolic circuit $C$, $\relint \clir^C(V) \subset \int \co(V)$. Thus $h \in \relint \clir^C(V)$ implies $\dim P(h) = d$.

Next, if $\dim P(h) = d$, then $\dim F_i(h) = d-1$ is equivalent to the compatibility of the system
$$
\langle v_i, x \rangle = h_i, \quad \langle v_j, x \rangle < h_j \ \forall j \ne i
$$
which by a standard argument is equivalent to
\begin{equation}
\label{eqn:IntPos}
\pi(h) \in \int \pos (\bar V_{[n] \setminus i})
\end{equation}
The assumption $\pi(h) \in \relint \clir^C(V)$ implies that \eqref{eqn:IntPos} holds for all $i \ne p(C)$, bud doesn't hold for $i = p(C)$. Hence $F_i(h)$ is a facet of $P(h)$ iff $i \ne p(C)$. This finishes the proof of part 1).

For part 2), observe that $\clir^C(V) \subset \pos(\bar V_{[n] \setminus C})$, which implies
$$
\relint \clir^C(V) \subset \relint \pos(\bar V_{[n] \setminus C})
$$
Then the argument from the proof of Lemma \ref{lem:CombGale} implies that $\pos(V_{C^-})$ belongs to the normal fan.

Finally, the hyperbolic circuit relation \eqref{eqn:HypCircuit} implies $v_{p(C)} \in \relint \pos(V_{C^-})$. Thus by Definition \ref{dfn:PolTrunc} $P(h - \epsilon e_{p(C)})$ is obtained from $P(h)$ by truncating $F_{C^-}$.
\end{proof}

\begin{lem}
\label{lem:Bdry2Ir}
Let $C_1$ and $C_2$ be two hyperbolic circuits such that $\clir^{C_1}(V)$ and $\clir^{C_2}(V)$ are two facets of $\clir(V)$ intersecting along a codimension 2 face of $\clir(V)$. Then for every $h \in \R^n$ such that $\pi(h) \in \relint (\clir^{C_1}(V) \cap \clir^{C_2}(V))$ the following holds.
\begin{enumerate}
\item
$P(h)$ is a $d$-dimensional polytope with outer facet normals $V_{[n] \setminus \{p_1,p_2\}}$, where $p_i$ is the positive index of $C_i$, $i = 1, 2$;
\item
if $p_1 \notin C_2$ and $p_2 \notin C_1$ (in particular, $p_1 \ne p_2$), then
$\pos(V_{C_i^-}) \in \cN(P(h))$ for $i = 1, 2$;
\item
under the assumptions
$$
p_1 \notin C_2, p_2 \notin C_1, \quad \pos(V_{C_1^- \cup C_2^-}) \notin \cN(P(h))
$$
the polytopes $P(h - \epsilon_1 e_{p_1} - \epsilon_2 e_{p_2})$ for all sufficiently small $\epsilon_1, \epsilon_2 > 0$ are obtained from $P(h)$ by independentely truncating the faces $F_{C_1^-}$ and $F_{C_2^-}$.
\end{enumerate}
Here truncations of two faces are called independent if the truncated parts are disjoint.
\end{lem}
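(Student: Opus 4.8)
The plan is to run the proof of Lemma~\ref{lem:BdryIr} with the two hyperbolic circuits $C_1,C_2$ handled simultaneously. Write $G:=\clir^{C_1}(V)\cap\clir^{C_2}(V)$ for the codimension~$2$ face in question, fix $h\in\R^n$ with $\pi(h)\in\relint G$, and let $\widetilde h$ be the support function of $P(h)$. For part~(1), I would first note, as in Lemma~\ref{lem:BdryIr}, that $\relint G\subset\int\co(V)$ --- the facets of $\co(V)$ that touch $\clir(V)$ come from positive circuits, so a codimension~$2$ face cut out by two hyperbolic facets cannot lie on $\partial\co(V)$ --- whence $\dim P(h)=d$. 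Once $\dim P(h)=d$, the argument recalled in the proof of Lemma~\ref{lem:BdryIr} (reducing to Lemma~\ref{lem:CombGale}) says that $F_i(h)$ is a facet of $P(h)$ iff $\pi(h)\in\int\pos(\bar V_{[n]\setminus i})$. Since $\pi(h)$ lies in the relative interior of $G$, it lies on the two facet hyperplanes $\{\langle\mu^{C_1},\cdot\rangle=0\}$ and $\{\langle\mu^{C_2},\cdot\rangle=0\}$ of $\core_2(\bar V)=\clir(V)$ and in the relative interior of no other facet; plugging this into the description \eqref{eqn:IrV2}--\eqref{eqn:IrV4} exactly as in Lemma~\ref{lem:BdryIr}, where a single hyperbolic facet produced a single non-facet index, one finds that $\pi(h)\in\int\pos(\bar V_{[n]\setminus i})$ fails precisely for $i=p_1$ and $i=p_2$. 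Hence $P(h)$ is a $d$-polytope with outer facet normals $V_{[n]\setminus\{p_1,p_2\}}$.

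For part~(2), assume $p_1\notin C_2$ and $p_2\notin C_1$. As in Lemma~\ref{lem:BdryIr} one has $\clir^{C_i}(V)\subset\pos(\bar V_{[n]\setminus C_i})$, and since $\clir^{C_i}(V)$ is a facet of $\clir(V)$ it is full-dimensional in the hyperplane $\span(\bar V_{[n]\setminus C_i})=\ker\mu^{C_i}$, so $\relint\clir^{C_i}(V)\subset\relint\pos(\bar V_{[n]\setminus C_i})$. The point $\pi(h)$ lies on the relative boundary of $\clir^{C_i}(V)$, so this is not yet enough; here the hypothesis $p_j\notin C_i$ ($j\ne i$) is used. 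It forces $\langle\mu^{C_i},\bar v_{p_j}\rangle=0$, and a short combinatorial argument then shows that the second facet hyperplane $\{\langle\mu^{C_j},\cdot\rangle=0\}$ meets $\span(\bar V_{[n]\setminus C_i})$ through the interior of the cone $\pos(\bar V_{[n]\setminus C_i})$ and not along one of its bounding facets; consequently $\pi(h)\in\relint\pos(\bar V_{[n]\setminus C_i})$ for $i=1,2$. Running the argument from the proof of Lemma~\ref{lem:CombGale} with $\sigma=C_i$ yields $\pos(V_{C_i})\in\cN(P(h))$, and since $C_i$ is hyperbolic, \eqref{eqn:HypCircuit} gives $v_{p_i}\in\relint\pos(V_{C_i^-})$, so $\pos(V_{C_i})=\pos(V_{C_i^-})$; thus $\pos(V_{C_i^-})\in\cN(P(h))$ for $i=1,2$.

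For part~(3), assume in addition $\pos(V_{C_1^-\cup C_2^-})\notin\cN(P(h))$. As $\pi(h)\in\clir(V)$, each inequality $\langle v_{p_i},x\rangle\le h_{p_i}$ is tight on the non-empty face $F_{p_i}(h)$, hence $h_{p_i}=\widetilde h(v_{p_i})$ by the remark after Theorem~\ref{thm:IrV}; by part~(2) and \eqref{eqn:HypCircuit} we have $v_{p_i}\in\relint\pos(V_{C_i^-})=\relint N_{F_{C_i^-}(h)}(P(h))$, so the face of $P(h)$ maximizing $\langle v_{p_i},\cdot\rangle$ is exactly $F_{C_i^-}(h)$. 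Therefore Definition~\ref{dfn:PolTrunc} applies, and for small $\epsilon_i>0$
$$
P(h-\epsilon_i e_{p_i})=P(h)\cap\{x\mid\langle v_{p_i},x\rangle\le h_{p_i}-\epsilon_i\}
$$
is $P(h)$ with $F_{C_i^-}(h)$ truncated. Intersecting both half-spaces gives $P(h-\epsilon_1 e_{p_1}-\epsilon_2 e_{p_2})$, which truncates both $F_{C_1^-}(h)$ and $F_{C_2^-}(h)$; the hypothesis $\pos(V_{C_1^-\cup C_2^-})\notin\cN(P(h))$ means that these two faces have no common face of $P(h)$, i.e.\ are disjoint, so for all sufficiently small $\epsilon_1,\epsilon_2>0$ the two removed caps are disjoint, which is exactly the asserted independence.

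The main obstacle is the relative-interior statement in part~(2): promoting $\clir^{C_i}(V)\subset\pos(\bar V_{[n]\setminus C_i})$ to $\pi(h)\in\relint\pos(\bar V_{[n]\setminus C_i})$ for $\pi(h)$ on the codimension~$2$ face $G$, and identifying precisely why the two conditions $p_1\notin C_2$ and $p_2\notin C_1$ suffice. This is purely a statement about how the cocircuit hyperplanes of $\core_2(\bar V)$ sit relative to the cones $\pos(\bar V_{[n]\setminus C})$, i.e.\ the codimension~$2$ refinement of the bookkeeping already carried out for a single facet in Lemma~\ref{lem:BdryIr}. Granting that, part~(3) reduces to the routine check that ``sufficiently small'' can be taken uniformly --- no vertex outside $F_{C_1^-}(h)\cup F_{C_2^-}(h)$ is cut off, and the two truncated caps stay apart --- which follows once the two faces are known to be disjoint.
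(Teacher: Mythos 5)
Your plan follows the paper's proof quite closely: part (1) via ``a point of $\relint(\clir^{C_1}(V)\cap\clir^{C_2}(V))$ lies on exactly the two facets of $\clir(V)$'' plus the facet criterion $\pi(h)\in\int\pos(\bar V_{[n]\setminus i})$, and part (3) via ``$\pos(V_{C_1^-\cup C_2^-})\notin\cN(P(h))$ forces $F_{C_1^-}$ and $F_{C_2^-}$ to be disjoint, so small truncations are independent''; both of these match the paper and are fine. However, there is a genuine gap exactly where you flag ``the main obstacle'': the inclusion $\relint(\clir^{C_1}(V)\cap\clir^{C_2}(V))\subset\relint\pos(\bar V_{[n]\setminus C_i})$ is the actual content of part (2), it is the only place where the hypotheses $p_1\notin C_2$, $p_2\notin C_1$ are used, and you do not prove it --- you only assert that ``a short combinatorial argument'' shows the hyperplane $\ker\mu^{C_j}$ cuts through the interior of $\pos(\bar V_{[n]\setminus C_i})$. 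Without this step the proof of (2) (and hence the identification of $F_{C_i^-}$ needed in (3)) is incomplete. Moreover, the fact you single out, $\langle\mu^{C_i},\bar v_{p_j}\rangle=0$, is not the relevant consequence of the hypothesis.

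The way the paper closes this gap is short but different in mechanism: since $C_j$ is hyperbolic, $\mu^{C_j}$ is strictly positive on $\bar v_{p_j}$ and nonpositive on all other $\bar v_k$, so $\R_+\bar v_{p_j}$ is an extreme ray of $\pos\bar V$; the hypothesis $p_j\notin C_i$ is used to conclude that $\bar v_{p_j}$ is one of the generators of $\pos(\bar V_{[n]\setminus C_i})$, i.e.\ it lies in that cone and strictly on the positive side of the hyperplane $\ker\mu^{C_j}$ containing the codimension-$2$ face, while $\clir^{C_i}(V)$ (also contained in $\pos(\bar V_{[n]\setminus C_i})$, and of full dimension $n-d-1$ inside $\ker\mu^{C_i}$) lies on the nonpositive side with points strictly on it. Hence the hyperplane $\ker\mu^{C_i}\cap\ker\mu^{C_j}$ through $\clir^{C_1}(V)\cap\clir^{C_2}(V)$ cannot support $\pos(\bar V_{[n]\setminus C_i})$, so this codimension-$2$ face is not contained in any facet of $\pos(\bar V_{[n]\setminus C_i})$, which gives precisely the relative-interior inclusion you need. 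If you supply this (or an equivalent) argument, the rest of your write-up stands.
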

\begin{proof}
The first part is similar to that of Lemma \ref{lem:BdryIr}, if one notes that the relative interior of a codimension 2 face belongs to 2 faces only.

For the second part we have to prove
\begin{equation}
\label{eqn:Relint}
\relint (\clir^{C_1}(V) \cap \clir^{C_2}(V)) \subset \relint \pos(\bar V_{[n] \setminus C_i}) \quad \text{for }i = 1, 2
\end{equation}
In other words, for two cones $\clir^{C_1}(V) \subset \pos(\bar V_{[n] \setminus C_1})$ of the same dimension we have to show that the facet $\clir^{C_1}(V) \cap \clir^{C_2}(V)$ of the former is not contained in a facet of the latter (and the same with indices 1 and 2 exchanged). The ray $\R_+ \bar v_{p_2}$ is an extreme ray of $\pos \bar V$ and, since $p_2 \in [n] \setminus C_1$, also of $\pos(\bar V_{[n] \setminus C_1})$. Since $\R_+ v_{p_2}$ and $\clir^{C_1}(V)$ lie on different sides from the facet $\clir^{C_1}(V) \cap \clir^{C_2}(V)$, this facet cannot be contained in a facet of $\pos(\bar V_{[n] \setminus C_i})$, and we are done.

Finally, the third part is true because $\pos(V_{C_1^- \cup C_2^-}) \notin \cN(P(h))$ implies that the faces $F_{C_1^-}$ and $F_{C_2^-}$ of $P(h)$ are disjoint. Hence all small truncations of those faces are independent. The lemma is proved.
\end{proof}

\subsection{Examples}
\label{sec:Example}
In the following examples of vector configurations $V$ we analyze the closure $\clir(V)$ of the irredundancy domain and its decomposition into type cones. We are using both the direct approach (``what happens when the facets of a polytope are translated'') and the more formal one, through Gale diagram, circuits, and the chamber fan.

\subsubsection{Parallelepipeds with fixed face directions}
Let $(v_1, v_2, v_3)$ be a basis of $\R^3$, put $v_{i+3} = -v_i$, $i=1,2,3$, and consider the resulting configuration $V$ of six vectors in $\R^3$.

All polyhedra with facet normals $V$ are normally equivalent: they are parallelepipeds with face normals $\pm v_i$. The normal fan $\Delta$ is generated by a hyperplane arrangement spanned on the vectors $v_1, v_2, v_3$. 

The lifted type cone $\wT(\Delta)$ consists of $h \in \R^6$ that satisfy
\begin{equation}
\label{eqn:Par}
h_1 + h_4 > 0, \quad h_2 + h_5 > 0, \quad h_3 + h_6 > 0
\end{equation}
By identifying $\R^6/\im V$ with $\{h \in \R^6 \mid h_4 = h_5 = h_6 = 0\}$, we obtain
$$
\cl T(\Delta) \cong \{(h_1,h_2,h_3) \mid h_i \ge 0, i = 1, 2, 3\}
$$
The extreme rays of the cone $\cl T(\Delta)$ correspond to degeneration of a parallelepiped into a segment parallel to one of the three vectors $v_1 \times v_2$, $v_2 \times v_3$, $v_3 \times v_1$.

The Gale diagram of $V$ has the property $\bar v_{i+3} = \bar v_i$, $i = 1, 2, 3$. This fits together with the fact $\co(V) = \ir(V)$: the second core of a vector configuration where each vector is repeated twice is its convex hull, see Definition \ref{dfn:Core} and equation \eqref{eqn:IrV2}. The facets of the cone \eqref{eqn:Par} correspond to the three positive circuits $\{1,4\}$, $\{2,5\}$, and $\{3,6\}$.

\subsubsection{Polygons}\label{ex:poly1}

Let $\alpha:=(\alpha_1,\ldots,\alpha_n)$ with $0<\alpha_i<\pi$ be an $n$-tuple of real numbers such that $\sum_i \alpha_i=2\pi$.
Let $v_1, \ldots, v_n \in \R^2$ be unit vectors such that the angle from $v_i$ to $v_{i+1}$ equals $\alpha_{i+1}$ (of course, the indices are taken modulo $n$).
This determines a fan $\Delta$ in $\R^2$. The fan is polytopal since for every collection of positive numbers $a_1, \ldots, a_n$ such that $\sum_{i=1}^n a_i v_i = 0$ there is a convex polygon with edge lengths $a_i$ and edge normals $v_i$.
% Choosing a starting half line in $\R^2$, $\alpha$ determines a fan $\Delta$. 
% The fan is polytopal as it is always possible to construct a convex polygon with edge directions 
% orthogonal to the lines of $ \Delta$. 
%Of course polygons are monotypic and $\ir(V)=\co(V)=T(\Delta)$.
We will denote $T(\Delta)$ by $T(\alpha)$. Note that $\cl T(\alpha)$ is a pointed $(n-2)$-dimensional cone. To avoid dealing with trivial cases, below we assume $n \ge 5$.

By Lemma \ref{lem:TypeConeIneq}, every facet of $T(\alpha)$  corresponds to vanishing of an edge: $\ell_i=0$. This is possible without making any other edges to disappear if and only if $\alpha_i+\alpha_{i+1}\leq\pi$.
It follows that $\cl T(\alpha)$ has $n$, $n-1$ or $n-2$ facets.
Denote by $T_i$ the facet $\ell_i = 0$ of $\cl T(\alpha)$. If $j\notin\{i,i-1,i+1\}$ then $T_i$ and $T_j$ meet along a codimension $2$ facet. Otherwise, $T_i$ and $T_{i+1}$
meet if and only if $\alpha_i+\alpha_{i+1}+\alpha_{i+2}<\pi$. 

An extreme ray $e$ of $\cl T(\alpha)$ corresponds to triangles with fixed edge normals $v_{i_1}, v_{i_2}, v_{i_3}$. This means that $\ell_j = 0$ for all $j \notin \{i_1, i_2, i_3\}$ while $\ell_{i_\alpha} \ne 0$. Thus $e$ is contained in exactly $n-3$ facets, and hence $T(\alpha)$ is simple. It follows that
%its compact $(n-3)$-dimensional section
$\cl T(\alpha)$ is the cone over an $(n-3)$-dimensional polytope that is either a simplex, or truncated simplex, or doubly truncated simplex.
See  \cite{BG92} for more details
and the case of non-convex
polygons.

In the Gale diagrams language, the cone $\cl T(\alpha)$ is the 2-core of the Gale dual $\bar V$. Figure \ref{fig:5gonExl} shows an example for $n=5$, where this cone has three facets.

\subsubsection{Polygonal prisms}
\label{sec:PolPrisms1}
Embed $\R^2$ as a coordinate plane in $\R^3$ and add to the vectors $v_1, \ldots, v_n \in \R^2$ from the preceding example the third basis vector and its inverse: $v_{n+1} = e_3$ and $v_{n+2} = -e_3$. This new vector configuration $V^+$ determines only one pointed fan, namely the normal fan of a prism over an $n$-gon. Denote this fan by $\Delta^+$, and its type cone by $T^+(\alpha)$. Then we have
$$
(h, h_{n+1}, h_{n+2}) \in T^+(\alpha) \Leftrightarrow h \in T(\alpha) \text{ and } h_{n+1} + h_{n+2} > 0
$$
Thus $T^+(\alpha)$ is a product of $T(\alpha)$ with a half-space. It follows that
$$
T^+(\alpha) = T(\alpha) \times \R^+
$$
The new extreme ray $\{0\} \times \R^+$ corresponds to degeneration of the prism into a segment parallel to $e_3$.

The Gale diagram of $V^+$ lives in the space $\R^{n-1}$ which is one dimension higher than that for $\bar V$. It is easy to see that $\bar{V^+}$ is obtained from $\bar V$ by adding two equal vectors $\bar v_{n+1} = \bar v_{n+2} = e_{n-1}$. It follows that the 2-core of $\bar V \cup \{e_{n-1}, e_{n-1}\}$ is the pyramid over the 2-core of $\bar V$, which yields the same result as above.

% Now let us consider a polytopal fan $\Delta^+$ in $\mathbb{R}^3$ 
% made by adding a vertical line $l$ to the polytopal fan $\Delta$ of $\mathbb{R}^2$ as in the preceding example. 
% We will denote $T(\Delta^+)$ by $T^+(\alpha)$.
% Let us identify $T(\alpha)$ with a subset of $T^+(\alpha)$ with the coordinates
%  $\overline{h}\in  T(\alpha)$, then $h=(\overline{h},h_{n+1},h_{n+2})\in T^+(\alpha)$. 
% 
% $T^+(\alpha)$ is the convex hull of  $T(\alpha)$ with a line corresponding to degeneration
% of the prism into a segment contained in $l$. We have
% $T^+(\alpha)/\lin(T^+(\alpha))\subset \mathbb{R}^{n-1}$.

\subsubsection{Triangular bipyramid}
\label{sub:bipyr1}
Let $u_1, u_2, u_3 \in \R^2$ be outward unit normals to the edges of a regular triangle. Choose $\lambda, \mu > 0$ such that $\lambda^2 + \mu^2 = 1$ and consider the following six vectors in $\R^6$:
\begin{equation}
\label{eqn:VectBipyr}
\begin{aligned}
&v_1 = \lambda u_1 + \mu e_3,  &&v_2 = \lambda u_2 + \mu e_3, &&v_3 = \lambda u_3 + \mu e_3 \\
&v_4 = \lambda u_1 - \mu e_3,  &&v_5 = \lambda u_2 - \mu e_3, &&v_6 = \lambda u_3 - \mu e_3
\end{aligned}
\end{equation}
The 3-polytope $Vx \le \one$ is a bipyramid over a triangle, see Figure \ref{fig:Bipyr}. The right half of the picture schematically shows the normal fan of the bipyramid (as a stereographic projection of the intersection of the normal fan with the sphere).

\begin{figure}[ht]
\centering
\begin{picture}(0,0)%
\includegraphics{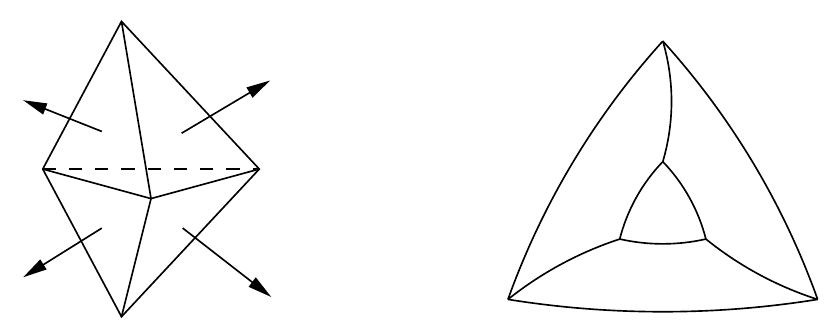}%
\end{picture}%
\setlength{\unitlength}{4144sp}%
\begingroup\makeatletter\ifx\SetFigFont\undefined%
\gdef\SetFigFont#1#2#3#4#5{%
  \reset@font\fontsize{#1}{#2pt}%
  \fontfamily{#3}\fontseries{#4}\fontshape{#5}%
  \selectfont}%
\fi\endgroup%
\begin{picture}(3765,1500)(346,-796)
\put(3421,-16){\makebox(0,0)[lb]{\smash{{\SetFigFont{10}{12.0}{\rmdefault}{\mddefault}{\updefault}{\color[rgb]{0,0,0}$v_2$}%
}}}}
\put(2971,-376){\makebox(0,0)[lb]{\smash{{\SetFigFont{10}{12.0}{\rmdefault}{\mddefault}{\updefault}{\color[rgb]{0,0,0}$v_3$}%
}}}}
\put(4096,-601){\makebox(0,0)[lb]{\smash{{\SetFigFont{10}{12.0}{\rmdefault}{\mddefault}{\updefault}{\color[rgb]{0,0,0}$v_4$}%
}}}}
\put(3421,569){\makebox(0,0)[lb]{\smash{{\SetFigFont{10}{12.0}{\rmdefault}{\mddefault}{\updefault}{\color[rgb]{0,0,0}$v_5$}%
}}}}
\put(1441,389){\makebox(0,0)[lb]{\smash{{\SetFigFont{10}{12.0}{\rmdefault}{\mddefault}{\updefault}{\color[rgb]{0,0,0}$v_1$}%
}}}}
\put(1441,-736){\makebox(0,0)[lb]{\smash{{\SetFigFont{10}{12.0}{\rmdefault}{\mddefault}{\updefault}{\color[rgb]{0,0,0}$v_4$}%
}}}}
\put(2431,-646){\makebox(0,0)[lb]{\smash{{\SetFigFont{10}{12.0}{\rmdefault}{\mddefault}{\updefault}{\color[rgb]{0,0,0}$v_6$}%
}}}}
\put(361,299){\makebox(0,0)[lb]{\smash{{\SetFigFont{10}{12.0}{\rmdefault}{\mddefault}{\updefault}{\color[rgb]{0,0,0}$v_3$}%
}}}}
\put(3601,-376){\makebox(0,0)[lb]{\smash{{\SetFigFont{10}{12.0}{\rmdefault}{\mddefault}{\updefault}{\color[rgb]{0,0,0}$v_1$}%
}}}}
\put(451,-691){\makebox(0,0)[lb]{\smash{{\SetFigFont{10}{12.0}{\rmdefault}{\mddefault}{\updefault}{\color[rgb]{0,0,0}$v_6$}%
}}}}
\end{picture}%
\caption{The bipyramid over triangle and its normal fan.}
\label{fig:Bipyr}
\end{figure}

By translating the faces of the bipyramid, we can split its four-valent vertices into pairs of three-valent vertices. For the spherical section of the normal fan this means subdividing quadrilaterals by their diagonals. Let us apply the Gale diagram technique to study the arrangement of the type cones that correspond to different combinatorial types.

Let $V$ be the $6\times 3$-matrix with rows $v_i$. We have to find a $6\times 3$-matrix $\bar V$ of rank $3$ whose columns are orthogonal to those of $V$. The matrix $\bar V$ is unique up to a multiplication from the right with an element of $\GL(\R, 3)$. Since the vectors $v_4, v_5, v_6$ form a basis of $\R^3$, the vectors $\bar v_1, \bar v_2, \bar v_3$ must do the same (see Lemma \ref{lem:GaleProp}). Thus we may assume
$$
\bar v_1 = (1, 0, 0), \quad \bar v_2 = (0, 1, 0), \quad \bar v_3 = (0, 0, 1)
$$
The remaining entries of $\bar V$ can be easily determined from the orthogonality condition between the columns of $V$ and $\bar V$:
$$
V =
\begin{pmatrix}
\lambda u_1 & \mu\\
\lambda u_2 & \mu\\
\lambda u_3 & \mu\\
\lambda u_1 & -\mu\\
\lambda u_2 & -\mu\\
\lambda u_3 & -\mu
\end{pmatrix}
\qquad
\bar V =
\begin{pmatrix}
1 & 0 & 0\\
0 & 1 & 0\\
0 & 0 & 1\\
-\frac13 & \frac23 & \frac23\\
\frac23 & -\frac13 & \frac23\\
\frac23 & \frac23 & -\frac13
\end{pmatrix}
$$

Recall that the map
$$
\bar V^\top \colon \R^n \to \R^{n-d} \cong \R^n/\im V
$$
projects the space of support vectors $h$ to its quotient by translations of the polytope $P(h)$ (see equation \eqref{eqn:PTransl} and Definition \ref{dfn:Gale}). We identify $\R^{n-d}$ with a subspace of $\R^n$ via a right inverse $\iota$ of $\bar V^\top$. In our case, we can put $\iota(e_i) = e_i$ for $i = 1, 2, 3$, so that $\R^{n-d} = \R^3$ is identified with the subspace $h_4 = h_5 = h_6 = 0$ of $\R^6$. Geometrically this corresponds to fixing the lower vertex of the bipyramid at the origin and varying only the heights $h_1, h_2, h_3$.

The rows of $\bar V$ are the coordinates of six vectors forming the Gale diagram of $V$. Since all of them lie in the subspace $h_1 + h_2 + h_3 > 0$, we can conveniently draw the \emph{affine Gale diagram} by intersecting the cones generated by $\bar V$ with the affine hyperplane $h_1 + h_2 + h_3 = 1$, see Figure \ref{fig:GaleCoIr}, left. (By a lucky coincidence, the vectors $\bar v_i$ lie in this plane.)

\begin{figure}[ht]
\centering
\begin{picture}(0,0)%
\includegraphics{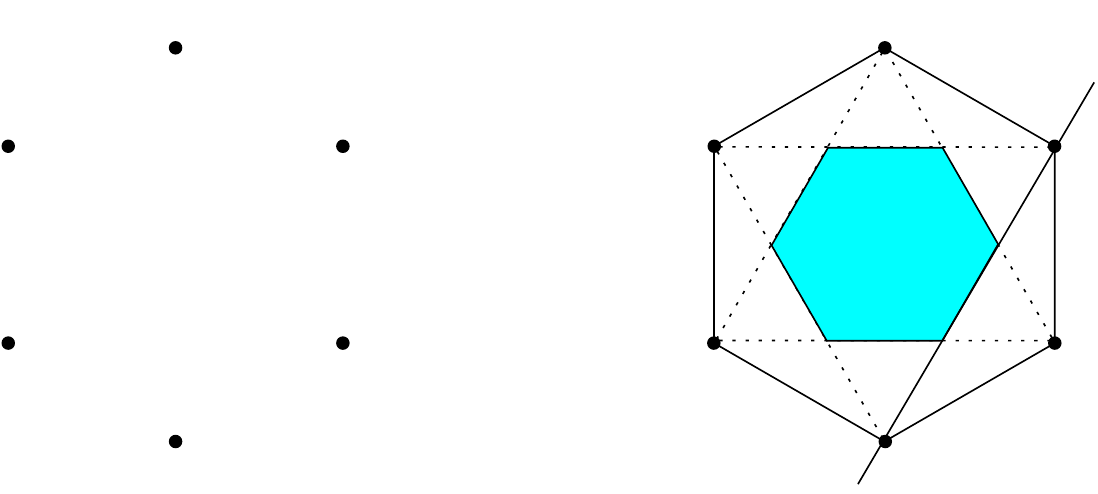}%
\end{picture}%
\setlength{\unitlength}{4144sp}%
\begingroup\makeatletter\ifx\SetFigFont\undefined%
\gdef\SetFigFont#1#2#3#4#5{%
  \reset@font\fontsize{#1}{#2pt}%
  \fontfamily{#3}\fontseries{#4}\fontshape{#5}%
  \selectfont}%
\fi\endgroup%
\begin{picture}(5015,2247)(323,-1422)
\put(1891,-916){\makebox(0,0)[lb]{\smash{{\SetFigFont{10}{12.0}{\rmdefault}{\mddefault}{\updefault}{\color[rgb]{0,0,0}$\bar v_1 = (1,0,0)$}%
}}}}
\put(1135,656){\makebox(0,0)[lb]{\smash{{\SetFigFont{10}{12.0}{\rmdefault}{\mddefault}{\updefault}{\color[rgb]{0,0,0}$\bar v_2 = (0,1,0)$}%
}}}}
\put(379,204){\makebox(0,0)[lb]{\smash{{\SetFigFont{10}{12.0}{\rmdefault}{\mddefault}{\updefault}{\color[rgb]{0,0,0}$\bar v_4 = (-\frac13, \frac23, \frac23)$}%
}}}}
\put(1104,-1358){\makebox(0,0)[lb]{\smash{{\SetFigFont{10}{12.0}{\rmdefault}{\mddefault}{\updefault}{\color[rgb]{0,0,0}$\bar v_5 = (\frac23, -\frac13, \frac23)$}%
}}}}
\put(338,-919){\makebox(0,0)[lb]{\smash{{\SetFigFont{10}{12.0}{\rmdefault}{\mddefault}{\updefault}{\color[rgb]{0,0,0}$\bar v_3 = (0,0,1)$}%
}}}}
\put(5151,-917){\makebox(0,0)[lb]{\smash{{\SetFigFont{10}{12.0}{\rmdefault}{\mddefault}{\updefault}{\color[rgb]{0,0,0}$\bar v_1$}%
}}}}
\put(4391,678){\makebox(0,0)[lb]{\smash{{\SetFigFont{10}{12.0}{\rmdefault}{\mddefault}{\updefault}{\color[rgb]{0,0,0}$\bar v_2$}%
}}}}
\put(4400,-1307){\makebox(0,0)[lb]{\smash{{\SetFigFont{10}{12.0}{\rmdefault}{\mddefault}{\updefault}{\color[rgb]{0,0,0}$\bar v_5$}%
}}}}
\put(5199,100){\makebox(0,0)[lb]{\smash{{\SetFigFont{10}{12.0}{\rmdefault}{\mddefault}{\updefault}{\color[rgb]{0,0,0}$\bar v_6$}%
}}}}
\put(3411,-890){\makebox(0,0)[lb]{\smash{{\SetFigFont{10}{12.0}{\rmdefault}{\mddefault}{\updefault}{\color[rgb]{0,0,0}$\bar v_3$}%
}}}}
\put(1847,213){\makebox(0,0)[lb]{\smash{{\SetFigFont{10}{12.0}{\rmdefault}{\mddefault}{\updefault}{\color[rgb]{0,0,0}$\bar v_6 = (\frac23, \frac23, -\frac13)$}%
}}}}
\put(3373,190){\makebox(0,0)[lb]{\smash{{\SetFigFont{10}{12.0}{\rmdefault}{\mddefault}{\updefault}{\color[rgb]{0,0,0}$\bar v_4$}%
}}}}
\end{picture}%
\caption{The Gale diagram of the face normals of a triangular bipyramid (left); the compatibility and the irredundancy domains (right).}
\label{fig:GaleCoIr}
\end{figure}

Figure \ref{fig:GaleCoIr}, right, shows the quotients $\co(V)$ and $\ir(V)$ of the compatibility domain and of the irredundancy domain. According to \eqref{eqn:CoV2}, $\co(V)$ is the positive hull of $\bar V$, which in the affine Gale diagram becomes the convex hull. According to \eqref{eqn:IrV2}, $\ir(V)$ is the 2-core of $\bar V$, which is shown as a shaded hexagon on Figure \ref{fig:GaleCoIr}.

It is also possible to interpret Figure \ref{fig:GaleCoIr}, right, in terms of positive and hyperbolic circuits, see equations \eqref{eqn:CoV3} and \eqref{eqn:IrV3}.
The six positive circuits of the vector configuration $V$ are obtained from
$$
v_1 + 2v_2 + v_4 + 2v_6 = 0
$$
by the action of the dihedral group. This particular circuit leads according to \eqref{eqn:CoV2} to the inequality $h_1 + 2h_2 \ge 0$ in the $h_4 = h_5 = h_6 = 0$ space, which determines the half-space containing vectors $\bar v_3$ and $\bar v_5$ in its boundary. The other five edges of the big hexagon on Figure \ref{fig:GaleCoIr} correspond to the other five positive circuits.

The lines bounding the irredundancy domain correspond to hyperbolic circuits (in this example, the inequalities in \eqref{eqn:IrV3} generated by the positive circuits turn out to be redundant). For example, the line highlighted on Figure \ref{fig:GaleCoIr} corresponds to the circuit
\begin{equation}\label{eq:rel bipyr v1}
v_1 = 2v_2 + 2v_3 + 3v_4,
\end{equation}
The principle ``evaluations on $\bar V$ correspond to dependencies in $V$'' (see \eqref{eqn:ValDep}) allows to read off the signature of the circuit from the position of the line. Since the line separates $\bar v_1$ from $\bar v_2$, $\bar v_3$, and $\bar v_4$, the coefficient at $v_1$ has the sign opposite to those at $\bar v_2$, $\bar v_3$, and $\bar v_4$; the points lying on the line correspond to zero coefficients.

In order to obtain the chamber fan of the vector configuration $\bar V$, one has to draw the diagonals $\bar v_1 \bar v_4$, $\bar v_2 \bar v_5$, and $\bar v_3 \bar v_6$, in addition to those drawn already on Figure \ref{fig:GaleCoIr}. The chambers in the interior of the 2-core are the type cones of $V$. Figure \ref{fig:TypeConesExl} shows the subdivision of $\clir(V)$ into chambers and describes the faces of one of the full-dimensional type cones.

\begin{figure}[ht]
\centering
\begin{picture}(0,0)%
\includegraphics{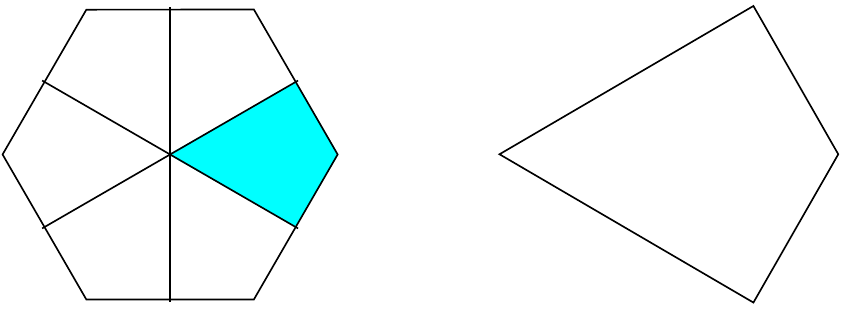}%
\end{picture}%
\setlength{\unitlength}{4144sp}%
\begingroup\makeatletter\ifx\SetFigFont\undefined%
\gdef\SetFigFont#1#2#3#4#5{%
  \reset@font\fontsize{#1}{#2pt}%
  \fontfamily{#3}\fontseries{#4}\fontshape{#5}%
  \selectfont}%
\fi\endgroup%
\begin{picture}(3845,1445)(123,-815)
\put(2759,216){\rotatebox{30.0}{\makebox(0,0)[lb]{\smash{{\SetFigFont{8}{9.6}{\rmdefault}{\mddefault}{\updefault}{\color[rgb]{0,0,0}$h_1 = h_2$}%
}}}}}
\put(2766,-423){\rotatebox{330.0}{\makebox(0,0)[lb]{\smash{{\SetFigFont{8}{9.6}{\rmdefault}{\mddefault}{\updefault}{\color[rgb]{0,0,0}$h_2=h_3$}%
}}}}}
\put(3683,-780){\rotatebox{60.0}{\makebox(0,0)[lb]{\smash{{\SetFigFont{8}{9.6}{\rmdefault}{\mddefault}{\updefault}{\color[rgb]{0,0,0}$h_1 = 2(h_2 + h_3)$}%
}}}}}
\put(3735,478){\rotatebox{300.0}{\makebox(0,0)[lb]{\smash{{\SetFigFont{8}{9.6}{\rmdefault}{\mddefault}{\updefault}{\color[rgb]{0,0,0}$h_3 = 0$}%
}}}}}
\put(2734,-91){\makebox(0,0)[lb]{\smash{{\SetFigFont{8}{9.6}{\rmdefault}{\mddefault}{\updefault}{\color[rgb]{0,0,0}$h_1 \ge h_2 \ge h_3 \ge 0$}%
}}}}
\end{picture}%
\caption{A type cone.}
\label{fig:TypeConesExl}
\end{figure}

For any point $h \in \clir(V)$ (recall that we identified $\R^3$ with a subspace of $\R^6$ by putting $h_4 = h_5 = h_6$), the combinatorics of the corresponding polytope can be read off from the diagram. By Lemma \ref{lem:CombGale}, the normal fan of $P(h)$ contains the cone $\pos(V_I)$ (equivalently, facets with normals $\{v_i \mid i \in I\}$ intersect along a face) if and only if $h$ lies in the relative interior of the positive hull of $\bar V_{[6]\setminus I}$. For example, since the type cone highlighted on Figure \ref{fig:TypeConesExl} lies in $\relint\pos\{\bar v_1, \bar v_2, \bar v_5\}$, the corresponding fan $\Delta$ contains the cone spanned by $v_3, v_4, v_6$. The normal fan corresponding to this type cone is shown in the center of the Figure \ref{fig:NormalFansExl}. This normal fan is simplicial, since the type cone is full-dimensional (compare Lemma \ref{lem:AffT}).

The other fans on Figure~\ref{fig:NormalFansExl} are associated with the faces of the depicted type cone. Note that the fans corresponding to boundary points of $\clir(V)$ aren't using all of the vertices $v_i$, compare Lemma \ref{lem:BdryIr}.

\begin{figure}[ht]
\centering
\begin{picture}(0,0)%
\includegraphics{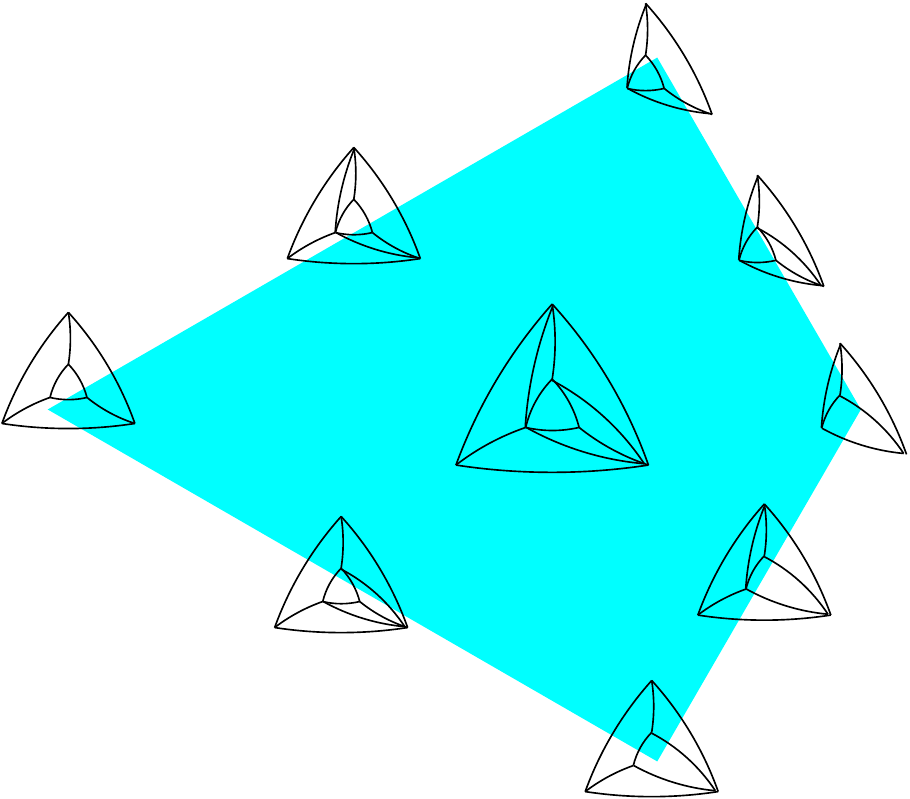}%
\end{picture}%
\setlength{\unitlength}{4144sp}%
\begingroup\makeatletter\ifx\SetFigFont\undefined%
\gdef\SetFigFont#1#2#3#4#5{%
  \reset@font\fontsize{#1}{#2pt}%
  \fontfamily{#3}\fontseries{#4}\fontshape{#5}%
  \selectfont}%
\fi\endgroup%
\begin{picture}(4152,3644)(-954,-2290)
\put(1688,-594){\makebox(0,0)[lb]{\smash{{\SetFigFont{8}{9.6}{\rmdefault}{\mddefault}{\updefault}{\color[rgb]{0,0,0}$1$}%
}}}}
\put(1595,-367){\makebox(0,0)[lb]{\smash{{\SetFigFont{8}{9.6}{\rmdefault}{\mddefault}{\updefault}{\color[rgb]{0,0,0}$2$}%
}}}}
\put(2033,-807){\makebox(0,0)[lb]{\smash{{\SetFigFont{8}{9.6}{\rmdefault}{\mddefault}{\updefault}{\color[rgb]{0,0,0}$4$}%
}}}}
\put(1529, -8){\makebox(0,0)[lb]{\smash{{\SetFigFont{8}{9.6}{\rmdefault}{\mddefault}{\updefault}{\color[rgb]{0,0,0}$5$}%
}}}}
\put(1024,-847){\makebox(0,0)[lb]{\smash{{\SetFigFont{8}{9.6}{\rmdefault}{\mddefault}{\updefault}{\color[rgb]{0,0,0}$6$}%
}}}}
\put(1390,-697){\makebox(0,0)[lb]{\smash{{\SetFigFont{8}{9.6}{\rmdefault}{\mddefault}{\updefault}{\color[rgb]{0,0,0}$3$}%
}}}}
\end{picture}%
\caption{The fans corresponding to the faces of a type cone.}
\label{fig:NormalFansExl}
\end{figure}

Crossing from one fully-dimensional type cone to an adjacent one
corresponds to a ``flip'', see Figure~\ref{fig:flip}. The edge $F_{35}$ becomes replaced by the edge $F_{26}$, compare the description of the faces of the type cones through vanishing edge lengths, Lemma \ref{lem:TypeConeIneq}. From a different point of view, boundaries between full-dimensional type cones correspond to (non-positive and non-hyperbolic) circuits of the vector configuration $V$. A circuit of this form corresponds to contracting an edge of the polytope, see equation \eqref{eqn:Circuit}. There are three such circuits:
$$
v_1 + v_5 = v_2 + v_4, \quad v_2 + v_6 = v_3 + v_5, \quad v_3 + v_4 = v_1 + v_6,
$$
and they correspond to the hyperplanes
$$
h_1 + h_5 = h_2 + h_4, \quad h_2 + h_6 = h_3 + h_5, \quad h_3 + h_4 = h_1 + h_6,
$$
or, in our picture to the lines
$$
h_1 = h_2, \quad h_2 = h_3, \quad h_3 = h_1
$$

\begin{figure}[ht]
\centering
\begin{picture}(0,0)%
\includegraphics{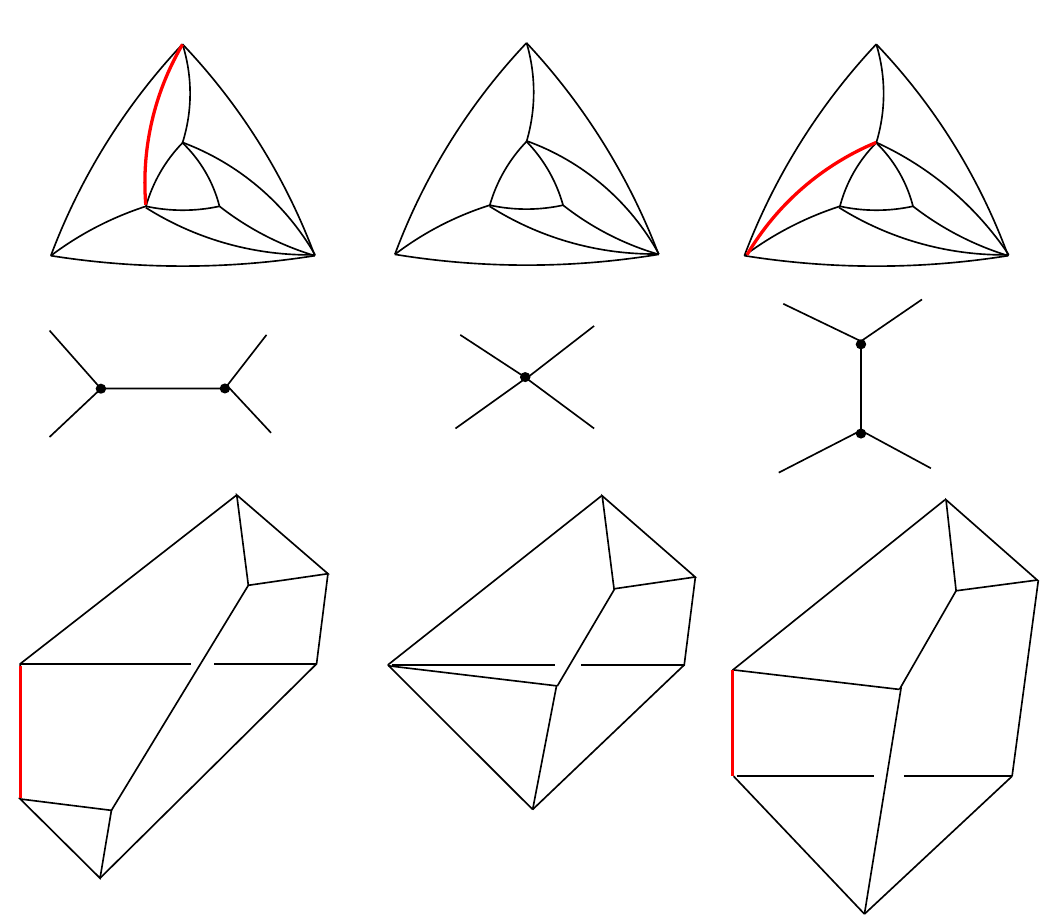}%
\end{picture}%
\setlength{\unitlength}{4144sp}%
\begingroup\makeatletter\ifx\SetFigFont\undefined%
\gdef\SetFigFont#1#2#3#4#5{%
  \reset@font\fontsize{#1}{#2pt}%
  \fontfamily{#3}\fontseries{#4}\fontshape{#5}%
  \selectfont}%
\fi\endgroup%
\begin{picture}(4759,4179)(7074,-3268)
\put(7948,279){\makebox(0,0)[lb]{\smash{{\SetFigFont{9}{10.8}{\rmdefault}{\mddefault}{\updefault}{\color[rgb]{0,0,0}$v_2$}%
}}}}
\put(7565,-16){\makebox(0,0)[lb]{\smash{{\SetFigFont{9}{10.8}{\rmdefault}{\mddefault}{\updefault}{\color[rgb]{0,0,0}$v_3$}%
}}}}
\put(8522,-201){\makebox(0,0)[lb]{\smash{{\SetFigFont{9}{10.8}{\rmdefault}{\mddefault}{\updefault}{\color[rgb]{0,0,0}$v_4$}%
}}}}
\put(7948,759){\makebox(0,0)[lb]{\smash{{\SetFigFont{9}{10.8}{\rmdefault}{\mddefault}{\updefault}{\color[rgb]{0,0,0}$v_5$}%
}}}}
\put(7105,-238){\makebox(0,0)[lb]{\smash{{\SetFigFont{9}{10.8}{\rmdefault}{\mddefault}{\updefault}{\color[rgb]{0,0,0}$v_6$}%
}}}}
\put(8101,-16){\makebox(0,0)[lb]{\smash{{\SetFigFont{9}{10.8}{\rmdefault}{\mddefault}{\updefault}{\color[rgb]{0,0,0}$v_1$}%
}}}}
\put(11119,279){\makebox(0,0)[lb]{\smash{{\SetFigFont{9}{10.8}{\rmdefault}{\mddefault}{\updefault}{\color[rgb]{0,0,0}$v_2$}%
}}}}
\put(10736,-16){\makebox(0,0)[lb]{\smash{{\SetFigFont{9}{10.8}{\rmdefault}{\mddefault}{\updefault}{\color[rgb]{0,0,0}$v_3$}%
}}}}
\put(11694,-201){\makebox(0,0)[lb]{\smash{{\SetFigFont{9}{10.8}{\rmdefault}{\mddefault}{\updefault}{\color[rgb]{0,0,0}$v_4$}%
}}}}
\put(11119,759){\makebox(0,0)[lb]{\smash{{\SetFigFont{9}{10.8}{\rmdefault}{\mddefault}{\updefault}{\color[rgb]{0,0,0}$v_5$}%
}}}}
\put(10277,-238){\makebox(0,0)[lb]{\smash{{\SetFigFont{9}{10.8}{\rmdefault}{\mddefault}{\updefault}{\color[rgb]{0,0,0}$v_6$}%
}}}}
\put(11272,-16){\makebox(0,0)[lb]{\smash{{\SetFigFont{9}{10.8}{\rmdefault}{\mddefault}{\updefault}{\color[rgb]{0,0,0}$v_1$}%
}}}}
\put(9519,284){\makebox(0,0)[lb]{\smash{{\SetFigFont{9}{10.8}{\rmdefault}{\mddefault}{\updefault}{\color[rgb]{0,0,0}$v_2$}%
}}}}
\put(9136,-10){\makebox(0,0)[lb]{\smash{{\SetFigFont{9}{10.8}{\rmdefault}{\mddefault}{\updefault}{\color[rgb]{0,0,0}$v_3$}%
}}}}
\put(10094,-195){\makebox(0,0)[lb]{\smash{{\SetFigFont{9}{10.8}{\rmdefault}{\mddefault}{\updefault}{\color[rgb]{0,0,0}$v_4$}%
}}}}
\put(9519,764){\makebox(0,0)[lb]{\smash{{\SetFigFont{9}{10.8}{\rmdefault}{\mddefault}{\updefault}{\color[rgb]{0,0,0}$v_5$}%
}}}}
\put(8677,-232){\makebox(0,0)[lb]{\smash{{\SetFigFont{9}{10.8}{\rmdefault}{\mddefault}{\updefault}{\color[rgb]{0,0,0}$v_6$}%
}}}}
\put(9673,-10){\makebox(0,0)[lb]{\smash{{\SetFigFont{9}{10.8}{\rmdefault}{\mddefault}{\updefault}{\color[rgb]{0,0,0}$v_1$}%
}}}}
\put(7511,-792){\makebox(0,0)[lb]{\smash{{\SetFigFont{9}{10.8}{\rmdefault}{\mddefault}{\updefault}{\color[rgb]{0,0,0}$\{2,3,5\}$}%
}}}}
\put(10571,-873){\makebox(0,0)[lb]{\smash{{\SetFigFont{9}{10.8}{\rmdefault}{\mddefault}{\updefault}{\color[rgb]{0,0,0}$2$}%
}}}}
\put(11204,-873){\makebox(0,0)[lb]{\smash{{\SetFigFont{9}{10.8}{\rmdefault}{\mddefault}{\updefault}{\color[rgb]{0,0,0}$6$}%
}}}}
\put(11077,-1056){\makebox(0,0)[lb]{\smash{{\SetFigFont{9}{10.8}{\rmdefault}{\mddefault}{\updefault}{\color[rgb]{0,0,0}$\{2,3,6\}$}%
}}}}
\put(11077,-689){\makebox(0,0)[lb]{\smash{{\SetFigFont{9}{10.8}{\rmdefault}{\mddefault}{\updefault}{\color[rgb]{0,0,0}$\{2,5,6\}$}%
}}}}
\put(7089,-873){\makebox(0,0)[lb]{\smash{{\SetFigFont{9}{10.8}{\rmdefault}{\mddefault}{\updefault}{\color[rgb]{0,0,0}$2$}%
}}}}
\put(8339,-873){\makebox(0,0)[lb]{\smash{{\SetFigFont{9}{10.8}{\rmdefault}{\mddefault}{\updefault}{\color[rgb]{0,0,0}$6$}%
}}}}
\put(7762,-1162){\makebox(0,0)[lb]{\smash{{\SetFigFont{9}{10.8}{\rmdefault}{\mddefault}{\updefault}{\color[rgb]{0,0,0}$3$}%
}}}}
\put(7714,-1017){\makebox(0,0)[lb]{\smash{{\SetFigFont{9}{10.8}{\rmdefault}{\mddefault}{\updefault}{\color[rgb]{0,0,0}$\{3,5,6\}$}%
}}}}
\put(9397,-585){\makebox(0,0)[lb]{\smash{{\SetFigFont{9}{10.8}{\rmdefault}{\mddefault}{\updefault}{\color[rgb]{0,0,0}$5$}%
}}}}
\put(7714,-585){\makebox(0,0)[lb]{\smash{{\SetFigFont{9}{10.8}{\rmdefault}{\mddefault}{\updefault}{\color[rgb]{0,0,0}$5$}%
}}}}
\put(9397,-1113){\makebox(0,0)[lb]{\smash{{\SetFigFont{9}{10.8}{\rmdefault}{\mddefault}{\updefault}{\color[rgb]{0,0,0}$3$}%
}}}}
\put(9782,-873){\makebox(0,0)[lb]{\smash{{\SetFigFont{9}{10.8}{\rmdefault}{\mddefault}{\updefault}{\color[rgb]{0,0,0}$6$}%
}}}}
\put(8916,-873){\makebox(0,0)[lb]{\smash{{\SetFigFont{9}{10.8}{\rmdefault}{\mddefault}{\updefault}{\color[rgb]{0,0,0}$2$}%
}}}}
\put(10936,-537){\makebox(0,0)[lb]{\smash{{\SetFigFont{9}{10.8}{\rmdefault}{\mddefault}{\updefault}{\color[rgb]{0,0,0}$5$}%
}}}}
\put(10936,-1306){\makebox(0,0)[lb]{\smash{{\SetFigFont{9}{10.8}{\rmdefault}{\mddefault}{\updefault}{\color[rgb]{0,0,0}$3$}%
}}}}
\end{picture}%
\caption{The flip $\left\{ \{2,3,5\}, \{3,5,6\} \right\} 
\rightsquigarrow \left\{ \{2,3,6\}, \{2,5,6\} \right\}$.}
\label{fig:flip}
\end{figure}

The fan on Figure \ref{fig:notfan}, left, is not polytopal. Indeed, since it contains $\pos\{v_1, v_6\}$, $\pos\{v_2, v_4\}$, and $\pos\{v_3, v_5\}$, the corresponding point $\pi(h)$ must lie in the intersection
$$
\relint\pos\{\bar v_2, \bar v_3, \bar v_4, \bar v_5\} \cap \relint\pos\{\bar v_1, \bar v_3, \bar v_5, \bar v_6\} \cap \relint\pos\{\bar v_1, \bar v_2, \bar v_4, \bar v_6\},
$$
which is empty. Similarly, the other two fans on Figure \ref{fig:notfan} are also non-polytopal. (One can also refer to Figure \ref{fig:NormalFansExl}, where all, up to symmetry, polytopal fans with the 1-skeleton in $V$ are shown.)

\begin{figure}[ht]
\centering
\includegraphics{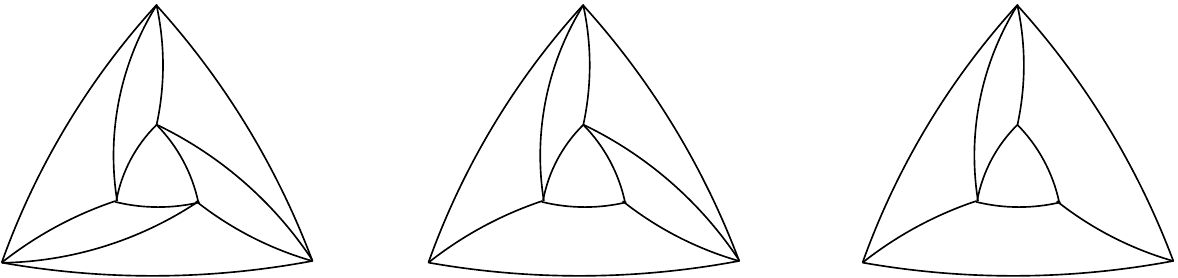}
\caption{Examples of non-polytopal fans.}
\label{fig:notfan}
\end{figure}

\section{Mixed volumes}
\label{sec:MixVol}
\subsection{The examples, continued}
\label{sec:ExlMV}
We continue the examples considered in Section \ref{sec:Example}. Motivated by the observation of Bavard and Ghys \cite{BG92} that the area of convex polygons as a function of their support numbers $h_i$ is a quadratic form of hyperbolic signature, we are looking at the surface area of 3-dimensional polytopes.
%Its restriction to every type cone is also a quadratic form, and it also happens to have a hyperbolic signature in all examples that follow.

\subsubsection{Parallelepipeds with fixed face directions}
\label{sec:CubeShapes}
The surface area of the parallelepiped $\{x \mid 0 \le \langle x, v_i \rangle \le h_i, i = 1,2,3\}$ equals
$$
\area(h) = \frac{2}{D}(h_1h_2 + h_2h_3 + h_3h_1),
$$
where $D = |\det(v_1,v_2,v_3)|$. This is a quadratic form of signature $(+,-,-)$.

\subsubsection{Polygons}

The area of a polygon with support vector $h \in \wT(\alpha) \subset \R^n$ equals
$$\a(h)=\frac{1}{2}\sum_i h_i \ell_i(h), $$
% with
% $$\ell_{\sigma_i}(h)=\frac{h_{i-1}-h_i\cos(\alpha_i)}{\sin(\alpha_i)}+\frac{h_{i+1}-h_i\cos(\alpha_{i+1})}{\sin(\alpha_{i+1})}. $$
where the edge length $\ell_i(h)$ is a linear function of $h$. Thus $\a(h)$ is a quadratic form.
The associated symmetric bilinear form is 
\begin{equation}\label{eq:ar polygon}
 \a(h,k)=\frac{1}{2}\sum_i h_i \ell_i(k) = \frac{1}{2}\sum_i k_i \ell_i(h),
\end{equation}
due to $\frac{\partial \a(h)}{\partial h_i} = \ell_i(h)$ that follows from a simple geometric argument.

The quadratic form $\a(h)$ has signature $(+,0,0,-,\ldots,-)$. This can be proved by induction on
the number of edges of the polygon as in \cite{BG92}, or using the Minkowski
inequality \cite[p.~321]{Scn93} in the plane. 
See also \cite[Lemma 2.14]{IzmHED}.

\subsubsection{Polygonal prisms}
Denote by $\a^+(h^+)$ the surface area of a prism $(h, h_{n+1}, h_{n+2})$ over the $n$-gon. Here $v_{n+1}$ and $v_{n+2}$ are as in Section \ref{sec:PolPrisms1}.
Then we have
\begin{multline*}
\a^+(h^+) = 2 \a(h) + (h_{n+1} + h_{n+2}) \per(h)\\
= 2 \a(h) + 2 (h_{n+1} + h_{n+2}) \a(\one, h)
\end{multline*}
with the associated symmetric bilinear form
\begin{multline*}
\a^+(h^+, k^+) = 2\a(h,k)\\
+(h_{n+1}+h_{n+2})\a(\one, k)
+ (k_{n+1}+k_{n+2})\a(\one, h)
\end{multline*}

% We will denote by $\A$ the face area of a polygonal prism $(\overline{h},h_{n+1},h_{n+2})\in T^+(\alpha)$ 
% constructed over a polygon $h\in T(\alpha)$.
% $$\A(h)=2\a(\overline{h})+2(h_{n+1}+h_{n+2})\a(\overline{1},\overline{h}), $$
% and its polar form is
% $$\A(h,k)=2\a(\overline{h},\overline{k})+(h_{n+1}+h_{n+2})\a(\overline{1},\overline{k})
% +(k_{n+1}+k_{n+2})\a(1,\overline{h}). $$

The restriction of $\a^+$ to $\R^n$ is $2\a$ that by the above results has signature $(+,0,0,-,\ldots,-)$.
The vector $(\zero,-1,1)$ belongs to the kernel of $\a^+$ (it corresponds to vertical translation). The vector $(\one,-1,-1)$ is orthogonal to $\R^n\subset \R^{n+2}$ with respect to $\a^+$, and is negative.
This implies that  
$\a^+$ has signature $(+,0,0,0,-,\ldots,-)$.

\subsubsection{Triangular bipyramid}
\label{sec:TriBiForm}
Let $v_1, \ldots, v_6 \in \R^3$ be as in \ref{sub:bipyr1}. The combinatorics of a polyhedron $P(h)$ with face normals $(v_i)$ depends on $h$. Let us compute $\a(\partial P(h))$ for the type cone $\Delta$ shaded on Figure \ref{fig:TypeConesExl}.

The normal fan of $P(h)$ (the triangulation in the center of Figure \ref{fig:NormalFansExl}) shows that $P(h)$ is a doubly truncated tetrahedron, see Figure \ref{fig:TruncTetr}. We have
$$
P(h) = \overline{(\Sigma_1 \setminus \Sigma_2) \setminus \Sigma_3},
$$
where
\begin{equation*}
\begin{aligned}
&\Sigma_1 := \{x \in \R^3 \mid \langle v_i, x \rangle \le h_i, i \in \{2, 3, 4, 5\}\}\\
&\Sigma_2 := \{x \in \R^3 \mid \langle v_i, x \rangle \le h_i, i \in \{2, 3, 4\},\, \langle v_1, x \rangle \ge h_1\}\\
&\Sigma_3 := \{x \in \R^3 \mid \langle v_i, x \rangle \le h_i, i \in \{3, 4, 5\},\, \langle v_6, x \rangle \ge h_6\}
\end{aligned}
\end{equation*}
The surface area of a tetrahedron with fixed face normals is proportional to the squared length of any of its edges, and the edge length is a linear function of the support vector. Thus we have
$$
\area(\partial\Sigma_1) = f_1^2(h),
$$
for some linear function $f_1$. We have $f_1(h) = 0$ if and only if the hyperplanes $H_i$, $i \in \{2, 3, 4, 5\}$ pass through a common point, that is iff the system $\langle v_i, x \rangle = h_i$ has a solution. Since $v_2 + 2v_3 + 2v_4 + v_5 = 0$, a solution exists if and only if $h_2 + 2h_3 + 2h_4 + h_5 = 0$. By restricting to $h_4 = h_5 = h_6 = 0$ as we have done in Section \ref{sub:bipyr1}, we obtain
$$
\area(\partial\Sigma_1) = c_1 (h_2 + 2h_3)^2
$$
for some $c_1 > 0$.

\begin{figure}[ht]
\centering
\begin{picture}(0,0)%
\includegraphics{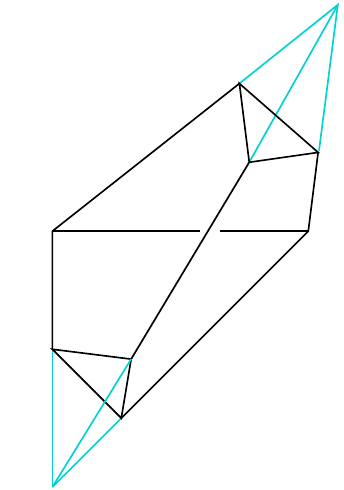}%
\end{picture}%
\setlength{\unitlength}{4144sp}%
\begingroup\makeatletter\ifx\SetFigFont\undefined%
\gdef\SetFigFont#1#2#3#4#5{%
  \reset@font\fontsize{#1}{#2pt}%
  \fontfamily{#3}\fontseries{#4}\fontshape{#5}%
  \selectfont}%
\fi\endgroup%
\begin{picture}(1560,2229)(661,-1873)
\put(1075,-1379){\makebox(0,0)[lb]{\smash{{\SetFigFont{8}{9.6}{\rmdefault}{\mddefault}{\updefault}{\color[rgb]{0,0,0}$F_6$}%
}}}}
\put(1811,-288){\makebox(0,0)[lb]{\smash{{\SetFigFont{8}{9.6}{\rmdefault}{\mddefault}{\updefault}{\color[rgb]{0,0,0}$F_1$}%
}}}}
\put(676,-1591){\makebox(0,0)[lb]{\smash{{\SetFigFont{10}{12.0}{\rmdefault}{\mddefault}{\updefault}{\color[rgb]{0,0,0}$\Sigma_3$}%
}}}}
\put(2206,-61){\makebox(0,0)[lb]{\smash{{\SetFigFont{10}{12.0}{\rmdefault}{\mddefault}{\updefault}{\color[rgb]{0,0,0}$\Sigma_2$}%
}}}}
\put(1591,-934){\makebox(0,0)[lb]{\smash{{\SetFigFont{8}{9.6}{\rmdefault}{\mddefault}{\updefault}{\color[rgb]{0,0,0}$F_4$}%
}}}}
\put(1351,-511){\makebox(0,0)[lb]{\smash{{\SetFigFont{8}{9.6}{\rmdefault}{\mddefault}{\updefault}{\color[rgb]{0,0,0}$F_3$}%
}}}}
\end{picture}%
\caption{Representing $P(h)$ as a truncated tetrahedron.}
\label{fig:TruncTetr}
\end{figure}

Next, observe that
\begin{multline*}
\area(\partial\overline{\Sigma_1 \setminus \Sigma_2}) = \area(\partial\Sigma_1)
- \area(F_2(\Sigma_2)) - \area(F_3(\Sigma_2)) \\ - \area(F_4(\Sigma_2)) + \area(F_1(\Sigma_2))
\end{multline*}
All quantities $\area(F_i(\Sigma_2))$ are proportional to the square of a linear function $f_2(h)$ that vanishes when the tetrahedron $\Sigma_2$ degenerates. Similarly to the previous paragraph, using \eqref{eq:rel bipyr v1}, we find $f_2(h) = h_1 - 2h_2 - 2h_3 - 3h_4$, and hence
$$
\area(\partial\overline{\Sigma_1 \setminus \Sigma_2}) = c_1 (h_2 + 2h_3)^2 - c_2 (h_1 - 2h_2 - 2h_3)^2
$$
Here $c_2 > 0$ because the sum of areas of three faces of a tetrahedron is bigger than the area of its fourth face.

Finally, cutting off the tetrahedron $\Sigma_3$ yields
%\begin{equation}
%\label{eqn:AreaPh}
$$
\area(\partial P(h)) = q_\Delta(h) := c_1 f_1^2 - c_2 f_2^2 - c_3 f_3^2, \quad c_1, c_2, c_3 > 0,
$$
%\end{equation}
where
$$
f_1(h) = h_2 + 2h_3, \quad f_2(h) = h_1 - 2h_2 - 2h_3, \quad f_3(h) = h_3.
$$
Since $f_1$, $f_2$, $f_3$ are linearly independent, quadratic form $q_\Delta(h)$ has signature $(+,-,-)$.

\subsection{Mixed volumes and quadratic forms}

\subsubsection{Definition and basic properties of mixed volumes}
\label{sec:MixVolDef}
Minkowski \cite{Min03} has shown that the volume behaves polylinearly with respect to the Minkowski addition and positive scaling. Namely, for any compact convex bodies $K_1, \ldots, K_m \subset \R^d$ there exist real numbers $c_{i_1\ldots i_d}$, $1 \le i_\alpha \le m$ such that
\begin{equation}
\label{eqn:MixVol}
\vol(\lambda_1 K_1 + \cdots + \lambda_m K_m) = \sum_{i_\alpha \in [m]} c_{i_1\ldots i_d} \lambda_{i_1} \cdots \lambda_{i_d}
\end{equation}
holds for all $\lambda_1, \ldots, \lambda_m \ge 0$.
The coefficients $c_{i_1\ldots i_d}$ are uniquely determined by the bodies $K_{i_1}, \ldots, K_{i_d}$ if we require that they are symmetric with respect to permutations of indices: $c_{\phi \circ I} = c_I$ for all $\phi \in S_m$.

\begin{dfn}
The coefficient $c_{i_1\ldots i_d}$ in \eqref{eqn:MixVol} is called a \emph{mixed volume} and denoted by $\vol(K_{i_1}, \ldots, K_{i_d})$.
\end{dfn}

Clearly, $\vol(K, \ldots, K) = K$. For more details on mixed volumes, see \cite[Chapter 5]{Scn93} and \cite[Chapter IV]{Ewald96}.

\begin{exl}
\label{exl:Quermass}
A special case of \eqref{eqn:MixVol} is the Steiner formula
$$
\vol(K + \rho B) = \sum_{i=0}^d \binom{d}{i} \rho^i W_i(K)
$$
where $B$ is the unit ball.
The coefficients $W_i(K) = \vol(\underbrace{K,\ldots,K}_{d-i}, \underbrace{B,\ldots,B}_i)$ are called \emph{quermassintegrals} of $K$. We have $W_0(K) = \vol(K)$, $W_1(K) = \frac{1}{d} \area(\partial K)$, $W_d(K) = \vol(B)$. For a polytope $P$ we have
$$
W_i(P) = c_{d,i} \sum_{F \in \cF^{d-i}(P)} \vol_{d-i}(F) \cdot |N_F(P)|
$$
for some constant $c_{d,i}$ independent of $P$, where the sum ranges over all $(d-i)$-faces of $P$ and $|N_F(P)|$ denotes the angular measure of the normal cone $N_F(P) \subset \R^d$.
\end{exl}

The following properties of mixed volumes will be needed in the sequel.

\begin{itemize}
\item
Mixed volume is multilinear with respect to the Minkowski addition:
$$
\vol(\lambda K + \mu L, \cC) = \lambda \vol(K, \cC) + \mu \vol(L, \cC) \quad \text{for } \lambda, \mu \ge 0,
$$
where $\cC = (K_1, \ldots, K_{d-1})$.
\item
Mixed volume is monotone under inclusion: $\vol(K, \cC) \ge \vol(L, \cC)$ if $K \supset L$. In particular,
\begin{equation}
\label{eqn:MixVolPos}
\vol(K_1, \ldots, K_d) \ge 0
\end{equation}
More precisely, we have the following \cite[Theorem 5.1.7]{Scn93}.
\end{itemize}

\begin{thm}\label{thm: segment}
The inequality in \eqref{eqn:MixVolPos} is strict if and only if there are segments $s_i\subset K_i$, $i=1,\ldots,d$ with linearly independent directions.
\end{thm}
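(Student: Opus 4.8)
The plan is to prove the two implications separately, the ``if'' direction being short and the ``only if'' direction carrying the weight. For the ``if'' direction, suppose $s_i \subset K_i$ are segments, $s_i = x_i + [0, u_i]$ with $u_1, \ldots, u_d$ linearly independent. Since mixed volume is monotone under inclusion and unaffected by translating the individual bodies, $\vol(K_1, \ldots, K_d) \ge \vol(s_1, \ldots, s_d) = \vol([0, u_1], \ldots, [0, u_d])$. I would evaluate the last term by expanding $\vol(\sum_i \lambda_i [0, u_i])$: the Minkowski combination is the parallelepiped $\{\sum_i t_i u_i \mid 0 \le t_i \le \lambda_i\}$, of volume $\lambda_1 \cdots \lambda_d \, |\det(u_1, \ldots, u_d)|$. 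Comparing with \eqref{eqn:MixVol} and using that the coefficient of $\lambda_1 \cdots \lambda_d$ equals $d!\,\vol(K_1,\dots,K_d)$ (by the symmetry of the coefficients $c_{i_1 \ldots i_d}$), one obtains $\vol([0, u_1], \ldots, [0, u_d]) = \frac{1}{d!}|\det(u_1, \ldots, u_d)| > 0$.

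For the ``only if'' direction I would argue by contraposition: assuming that no segments $s_i \subset K_i$ with linearly independent directions exist, show $\vol(K_1, \ldots, K_d) = 0$. Let $L_i$ be the linear subspace parallel to $\aff(K_i)$. Since $K_i$ is convex with nonempty interior relative to $\aff(K_i)$, the directions of segments contained in $K_i$ are exactly the nonzero vectors of $L_i$, so the hypothesis says that no choice $u_i \in L_i$, $i = 1, \ldots, d$, forms a linearly independent $d$-tuple. Applying Rado's theorem (the linear-matroid form of Hall's marriage theorem) to finite spanning sets of $L_1, \ldots, L_d$, the absence of such an independent transversal produces a subset $I \subseteq \{1, \ldots, d\}$ of some size $k$ with $\dim(\sum_{i \in I} L_i) \le k - 1$. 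Reindexing so that $I = \{1, \ldots, k\}$ and using translation invariance of the mixed volume in each argument, we may assume that $K_1, \ldots, K_k$ all lie in the subspace $W := \sum_{i=1}^k L_i$, of dimension $m := \dim W \le k - 1$.

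It then remains to establish the dimension-drop lemma: if $K_1, \ldots, K_k$ lie in a subspace $W$ with $\dim W = m \le k - 1$, then $\vol(K_1, \ldots, K_d) = 0$. Choosing $R$ with $K_i \subseteq R B_W$ for $i \le k$, where $B_W$ is the unit ball of $W$, monotonicity and multilinearity give $\vol(K_1, \ldots, K_d) \le R^k \, \vol(B_W, \ldots, B_W, K_{k+1}, \ldots, K_d)$ with $B_W$ repeated $k$ times. Since $\sum_{i=1}^k \lambda_i B_W = t B_W$ for $t := \sum_{i=1}^k \lambda_i$, the volume $\vol(t B_W + \sum_{i>k} \lambda_i K_i)$ depends on $\lambda_1, \ldots, \lambda_k$ only through $t$; and by Fubini over $W \oplus W^\perp$ every $W$-slice of $t B_W + \sum_{i>k} \lambda_i K_i$ is the Minkowski sum of $t B_W$ with a bounded convex subset of $W$, so its $m$-dimensional volume, and hence the whole $d$-dimensional volume, is a polynomial in $t$ of degree at most $m \le k - 1$. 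Consequently the monomial $\lambda_1 \cdots \lambda_k$ cannot occur, so the coefficient of $\lambda_1 \cdots \lambda_d$ in \eqref{eqn:MixVol} vanishes, i.e. $\vol(B_W, \ldots, B_W, K_{k+1}, \ldots, K_d) = 0$, and hence $\vol(K_1, \ldots, K_d) = 0$. (Alternatively, this lemma may simply be quoted from the monograph \cite{Scn93}.)

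I expect the main obstacle to lie entirely in the ``only if'' direction, specifically in the passage from ``no linearly independent segments'' to a dimension-deficient subset $I$ via the Hall--Rado argument, and then from ``$k$ bodies inside a subspace of dimension $< k$'' to the vanishing of the mixed volume via the dimension-drop lemma. The remaining steps are routine manipulations of the expansion \eqref{eqn:MixVol} together with the monotonicity and multilinearity of mixed volumes already recorded above.
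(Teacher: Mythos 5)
Your proof is essentially correct, but note that the paper does not prove Theorem \ref{thm: segment} at all: it is quoted from Schneider \cite[Theorem 5.1.7]{Scn93}, so there is no in-paper argument to compare against, and a self-contained proof like yours is genuinely extra content. Relative to the standard treatment in \cite{Scn93}, both halves of your argument take a different (and in places lighter) route: the ``if'' direction you get directly from argument-wise monotonicity together with $\vol([0,u_1],\ldots,[0,u_d])=\frac1{d!}\lvert\det(u_1,\ldots,u_d)\rvert$, avoiding the induction via the support-function representation of mixed volumes of polytopes; and your Rado/Hall step is precisely the translation between the segment condition and Schneider's equivalent dimension condition $\dim(K_{i_1}+\cdots+K_{i_k})\ge k$, after which your degree-in-$t$ slicing argument replaces the dimension-drop statement one would otherwise cite. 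Two small points should be made explicit in a written-up version. First, the paper states monotonicity only in the first argument, so you should remark that it holds in each argument by the symmetry of the mixed volume (you use it $k$ times in a row). Second, the passage from ``each $W$-slice $tB_W+C_y$ has $m$-dimensional volume polynomial in $t$ of degree at most $m$'' to ``the total $d$-dimensional volume is a polynomial in $t$ of degree at most $m$'' needs one more line: either interpolate at $m+1$ values of $t$ pointwise in $y$ and integrate, or observe that $\vol_d(tB_W+C)\le \vol_m(tB_W+\Pi_W C)\cdot\vol_{d-m}(\Pi_{W^\perp}C)=O(t^m)$ while the left-hand side is already known to be a polynomial in $t$ by \eqref{eqn:MixVol}, so its degree is at most $m\le k-1$ and the monomial $\lambda_1\cdots\lambda_k$ cannot occur. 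With these routine additions the argument is complete and correctly characterizes strictness in \eqref{eqn:MixVolPos}.
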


In particular, the inequality \eqref{eqn:MixVolPos} is strict if $\dim K_i = d$ for all $i$.

\subsubsection{Alexandrov-Fenchel inequalities and signatures of quadratic forms}
\label{sec:AFSign}
Fix convex bodies $K_1, \ldots, K_{d-2} \subset \R^d$ and denote $\cC := (K_1, \ldots, K_{d-2})$.
Then the function
\begin{equation}
\label{eqn:fC}
\vol_{\cC} \colon K \mapsto \vol(K, K, \cC)
\end{equation}
on the set of convex bodies in $\R^d$ possesses the valuation property:
\begin{equation}
\label{eqn:ValProp}
\vol_{\cC}(K \cup L) + \vol_{\cC}(K \cap L) = \vol_{\cC}(K) + \vol_{\cC}(L),
\end{equation}
provided that $K \cup L$ is convex.
Besides, it is homogeneous of degree $2$:
\begin{equation}
\label{eqn:Hom2}
\vol_{\cC}(\lambda K) = \lambda^2 \vol_{\cC}(K)
\end{equation}
This follows quite easily from the corresponding properties of the volume, see \cite{McMScn83}.

Let $\Delta$ be a complete polytopal fan in $\R^d$. Due to the multilinearity of the mixed volume and compatibility of the Minkowski addition with the linear structure of $\wT(\Delta)$ (Corollary \ref{cor:MinkLin}), the function $\vol_{\cC}$ on $\wT(\Delta)$ is a restriction of a quadratic form.
\begin{dfn}
\label{dfn:qForm}
Given a collection $\cC = (K_1, \ldots, K_{d-2})$ of convex bodies and a complete polytopal fan $\Delta$, denote by $q_{\cC,\Delta}$ the unique quadratic form on $\span(\wT(\Delta))$ such that
$$
q_{\cC,\Delta}(h) = \vol(P(h), P(h), \cC) \quad \text{for } h \in \wT(\Delta)
$$
\end{dfn}

% \begin{equation}
% \label{eqn:qSigma}
% q_{\cC,\Delta(h)} := \vol(P(h), P(h), \cC) \quad \text{for }h \in T(\Delta)
% \end{equation}

The following properties of $q_{\cC,\Delta}$ are immediate.
\begin{itemize}
\item
The kernel of $q_{\cC, \Delta}$ contains $\im V$ and thus has dimension at least $d$.
\item
If $\dim K_i = d$ for all $i = 1, \ldots, d-2$, then $q_{\cC,\Delta}(h) > 0$ for all $h \in \wT(\Delta)$.
\end{itemize}

The following theorem \cite{Ale37, Scn93} tells us more about the signature of $q_\Delta$.

\begin{thm}[Alexandrov-Fenchel inequalities]\label{th:af}
The inequality
\begin{equation}
\label{eqn:AFIneq}
\vol(K, L, \cC)^2 \ge \vol(K, K, \cC) \vol(L, L, \cC)
\end{equation}
holds for all compact convex bodies $K, L, K_1, \ldots, K_{d-2}$.
\end{thm}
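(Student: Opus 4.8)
The plan is to deduce the inequality \eqref{eqn:AFIneq} from a statement about the signature of the quadratic forms $q_{\cC,\Delta}$ of Definition~\ref{dfn:qForm}. By continuity of the mixed volume under Hausdorff convergence it suffices to prove \eqref{eqn:AFIneq} for polytopes $K, L, K_1, \dots, K_{d-2}$; and then, by the refinement results of Section~\ref{sec:Shapes} (Lemmas~\ref{lem:FanMink} and~\ref{lem:FanRefine}), it suffices to treat polytopes whose support vectors all lie in the closure $\cl\wT(\Delta)$ of the lifted type cone of one fixed complete simplicial polytopal fan $\Delta$. For such a fan $\wT(\Delta)$ is open and spans $\R^n$, and by multilinearity of the mixed volume together with Corollary~\ref{cor:MinkLin} the symmetric bilinear form $b_{\cC,\Delta}$ associated with $q_{\cC,\Delta}$ is given on $\wT(\Delta)$ by $b_{\cC,\Delta}(g,g')=\vol(P(g),P(g'),\cC)$; by continuity this persists on $\cl\wT(\Delta)$. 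Writing $K=P(g)$, $L=P(g')$ with $g,g'\in\cl\wT(\Delta)$, the three terms of \eqref{eqn:AFIneq} become $b_{\cC,\Delta}(g,g')^2$, $q_{\cC,\Delta}(g)$ and $q_{\cC,\Delta}(g')$. Since mixed volumes are nonnegative (Theorem~\ref{thm: segment}), $q_{\cC,\Delta}(g)\ge0$ at a support vector; if it vanishes the inequality \eqref{eqn:AFIneq} is trivial, and if it is positive then the restriction of $q_{\cC,\Delta}$ to $\span(g,g')$ has a positive eigenvalue, so, provided $q_{\cC,\Delta}$ has \emph{at most one} positive eigenvalue, its Gram matrix on $\span(g,g')$ has nonpositive determinant $q_{\cC,\Delta}(g)\,q_{\cC,\Delta}(g')-b_{\cC,\Delta}(g,g')^2$, which is exactly \eqref{eqn:AFIneq}. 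Thus everything reduces to the following.

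\emph{Claim.} For every complete simplicial polytopal fan $\Delta$ with $V=\Delta^{(1)}$ and every $(d-2)$-tuple $\cC$ of convex bodies the form $q_{\cC,\Delta}$ has at most one positive eigenvalue; if in addition all $K_j$ are full-dimensional, it has signature $(+,0,\dots,0,-,\dots,-)$ with kernel exactly $\im V$. One inequality is easy: $\im V$ lies in the kernel because a mixed volume is translation invariant, and taking $g$ with $P(g)$ full-dimensional gives $q_{\cC,\Delta}(g)=\vol(P(g),P(g),\cC)>0$ by Theorem~\ref{thm: segment}, so there is at least one positive direction. The substance is the bound ``at most one positive eigenvalue'', which I would establish by induction on $d$, following Alexandrov~\cite{Ale37}. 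The base case $d=2$ (empty $\cC$) is the signature statement for the area form on polygons with prescribed edge directions recalled in Section~\ref{sec:ExlMV}, provable either from Minkowski's inequality in the plane or by induction on the number of edges. For the inductive step one uses the classical formula $b_{\cC,\Delta}(g,g')=\tfrac1d\sum_{i=1}^n g_i\, A_i(g';\cC)$, where $A_i(g';\cC)$ is the $(d-1)$-dimensional mixed volume, inside the facet hyperplane $\aff F_i(g')$, of $F_i(g')$ with the projections along $v_i$ of the bodies $K_j$; applying the induction hypothesis on each hyperplane $v_i^\perp$ controls these $(d-1)$-dimensional forms and forces $q_{\cC,\Delta}$ to be negative semidefinite on the hyperplane $\{g: b_{\cC,\Delta}(u,g)=0\}$ for a reference $u$ with $q_{\cC,\Delta}(u)>0$. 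To pin down the kernel in the full-dimensional case, and so rule out spurious zero eigenvalues, I would run Alexandrov's continuity argument: deform $\cC$ through full-dimensional tuples to a tuple of segments in general position; by Theorem~\ref{thm: segment} the relevant nondegeneracy persists along the deformation, so the signature cannot jump; and for a tuple of segments the second mixed volume decouples into a lower-dimensional mixed volume, to which the inductive hypothesis applies directly.

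The main obstacle is precisely this last step: intertwining the dimensional induction on the facets with the continuity argument that keeps the dimension of the kernel equal to $d$. This is the technical core of the Alexandrov--Fenchel theorem, and there is no short way around it; the delicate point is that the facet induction yields only negative \emph{semi}definiteness, so excluding the extra null directions needs a careful use of the equality case of monotonicity (Theorem~\ref{thm: segment}) along the deformation, or else Alexandrov's second argument, in which the whole scheme is replaced by a maximum principle for a second-order elliptic operator on $\Sph^{d-1}$ whose quadratic form is $q_{\cC,\Delta}$. I should also note two alternative routes: the algebro-geometric one of Khovanskii and Teissier, where mixed volumes of lattice polytopes are intersection numbers on a toric variety and \eqref{eqn:AFIneq} becomes the Hodge index theorem for algebraic surfaces, and the recent purely convex-geometric local proof of Shenfeld and van Handel. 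For the present paper, however, the polytopal version of Alexandrov's argument is the natural one, since it is exactly the signature of $q_{\cC,\Delta}$ from the Claim that will be used to put the piecewise hyperbolic structure on the space of polyhedra.
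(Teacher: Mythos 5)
The reduction half of your plan is fine and standard: approximate by polytopes, pass to a common simplicial refinement via Lemmas \ref{lem:FanMink} and \ref{lem:FanRefine}, identify the three quantities in \eqref{eqn:AFIneq} with values of the bilinear form attached to $q_{\cC,\Delta}$ on $\cl\wT(\Delta)$ (Corollary \ref{cor:MinkLin} and multilinearity), and note that a $2\times 2$ Gram determinant of a form with at most one positive eigenvalue is nonpositive. But this reduces Theorem \ref{th:af} to your Claim, which is not a smaller statement: ``$q_{\cC,\Delta}$ has at most one positive eigenvalue for every complete simplicial polytopal fan'' is precisely the Alexandrov--Fenchel inequality for strongly isomorphic simple polytopes, i.e.\ the entire content of the theorem. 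Be aware also that the paper's logical order is the opposite of yours: Theorem \ref{th:af} is quoted from \cite{Ale37,Scn93} without proof, and the signature lemma following it is \emph{deduced from} the inequality; so that lemma cannot be invoked for your Claim without circularity (you do not invoke it, but then the whole burden of proof sits on the Claim).

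And the Claim is not proved. The inductive step is only gestured at, and as written it contains a misstatement: the facet decomposition of the mixed volume for polytopes subordinate to $\Delta$ is $\vol(P(g),P(g'),K_1,\dots,K_{d-2})=\frac1d\sum_i g_i\, v\bigl(F_i(g'),F(K_1,v_i),\dots,F(K_{d-2},v_i)\bigr)$, where $v$ is the $(d-1)$-dimensional mixed volume computed in the hyperplane of $F_i(g')$ and $F(K_j,v_i)$ is the \emph{face} of $K_j$ with outer normal $v_i$ --- not the projection of $K_j$ along $v_i$; projections enter only in the special formula where one argument is a segment, the degenerate endpoint of your deformation. More importantly, you yourself identify the crux --- showing that the semidefiniteness produced by the facet induction admits no null directions beyond $\im V$, via the continuity/deformation argument with the equality case of monotonicity (Theorem \ref{thm: segment}) --- and explicitly leave it open; ``there is no short way around it'' is an accurate assessment, not an argument. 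As it stands the proposal is a correct roadmap to Alexandrov's classical proof (or to the toric/Hodge-theoretic and Shenfeld--van Handel routes you mention), but the technical core of the theorem is missing; carrying it out essentially means reproducing \cite[Section 6.3]{Scn93}, which is why the paper simply cites the result rather than proving it.
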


Equation \eqref{eqn:AFIneq} basically says that the Gram matrix of the restriction of $q_{\cC,\Delta}$ to any 2-dimensional subspace spanned by two vectors from $\wT(\Delta)$ has a non-positive determinant. This is used in the proof of the following lemma (compare with \cite[Appendix A.3]{Izm10}).

\begin{lem}
Let $\cC = (K_1, \ldots, K_{d-2})$ consist of $d$-dimensional convex bodies. Then for every polytopal fan $\Delta$ the quadratic form $q_{\cC,\Delta}$ has the following properties.
\begin{enumerate}
\item
The positive index of $q_{\cC,\Delta}$ equals $1$.
\item
We have $\dim\ker q_{\cC,\Delta} = d$ if and only if the Alexandrov-Fenchel inequality for
$$
K = P(h), \quad L = P(h'), \quad 0 \ne h, h' \in \wT(\Delta)
$$
holds with equality only when $h' - \lambda h \in \im V$ for some $\lambda$.
\end{enumerate}
\end{lem}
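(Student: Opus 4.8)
The plan is to deduce both statements from a single pointwise estimate that packages the Alexandrov--Fenchel inequality \eqref{eqn:AFIneq}. Write $q := q_{\cC,\Delta}$ and let $q(\cdot,\cdot)$ denote its polar bilinear form, and fix a base point $h_0 \in \wT(\Delta)$; since every $K_i$ is $d$-dimensional, $q(h_0) = \vol(P(h_0),P(h_0),\cC) > 0$. The first step is to observe that, by Lemma \ref{lem:LinTypeCone}, $\wT(\Delta)$ is open in $W := \span(\wT(\Delta))$, so that for any $k \in W$ one has $h_0 + tk \in \wT(\Delta)$ for all sufficiently small $t > 0$; moreover on $\wT(\Delta)\times\wT(\Delta)$ the form $q(\cdot,\cdot)$ agrees with the mixed-volume pairing $(h,h') \mapsto \vol(P(h),P(h'),\cC)$, by multilinearity of the mixed volume together with Corollary \ref{cor:MinkLin}. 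Applying \eqref{eqn:AFIneq} to $K = P(h_0)$, $L = P(h_0+tk)$, expanding in $t$, cancelling the terms of degrees $0$ and $1$, and dividing by $t^2$ leaves
$$
q(h_0,k)^2 \ \ge\ q(h_0)\, q(k) \qquad\text{for every } k \in W .
$$
I expect this passage --- from the inequality along the segments $h_0 + tk$ to an inequality valid for all $k \in W$ --- to be the only subtle point; it is exactly here that the relative openness of the type cone is used.

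Part 1 is then immediate. Decompose $W = \R h_0 \oplus h_0^{\perp}$, where $h_0^{\perp} := \{k \in W \mid q(h_0,k) = 0\}$ (a genuine direct sum because $q(h_0) \ne 0$). For $k \in h_0^{\perp}$ the displayed inequality reads $0 \ge q(h_0) q(k)$, so $q(k) \le 0$; hence $q$ is negative semidefinite on the hyperplane $h_0^{\perp}$ while $q(h_0) > 0$, and its positive index equals $1$.

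For Part 2, I would first record two consequences of the negative semidefiniteness of $q|_{h_0^{\perp}}$: that $\ker q \subseteq h_0^{\perp}$, and that $\ker q = \{k \in h_0^{\perp} \mid q(k) = 0\}$; together with $\ker q \supseteq \im V$ and $\dim\im V = d$ this shows $\dim\ker q = d \iff \ker q = \im V$. Now assume $\dim\ker q = d$ and take nonzero $h, h' \in \wT(\Delta)$ for which \eqref{eqn:AFIneq} holds with equality. If $h, h'$ are linearly dependent there is nothing to prove; otherwise the Gram matrix of $q$ on $U := \span\{h,h'\}$ is positive semidefinite (since $q(h), q(h') > 0$) with vanishing determinant, hence of rank $1$, so $q|_U$ has a $1$-dimensional radical spanned by some $w = ah + bh' \ne 0$. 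Then $q(w) = 0$ and $q(w,h) = 0$, i.e. $w \in h^{\perp}$ with $q(w) = 0$, so by the recorded characterization (with $h_0 := h$) we get $w \in \ker q = \im V$; as $q$ vanishes on $\im V$ but $q(h) > 0$, necessarily $b \ne 0$, and then $h' = \tfrac1b w - \tfrac ab h$ exhibits $h' - \lambda h \in \im V$ with $\lambda = -a/b$.

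For the converse I would argue contrapositively: if $\dim\ker q > d$, pick $k \in \ker q \setminus \im V$, set $h := h_0$ and $h' := h_0 + tk \in \wT(\Delta)$ for small $t > 0$. Since $q(h,k) = 0 = q(k)$ we get $q(h') = q(h) = q(h,h')$, so \eqref{eqn:AFIneq} holds with equality; yet $h' - \lambda h = (1-\lambda)h_0 + tk$ lies in $\im V$ for no $\lambda$, because for $\lambda \ne 1$ its $q$-value $(1-\lambda)^2 q(h_0)$ is nonzero while $q$ vanishes on $\im V$, and for $\lambda = 1$ it equals $tk \notin \im V$. Thus equality can occur without $h' - \lambda h \in \im V$, which completes Part 2. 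The work after the displayed estimate is routine linear algebra of quadratic forms of positive index $1$, so I anticipate no difficulty there beyond bookkeeping.
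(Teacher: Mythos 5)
Your proposal is correct and follows essentially the same route as the paper: both arguments rest on applying the Alexandrov--Fenchel inequality to pairs $P(h_0)$, $P(h_0+tk)$ made available by the relative openness of $\wT(\Delta)$ (Lemma \ref{lem:LinTypeCone}), together with $q_{\cC,\Delta}>0$ on the type cone and $\im V\subseteq\ker q_{\cC,\Delta}$. Your packaging via the uniform inequality $q(h_0,k)^2\ge q(h_0)q(k)$ and semidefiniteness on $h_0^{\perp}$ is only a cosmetic reorganization of the paper's Gram-determinant argument on two-dimensional subspaces, and your treatment of the equality case (radical vector lands in $\ker q=\im V$; perturbation along a kernel vector outside $\im V$ for the converse) matches the paper's, if anything spelled out in a bit more detail.
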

\begin{proof}
Due to $q_{\cC,\Delta}(h) > 0$ for all $h \in \wT(\Delta)$, we have $\ind_+(q_{\cC,\Delta}) \ge 1$. If $\ind_+(q_{\cC,\Delta}) \ge 2$, then there exists a positive vector $x$ in the orthogonal complement to $h$ with respect to $q_{\cC,\Delta}$. For a sufficiently small $\epsilon$ we have
$$
h' := h + \epsilon x \in \wT(\Delta),
$$
since $\wT(\Delta)$ is relatively open, see Lemma \ref{lem:LinTypeCone}. It follows that the restriction of $q_{\cC,\Delta}$ to $\span(h,x) = \span(h,h')$ is positive definite, and hence the Gram matrix of $q_{\cC,\Delta}$ with respect to $h, h'$ has a positive determinant. Thus
$$
q_{\cC,\Delta}(h,h) q_{\cC,\Delta}(h',h') - q_{\cC,\Delta}(h,h')^2 > 0,
$$
which contradicts the Alexandrov-Fenchel inequality. Hence $\ind_+(q_{\cC,\Delta}) = 1$, and the first part of the lemma is proved.

For $0 \ne h, h' \in \wT(\Delta)$ the condition $h' - \lambda h \notin \im V$ for all $\lambda$ is equivalent to
\begin{equation}
\label{eqn:hhPrime}
\dim E = 2 \quad \text{and} \quad E \cap \im V = \{0\},
\end{equation}
where $E = \span(h, h')$. Assume that $\dim\ker q_{\cC,\Delta} = d$, that 
is $\ker q_{\cC,\Delta} = \im V$. Then \eqref{eqn:hhPrime} implies that the restriction of 
$q_{\cC,\Delta}$ to $E$ is non-degenerate, thus the determinant of its Gram matrix doesn't vanish. 
It follows that the Alexandrov-Fenchel inequality for $h,h'$ is strict.

In the opposite direction, if 
$\dim\ker q_{\cC,\Delta} \ge d+1$, then
$$
\dim \span\{h, \ker q_{\cC,\Delta}\} \ge d+2,
$$
which allows us to choose $E \subset \span\{h, \ker q_{\cC,\Delta}\}$ transversal to $\im V$. Then the restriction of $q_{\cC,\Delta}$ to $E$ is degenerate, and representing $E$ as $\span(h,h')$ for $h' \in \wT(\Delta)$, we see that the Alexandrov-Fenchel inequality for $h,h'$ holds with equality. The lemma is proved.
\end{proof}

\subsubsection{Quadratic forms of nullity $d$}
The condition $h' - \lambda h \in \im V$ means that $K = P(h)$ and $L = P(h')$ are homothetic. But equality in \eqref{eqn:AFIneq} can take place also for non-homothetic $K$ and $L$.

\begin{exl}
\label{exl:AFEq}
For $d=3$, let $K = \Sigma$ be a tetrahedron, and $L = \overline{\Sigma \setminus \Sigma_1}$ a truncated tetrahedron, where we assume that the common vertex of $\Sigma$ and $\Sigma_1$ lies at the coordinate origin. Then it can be shown (for example, with the help of the support function) that
$$
\lambda K + \mu L = \overline{(\lambda + \mu)\Sigma \setminus \mu \Sigma_1}
$$
See also Figure \ref{fig:LambdaMu} for the 2-dimensional case.

\begin{figure}[ht]
\centering
\includegraphics{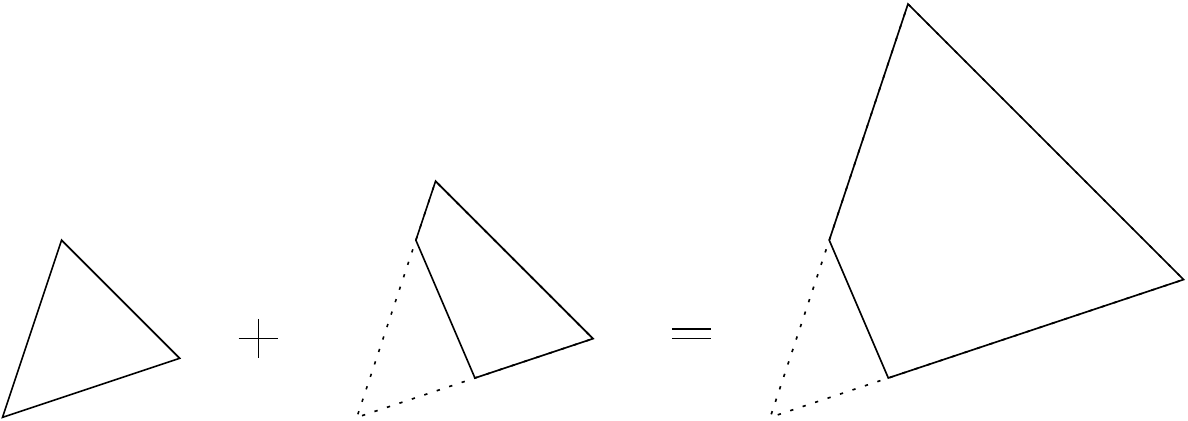}
\caption{Illustration for Example \ref{exl:AFEq}.}
\label{fig:LambdaMu}
\end{figure}

It follows that
\begin{multline*}
\vol(\lambda K + \mu L) = (\lambda + \mu)^3 \vol(\Sigma) - \mu^3 \vol(\Sigma_1)\\
= \lambda^3 \vol(\Sigma) + 3\lambda^2 \mu \vol(\Sigma) + 3 \lambda \mu^2 \vol(\Sigma) + \mu^3 \vol(\overline{\Sigma \setminus \Sigma_1}),
\end{multline*}
and hence
$$
\vol(L,L,K) = \vol(L,K,K) = \vol(K)
$$
Therefore $\vol(K,L,K)^2 = \vol(K,K,K) \vol(L,L,K)$, although $K$ and $L$ are not homothetic.
\end{exl}

A complete characterization of the equality case in the Alexandrov-Fenchel inequality is still missing, see \cite[Section 6.6]{Scn93}, but there are some partial results. Roughly speaking, if equality holds for some non-homothetic $K$ and $L$, then their normal fans are more complicated than those of $K_1, \ldots, K_{d-2}$.

% 
% Here we give examples of collections of convex bodies $\cC = \{K_1, \ldots, K_{d-2}\}$ for which the quadratic form \eqref{eqn:qSigma} has kernel of dimension $d$.

\begin{thm}
\label{thm:Kubota}
If $\cC = (B, \ldots, B)$, where $B$ is the unit ball, and $\dim K = \dim L = d$, then the Alexandrov-Fenchel inequality \eqref{eqn:AFIneq} holds with equality only if $K$ and $L$ are homothetic.
\end{thm}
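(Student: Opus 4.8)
The plan is to recognize the quadratic form sitting behind the inequality and to exploit the rotational symmetry of the ball, which pins down its kernel to be as small as possible. Write $\cC=(B,\dots,B)$ for the $d-2$ copies of the unit ball, let $\vol_{\cC}(h):=\vol(h,h,B,\dots,B)$ be the quadratic form of \S\ref{sec:AFSign} on the space of virtual convex bodies (differences of support functions on $\Sph^{d-1}$), and let $Q(h,g)=\vol(h,g,B,\dots,B)$ be its associated bilinear form, extended bilinearly from convex bodies. Suppose $\dim K=\dim L=d$ and that \eqref{eqn:AFIneq} holds with equality for $\cC=(B,\dots,B)$. If $h_K,h_L$ are linearly dependent then $K$ is a dilate of $L$ and we are done, so assume $P:=\span(h_K,h_L)$ is two-dimensional. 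By Theorem~\ref{thm: segment}, $Q(h_K)=\vol(K,K,B,\dots,B)>0$ and $Q(h_L)>0$, while the Alexandrov--Fenchel inequality (Theorem~\ref{th:af}) says that the Gram determinant $Q(h_K)Q(h_L)-Q(h_K,h_L)^2$ is nonpositive; the equality hypothesis says it vanishes. Hence $Q|_P$ is positive semidefinite with a one-dimensional kernel, spanned by some $M=x_0h_K+y_0h_L\neq0$.

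Next I would isolate the point where the ball is used, namely the signature of $Q$. The first ingredient is an elementary linear-algebra fact: \emph{if a quadratic form $Q$ of positive index $1$ restricts to a $2$-plane $P$ as a positive semidefinite form with one-dimensional kernel $\R v$, then $v\in\ker Q$.} Indeed, pick $u\in P$ with $Q(u)>0$, so $P=\span(u,v)$ with $Q(u,v)=0$; split the ambient space as $\R u\oplus u^{\perp_Q}$; since $\ind_+(Q)=1$, $Q$ is negative semidefinite on $u^{\perp_Q}$, and a negative semidefinite form vanishes exactly on its kernel, so $v\in\ker(Q|_{u^{\perp_Q}})=\ker Q$. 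The second ingredient, and the heart of the matter, is the computation of the signature of $Q$. For a smooth body one has the classical identity $\vol(K,K,B,\dots,B)=c\int_{\Sph^{d-1}}h_K\bigl(\Delta_{\Sph^{d-1}}h_K+(d-1)h_K\bigr)$ with $c>0$ (here $\Delta_{\Sph^{d-1}}$ is the spherical Laplacian), so by polarization $Q(h,g)=c\int_{\Sph^{d-1}}\bigl(\Delta_{\Sph^{d-1}}h+(d-1)h\bigr)g$; since the eigenvalues of $-\Delta_{\Sph^{d-1}}$ are $k(k+d-2)$ for $k\geq0$, the form $Q$ is positive on the constants, zero on the degree-one spherical harmonics, and negative on all higher harmonics. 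Thus $\ind_+(Q)=1$ and $\ker Q$ is exactly the space of restrictions to $\Sph^{d-1}$ of linear functions on $\R^d$, i.e.\ the support functions of points. (Non-smooth $K,L$ cause no real trouble: the relation $Q(M,\cdot)\equiv0$ forces the \emph{distribution} $\Delta_{\Sph^{d-1}}h_M+(d-1)h_M$ to vanish, whence $h_M$ is smooth and linear by elliptic regularity; alternatively one approximates by smooth bodies.)

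Combining the two ingredients, $M=x_0h_K+y_0h_L$ lies in $\ker Q$, hence equals $\langle\xi,\cdot\rangle$ for some $\xi\in\R^d$. Moreover $x_0y_0<0$: if $x_0,y_0$ had the same sign then $M$ (or $-M$) would be the support function of a $d$-dimensional body, giving $Q(M)>0$ and contradicting $M\in\ker Q$. Normalizing, $h_K-c\,h_L=\langle\xi,\cdot\rangle$ with $c>0$, that is $h_K=h_{cL}+h_{\{\xi\}}=h_{cL+\xi}$, so $K=cL+\xi$ and $K$ and $L$ are homothetic.

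The main obstacle is the signature computation in the second paragraph: one genuinely needs the rotational invariance of $B$, since for general auxiliary bodies $\cC$ the kernel of $q_{\cC,\Delta}$ can be strictly larger than the translation subspace $\im V$, and then equality in \eqref{eqn:AFIneq} may hold for non-homothetic bodies, as Example~\ref{exl:AFEq} shows. A careful write-up must also make the spherical-harmonic argument rigorous on the space of virtual bodies and handle the mild regularity issue for polytopes, but these are routine; an alternative route for the same computation is the Kubota projection formula, which expresses $\vol(\cdot,\cdot,B,\dots,B)$ as an average over $2$-planes of planar mixed areas and reduces the kernel statement to Minkowski's uniqueness theorem in the plane.
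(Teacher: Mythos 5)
Your argument is correct, but it is worth saying up front that the paper does not prove this statement at all: it is quoted as a classical result of Kubota \cite{Kub25}, with \cite[Theorem 6.6.2]{Scn93} as the modern reference. What you have written is essentially the classical Hilbert-type proof for the quermassintegral case, and it holds together: from equality in \eqref{eqn:AFIneq} and the strict positivity of $\vol(K,K,B,\ldots,B)$, $\vol(L,L,B,\ldots,B)$ (Theorem~\ref{thm: segment}) you correctly get a rank-one positive semidefinite restriction of $Q=\vol(\cdot,\cdot,B,\ldots,B)$ to $\span(h_K,h_L)$; your linear-algebra lemma (positive index $1$ forces the null direction of such a plane into the global kernel) is valid also in the infinite-dimensional setting, since it only uses the splitting off of a $Q$-positive line and Cauchy--Schwarz for a semidefinite form; and the identification $\ind_+(Q)=1$, $\ker Q=\{\text{linear functions}\}$ via the representation $Q(h,g)=c\int_{\Sph^{d-1}}g\,(\Delta_{\Sph^{d-1}}h+(d-1)h)$ and the eigenvalues $k(k+d-2)$ is the standard diagonalization; the regularity caveat you flag (that $Q(M,\cdot)\equiv 0$ means the first area measures satisfy $x_0S_1(K,\cdot)+y_0S_1(L,\cdot)=0$, hence $h_M$ is linear by elliptic regularity, or by approximation) is genuinely routine, as is the sign argument giving $x_0y_0<0$. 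The comparison, then: the paper's route buys generality for free --- the cited \cite[Theorem 6.6.2]{Scn93} covers arbitrary smooth bodies $K_1,\ldots,K_{d-2}$ in place of balls, which is exactly the extension mentioned right after the theorem and which your explicit spectral computation cannot give, since it uses the rotational invariance of $B$ in an essential way (as you yourself note, and as Example~\ref{exl:AFEq} shows some restriction is necessary). Your route buys a self-contained proof that meshes well with the signature discussion of Section~\ref{sec:AFSign}: the statement $\ker Q=\{\text{linear functions}\}$ is precisely the reason the form $q_\Delta$ of Corollary~\ref{cor:Area} has kernel $\im V$, i.e.\ signature $(+,d\cdot 0,-,\ldots,-)$, rather than merely positive index $1$.
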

This was proved by Kubota \cite{Kub25}, see also \cite[Theorem 6.6.2]{Scn93}. More generally, $K_i$, $i = 1, \ldots, d-2$ may be any smooth convex bodies.

% By $q_\Delta$ we will denote a quadratic form ... stand for $q_{\cC,\Delta}$ with $\cC = (B, \ldots, B)$.

In view of Example \ref{exl:Quermass}, Theorem \ref{thm:Kubota} implies
\begin{cor}
\label{cor:Area}
For $d=3$, the function $h \mapsto \area(\partial P(h))$ restricted to any type cone $\wT(\Delta)$ is a quadratic form $q_\Delta$ of signature $(+, 0, 0, 0, -, \ldots, -)$.

For $d>3$, the $(d-2)$-nd quermassintegral
$$
h \mapsto \sum_{\sigma \in \Delta^{(d-2)}} |\sigma| \cdot \area(F_\sigma)
$$
restricted to any type cone $\wT(\Delta)$ is a quadratic form $q_\Delta$ of signature $(+, d \cdot 0, -, \ldots, -)$.
\end{cor}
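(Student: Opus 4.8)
The plan is to exhibit both functions in the statement as a positive multiple of the quadratic form $q_{\cC,\Delta}$ of Definition~\ref{dfn:qForm} for the special choice $\cC=(B,\ldots,B)$ with $d-2$ copies of the unit ball $B\subset\R^d$, and then to read off the signature from the Lemma at the end of Section~\ref{sec:AFSign} together with Kubota's Theorem~\ref{thm:Kubota}.

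First I would identify the forms. With $\cC=(B,\ldots,B)$ one has $q_{\cC,\Delta}(h)=\vol(P(h),P(h),B,\ldots,B)=W_{d-2}(P(h))$. For $d=3$ this is $W_1(P(h))=\frac13\area(\partial P(h))$ by Example~\ref{exl:Quermass}, so $\area(\partial P(h))=3\,q_{\cC,\Delta}(h)$ on $\wT(\Delta)$. For $d>3$, the polytopal formula for quermassintegrals in Example~\ref{exl:Quermass} with $i=d-2$ gives $W_{d-2}(P(h))=c_{d,d-2}\sum_{F}\area(F)\cdot|N_F(P(h))|$, the sum ranging over the $2$-faces of $P(h)$. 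Since $\Delta=\cN(P(h))$ and $\dim N_F(P(h))=d-\dim F$, the $2$-faces of $P(h)$ correspond exactly to the cones $\sigma\in\Delta^{(d-2)}$, with $F=F_\sigma$ and $|N_{F_\sigma}(P(h))|=|\sigma|$; hence $W_{d-2}(P(h))=c_{d,d-2}\sum_{\sigma\in\Delta^{(d-2)}}|\sigma|\cdot\area(F_\sigma)$. As $c_{d,d-2}>0$, in both cases the function in the statement has the same signature as $q_{\cC,\Delta}$.

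Next I would apply the Lemma at the end of Section~\ref{sec:AFSign}: each $K_i=B$ is $d$-dimensional, so that Lemma gives that the positive index of $q_{\cC,\Delta}$ equals $1$, and it reduces $\dim\ker q_{\cC,\Delta}=d$ to the claim that equality in the Alexandrov--Fenchel inequality \eqref{eqn:AFIneq} for $K=P(h)$, $L=P(h')$ with $0\ne h,h'\in\wT(\Delta)$ forces $h'-\lambda h\in\im V$ for some $\lambda$. Since $\Delta$ is complete and pointed, $P(h)$ and $P(h')$ are $d$-dimensional polytopes, so Theorem~\ref{thm:Kubota} applies (here $\cC=(B,\ldots,B)$) and tells us that equality forces $P(h)$ and $P(h')$ to be homothetic; by \eqref{eqn:PTransl} together with $P(\lambda h)=\lambda P(h)$ and the injectivity of $h\mapsto P(h)$ on $\wT(\Delta)$, this means precisely $h'=\lambda h+Vt$ for some $\lambda>0$ and $t\in\R^d$, i.e.\ $h'-\lambda h\in\im V$. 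Therefore $\dim\ker q_{\cC,\Delta}=d$, and $q_{\cC,\Delta}$ has exactly one positive, exactly $d$ vanishing, and all remaining (that is, $\dim\span(\wT(\Delta))-d-1$) eigenvalues negative. On $\span(\wT(\Delta))$, which equals $\R^n$ when $\Delta$ is simplicial by Lemma~\ref{lem:AffT}, this is the signature $(+,0,0,0,-,\ldots,-)$ for $d=3$ and $(+,d\cdot 0,-,\ldots,-)$ for $d>3$. I do not expect a genuine obstacle here: Kubota's equality case is available as Theorem~\ref{thm:Kubota}, the signature mechanism is exactly the Lemma, and the remaining work is bookkeeping — the one point needing care is matching the $(d-2)$-nd quermassintegral with the weighted sum of areas of $2$-faces via the Steiner formula and keeping track of the positive constants.
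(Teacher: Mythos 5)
Your proposal is correct and follows exactly the paper's route: the paper proves the corollary by observing that $q_\Delta$ is proportional to $q_{\cC,\Delta}$ with $\cC=(B,\ldots,B)$ (via Example~\ref{exl:Quermass}) and then combining the signature lemma of Section~\ref{sec:AFSign} with Kubota's Theorem~\ref{thm:Kubota}, which is precisely your argument, only spelled out in more detail.
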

The form $q_\Delta$ in this corollary is proportional to $q_{\cC,\Delta}$, where $\cC = (B, \ldots, B)$.

The next result is a reformulation of \cite[Theorem 6.6.20]{Scn93}.
\begin{thm}Let $\cC = \{K_1, \ldots, K_{d-2}\}$ be a collection of normally equivalent simple polytopes with the normal fan $\Delta_0$.
%Let $K_i \in \wT(\Delta_0)$, $i = 1, \ldots, d-2$, where $\Delta_0$ is a simplicial polytopal fan. 
Then the Alexandrov-Fenchel inequality \eqref{eqn:AFIneq} holds with equality if and only if after applying a suitable homothety to $K$ or $L$ we have
$$
h_K(v) = h_L(v) \quad \text{for all } v \in \Delta_0^{(2)}
$$
\end{thm}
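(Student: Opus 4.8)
The plan is to separate the combinatorial content (which part of the support functions is seen by the mixed volumes) from the analytic content (the equality discussion for the Alexandrov--Fenchel inequality), the latter being supplied by \cite[Theorem 6.6.20]{Scn93}. Write $W:=\supp(\Delta_0^{(2)})$ for the $2$-skeleton of $\Delta_0$. The key preliminary step is a \emph{support lemma}: for any convex body $Y$ the mixed area measure $S(Y,K_1,\dots,K_{d-2};\cdot)$ is concentrated on $W\cap\Sph^{d-1}$. By weak continuity of mixed area measures it suffices to check this for polytopes $Y$, and then one verifies that the local mixed volume vanishes on the relative interior of any $\sigma\in\Delta_0^{(k)}$ with $k\ge 3$: there each $F(K_i,u)$ is a translate of the one $(d-k)$-dimensional subspace $\sigma^\perp$, so the $d-1$ bodies $F(Y,u),F(K_1,u),\dots,F(K_{d-2},u)$ contain segments in at most $(d-k)+1<d-1$ linearly independent directions, and Theorem~\ref{thm: segment} makes the local mixed volume zero. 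The consequence is that $\vol(X,Y,\cC)$ depends on $X$ (and, symmetrically, on $Y$) only through the restriction $h_X|_W$.

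With the support lemma the sufficiency direction is immediate: if after a homothety applied to $L$ (or to $K$) one has $h_K|_W=h_L|_W$, then $\vol(K,K,\cC)=\vol(K,L,\cC)=\vol(L,L,\cC)$ and \eqref{eqn:AFIneq} is an equality; and a homothety $x\mapsto\lambda x+t$ merely rescales the three mixed volumes by $\lambda^2$, $\lambda$, $1$, so it does not affect equality in \eqref{eqn:AFIneq}.

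For necessity I would assume equality in \eqref{eqn:AFIneq} and treat the case $\dim K=\dim L=d$ with $K,L$ polytopes, which is the one relevant for the applications; the general case follows by approximation (or directly from \cite{Scn93}). Then $\vol(K,K,\cC),\vol(L,L,\cC)>0$ by Theorem~\ref{thm: segment}, so after rescaling $L$ I may assume $\vol(K,K,\cC)=\vol(L,L,\cC)=:v>0$, whence equality together with the nonnegativity of mixed volumes forces $\vol(K,L,\cC)=v$. Pass to a common polytopal refinement $\Delta$ of $\cN(K),\cN(L),\Delta_0$ (for instance $\Delta=\cN(K+L+K_1)$, by Lemma~\ref{lem:FanMink}); then $h_K,h_L\in\cl\wT(\Delta)\subset\span\wT(\Delta)$, where $q_{\cC,\Delta}$ has positive index $1$ by the lemma of Section~\ref{sec:AFSign}. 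For $x:=h_K-h_L$ bilinearity gives $q_{\cC,\Delta}(x)=v-2v+v=0$ and $q_{\cC,\Delta}(x,h_K)=v-v=0$; since $h_K$ is $q_{\cC,\Delta}$-positive, $q_{\cC,\Delta}$ is negative semidefinite on the $q_{\cC,\Delta}$-orthogonal complement of $h_K$, and a null vector of a negative semidefinite form lies in its radical. Hence $x\in\ker q_{\cC,\Delta}$, which unwinds to $\vol(K,M,\cC)=\vol(L,M,\cC)$ for every polytope $M$ whose normal fan is refined by $\Delta$, and then (letting $\Delta$ vary and using density of support functions) for every convex body $M$; equivalently $S(K,K_1,\dots,K_{d-2};\cdot)=S(L,K_1,\dots,K_{d-2};\cdot)$.

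The main obstacle is the final step: passing from this equality of mixed area measures to the statement that $h_K|_W$ and $h_L|_W$ differ by the restriction of a linear function — equivalently, that the radical of the bilinear form $(X,Y)\mapsto\vol(X,Y,\cC)$, read off on $W$-restrictions of support functions, is precisely the space of functions linear on $W$. This is the genuine analytic content, namely \cite[Theorem 6.6.20]{Scn93}: its general formulation refers to the $2$-skeleton of the common refinement of the normal fans of $K_1,\dots,K_{d-2}$, and since the $K_i$ are normally equivalent that refinement is $\Delta_0$, so the relevant set is exactly $\Delta_0^{(2)}=W$. Once $h_K|_W=h_{L+t}|_W$ is obtained, combining with the rescaling of $L$ yields the required homothety; the only loose ends are the fully degenerate low-dimensional cases and the reconciliation of conventions with \cite{Scn93}.
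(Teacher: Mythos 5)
There is nothing to compare against inside the paper: this theorem is not proved there, it is stated as a reformulation of \cite[Theorem 6.6.20]{Scn93} and used as a black box. Your proposal ends up in the same place, since the step you isolate as ``the genuine analytic content'' is exactly the cited theorem; in the decisive step you and the paper coincide. What you add is the bridging material that the paper leaves implicit, and it is essentially correct: the support lemma (via Theorem~\ref{thm: segment}, the mixed area measure $S(\,\cdot\,,K_1,\dots,K_{d-2};\cdot)$ charges only $\supp(\Delta_0^{(2)})\cap\Sph^{d-1}$, because for $u$ in the relative interior of a cone of dimension $k\ge 3$ the faces $F(K_i,u)$ only contain segments with directions in a $(d-k)$-dimensional subspace) cleanly gives the ``if'' direction and explains why only $h_K|_{\Delta_0^{(2)}}$ and $h_L|_{\Delta_0^{(2)}}$ matter; and the signature/radical argument (positive index $1$ of $q_{\cC,\Delta}$ on a common refinement, a null vector orthogonal to a positive vector lies in the kernel) correctly reduces equality in \eqref{eqn:AFIneq} to $S(K,\cC;\cdot)=S(L,\cC;\cdot)$ after rescaling. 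Be aware, though, that once one quotes \cite[Theorem 6.6.20]{Scn93} in the form you describe, that reduction is redundant — the quoted theorem is the statement itself — so your argument is a useful gloss on why the ``reformulation'' is legitimate rather than an independent proof; also, Schneider's hypothesis is precisely simple strongly isomorphic (i.e.\ normally equivalent) polytopes $K_1,\dots,K_{d-2}$, not arbitrary ones with a common refinement, so the ``general formulation'' you allude to should not be overstated.

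One of the ``loose ends'' you mention is real and worth recording: with no dimension hypotheses on $K$ and $L$ the ``only if'' direction fails, e.g.\ for $K$ a point and $L$ full-dimensional one has $0=0$ in \eqref{eqn:AFIneq}, while $h_L$ cannot agree with a linear function on all of $\Delta_0^{(2)}$ (the rays of a complete fan positively span $\R^d$, so this would force $L$ to be a point). So the nondegeneracy assumptions present in \cite[Theorem 6.6.20]{Scn93} are genuinely needed; this is a defect of the statement as quoted in the paper, not of your reduction, and your restriction to full-dimensional $K,L$ is the right setting for the applications in Corollaries \ref{cor:Area}--\ref{cor:WeightArea}.
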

\begin{cor}
\label{cor:AFNormEq}
Let $\cC = \{K_1, \ldots, K_{d-2}\}$ be a collection of normally equivalent simple polytopes with the normal fan $\Delta_0$. Then the quadratic form $q_{\cC,\Delta}$ has signature $(+, d \cdot 0, -, \ldots, -)$ if $\Delta^{(1)} \subset \Delta_0^{(2)}$.
\end{cor}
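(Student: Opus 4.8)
The plan is to read off the signature from the Alexandrov--Fenchel signature lemma of Section~\ref{sec:AFSign} together with the equality characterization stated in the theorem preceding the corollary. A simple $d$-polytope is in particular $d$-dimensional, so the collection $\cC$ consists of $d$-dimensional bodies; hence part~1 of that lemma already gives $\ind_+(q_{\cC,\Delta}) = 1$. It remains to prove $\dim\ker q_{\cC,\Delta} = d$, and by part~2 of the same lemma this is equivalent to showing: whenever the Alexandrov--Fenchel inequality \eqref{eqn:AFIneq} for $K = P(h)$, $L = P(h')$ with $0 \ne h, h' \in \wT(\Delta)$ holds with equality, one has $h' - \lambda h \in \im V$ for some $\lambda$.

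So assume equality holds in \eqref{eqn:AFIneq} for such $K,L$. By \cite[Theorem~6.6.20]{Scn93} in the form stated just before the corollary, after applying a suitable homothety to $K$ or $L$ one gets $h_K(v) = h_L(v)$ for all $v \in \Delta_0^{(2)}$. Say the homothety $x \mapsto \lambda x + t$ (with $\lambda > 0$) is applied to $L$; then, using $h_{\lambda L + t} = \lambda\, h_L + \langle \cdot, t\rangle$ and \eqref{eqn:PTransl}, the transformed body has support vector $\lambda h' + Vt$, and the equality reads $h_i = (\lambda h' + Vt)_i$ for every index $i$ with $v_i \in \Delta_0^{(2)}$. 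Here the hypothesis $\Delta^{(1)} \subset \Delta_0^{(2)}$ is used: since the facet normals of $P(h)$ are exactly $V = \Delta^{(1)} = (v_1,\ldots,v_n)$ and all of them lie in $\Delta_0^{(2)}$, this equality holds for \emph{every} $i \in [n]$. Hence $h - \lambda h' = Vt \in \im V$, and since $\lambda > 0$ this also gives $h' - \lambda^{-1} h \in \im V$; if instead the homothety is applied to $K$, one obtains $h' - \lambda h = Vt \in \im V$ directly. Either way the required condition holds, so $\dim\ker q_{\cC,\Delta} = d$, and combining this with $\ind_+(q_{\cC,\Delta}) = 1$ yields the signature $(+, d\cdot 0, -, \ldots, -)$.

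The only point requiring care is the passage from ``$h_K = h_L$ on $\Delta_0^{(2)}$ up to homothety'' to the membership $h' - \lambda h \in \im V$, which is precisely where the containment $\Delta^{(1)} \subset \Delta_0^{(2)}$ does its work; everything else is assembly of results proved above, and there is no substantive obstacle, the analytic content being entirely contained in the cited equality case of the Alexandrov--Fenchel inequality.
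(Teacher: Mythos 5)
Your proposal is correct and is essentially the argument the paper intends: the corollary is stated as an immediate consequence of the signature lemma of Section~\ref{sec:AFSign} (positive index $1$ plus the kernel criterion) and the quoted reformulation of \cite[Theorem~6.6.20]{Scn93}, with the hypothesis $\Delta^{(1)} \subset \Delta_0^{(2)}$ used exactly as you use it to upgrade $h_K = h_L$ on $\Delta_0^{(2)}$ (after a homothety) to equality of the support vectors modulo $\im V$. Your explicit handling of the homothety and of \eqref{eqn:PTransl} correctly fills in the details the paper leaves implicit, so there is nothing to add.
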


In particular, the assumption of the Corollary \ref{cor:AFNormEq} is fulfilled when $\Delta^{(1)} = \Delta_0^{(1)}$, that is when $K_i$ and $K = P(h)$ have the same sets of the outward facet normals.
\begin{cor}
\label{cor:WeightArea}
Let $V$ be a vector configuration in $\R^3$, and let $h^0 \in \int\ir(V)$. Then the weighted sum of face areas
$$
h \mapsto \sum_{i=1}^n h^0_i \area(F_i(h))
$$
restricts on every type cone $\wT(\Delta)$ with $\Delta^{(1)} = V$ to a quadratic form of signature $(+, 0, 0, 0, -, \ldots, -)$.
\end{cor}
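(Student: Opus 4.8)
The plan is to recognize the weighted face area as three times a mixed volume, and then, after replacing $P(h^0)$ by a suitable \emph{simple} polytope with the same support values on $V$, to apply Corollary~\ref{cor:AFNormEq}.

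Write $P := P(h^0)$. Since $h^0 \in \int\ir(V)$, equation~\eqref{eqn:IntIr} of Lemma~\ref{lem:IntCoIr} shows that $P$ is a $3$-dimensional polytope whose facets have outer unit normals exactly $v_1,\dots,v_n$; in particular $h_P(v_i)=h^0_i$ for all $i$. I would first record the elementary mixed-volume identity: for any convex body $L\subset\R^3$ and any $h\in\wT(\Delta)$,
$$
3\,\vol\bigl(P(h),P(h),L\bigr)=\sum_{i=1}^n h_L(v_i)\,\area\bigl(F_i(h)\bigr),
$$
which is the first-variation formula $\frac{d}{d\epsilon}\big|_{0}\vol(P(h)+\epsilon L)=3\vol(P(h),P(h),L)$ combined with the fact that the surface area measure of the polytope $P(h)$ equals $\sum_i\area(F_i(h))\,\delta_{v_i}$ (see \cite[Chapter~5]{Scn93}). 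Thus, for \emph{any} polytope $S$ with $h_S(v_i)=h^0_i$ for all $i$, the function in the statement equals $3\,\vol(P(h),P(h),S)=3\,q_{(S),\Delta}(h)$ on $\wT(\Delta)$ (Definition~\ref{dfn:qForm}), and it remains only to determine the signature of $q_{(S),\Delta}$.

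The polytope $P$ itself satisfies $h_P(v_i)=h^0_i$, but it need not be simple, so Corollary~\ref{cor:AFNormEq} does not apply to $\cC=(P)$ directly. To remedy this I would let $S$ be obtained from $P$ by truncating (Definition~\ref{dfn:PolTrunc}), by sufficiently small and mutually independent amounts, every vertex of $P$ of valence greater than $3$. For the truncating parameters small enough: (i)~every facet $F_i$ of $P$ survives and its supporting hyperplane is unchanged, so $h_S(v_i)=h^0_i$ for all $i$ and $\R_+v_i\in\cN(S)^{(1)}$; and (ii)~every vertex of $S$ is $3$-valent, so $S$ is a \emph{simple} $3$-polytope. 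From (i), since $\cN(S)$ is a complete fan, each ray $\R_+v_i$ is a face of some $2$-dimensional cone of $\cN(S)$, so $\Delta^{(1)}=V$ is contained in $\cN(S)^{(2)}$. This is exactly the hypothesis of Corollary~\ref{cor:AFNormEq} for $\cC=(S)$ (a single simple polytope, since $d-2=1$) and $\Delta_0=\cN(S)$, which therefore yields that $q_{(S),\Delta}$ has signature $(+,0,0,0,-,\dots,-)$.

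Combining the two steps, $3\,\vol(P(h),P(h),S)=\sum_i h^0_i\,\area(F_i(h))$ for all $h\in\wT(\Delta)$, so the quadratic extension of $h\mapsto\sum_i h^0_i\,\area(F_i(h))$ to $\span(\wT(\Delta))$ coincides with $3\,q_{(S),\Delta}$, hence has signature $(+,0,0,0,-,\dots,-)$, as claimed. I expect the only delicate point to be the truncation construction of the third paragraph — verifying that small, pairwise independent truncations of all the vertices of valence greater than $3$ indeed produce a simple $3$-polytope that retains every original facet (so that its supporting hyperplanes, and hence the numbers $h_S(v_i)$, are unchanged). This is elementary but must be stated carefully; everything else is the mixed-volume bookkeeping above together with the invocation of Corollary~\ref{cor:AFNormEq}.
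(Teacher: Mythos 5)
Your proposal is correct and follows essentially the same route as the paper: identify the weighted face-area sum with $3\,\vol(P(h),P(h),P(h^0))$, replace $P(h^0)$ by a simple polytope obtained by small truncations of its high-valence vertices, observe the quadratic form is unchanged, and invoke Corollary~\ref{cor:AFNormEq} since $V\subset\cN(S)^{(1)}$ keeps the rays of $\Delta$ inside the $2$-skeleton of the richer normal fan. Your justification that the form is unchanged — via the identity $3\,\vol(P(h),P(h),L)=\sum_i h_L(v_i)\area(F_i(h))$ and $h_S(v_i)=h^0_i$ — is a slightly more explicit version of the paper's appeal to Example~\ref{exl:AFEq}, but it is the same argument in substance.
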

\begin{proof}
Since
$$
\vol(P(h), P(h), P(h^0)) = \frac13 \sum_{i=1}^n h^0_i \area(F_i(h)),
$$
(see \cite{Scn93,Ewald96}), the quadratic form in the corollary is proportional to $q_{P(h^0),\Delta}$. Polytopes $P(h^0)$ and $P(h)$ have the same sets of face normals: $\Delta_0^{(1)} = \Delta^{(1)} = V$. If $P(h^0)$ is simple, then the remark after Corollary~\ref{cor:AFNormEq} applies. If $P(h^0)$ is not simple, then it can be made simple by truncating vertices that have more than three adjacent edges. The quadratic form does not change (similarly to Example \ref{exl:AFEq}), while the normal fan of $K_1$ becomes only richer. Thus, by Corollary~\ref{cor:AFNormEq}, the form has the right signature.
\end{proof}

\begin{exl}
We know (Lemma \ref{lem:1inIr}) that $\one \in \int\ir(V)$. For $h^0 = \one$ the quadratic form in Corollary \ref{cor:WeightArea} is simply the surface area of $P$, that is coincides with the quadratic form from the first part of Corollary \ref{cor:Area}. Geometrically, $P(\one)$ is the circumscribed polytope. We thus have
$$
\vol(P(h),P(h),P(\one)) = \vol(P(h),P(h),B),
$$
which is another example of non-strict monotonicity of mixed volumes, compare Theorem \ref{thm: segment} and the paragraph before it.
\end{exl}

\begin{rem}\label{rem:hess}
For every simplicial polytopal fan $\Delta$ in $\R^d$ there exists a homogeneous polynomial $Z_\Delta$ of degree $d$ such that
$$
\vol(P(h)) = Z_\Delta(h) \quad \text{for all }h \in \wT(\Delta)
$$
The mixed volume of polytopes with the normal fan $\Delta$ is given by the polarization of the polynomial $Z_\Delta$. (In fact, the existence of development \eqref{eqn:MixVol} is usually proved through approximation of $K_i$ by normally equivalent simple polytopes.)

It follows that the quadratic form $q_{\cC,\Delta}$ for $\cC = (P(h^{0}), \ldots, P(h^{0}))$ with $h^{0} \in \Delta$ is proportional to the Hessian of $\vol(P(h))$ at $h = h^0$. By Corollary \ref{cor:AFNormEq}, this Hessian is non-degenerate modulo translations. This is related to the uniqueness part of the Minkowski problem for polytopes; besides, knowing the signature of the Hessian allows to prove the existence part. By duality, this is related to the infinitesimal rigidity of convex polytopes, \cite{Izm10,IzmHED}.
\end{rem}

\section{Hyperbolic geometry}
\label{sec:HypGeom}

\subsection{From type cones to hyperbolic polyhedra}
\label{sec:HypPol}
Let $\Delta$ be a simplicial polytopal fan with $\Delta^{(1)} = V$, and $T(\Delta)$ be the corresponding type cone. Let $\cC = (K_1, \ldots, K_{d-2})$ be a collection of convex bodies such that the quadratic form $q_{\cC,\Delta}$ from Definition \ref{dfn:qForm} has signature $(+, d \cdot 0, -, \ldots, -)$ (examples of such $\cC$ are given in Corollaries \ref{cor:Area}, \ref{cor:AFNormEq}, and \ref{cor:WeightArea}). Since $\ker q_{\cC,\Delta} = \im V$, the form $q_{\cC,\Delta}$ descends from $\R^n$ to $\R^n/\im V$. By an abuse of notation, this form will also be denoted by $q_{\cC,\Delta}$; its signature is $(+, -, \ldots, -)$.

Thus $q_{\cC,\Delta}$ is a Minkowski scalar product on $\R^n/\im V$. The upper half of the hyperboloid $\{\pi(h) \mid q_{\cC,\Delta}(h) = 1\}$ becomes a model of the hyperbolic space $\H^{n-d-1}$. On the convex polyhedral cone $\cl T(\Delta)$, the form $q_{\cC,\Delta}$ takes non-negative values, due to the non-negativity of mixed volumes. Thus
$$
H_{\cC}(\Delta) := \cl T(\Delta) \cap \{\pi(h) \mid q_{\cC,\Delta}(h) = 1\}
$$
(recall that $\pi \colon \R^n \to \R^n/\im V$ is the projection map)
becomes a convex hyperbolic polyhedron. More exactly, $H_{\cC}(\Delta)$ is the convex hull of finitely many points, some of which can be ideal.

An ideal vertex of $H_{\cC}(\Delta)$ corresponds to $P(h)$ degenerating into a segment (at least in the situations of Corollaries \ref{cor:Area} and \ref{cor:WeightArea}). Such a degeneration is possible if and only if the orthogonal complement of the segment is positively spanned by a subset of~$V$.

A non-simplicial fan $\Delta$ gives rise to a hyperbolic polyhedron $H_{\cC}(\Delta)$ of dimension smaller than $n-d-1$. If $\Delta' \preccurlyeq \Delta$, then $H_{\cC}(\Delta)$ is a face of $H_{\cC}(\Delta')$.

% From Theorem~\ref{thm: segment}, $q_{\cC,\Delta}$ takes positive values on  $T(\Delta)/\im V$, so $\overline{H}(\Delta)$ has finite volume. 
% It is non compact (i.e.~it has a vertex on the ideal boundary of the hyperbolic space) if and only if there exists a segment of 
% $\mathbb{R}^d$ belonging to 
% the boundary 
% of $\cl T(\Delta)$. In particular, the set of unit vectors $V$ must contain $n$ vectors positively spanning a hyperplane. 

% 
% The polyhedron $H(\Delta)$ may be non-compact, as the example in Section \ref{sec:CubeShapes} shows.

In general, the irreducibility domain $\ir(V)$ is composed of several type cones, quadratic forms on which have different extensions to $\R^n$: $q_{\cC,\Delta} \ne q_{\cC,\Delta'}$. However, at the common boundary points of the type cones these forms coincide:
$$
q_{\cC,\Delta}(h) = q_{\cC,\Delta'}(h) \quad \text{for } h \in \cl \wT(\Delta) \cap \cl \wT(\Delta'),
$$
which follows, for example, from the continuity of the mixed volumes with respect to the Hausdorff metric.
Thus the closure of the irredundancy domain $\clir(V)$ becomes equipped with a piecewise hyperbolic metric. Denote by
\begin{equation}
\label{eqn:MC}
M_{\cC}(V) := \bigcup_{\Delta} H_{\cC}(\Delta)
\end{equation}
the corresponding metric space.

Recall that the combinatorial structure of $M_{\cC}(V)$ is that of the chamber fan $\Ch(\bar V)$ intersected with $\clir(V)$, and that $\clir(V) = \core_2(V)$, see Section \ref{sec:Chambers}. (Strictly speaking, $M_{\cC}(V)$ is combinatorially isomorphic to the chamber \emph{complex} of the \emph{affine} Gale diagram, intersected with the \emph{affine} 2-core.) Besides, each facet of $\clir(V)$ corresponds to a positive or hyperbolic circuit of $V$, see \eqref{eqn:IrV4} and \eqref{eqn:ClirFacet}. Let
$$
M_{\cC}^C(V) \cong \clir^C(V) \cap A
$$
be the subset of $M_{\cC}(V)$ that corresponds to the facet $\clir^C(V)$ under the isomorphism of polyhedral complexes
$$
M_{\cC}(V) \cong \clir(V) \cap A,
$$
where $A$ is an affine hyperplane in $\R^{n-d}$ whose intersection with $\clir(V)$ is bounded.

Let us now look at the polytopes $H_{\cC}(\Delta)$ and the metric space $M_{\cC}(V)$ in our standard examples.

\subsection{The examples, continued}
% Each of the quadratic forms $q(h)$ considered in Section \ref{sec:ExlMV} has signature $(+,-,\ldots,-)$ when pushed out to the quotient space $\R^n/\im V \cong \R^{n-d}$. Since $q(h)$ computes the area, which is always positive, the type cone $T(\Delta)/\im V$ lies in the interior of the light cone of the form $q$. Hence the intersection
% $$
% H(\Delta) := \cl T(\Delta)/\im V \cap \{\pi(h) \mid q(h) = 1\}
% $$
% is a convex polyhedron (possibly with some ideal vertices) in the hyperboloid model of the hyperbolic space $\H^{n-d-1}$.
% 
% If a given vector configuration $V$ admits different type cones, then on each of them there is its own quadratic form. Hence the hyperbolic polyhedra $H(\Delta)$ are glued together in a polyhedral complex. This defines a hyperbolic metric with cone singularities on an affine section of $\clir(V)/\im V$.

In each of the examples below, the quadratic form $q(h)$ (or $q_\Delta(h)$) is the area of a polygon or the surface area of a 3-dimensional polytope, already studied in Section \ref{sec:ExlMV}. It is proportional to $q_{\cC,\Delta}(h)$ with $\cC = \emptyset$ for polygons and $\cC = B$ for 3-polytopes.

In the first three examples the vector configurations are monotypic (there is only one full-dimensional type cone $T(\Delta)$), so that it suffices to study the corresponding hyperbolic polytope $H(\Delta)$. In the last example we have several $H(\Delta)$, and we are also studying the space \eqref{eqn:MC} obtained by gluing them together.

\subsubsection{Parallelepipeds with fixed face directions form an ideal hyperbolic triangle}
\label{sec:Par3}
%Let us continue Exemple~\ref{sec:CubeShapes}.
We identified $\R^6/\im V$ with $\R^3 = \{h_4 = h_5 = h_6 = 0\}$.
The cone $\cl T(\Delta) = \{h \in \R^3 \mid h_i \ge 0,\, i = 1, 2, 3\}$ is spanned by three vectors $e_1$, $e_2$, $e_3$, which are light-like with respect to the quadratic form $q(h) = \frac{2}{D}(h_1h_2 + h_2h_3 + h_3h_1)$. Thus $H(\Delta)$ is an ideal hyperbolic triangle.

\subsubsection{Polygons}
We have $\dim T(\alpha) = n-2$, so that $H(\alpha)$ is an $(n-3)$-dimensional hyperbolic polyhedron.
Recall from Section \ref{ex:poly1} that the facet $T_i$ of the cone $T(\alpha)$ corresponds to vanishing of the $i$-th edge of the polygon: $\ell_i(h) = 0$. At the same time, \eqref{eq:ar polygon} implies $\a(e_i, h) = \frac12 \ell_i(h)$. It follows that $\aff(T_i)$ is orthogonal to $e_i$ with respect to the quadratic form $q$. In other words, the point corresponding to $e_i$ is polar dual to the corresponding facet $H_i$ of $H(\alpha)$. (This point lies in the de Sitter space, if $T_i$ intersects the interior of the light cone.) This leads to the following formula for the dihedral angle $\Theta_{i-1i}$ between
$H_{i-1}$ and $H_{i}$:
$$\cos^2(\Theta_{i-1i})=\frac{\sin(\alpha_{i-1})\sin(\alpha_{i+1})}{\sin(\alpha_{i-1}+\alpha_i)\sin(\alpha_i+\alpha_{i+1}) }. $$
% Let us denote by $P(\alpha)$ the hyperbolic polyhedron
% $T(\alpha) \cap \{h \mid \area(h) = 1\}$.
% We will still denote by $F_i$ its facet corresponding to the facet $F_i$ of
% $T(\alpha)$. Let $u_i$ be the $i$th vector of the standard basis
% of $\R^n$. From \eqref{eq:ar polygon}
% $\a(u_i,h)=\frac{1}{2} \ell_{\sigma_i}(h)$. But the elements of $F_i$ are defined by
% $\ell_{\sigma_i}(h)=0$.
% So $u_i$ is orthogonal to $F_i$. So
% the interior dihedral angle $\Theta_{i-1i}$ between
% $F_{i-1}$ and $F_{i}$ can be explicitly computed as 
% $$\cos^2(\Theta_{i-1i})=\frac{\sin(\alpha_{i-1})\sin(\alpha_{i+1})}{\sin(\alpha_{i-1}+\alpha_i)\sin(\alpha_i+\alpha_{i+1}) }. $$
If $j\notin\{i-1,i,i+1\}$, then this formula implies that
$H_i$ and $H_j$ meet orthogonally. Thus $H(\alpha)$ is an \emph{orthoscheme}.
All hyperbolic orthoschemes (included truncated and doubly truncated) can be constructed this way.
For example, $H\left(\frac{2\pi}{5},\frac{2\pi}{5},\frac{2\pi}{5},\frac{2\pi}{5},\frac{2\pi}{5}\right)$ is the regular right-angled hyperbolic pentagon, see Figure \ref{fig:Pentagon}. For more details, see \cite{BG92,FillEM}.

\begin{figure}[ht]
\centering
\begin{picture}(0,0)%
\includegraphics{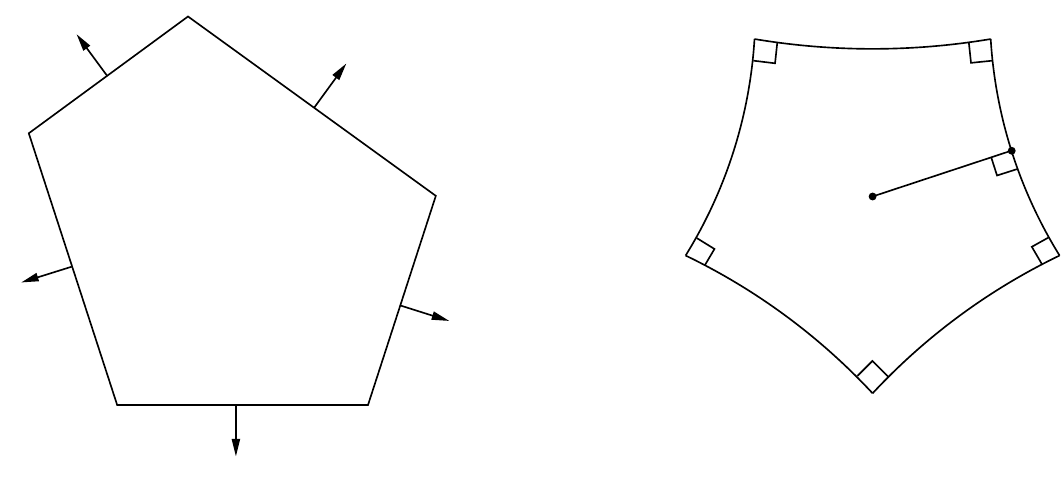}%
\end{picture}%
\setlength{\unitlength}{4144sp}%
\begingroup\makeatletter\ifx\SetFigFont\undefined%
\gdef\SetFigFont#1#2#3#4#5{%
  \reset@font\fontsize{#1}{#2pt}%
  \fontfamily{#3}\fontseries{#4}\fontshape{#5}%
  \selectfont}%
\fi\endgroup%
\begin{picture}(4853,2184)(61,-1353)
\put(951,-308){\makebox(0,0)[lb]{\smash{{\SetFigFont{10}{12.0}{\rmdefault}{\mddefault}{\updefault}{\color[rgb]{0,0,0}$P(h)$}%
}}}}
\put(1193,-1289){\makebox(0,0)[lb]{\smash{{\SetFigFont{10}{12.0}{\rmdefault}{\mddefault}{\updefault}{\color[rgb]{0,0,0}$v_1$}%
}}}}
\put(2063,-771){\makebox(0,0)[lb]{\smash{{\SetFigFont{10}{12.0}{\rmdefault}{\mddefault}{\updefault}{\color[rgb]{0,0,0}$v_2$}%
}}}}
\put(1678,459){\makebox(0,0)[lb]{\smash{{\SetFigFont{10}{12.0}{\rmdefault}{\mddefault}{\updefault}{\color[rgb]{0,0,0}$v_3$}%
}}}}
\put(4773, 15){\makebox(0,0)[lb]{\smash{{\SetFigFont{10}{12.0}{\rmdefault}{\mddefault}{\updefault}{\color[rgb]{0,0,0}$H_2$}%
}}}}
\put(3403,-724){\makebox(0,0)[lb]{\smash{{\SetFigFont{10}{12.0}{\rmdefault}{\mddefault}{\updefault}{\color[rgb]{0,0,0}$H_3$}%
}}}}
\put(3143,153){\makebox(0,0)[lb]{\smash{{\SetFigFont{10}{12.0}{\rmdefault}{\mddefault}{\updefault}{\color[rgb]{0,0,0}$H_1$}%
}}}}
\put( 76,-610){\makebox(0,0)[lb]{\smash{{\SetFigFont{10}{12.0}{\rmdefault}{\mddefault}{\updefault}{\color[rgb]{0,0,0}$v_5$}%
}}}}
\put(218,550){\makebox(0,0)[lb]{\smash{{\SetFigFont{10}{12.0}{\rmdefault}{\mddefault}{\updefault}{\color[rgb]{0,0,0}$v_4$}%
}}}}
\put(4457,-736){\makebox(0,0)[lb]{\smash{{\SetFigFont{10}{12.0}{\rmdefault}{\mddefault}{\updefault}{\color[rgb]{0,0,0}$H_5$}%
}}}}
\put(3920,684){\makebox(0,0)[lb]{\smash{{\SetFigFont{10}{12.0}{\rmdefault}{\mddefault}{\updefault}{\color[rgb]{0,0,0}$H_4$}%
}}}}
\put(3916,-196){\makebox(0,0)[lb]{\smash{{\SetFigFont{10}{12.0}{\rmdefault}{\mddefault}{\updefault}{\color[rgb]{0,0,0}$I$}%
}}}}
\put(4481,184){\makebox(0,0)[lb]{\smash{{\SetFigFont{10}{12.0}{\rmdefault}{\mddefault}{\updefault}{\color[rgb]{0,0,0}$I_2$}%
}}}}
\end{picture}%
\caption{The space of equiangular pentagons is the regular right-angled hyperbolic pentagon.}
\label{fig:Pentagon}
\end{figure}

The polyhedron $H(\alpha)$ contains a distinguished point $I$ that corresponds to a circumscribed polygon. In other words, $I := (\a(\one))^{-1/2} \one$.
For further reference,  we want to compute the 
hyperbolic distance from $I$ to the facet $H_i$ of $H(\alpha)$.
Let $I_i$ be the orthogonal projection of 
$I$ onto $H_i$. As $e_i$ is orthogonal to 
$H_i$, the polygon corresponding to $I_i$ is
obtained from a circumscribed polygon by moving its $i$-th edge until it disappears. Thus $I_i$ corresponds to a circumscribed polygon with one edge less. In other words, its support vector is proportional to
$$
\hat{\one}_i := (1, \ldots, 0, \ldots, 1),
$$
with the only zero at the $i$-th place.
% see Figure~\ref{fig:polygon}.
% 
% 
%   \begin{figure}
% \centering
% \includegraphics[scale=0.4]{fig/polygons.png}  
% \caption{The big polygon with five edges is the orthogonal projection of the one with six edges.
% Both  have inscribed unit circle.
% \label{fig:polygon}
% } 
%   \end{figure}

\begin{figure}[ht]
\centering
\includegraphics{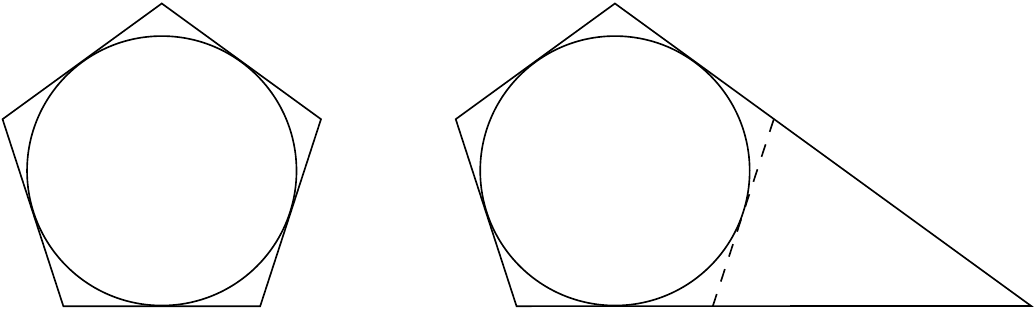}
\caption{The polygons $P(\one)$ and $P(\hat{\one}_i)$.}
\label{fig:P1i}
\end{figure}

By standard hyperbolic geometry, we have
%if $\gamma_i$ is the hyperbolic distance between
% the normalization of $1$ and the normalization of $P_i$,

$$\cosh \dist(I, I_i) = \frac{\area (\one,\hat{\one}_i)}{\sqrt{\area(\one)\area(\hat\one_i)}} $$

By \eqref{eq:ar polygon} $\area (\one, \hat\one_i)$ is half of the perimeter of $P(\hat\one_i)$. 
Since $P(\hat\one_i)$ is circumscribed about a unit circle, its perimeter equals $2\area(\hat\one_i)$, so 

\begin{equation}\label{eq: gammai}\cosh \dist(I, I_i)= \sqrt{\frac{\a(\hat\one_i)}{\a(\one)}}.\end{equation}

\subsubsection{Polygonal prisms}

Denote by $H^+(\alpha)$ the hyperbolic polyhedron
that is the section of $T^+(\alpha)$. Then $H^+(\alpha)$ is a pyramid over $H(\alpha)$ with apex $A$ that corresponds to prisms degenerating into a segment. Since this degeneration nullifies the surface area, $A$ is an ideal point.
In coordinates, $A$ is given by
 the light-like vector
$(\zero,1,1)$ (proportional to $(\zero,0,1)$ and $(\zero, 1, 0)$ modulo $\im V$).
The orthogonal projection of the apex $A$ onto the hyperplane $h_{n+1} = h_{n+2} = 0$ that contains the basis
$H(\alpha)$ is a linear combination of $(\zero, 1, 1)$ and the normal $(\one, -1, -1)$ to that hyperplane. We obtain the vector $(\one, 0, 0)$. In other words, the orthogonal projection of the apex $A$ is the point $I \in H(\alpha)$ that corresponds to a circumscribed polygon.

\begin{figure}[ht]
\centering
\begin{picture}(0,0)%
\includegraphics{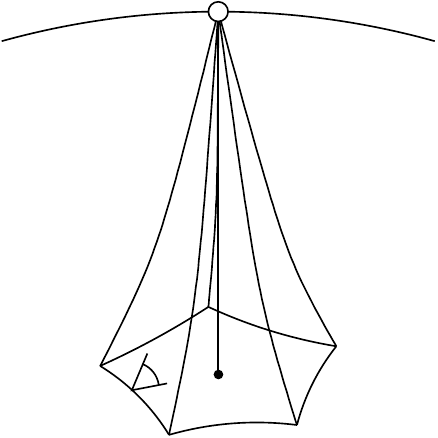}%
\end{picture}%
\setlength{\unitlength}{4144sp}%
\begingroup\makeatletter\ifx\SetFigFont\undefined%
\gdef\SetFigFont#1#2#3#4#5{%
  \reset@font\fontsize{#1}{#2pt}%
  \fontfamily{#3}\fontseries{#4}\fontshape{#5}%
  \selectfont}%
\fi\endgroup%
\begin{picture}(1996,2000)(893,-1828)
\put(1909,-1655){\makebox(0,0)[lb]{\smash{{\SetFigFont{10}{12.0}{\rmdefault}{\mddefault}{\updefault}{\color[rgb]{0,0,0}$I$}%
}}}}
\put(1327,-1762){\makebox(0,0)[lb]{\smash{{\SetFigFont{10}{12.0}{\rmdefault}{\mddefault}{\updefault}{\color[rgb]{0,0,0}$\phi_i$}%
}}}}
\put(1684,-40){\makebox(0,0)[lb]{\smash{{\SetFigFont{10}{12.0}{\rmdefault}{\mddefault}{\updefault}{\color[rgb]{0,0,0}$A$}%
}}}}
\end{picture}%
\caption{The space of prisms is the pyramid over the space of polygons.}
\label{fig:IdPyr}
\end{figure}

Let us compute the dihedral angle $\phi_i$ between the facets $\conv\{A, H_i\}$ and $H(\alpha)$ of the pyramid $H^+(\alpha)$.
Applying the hyperbolic Pythagorean theorem \cite{Thu97} to the triangle $AII_i$, we obtain
% if $\gamma_i$ is the hyperbolic distance between
% the normalization of $\overline{1}$ and the normalization of $P_i$, $\sin A_i=\frac{1}{\cosh \gamma_i}$
from \eqref{eq: gammai}
$$ \sin \phi_i= \sqrt{\frac{\a(\one)}{\a(\hat\one_i)}}.$$

The formulas for the other dihedral angles of $H^+(\alpha)$ don't look nice.

% Note that the intersection of $H^+(\alpha)$ with a horosphere centered at $A$ gives a Euclidean convex polyhedron $Q(\alpha)$
% combinatorially isomorphic to $H(\alpha)$.
% If $n = 5$, that is we start from a Euclidean pentagon, then $H(\alpha)$ is a hyperbolic pentagon
% such that $P(\alpha)$ is also a pentagon, then $Q(\alpha)$ is a Euclidean pentagon, as well as $Q(Q(\alpha))$ and so on.
% It should be interesting to investigate its dynamic. There is an obvious fixed point:
% if we start from the regular Euclidean pentagon, by symmetry it is not hard to see that $Q(\alpha)$ must be the regular Euclidean pentagon.

\subsubsection{Triangular bipyramid}
\label{sec:TrBiHyp}
The space $M_{\cC}(V)$ is glued from six quadrilaterals as shown on Figure \ref{fig:TypeConesExl}. Each of these quadrilaterals is equipped with a hyperbolic metric that arises from the corresponding quadratic form $q_\Delta$.
A priori, the hexagon on Figure \ref{fig:TypeConesExl} can become a hyperbolic 12-gon with a conic singularity at the point where all six quadrilaterals meet. Let us understand what is its shape in reality.

% Equation \eqref{eqn:AreaPh} describes a quadratic form $q_\Delta(h)$ that computes the surface area of polytopes from the type cone on Figure~\ref{fig:TypeConesExl}. The intersection of the type cone with the hyperboloid $\{q_\Delta(h) = 1\}$ is a hyperbolic quadrilateral $H(\Delta)$. We would like to determine its shape and to find out how all six quadrilaterals are glued together.

Consider the hyperbolic quadrilateral $H(\Delta)$, where $\Delta$ is the cone over the quadrilateral shaded on Figure \ref{fig:TypeConesExl}.
The equations of the boundary hyperplanes on Figure \ref{fig:TypeConesExl}, right, can be rewritten in coordinates $f_1, f_2, f_3$ as shown on Figure \ref{fig:RightAnglesExl}. Since
\begin{equation}
\label{eqn:AreaPh}
q_\Delta(h) = c_1 f_1^2 - c_2 f_2^2 - c_3 f_3^2,
\end{equation}
as was proved in Section \ref{sec:TriBiForm}, it follows that the quadrilateral $H(\Delta)$ has three right angles. The same holds for all of the other type cones. For two adjacent type cones, the two adjacent right angles sum up to $\pi$. It follows that the union of all six quadrilaterals is a right-angled hyperbolic hexagon, possibly with a conic singularity in the interior.

\begin{figure}[ht]
\centering
\begin{picture}(0,0)%
\includegraphics{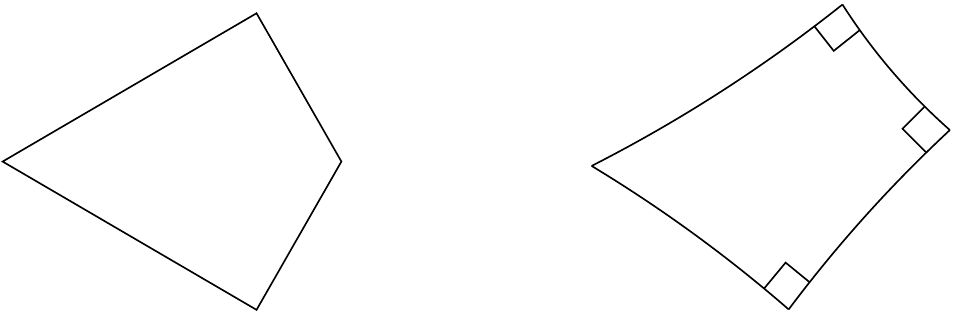}%
\end{picture}%
\setlength{\unitlength}{4144sp}%
\begingroup\makeatletter\ifx\SetFigFont\undefined%
\gdef\SetFigFont#1#2#3#4#5{%
  \reset@font\fontsize{#1}{#2pt}%
  \fontfamily{#3}\fontseries{#4}\fontshape{#5}%
  \selectfont}%
\fi\endgroup%
\begin{picture}(4351,1416)(2395,-750)
\put(2759,216){\rotatebox{30.0}{\makebox(0,0)[lb]{\smash{{\SetFigFont{8}{9.6}{\rmdefault}{\mddefault}{\updefault}{\color[rgb]{0,0,0}$f_1 + f_2 = 0$}%
}}}}}
\put(3798,-570){\rotatebox{60.0}{\makebox(0,0)[lb]{\smash{{\SetFigFont{8}{9.6}{\rmdefault}{\mddefault}{\updefault}{\color[rgb]{0,0,0}$f_2 = 0$}%
}}}}}
\put(2766,-423){\rotatebox{330.0}{\makebox(0,0)[lb]{\smash{{\SetFigFont{8}{9.6}{\rmdefault}{\mddefault}{\updefault}{\color[rgb]{0,0,0}$f_1 - 3f_3 = 0$}%
}}}}}
\put(3791,377){\rotatebox{300.0}{\makebox(0,0)[lb]{\smash{{\SetFigFont{8}{9.6}{\rmdefault}{\mddefault}{\updefault}{\color[rgb]{0,0,0}$f_3 = 0$}%
}}}}}
\end{picture}%
\caption{The hyperbolic quadrilateral corresponding to the type cone.}
\label{fig:RightAnglesExl}
\end{figure}

To proceed further, one needs to know the coefficients $c_1, c_2, c_3$ in \eqref{eqn:AreaPh}. This can be done with the help of formulas from Appendix \ref{sec:App2}, but in our case we can exploit the symmetry.

Let $\Delta'$ be the type cone $h_1 \ge h_3 \ge h_2$ sharing with $\Delta$ the facet $h_2 = h_3$. A priori we have $q_\Delta \ne q_{\Delta'}$: indeed, the surface areas of polytopes with pairwise parallel faces but different combinatorial structure is in general given by different quadratic forms. However, we claim that in our case $q_{\Delta'} = q_\Delta$.

The fans $\Delta$ and $\Delta'$ are related by a flip,
% $
% \left\{ \{2,3,5\}, \{3,5,6\} \right\} \rightsquigarrow \left\{ \{2,3,6\}, \{2,5,6\} \right\}
% $, see 
see Figure~\ref{fig:flip}.
It follows that
\begin{multline*}
q_{\Delta'}(h) = \area(\partial P(h)) + \area(F_3(\Sigma)) + \area(F_5(\Sigma))\\
- \area(F_2(\Sigma)) - \area(F_6(\Sigma)),
\end{multline*}
where $\Sigma$ is the tetrahedron bounded by the planes $H_2, H_3, H_5, H_6$, see Figure~\ref{fig:FlipExl} and \ref{fig:but-move}.
On the other hand, we have $v_3 + v_5 - v_2 - v_6 = 0$, and $-v_3, -v_5, v_2, v_6$ are the outer unit normals to the faces of $\Sigma$. Together with the Minkowski identity $\sum \area(F_i) v_i = 0$ this implies that $\area(F_3(\Sigma)) + \area(F_5(\Sigma)) - \area(F_2(\Sigma)) - \area(F_6(\Sigma)) = 0$, and thus $q_{\Delta'} = q_\Delta$.

\begin{figure}[ht]
\centering
\begin{picture}(0,0)%
\includegraphics{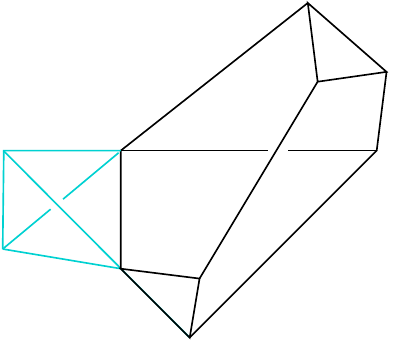}%
\end{picture}%
\setlength{\unitlength}{4144sp}%
\begingroup\makeatletter\ifx\SetFigFont\undefined%
\gdef\SetFigFont#1#2#3#4#5{%
  \reset@font\fontsize{#1}{#2pt}%
  \fontfamily{#3}\fontseries{#4}\fontshape{#5}%
  \selectfont}%
\fi\endgroup%
\begin{picture}(1779,1554)(349,-1558)
\put(446,-1361){\makebox(0,0)[lb]{\smash{{\SetFigFont{10}{12.0}{\rmdefault}{\mddefault}{\updefault}{\color[rgb]{0,0,0}$\Sigma$}%
}}}}
\end{picture}%
\caption{The difference $q_{\Delta'} - q_\Delta$ for fans related by a flip.}
\label{fig:FlipExl}
\end{figure}

\begin{figure}[ht]
\centering
\begin{picture}(0,0)%
\includegraphics{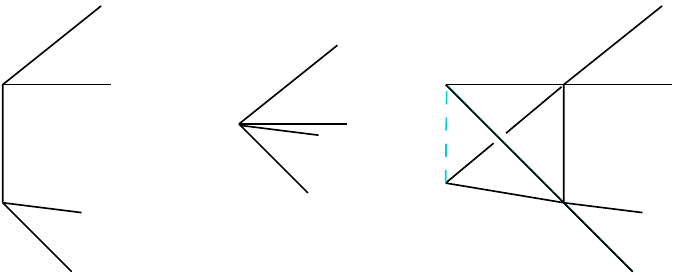}%
\end{picture}%
\setlength{\unitlength}{4144sp}%
\begingroup\makeatletter\ifx\SetFigFont\undefined%
\gdef\SetFigFont#1#2#3#4#5{%
  \reset@font\fontsize{#1}{#2pt}%
  \fontfamily{#3}\fontseries{#4}\fontshape{#5}%
  \selectfont}%
\fi\endgroup%
\begin{picture}(3084,1239)(889,-1558)
\end{picture}%
\caption{Changing a height of a convex polytope with combinatorics $ \Delta$ until going outside $\tilde{T}(\Delta)$, but keeping the combinatorics $\Delta$.
The shaded edge has negative length (in the sense of Lemma~\ref{lem:EllLin}). Compare ``butterfly moves'' \cite{Thu98}.}
\label{fig:but-move}
\end{figure}

The same holds for any other pair of adjacent type cones. Thus for all $h \in \ir(V)$ the surface area of $P(h)$ is given by the same quadratic form $q(h)$, independent of the combinatorics of $P(h)$.

It follows that the right-angled hyperbolic hexagon $\cup_\Delta H(\Delta)$ has no cone singularity in the interior.
By exploiting the symmetry of the vector configuration further, it can be shown that the hexagon is equilateral and that the lines subdividing it into the type polygons are its axes of symmetry. Indeed, the maps
$$
(h_1, h_2, h_3) \mapsto (h_1, h_3, h_2), \qquad (h_1, h_2, h_3) \mapsto \left(\frac23 - h_1, \frac23 - h_2, \frac23 - h_3\right)
$$
send the hexagon to itself and preserve the value of $q(h)$: the first map corresponds to reflection of $P(h)$ in the coordinate $xz$-plane, the second one to reflection in the $xy$-plane. Thus both maps are hyperbolic isometries. It follows that all of the type quadrilaterals are congruent and symmetric.

The invariance of the form $q(h)$ under permutations of coordinates allows us to quickly compute the coefficients $c_i$ in \eqref{eqn:AreaPh} up to a common factor. Namely, we have $c_1 = 3c_2$, $c_3 = 3c_1$, and
$$
q(h) = c(-h_1^2 - h_2^2 - h_3^2 + 4h_1h_2 + 4h_2h_3 + 4h_3h_1)
$$
The factor $c$ depends on the parameter $\lambda$ in \eqref{eqn:VectBipyr}.

%%%%%%%%%%%%%%%%%%%%%%%%%%%%%%%%%%%%%%%%%%%%%%%%%%%%%%%%%%%%%%%%%%%%%%%%%%

\subsection{Dihedral angles at the boundary}
\label{sec:DihAngles}
Let $C_1$ and $C_2$ be positive or hyperbolic circuits determining two facets of $\clir(V)$, and let $M_{\cC}^{C_1}(V)$ and $M_{\cC}^{C_2}(V)$ be the corresponding subcomplexes of $\partial M_{\cC}(V)$, see Section \ref{sec:HypPol}. If $M_{\cC}^{C_1}(V)$ and $M_{\cC}^{C_2}(V)$ intersect along a codimension 2 (with respect to $M_{\cC}(V)$) subcomplex, then one may ask what are the dihedral angles of $M_{\cC}(V)$ along this intersection. Note that the dihedral angles may be different at different cells $H_{\cC}(\Delta)$ of $M_{\cC}^{C_1}(V) \cap M_{\cC}^{C_2}(V)$.

The following theorem describes a special case when $M_{\cC}^{C_1}(V)$ and $M_{\cC}^{C_2}(V)$ intersect at a right angle. The proof generalizes an argument from \cite{BG92}. Example in Section \ref{sec:TrBiHyp} can serve as an illustration.

% $$
% f_1(h) := h_{p_1} - \sum_{i \in C_1^{-}} \lambda_i^{C^-_1} h_i, \quad f_2(h) := h_{p_2} - \sum_{i \in C_2^{-}} \lambda_i^{C^-_2} h_i
% $$

\begin{thm}
Let $C_1, C_2 \subset [n]$ be hyperbolic circuits with the positive indices $p_1$ and $p_2$ such that
$$
|C_1| = |C_2| = d+1 \quad \text{and} \quad p_1 \notin C_2, p_2 \notin C_1
$$
(in particular $p_1 \ne p_2$)
and such that $\clir^{C_1}(V)$ and $\clir^{C_2}(V)$ are facets of $\clir(V)$ intersecting along a codimension 2 face. Then the dihedral angle between the corresponding subsets of $\partial M_{\cC}(V)$ equals $\frac{\pi}2$ at every point.
\end{thm}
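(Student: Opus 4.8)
The plan is to reduce the statement to a single hyperbolic cell $H_{\cC}(\Delta)$ and, inside it, to identify the polars of the two facets with the coordinate vectors $e_{p_1},e_{p_2}$ up to positive scalars; orthogonality will then follow at once from the defining relation of the circuit $C_1$. For the reduction, fix $h\in\R^n$ with $\pi(h)$ in the relative interior of the codimension $2$ face $\clir^{C_1}(V)\cap\clir^{C_2}(V)$. Since $|C_i|=d+1$, the set $V_{C_i^-}$ is a basis of $\R^d$, so by Lemma~\ref{lem:Bdry2Ir} each $w_i:=F_{C_i^-}(h)$ is a simple vertex of $P(h)$, the vertices $w_1,w_2$ are distinct, and $P(h)$ has facet normals $V_{[n]\setminus\{p_1,p_2\}}$; the polytopes obtained by truncating $w_1$ and $w_2$ independently all realise one combinatorial type, with simplicial normal fan $\Delta$, $\Delta^{(1)}=V$, whose truncation facets carry the normals $v_{p_1}$ and $v_{p_2}$. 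Thus the dihedral angle of $M_{\cC}(V)$ along $M_{\cC}^{C_1}(V)\cap M_{\cC}^{C_2}(V)$ near $\pi(h)$ is the dihedral angle of the hyperbolic polytope $H_{\cC}(\Delta)$ between its facets lying on $\clir^{C_1}(V)$ and on $\clir^{C_2}(V)$, and it is enough to show this angle is $\tfrac\pi2$.

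Set $f_i(h):=\langle\lambda^{C_i^-},h\rangle-h_{p_i}$. This is a linear functional; it descends to $\R^n/\im V$ (being $-\langle\lambda^{C_i},h\rangle$ with $\lambda^{C_i}\in\ker V^{\top}=\im\bar V$), it is non-negative on $\clir(V)$, and it vanishes exactly on $\clir^{C_i}(V)$ — indeed $f_i(h)$ is the signed depth by which the hyperplane $\langle v_{p_i},x\rangle=h_{p_i}$ cuts behind the point where the hyperplanes $\langle v_j,x\rangle=h_j$, $j\in C_i^-$, meet, and this depth is positive on $\wT(\Delta)$. In the Minkowski space $(\R^n/\im V,\,q_{\cC,\Delta})$ of signature $(+,-,\dots,-)$, let $n_i$ be the polar vector of the facet $\{f_i=0\}$, i.e. the vector determined by $q_{\cC,\Delta}(n_i,\cdot)=f_i(\cdot)$. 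By the standard Lorentzian formula for the angle between two facets meeting in codimension $2$ (as used for polygons in Section~\ref{sec:HypPol}), that angle equals $\tfrac\pi2$ if and only if $q_{\cC,\Delta}(n_1,n_2)=0$.

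The crux is that $n_i=c_i^{-1}e_{p_i}\pmod{\im V}$ for a positive constant $c_i$. Since $q_{\cC,\Delta}(\cdot,e_{p_i})$ and $f_i$ are both linear functionals on $\R^n/\im V$, it suffices to check they are proportional on the open cone $\wT(\Delta)$, and there I use the facet decomposition of a polytope's mixed volume:
$$
q_{\cC,\Delta}(h,e_{p_i})=\tfrac1d\,\vol_{d-1}\!\big(F_{p_i}(h);\ \Pi_{v_{p_i}}K_1,\dots,\Pi_{v_{p_i}}K_{d-2}\big),
$$
where $\Pi_{v_{p_i}}$ is orthogonal projection onto $v_{p_i}^{\perp}$ and $\vol_{d-1}$ the mixed volume there. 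For $h\in\wT(\Delta)$ the facet $F_{p_i}(h)$ is exactly the facet created by truncating the simple vertex $w_i$: it is the cross-section of the $d$-edged vertex cone at $w_i$ (whose edge directions are fixed by $\Delta$) by the hyperplane $\langle v_{p_i},x\rangle=h_{p_i}$, hence homothetic to one fixed $(d-1)$-simplex with ratio equal to the truncation depth $f_i(h)$. Therefore $q_{\cC,\Delta}(h,e_{p_i})=c_i f_i(h)$ with $c_i=\tfrac1d\vol_{d-1}(\text{the fixed simplex};\ \Pi_{v_{p_i}}K_1,\dots,\Pi_{v_{p_i}}K_{d-2})>0$; positivity holds because $\dim K_j=d$ makes every $\Pi_{v_{p_i}}K_j$ full-dimensional in $v_{p_i}^{\perp}$, so Theorem~\ref{thm: segment} applies. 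Hence $n_i=c_i^{-1}e_{p_i}$ works, and $q_{\cC,\Delta}(n_i,n_i)=c_i^{-1}f_i(e_{p_i})=-c_i^{-1}<0$, so $n_i$ is space-like and the angle formula is legitimate.

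Finally, $q_{\cC,\Delta}(n_1,n_2)=\tfrac1{c_1c_2}\,q_{\cC,\Delta}(e_{p_1},e_{p_2})=\tfrac1{c_2}\,f_1(e_{p_2})$, and $f_1(e_{p_2})=\langle\lambda^{C_1^-},e_{p_2}\rangle-(e_{p_2})_{p_1}=0$ since $p_1\ne p_2$ and $p_2\notin C_1\supseteq C_1^-$. Thus $q_{\cC,\Delta}(n_1,n_2)=0$ and the dihedral angle is $\tfrac\pi2$; since the same argument applies near any point of $M_{\cC}^{C_1}(V)\cap M_{\cC}^{C_2}(V)$, the theorem follows. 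The step I expect to be the main obstacle is the reduction in the first paragraph: verifying that the codimension $2$ face really corresponds to the single fan $\Delta$ in which \emph{two distinct simple} vertices are truncated independently (i.e. checking the hypotheses of Lemma~\ref{lem:Bdry2Ir}, especially $w_1\ne w_2$); were $w_1=w_2$, the truncations would be nested and $F_{p_i}(h)$ would cease to scale linearly with $f_i$, breaking the identification of $n_i$ with $e_{p_i}$.
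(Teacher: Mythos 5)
Your overall strategy is the paper's: reduce via Lemma~\ref{lem:Bdry2Ir} to a single simplicial cell $H_{\cC}(\Delta)$ obtained by two independent truncations, and get the right angle by showing that the $q_{\cC,\Delta}$-polar of the facet $\{f_i=0\}$ is (a positive multiple of) $e_{p_i}$, which pairs to zero with the other circuit's functional because $p_1\notin C_2$, $p_2\notin C_1$. The paper reaches the same conclusion by decomposing $q_{\cC,\Delta}(h)=q(h_{[n]\setminus\{p_1,p_2\}})+c_1f_1^2+c_2f_2^2$ via the valuation property \eqref{eqn:ValProp} and degree-$2$ homogeneity \eqref{eqn:Hom2}; your identity $q_{\cC,\Delta}(\cdot,e_{p_i})=c_if_i$ is precisely the polarization of that decomposition, and your closing worry about $w_1\neq w_2$ is shared by the paper, which simply asserts that $F_{C_1^-}$ and $F_{C_2^-}$ are distinct vertices (and tacitly that the truncations are independent, i.e.\ the hypothesis of part 3 of Lemma~\ref{lem:Bdry2Ir} holds).

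The genuine problem is the displayed identity on which your argument rests: $q_{\cC,\Delta}(h,e_{p_i})=\frac1d\vol_{d-1}\bigl(F_{p_i}(h);\Pi_{v_{p_i}}K_1,\dots,\Pi_{v_{p_i}}K_{d-2}\bigr)$ is the formula for the mixed volume with an honest \emph{segment} in direction $v_{p_i}$, whereas $e_{p_i}$ is the coordinate vector in support-number space, a virtual body whose support function is the piecewise linear hat function at $v_{p_i}$ over $\Delta$. The correct expression is $\frac1d\int\widetilde{(e_{p_i})}_{\Delta}\,dS(P(h),K_1,\dots,K_{d-2};\cdot)$ with the mixed area measure, not the projections. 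Already for $d=3$ and $\cC=(B)$ (the case of Corollary~\ref{cor:Area}) the two differ: the derivative of the surface area with respect to $h_{p_i}$ is $\sum_j\ell_{ij}\tan(\phi_{ij}/2)$ (Appendix~\ref{sec:App2}), not the perimeter $\per(F_{p_i}(h))$ that your formula would give; they agree only when all dihedral angles at $F_{p_i}$ are right. The conclusion you extract, $q_{\cC,\Delta}(h,e_{p_i})=c_if_i(h)$ with $c_i>0$, is nonetheless true, and your homothety observation (that $F_{p_i}(h)$ is the section of a fixed vertex cone at depth $f_i(h)$) is the right reason for it; but to convert it into the pairing you must either invoke the locality and Minkowski linearity of mixed area measures on the star of $v_{p_i}$ (a nontrivial input you do not cite), or argue as the paper does: cut off the two simplices, apply the valuation property, and use that $\vol_{\cC}$ of each cut-off simplex and of each truncation facet is proportional to $f_i^2(h)$, which gives \eqref{eqn:OrthDecomp} and hence the orthogonality for arbitrary $\cC$.
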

\begin{proof}
Let $h^0$ be such that $\pi(h^0) \in \relint(\clir^{C_1}(V) \cap \clir^{C_2}(V))$.
By Lemma~\ref{lem:Bdry2Ir}, the polytope $P(h)$ has dimension $d$ and outer normals $V_{[n] \setminus \{p_1, p_2\}}$. For a generic choice of $h^0$, the polytope $P(h^0)$ is simple. Since $|C_1^-| = |C_2^-| = d$, the faces $F_{C_1^-}$ and $F_{C_2^-}$ are two different vertices of $P(h^0)$. Also by Lemma \ref{lem:Bdry2Ir}, for all $h \in \wir(V)$ sufficiently close to $h^0$ the polytopes $P(h)$ have the same combinatorics. Namely, $P(h)$ is obtained by an independent truncation of the vertices $F_{C_1^-}$ and $F_{C_2^-}$ of the polytope $P(h^0)$, where $h - h^0 = \epsilon_1 \bar v_{p_1} + \epsilon_2 \bar v_{p_2}$. Let $\Delta$ be the corresponding complete simplicial fan. It suffices to compute the dihedral angle of the hyperbolic polytope $H_{\cC}(\Delta)$ at $\clir^{C_1}(V) \cap \clir^{C_2}(V)$.

We have
$$
P(h) = \overline{P(h^0) \setminus (\Sigma_1 \cup \Sigma_2)},
$$
where $\Sigma_i$ is a simplex with outward facet normals $V_{C_i^-} \cup \{-v_{p_i}\}$, $i = 1,2$, $\Sigma_1 \cap \Sigma_2 = \emptyset$.
The valuation property \eqref{eqn:ValProp} implies
\begin{multline}
\vol_{\cC}(P(h^0)) = \vol_{\cC}(P(h)) - \vol_{\cC}(\Sigma_1) - \vol_{\cC}(\Sigma_2)\\
+ \vol_{\cC}(F_{p_1}(h^0)) + \vol_{\cC}(F_{p_2}(h^0)),
\end{multline}
where $F_{p_i}$ is a common facet of $P(h)$ and $\Sigma_i$. It follows that
\begin{equation}
\label{eqn:OrthDecomp}
q_{\cC,\Delta}(h^0) = q(h_{[n]\setminus\{p_1,p_2\}}) + c_1 f_1^2(h) + c_2 f_2^2(h),
\end{equation}
where the quadratic form $q$ on the right hand side doesn't depend on $h_{p_1}$ and $h_{p_2}$, and
$$
f_i(h) = \langle \lambda^{C_i}, h \rangle, \quad i = 1,2
$$
is the linear function corresponding to the circuit $C_i$.
Indeed, the linear measurements of both $\Sigma_1(h)$ and $F_{p_1}(h)$ are proportional to $f_1(h)$, and thus by homogeneity \eqref{eqn:Hom2} both $\vol_{\cC}(\Sigma_1(h))$ and $\vol_{\cC}(F_{p_1}(h))$ are proportional to $f_1^2(h)$.

As the decomposition \eqref{eqn:OrthDecomp} is orthogonal, the hyperplanes
$$
\{h \mid f_1(h) = 0\} \quad \text{and} \quad \{h \mid f_2(h) = 0\}
$$
are orthogonal with respect to the quadratic form $q_{\cC,\Delta}$. It follows that the corresponding facets of $\cl T(\Delta')$ are orthogonal, and hence the dihedral angle of $M_{\cC}(V)$ at $\pi(h)$ equals $\frac{\pi}2$.
\end{proof}

\begin{rem}
The monotonicity of mixed volumes under inclusion implies that $c_1, c_2 \le 0$ in \eqref{eqn:OrthDecomp}. Similarly to Section \ref{sec:TriBiForm}, this can be used to determine the signature of the quadratic form $q_{\cC,\Delta}$ if $\Delta$ is the normal fan of a (multiply) truncated simplex.
\end{rem}

\subsection{Cone angles in the interior}
\label{sec:IntAngles}
% Recall that the combinatorial structure of the polyhedral complex \eqref{eqn:MC} is that of the chambers fan of the Gale dual configuration $\bar V$, restricted to the second core of $\bar V$, see Section \ref{sec:Chambers}.
In this section we provide an evidence that the metric space $M_{\cC}(V)$ is in general a cone-manifold, that is some of its interior codimension 2 strata can have total angles different from $2\pi$ around them.

Let $\Delta$, $\Delta^{(1)} = V$, be a polytopal fan in $\R^d$ all of whose cones are simplicial except for two, each of which is spanned by $d+1$ vectors in general position (non-positive circuits of full rank). Such a cone can be triangulated in two ways, and perturbations of $h \in T(\Delta)$ allow to obtain any of the four combinations of these two pairs of triangulations. This results in four simplicial fans that we denote by $\Delta_{00}$, $\Delta_{01}$, $\Delta_{10}$, and $\Delta_{11}$. For $d = 3$ we have two quadrilaterals and subdivide each of them by a diagonal; it is reflected in the face structure of $P(h)$ by flipping two edges, see Figure \ref{fig:Walls}, left.

\begin{figure}[ht]
\centering
\begin{picture}(0,0)%
\includegraphics{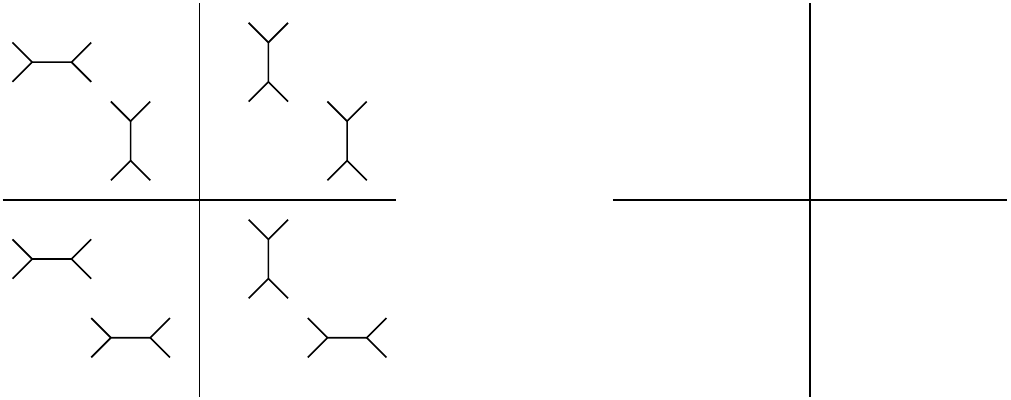}%
\end{picture}%
\setlength{\unitlength}{4144sp}%
\begingroup\makeatletter\ifx\SetFigFont\undefined%
\gdef\SetFigFont#1#2#3#4#5{%
  \reset@font\fontsize{#1}{#2pt}%
  \fontfamily{#3}\fontseries{#4}\fontshape{#5}%
  \selectfont}%
\fi\endgroup%
\begin{picture}(4614,1824)(-2801,-973)
\put(1036,299){\makebox(0,0)[lb]{\smash{{\SetFigFont{10}{12.0}{\rmdefault}{\mddefault}{\updefault}{\color[rgb]{0,0,0}$q+c_1f_1^2$}%
}}}}
\put(  1,-466){\makebox(0,0)[lb]{\smash{{\SetFigFont{10}{12.0}{\rmdefault}{\mddefault}{\updefault}{\color[rgb]{0,0,0}$q+c_2f_2^2$}%
}}}}
\put(991,-466){\makebox(0,0)[lb]{\smash{{\SetFigFont{10}{12.0}{\rmdefault}{\mddefault}{\updefault}{\color[rgb]{0,0,0}$q+c_1f_1^2+c_2f_2^2$}%
}}}}
\put(316,299){\makebox(0,0)[lb]{\smash{{\SetFigFont{10}{12.0}{\rmdefault}{\mddefault}{\updefault}{\color[rgb]{0,0,0}$q$}%
}}}}
\end{picture}%
\caption{Two independent flips.}
\label{fig:Walls}
\end{figure}

Locally, the arrangement of the type cones $T(\Delta_{ij})$ around $T(\Delta)$ is that of the intersections of half-spaces determined by linear functionals $f_1$ and $f_2$ on $\R^n$. Here $f_i$ is proportional to the length of any edge in a triangulation of the $i$-th circuit, $i = 1,2$. Denote by $q$ the quadratic form $q_{\cC,\Delta_{00}}$. Crossing the hyperplane $f_i(h) = 0$ changes the quadratic form by a multiple of $f_i^2$, so that we have the situation on Figure \ref{fig:Walls}, right.

The total angle around $H_{\cC}(\Delta)$ in the metric space $M_{\cC}(V)$ equals $2\pi$ if one of the following conditions is fulfilled:
\begin{enumerate}
\item
One of the coefficients $c_1$ or $c_2$ equals zero.
\item
The dihedral angle of $H_{\cC}(\Delta_{00})$ at $H_{\cC}(\Delta)$ equals $\frac{\pi}2$.
\end{enumerate}
Figure \ref{fig:Singular} shows how the geometry around $T(\Delta)$ changes if we change the quadratic form $q$ in two steps: first adding a summand in one half-space, then in the other. It indicates that if neither of the above conditions is satisfied, then the total angle around $H_{\cC}(\Delta)$ is different from $2\pi$.

\begin{figure}[ht]
\centering
\begin{picture}(0,0)%
\includegraphics{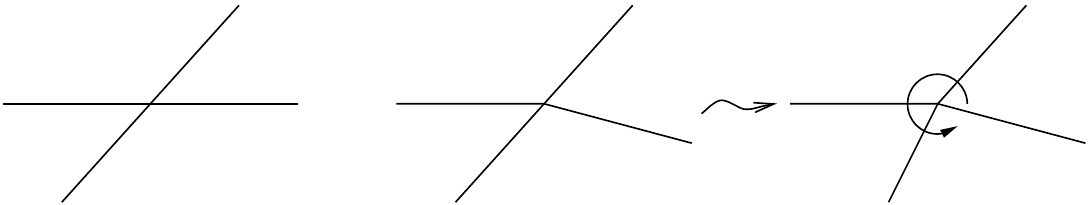}%
\end{picture}%
\setlength{\unitlength}{4144sp}%
\begingroup\makeatletter\ifx\SetFigFont\undefined%
\gdef\SetFigFont#1#2#3#4#5{%
  \reset@font\fontsize{#1}{#2pt}%
  \fontfamily{#3}\fontseries{#4}\fontshape{#5}%
  \selectfont}%
\fi\endgroup%
\begin{picture}(4974,924)(-11,-523)
\put(676,-331){\makebox(0,0)[lb]{\smash{{\SetFigFont{10}{12.0}{\rmdefault}{\mddefault}{\updefault}{\color[rgb]{0,0,0}$q+f_1^2$}%
}}}}
\put(1531,-106){\makebox(0,0)[lb]{\smash{{\SetFigFont{10}{12.0}{\rmdefault}{\mddefault}{\updefault}{\color[rgb]{0,0,0}$\simeq$}%
}}}}
\put(4322,-300){\makebox(0,0)[lb]{\smash{{\SetFigFont{10}{12.0}{\rmdefault}{\mddefault}{\updefault}{\color[rgb]{0,0,0}$\ne 2\pi$}%
}}}}
\put(901, 74){\makebox(0,0)[lb]{\smash{{\SetFigFont{10}{12.0}{\rmdefault}{\mddefault}{\updefault}{\color[rgb]{0,0,0}$q+f_1^2$}%
}}}}
\put(361, 74){\makebox(0,0)[lb]{\smash{{\SetFigFont{10}{12.0}{\rmdefault}{\mddefault}{\updefault}{\color[rgb]{0,0,0}$q$}%
}}}}
\put( 91,-331){\makebox(0,0)[lb]{\smash{{\SetFigFont{10}{12.0}{\rmdefault}{\mddefault}{\updefault}{\color[rgb]{0,0,0}$q$}%
}}}}
\put(2431,-376){\makebox(0,0)[lb]{\smash{{\SetFigFont{10}{12.0}{\rmdefault}{\mddefault}{\updefault}{\color[rgb]{0,0,0}$q$}%
}}}}
\put(3601,-376){\makebox(0,0)[lb]{\smash{{\SetFigFont{10}{12.0}{\rmdefault}{\mddefault}{\updefault}{\color[rgb]{0,0,0}$q+f_2^2$}%
}}}}
\put(3916,164){\makebox(0,0)[lb]{\smash{{\SetFigFont{10}{12.0}{\rmdefault}{\mddefault}{\updefault}{\color[rgb]{0,0,0}$q$}%
}}}}
\put(2251, 74){\makebox(0,0)[lb]{\smash{{\SetFigFont{10}{12.0}{\rmdefault}{\mddefault}{\updefault}{\color[rgb]{0,0,0}$q$}%
}}}}
\put(1891,-331){\makebox(0,0)[lb]{\smash{{\SetFigFont{10}{12.0}{\rmdefault}{\mddefault}{\updefault}{\color[rgb]{0,0,0}$q$}%
}}}}
\put(2791, 29){\makebox(0,0)[lb]{\smash{{\SetFigFont{10}{12.0}{\rmdefault}{\mddefault}{\updefault}{\color[rgb]{0,0,0}$q$}%
}}}}
\end{picture}%
\caption{Angles around a generic codimension 2 stratum.}
\label{fig:Singular}
\end{figure}

\begin{exl}
The normals of the triangular bipyramid produce six type cones on Figure \ref{fig:TypeConesExl}. There is only one codimension 2 stratum, namely the point in the center, and it corresponds to vanishing of three edges rather than two. The symmetry can be broken by perturbing the Gale diagram $\bar V$ on Figure \ref{fig:GaleCoIr} (which is equivalent to perturbing the normal vectors of the bipyramid). Figure \ref{fig:BipyrPerturb} shows the type cones arising from a generic perturbation.

\begin{figure}[ht]
\centering
\includegraphics{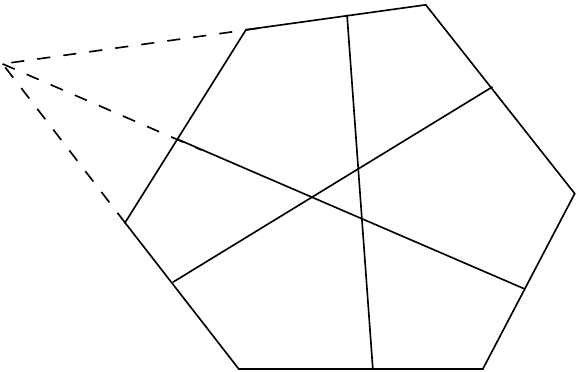}
\caption{Decomposition of $\ir(V)$, with $V$ obtained by perturbing the normals of a triangular bipyramid.}
\label{fig:BipyrPerturb}
\end{figure}

In the metric space $M_{\cC}(V)$, the three intersection points on Figure \ref{fig:BipyrPerturb} are singular in general. Indeed, if the perturbation is small, then the angles of the polygons $H(\Delta)$ at these points are close to $\frac{\pi}3$ or $\frac{2\pi}3$. Thus the second of the conditions of non-singularity above is not fulfilled. The first condition means that crossing one of the 1-dimensional strata in the interior doesn't change the quadratic form $q$. As we have seen in Section \ref{sec:TrBiHyp}, this is equivalent to the coefficient sum of the corresponding circuit being zero. For a generic perturbation this is not the case.
\end{exl}

\subsection{Questions}
There is definitely more to say about the metric space $M_{\cC}(V)$ in general.

\begin{conj}
The subsets $M_{\cC}^C$ of $\partial M_{\cC}(V)$ defined in Section \ref{sec:DihAngles} are orthogonal to all singular strata. This would allow to call them facets of $M_{\cC}(V)$.
\end{conj}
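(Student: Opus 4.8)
The plan is to reduce the orthogonality assertion to a block decomposition of the relevant quadratic forms, in the same spirit as the proof of the preceding theorem and the computation in Section~\ref{sec:TriBiForm}. First I would fix the meaning of ``$M_{\cC}^{C}$ orthogonal to a singular stratum''. A singular codimension~$2$ stratum $S$ of $M_{\cC}(V)$ sits, locally, on a codimension~$2$ cell of the chamber fan of the form $\{f_1=f_2=0\}$, where $f_1,f_2$ are the linear functionals (edge lengths) attached to two independent flips, i.e.\ to two non-positive non-hyperbolic circuits $D_1,D_2$ of full rank, and in the four adjacent type cones $T(\Delta_{ij})$ the form is $q_0+c_1^{(i)}f_1^2+c_2^{(j)}f_2^2$ with $q_0$ free of $f_1,f_2$ (the orthogonal splitting already recorded in Section~\ref{sec:IntAngles}). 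The functionals $f_1,f_2$, being translation invariant, descend to $\R^n/\im V$, where $q_{\cC,\Delta}$ is nondegenerate. I will say that $M_{\cC}^{C}$, carried near a point $\pi(h^0)\in\relint\bigl(\cl M_{\cC}^{C}\cap\cl S\bigr)$ lying in the relative interior of a cell $H_{\cC}(\Delta)$ by the hyperplane $\{f_C=0\}$ with $f_C=\langle\lambda^C,\cdot\rangle$, is \emph{orthogonal to $S$ at $\pi(h^0)$} if the hyperplane $\{f_C=0\}$ is $q_{\cC,\Delta}$-orthogonal to the codimension~$2$ subspace $\{f_1=f_2=0\}$, that is, the $q_{\cC,\Delta}$-orthogonal complement of $\{f_C=0\}$ is contained in $\{f_1=f_2=0\}$; equivalently, the reflection in $M_{\cC}^{C}$ preserves that subspace, which is exactly the local flatness one needs in order to call $M_{\cC}^{C}$ a facet.

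With this reformulation the statement becomes the existence of a refined orthogonal decomposition
$$
q_{\cC,\Delta}\;=\;q_0'\;+\;c\,f_C^2\;+\;c_1 f_1^2\;+\;c_2 f_2^2,
$$
with $q_0'$ free of $f_C,f_1,f_2$ and with $f_C,f_1,f_2$ linearly independent, valid for $h$ in the relevant $\wT(\Delta)$. To produce it I would take $\pi(h^0)\in\relint\bigl(\cl M_{\cC}^{C}\cap\cl S\bigr)$ in the relative interior of a cell $H_{\cC}(\Delta)$ and apply Lemma~\ref{lem:BdryIr} (for a hyperbolic $C$; for a positive $C$ one uses the analogous, easier degeneration): for $h$ near $h^0$ in $\wir(V)$ the polytope $P(h)$ is obtained from the polytope on $\clir^{C}(V)$ by truncating the face $F_{C^-}$, while the two independent flips act on two edges of $P(h)$. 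Granting that $F_{C^-}$ is disjoint from each of those two edges, the simplices and faces produced by the truncation and by the two flips are pairwise disjoint, so the valuation property \eqref{eqn:ValProp}, multilinearity of the mixed volume and the homogeneity \eqref{eqn:Hom2} express $\vol_{\cC}(P(h))$ as $\vol_{\cC}$ of a fixed body plus three independent corrections, each proportional to the square of one of $f_C,f_1,f_2$. This is the decomposition above, and the orthogonality of $\{f_C=0\}$ to $\{f_1=f_2=0\}$ with respect to $q_{\cC,\Delta}$ is then read off the block-diagonal form. Singular strata that are not of the two-flips type (several flip walls meeting, as at the centre of the bipyramid configuration) are handled by the same valuation bookkeeping applied pair by pair.

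The hard part will be the disjointness input. A priori the face $F_{C^-}$ of $P(h^0)$ may be incident to one of the flipped edges, and then the truncation and the flip genuinely interact and the form need not split orthogonally; so the conjecture can only hold if such an incidence is impossible whenever $\cl S$ meets the relative interior of a cell of $M_{\cC}^{C}$. I expect this to be a statement about the mutual position of the circuits $C,D_1,D_2$ in the oriented matroid of $V$ — equivalently, about how the cone $\clir^{C}(V)$ and the flip hyperplanes $\{f_i=0\}$ sit in $\Ch(\bar V)$ — and I would try to deduce it from the chamber description of Section~\ref{sec:Chambers} together with Lemma~\ref{lem:CombGale}: membership of $\pi(h^0)$ in $\relint\clir^{C}(V)$ forces certain cones $\pos(\bar V_{[n]\setminus\sigma})$ to contain $\pi(h^0)$ in their relative interiors, and this should be incompatible with the combinatorial configuration in which $F_{C^-}$ is a vertex of a flipped edge. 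If this combinatorial step fails in some case, the conjecture would require precisely an extra disjointness hypothesis, matching the ``independent truncation'' assumptions of Lemma~\ref{lem:Bdry2Ir}.
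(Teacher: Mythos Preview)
The statement you are addressing is presented in the paper as a \emph{conjecture}; there is no proof given, so there is nothing to compare against. The question is whether your proposal actually settles it.

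The central flaw is your assertion that Section~\ref{sec:IntAngles} ``already records'' an orthogonal splitting $q_0+c_1 f_1^2+c_2 f_2^2$ with $q_0$ free of $f_1,f_2$. It does not. The paper records only that \emph{crossing} the wall $\{f_i=0\}$ changes the form by a multiple of $f_i^2$; the base form on any one side may still carry cross-terms with $f_1,f_2$. If your splitting held, all four dihedral angles at the stratum would be $\tfrac{\pi}{2}$ and there would be no singularity --- whereas Section~\ref{sec:IntAngles} is precisely an argument that singularities \emph{do} occur, and the paper's very next conjecture asserts that even for \emph{disjoint} flip circuits the four angles need not equal $\tfrac{\pi}{2}$ and are not determined by local data. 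Your ``refined orthogonal decomposition'' $q_0'+cf_C^2+c_1f_1^2+c_2f_2^2$, with all cross-terms absent, therefore cannot hold in general: it would force $f_1\perp_q f_2$.

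Fortunately, the full decomposition is stronger than what you need. For the reflection in $\{f_C=0\}$ to preserve $\{f_1=f_2=0\}$ you only require $f_1(n_C)=f_2(n_C)=0$, where $n_C$ is the $q$-normal to $\{f_C=0\}$. When $|C|=d+1$ the truncation argument from the preceding theorem gives $q=Q+cf_C^2$ with $Q$ independent of $h_{p(C)}$; after the change of variable replacing $h_{p(C)}$ by $f_C$ this is block-diagonal in the $f_C$-slot, so $n_C$ is proportional to $e_{p(C)}$, and then $f_i(n_C)=\lambda^{D_i}_{p(C)}=0$ exactly when $p(C)\notin D_i$. No orthogonality between $f_1$ and $f_2$ is invoked. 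The honest residual obstacles are therefore (i) showing $p(C)\notin D_1\cup D_2$ whenever the facet $\clir^C(V)$ meets the flip stratum in a cell of the right dimension --- a sharper, purely combinatorial form of the disjointness you flag --- and (ii) treating circuits with $|C|<d+1$ and positive circuits, where the truncated piece is no longer a simplex and the scaling step breaks down. Neither is handled by your proposal, and the conjecture remains open.
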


Let $C_1, C_2 \subset [n]$ be two circuits of the vector configuration $V$, and let $\Delta$ be a polytopal fan that contains $\pos(V_{C_1})$ and $\pos(V_{C_2})$ and is otherwise simplicial. In Section \ref{sec:IntAngles}, we have shown that the angle around $H_{\cC}(\Delta)$ may be different from $2\pi$. The following conjecture suggests that the situation is even worse.

\begin{conj}
Even if the circuits $C_1$ and $C_2$ are disjoint (and even if they are ``sufficiently far apart''), the four dihedral angles at $H_{\cC}(\Delta)$ may be different from $\frac{\pi}2$. Moreover, these angles are not determined by local data. That is, changing a vector $v_i$ with $i \notin C_1 \cup C_2$ can change the values of these angles.
\end{conj}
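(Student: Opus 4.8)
Since the statement asserts the \emph{existence} of configurations with the claimed pathologies, the plan is first to reduce all four dihedral angles to a single quadratic-form invariant, and then to exhibit an explicit example realizing it. We keep the notation of Section~\ref{sec:IntAngles}: around $H_{\cC}(\Delta)$ lie the four type cones $T(\Delta_{ij})$, $i,j\in\{0,1\}$, and on $\R^n/\im V$ the mixed-volume form attached to $\Delta_{ij}$ is $Q_{ij}=q+ic_1f_1^2+jc_2f_2^2$, where $q=q_{\cC,\Delta_{00}}$, the linear functional $f_k=\langle\lambda^{C_k},\cdot\rangle$ is a positive multiple of the length of the edge $F_{C_k^-}$ by Lemma~\ref{lem:EllLin}, and $c_1,c_2\le 0$. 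Taking $d=3$ and $\cC=B$, Corollary~\ref{cor:Area} gives that each $Q_{ij}$ is nondegenerate of signature $(+,-,\dots,-)$ on $\R^n/\im V$; write $Q^*$ for the induced form on the dual space (the inverse Gram matrix). By Lemma~\ref{lem:TypeConeIneq} the face $H_{\cC}(\Delta)$ is cut out of $\cl T(\Delta_{ij})$ by the hyperplanes $\{f_1=0\}$ and $\{f_2=0\}$, so the dihedral angle $\theta_{ij}$ of $H_{\cC}(\Delta_{ij})$ at $H_{\cC}(\Delta)$ satisfies
$$
\cos\theta_{ij}=-\frac{Q_{ij}^*(f_1,f_2)}{\sqrt{Q_{ij}^*(f_1,f_1)\,Q_{ij}^*(f_2,f_2)}}.
$$

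The crucial reduction is a rank-one update (Sherman--Morrison) computation. It gives
$$
Q_{10}^*(f_1,f_2)=\frac{q^*(f_1,f_2)}{1+c_1q^*(f_1,f_1)},\qquad Q_{01}^*(f_1,f_2)=\frac{q^*(f_1,f_2)}{1+c_2q^*(f_2,f_2)},
$$
and, iterating, $Q_{11}^*(f_1,f_2)$ is again a finite nonzero multiple of $q^*(f_1,f_2)$ (the denominators are nonzero because the $Q_{ij}$ are nondegenerate). Hence all four angles equal $\tfrac\pi2$ exactly when $q^*(f_1,f_2)=0$, and otherwise \emph{all four} differ from $\tfrac\pi2$; this recovers the sufficiency of condition~(2) in Section~\ref{sec:IntAngles}. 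Moreover $f_1$ and $f_2$ depend only on $V_{C_1}$ and $V_{C_2}$ (Lemma~\ref{lem:EllLin}), and so do $c_1,c_2$ (via the cut-off simplices and the homogeneity \eqref{eqn:Hom2}); thus the \emph{only} globally defined quantity entering the four angles is $q$ itself, the surface area of $P(h)$ as a function of the support vector. Both parts of the conjecture therefore follow once one produces (a)~a configuration with disjoint, ``far apart'' circuits for which $q^*(f_1,f_2)\ne0$, and (b)~a second configuration agreeing with the first on every $v_i$ with $i\in C_1\cup C_2$ but for which $q^*(f_1,f_2)$ takes a different value.

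To build the example one would take $d=3$, $\cC=B$, and a simple $3$-polytope $P_0$ with at least eight facets possessing two edges $e_1,e_2$ that share no vertex and lie on no common facet, each of whose diagonal flips stays polytopal; its facet normals form $V$, and the four facets incident to or lying beyond the endpoints of $e_k$ span a disjoint $4$-element circuit $C_k$ of full rank. The independence of the two flips (Lemma~\ref{lem:Bdry2Ir}) is precisely the ``sufficiently far apart'' hypothesis. Using the recursive formulas of Appendix~\ref{sec:App2} one writes $q(h)=\area(\partial P(h))$ and the $f_k(h)$ in coordinates on $\R^3\cong\R^n/\im V$, inverts the Gram matrix of $q$, and checks $q^*(f_1,f_2)\ne0$ for generic numerical values of the normals --- this is part~(a). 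For part~(b) one perturbs a single normal $v_i$ with $i\notin C_1\cup C_2$, keeping the combinatorics of $P_0$ fixed (legitimate for small perturbations because $\wT(\Delta)$ is relatively open, Lemma~\ref{lem:LinTypeCone}): then $f_1,f_2,c_1,c_2$ are unchanged while $q$ varies, so computing $\tfrac{d}{dt}q^*(f_1,f_2)$ along a suitable direction and finding it nonzero exhibits two configurations with different angles $\theta_{ij}$.

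The main obstacle is the explicit computation: one must pick $P_0$ concretely enough to run the Appendix~\ref{sec:App2} recursion while keeping $n$ small, verify that $q$ is genuinely hyperbolic on $\R^n/\im V$ and that the four refinements $\Delta_{ij}$ are all polytopal, and --- most delicately --- confirm that perturbing a far-away normal really moves $q^*(f_1,f_2)$; the conjecture asserts this, but only a displayed example (or one-parameter family) proves it. A less computational route would be a genericity argument: show that $\{q^*(f_1,f_2)=0\}$ is a proper algebraic subvariety of the space of configurations with the given combinatorics and that it is not defined by equations in $\{v_i:i\in C_1\cup C_2\}$ alone; this would settle both parts at once, though proving the subvariety proper still requires one configuration off it.
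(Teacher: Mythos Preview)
The statement you are addressing is labeled a \emph{Conjecture} in the paper, and the paper offers no proof of it; it appears in the ``Questions'' subsection as an open problem. So there is no paper proof to compare against, and your proposal should be judged on its own merits.

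Your reduction is correct and worth recording. The Sherman--Morrison computation really does give
\[
Q_{ij}^*(f_1,f_2)=\frac{q^*(f_1,f_2)}{\bigl(1+i\,c_1 q^*(f_1,f_1)\bigr)\bigl(1+j\,c_2 q^*(f_2,f_2)\bigr)-ij\,c_1c_2\,q^*(f_1,f_2)^2},
\]
and the nonvanishing of the denominator follows from the matrix determinant lemma together with the nondegeneracy of each $Q_{ij}$ (Corollary~\ref{cor:Area}). Hence all four dihedral angles are $\tfrac\pi2$ precisely when $q^*(f_1,f_2)=0$, which is a clean criterion the paper does not isolate. Your observation that $f_1,f_2,c_1,c_2$ depend only on $V_{C_1}\cup V_{C_2}$ while $q$ depends on all of $V$ is also correct and makes the second clause of the conjecture plausible.

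The gap is that you never actually produce the example. Everything after ``To build the example one would take\ldots'' is programmatic: you describe what $P_0$ should look like, invoke Appendix~\ref{sec:App2} in principle, and assert that a generic choice and a generic perturbation will do the job---but you compute nothing. You yourself flag this (``The main obstacle is the explicit computation\ldots only a displayed example proves it''), and that concession is accurate: without a single concrete $V$ for which $q^*(f_1,f_2)\ne 0$, and a one-parameter deformation moving that number, the conjecture remains unproved. The genericity route you sketch at the end has the same defect: showing the vanishing locus is a \emph{proper} subvariety still requires exhibiting one point off it. So what you have is a sound strategy and a useful reformulation, but not yet a proof.
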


% Besides general properties of $M_{\cC}(V)$, it is interesting to look at special classes or cases of vector configurations. Below we list some problems of this sort.

A generalization of the triangular bipyramid example is the configuration of the $2d$ vectors
\begin{equation}
\label{eqn:D1Dd}
v_i^\pm = u_i \pm e_d, i = 1, \ldots, d,
\end{equation}
where $u_i$ are normals to a regular simplex in $\R^{d-1}$. The corresponding affine Gale diagram has the properties $\bar v_i^- = -\bar v_i^+$ and $\sum_i \bar v_i = 0$. It follows that
$$
\clir(V) = \pos(S \cap -S),
$$
where $S$ is a $(d-1)$-dimensional simplex with the barycenter at the coordinate origin. The case $d=3$ was dealt with in Section \ref{sub:bipyr1}. For $d = 4$ the Gale diagram is formed by the vertices of a $3$-cube, and $\clir(V)$ is the cone over the octahedron. The corresponding secondary polytope is related to the permutahedron, see \cite[Section 6.2.1]{LRS10} where a similar point configuration is analyzed.

\begin{pro}
For the vector configuration \eqref{eqn:D1Dd}, describe the metric structure of the space $M_{\cC}(V)$ in the case when $\cC = (B,\ldots, B)$, that is the quadratic form is given by the $(d-2)$-nd quermassintegral.
\end{pro}
For example, $d=4$ should yield the right-angled ideal octahedron.

The hyperbolic polyhedron $H_{\cC}(\Delta)$ has an especially nice structure, if the fan $\Delta$ is obtained by a sequence of stellar subdivisions from the normal fan of a tetrahedron. The corresponding polyhedron $P(h)$ is a multiply truncated tetrahedron, see Section \ref{sec:TriBiForm}.

\begin{pro}
Analyze the combinatorics and geometry of $H_{\cC}(\Delta)$, where $\Delta$ is a multiple stellar subdivision of the normal fan of a tetrahedron.
\end{pro}

% A generalization of the example of the $3$-parallelepiped (Section \ref{sec:Par3}) identifies the space of $d$-parallelepipeds with an ideal $(d-1)$-simplex. If $\cC = (B,\ldots, B)$, then this simplex is regular by symmetry reasons.
% \begin{pro}
% 
% \end{pro}

In the following two problems we assume $\cC = (B,\ldots, B)$ and $d=3$, thus the quadratic form is the surface area of a 3-dimensional polytope. 

\begin{pro}
What is $M(V)$ when $V$ consists of normals to an octahedron? (The quadratic form seems to be independent of the choice of the type cone, thus $M(V)$ should be a 4-dimensional hyperbolic polytope with 8 facets.)

The same question when $V$ consists of normals to a regular bipyramid over an $n$-gon.
\end{pro}

\begin{pro}
Let $\Delta$ be the normal fan of the dodecahedron. Describe the corresponding hyperbolic $8$-dimensional polyhedron $H(\Delta)$ and compute its dihedral angles.
\end{pro}

\begin{pro}
Describe all vector configurations in $\R^3$ for which the surface area is given by a same quadratic form independent of a choice of a type cone.
\end{pro}
A trivial class of examples are monotypic polyhedra, see Remark \ref{rem:Monotypic}. For vectors in general position, the necessary and sufficient condition is probably that for all $(2,2)$-circuits the sum of coefficients is zero.

Finally, inspired by the results of \cite{BG92}, we pose the following problem.
\begin{pro}
Describe some of the known or construct new examples of hyperbolic Coxeter polyhedra that appear as $H_{\cC}(\Delta)$ or $M_{\cC}(V)$.
\end{pro}

Note that both $H_{\cC}(\Delta)$ and $M_{\cC}(V)$ can belong to any combinatorial type, see Remark \ref{rem:AnyType}. It is more hard to determine the possible values of their dihedral angles.

\section{Related work}
\label{sec:RelWork}
\subsection{The first weight space and the discrete Christoffel problem}
Here we explain the relation of Section \ref{sec:ContrEdge} with the first weight space of McMullen \cite{McM96}. Definitions and propositions below are taken from \cite{McM96}.

Let $\Delta$ be a complete polytopal fan in $\R^d$.
Pick a cone $\tau \in \Delta^{(d-2)}$ and consider all $\sigma \in \Delta^{(d-1)}$ such that $\tau \subset \sigma$. Denote by $v_{\sigma/\tau} \in \aff(\sigma)$ the inner unit normal to the facet $\tau$ of the cone $\sigma$, see Figure \ref{fig:ConeFace}, left.

\begin{lem}
For every $h \in \wT(\Delta)$ and every $\tau \in \Delta^{(d-2)}$ the following equality holds:
\begin{equation}
\label{eqn:MinkCond}
\sum_{\sigma: \sigma \supset \tau} \ell_\sigma(h) v_{\sigma/\tau} = 0
\end{equation}
\end{lem}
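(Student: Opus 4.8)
The plan is to reduce this to the classical Minkowski condition $\sum_i \area(F_i) v_i = 0$ applied to the $2$-dimensional face $G = F_\tau(h)$ of $P = P(h)$ whose normal cone is $\tau$. First I would observe that the cones $\sigma \in \Delta^{(d-1)}$ with $\tau \subset \sigma$ are in bijection with the edges of $G$: indeed $F_\sigma(h)$ is a face of $F_\tau(h) = G$ (since $N_{F_\sigma}(P) = \sigma \supset \tau = N_G(P)$), and $\dim F_\sigma(h) = d - \dim\sigma = 1$, so each such $\sigma$ gives an edge $E_\sigma$ of $G$, with $\vol_1(E_\sigma) = \ell^\Delta_\sigma(h)$ by the definition in Section \ref{sec:ContrEdge}. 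Conversely every edge of $G$ arises this way. Thus the sum in \eqref{eqn:MinkCond} ranges exactly over the edges of the polygon $G$.

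Next I would identify the vectors $v_{\sigma/\tau}$. Work inside the $2$-dimensional linear subspace $L := \aff(\tau)^\perp$, which is parallel to $\aff(G)$ (since $\dim N_G(P) = \dim\tau = d-2$). For a cone $\sigma \supset \tau$ with $\dim\sigma = d-1$, the relative interior of $\sigma$ projects to a ray in $L$; the unit vector $v_{\sigma/\tau} \in \aff(\sigma) \subset L$ that is the inner normal to the facet $\tau$ of $\sigma$ is precisely the outer unit normal, within $L$, to the edge $E_\sigma$ of the polygon $G$. This is the standard duality between the normal fan of a polygon and its edges: the normal cone of $G$ at $E_\sigma$ inside $L$ is the ray spanned by $v_{\sigma/\tau}$. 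A short verification using Lemma \ref{lem:SuppFunc} and Lemma \ref{lem:EllLin} (the edge vector $p_1 - p_2 = \ell^\Delta_\sigma e_\sigma$ is orthogonal to $\aff(\sigma)$, hence to $v_{\sigma/\tau}$ as well, since both lie in $L$) pins down the sign, so that $v_{\sigma/\tau}$ is the \emph{outer} normal to $E_\sigma$.

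With these two identifications in place, \eqref{eqn:MinkCond} becomes the assertion $\sum_{E} \vol_1(E)\, n_E = 0$, where the sum is over the edges $E$ of the convex polygon $G \subset \aff(G)$ and $n_E$ is the outer unit normal to $E$ within that plane. This is just the statement that the boundary of a convex polygon is a closed polygonal curve rotated by $90^\circ$ — equivalently, the $2$-dimensional case of Minkowski's relation $\sum \area_1(F_i) v_i = 0$ — and is elementary. I expect the only real subtlety to be bookkeeping with the sign/orientation in the identification of $v_{\sigma/\tau}$ with $n_E$ (making sure "inner normal to the facet $\tau$ of $\sigma$" translates to "outer normal to the edge of $G$"); once that is nailed down, the rest is immediate. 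One should also note the degenerate possibility that $F_\tau(h)$ has dimension $0$ or $1$ rather than $2$ if $\tau$ is not in the non-linearity locus in the expected way, but for $h \in \wT(\Delta)$ with $\Delta^{(1)} = V$ and $\tau \in \Delta^{(d-2)}$ the face $F_\tau(h)$ is genuinely $2$-dimensional, so this does not occur.
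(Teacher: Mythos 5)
Your proposal is correct and follows essentially the same route as the paper: reduce to the $2$-face $F_\tau(h)$, identify each $v_{\sigma/\tau}$ as the outer unit normal (within the plane parallel to $\aff(F_\tau(h))$) of the edge $F_\sigma(h)$, and conclude from the fact that the edge vectors of the polygon close up, i.e.\ the planar Minkowski relation. Only a cosmetic slip: for $d>3$ one has $v_{\sigma/\tau}\in\aff(\sigma)\cap\aff(\tau)^\perp$ rather than $\aff(\sigma)\subset\aff(\tau)^\perp$, which is what you presumably meant.
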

\begin{proof}
Indeed, $v_{\sigma/\tau}$ is the outer unit normal to the edge $F_\sigma(h)$ of the $2$-face $F_\tau(h)$. Thus equation \eqref{eqn:MinkCond} follows from
$$
\sum_{\sigma : \sigma \supset \tau} \ell_\sigma(h) e_\sigma = \sum_{i=1}^n (p_{i+1}(h) - p_i(h)) = 0,
$$
where $p_1, \ldots, p_n, p_{n+1} = p_1$ are the vertices of the polygon~$F_\tau(h)$ in a cyclic order. Figure \ref{fig:ConeFace} illustrates the case $d=3$.
\end{proof}

\begin{figure}[ht]
\centering
\begin{picture}(0,0)%
\includegraphics{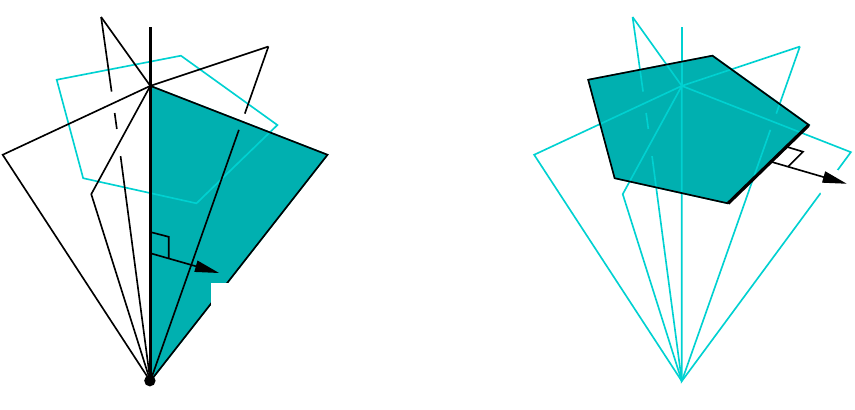}%
\end{picture}%
\setlength{\unitlength}{4144sp}%
\begingroup\makeatletter\ifx\SetFigFont\undefined%
\gdef\SetFigFont#1#2#3#4#5{%
  \reset@font\fontsize{#1}{#2pt}%
  \fontfamily{#3}\fontseries{#4}\fontshape{#5}%
  \selectfont}%
\fi\endgroup%
\begin{picture}(3902,1873)(214,-1991)
\put(1351,-1006){\makebox(0,0)[lb]{\smash{{\SetFigFont{10}{12.0}{\rmdefault}{\mddefault}{\updefault}{\color[rgb]{0,0,0}$\sigma$}%
}}}}
\put(3391,-725){\makebox(0,0)[lb]{\smash{{\SetFigFont{10}{12.0}{\rmdefault}{\mddefault}{\updefault}{\color[rgb]{0,0,0}$F_\tau$}%
}}}}
\put(946,-241){\makebox(0,0)[lb]{\smash{{\SetFigFont{10}{12.0}{\rmdefault}{\mddefault}{\updefault}{\color[rgb]{0,0,0}$\tau$}%
}}}}
\put(3634,-1073){\makebox(0,0)[lb]{\smash{{\SetFigFont{10}{12.0}{\rmdefault}{\mddefault}{\updefault}{\color[rgb]{0,0,0}$F_\sigma$}%
}}}}
\put(757,-1936){\makebox(0,0)[lb]{\smash{{\SetFigFont{10}{12.0}{\rmdefault}{\mddefault}{\updefault}{\color[rgb]{0,0,0}$0$}%
}}}}
\put(3976,-1098){\makebox(0,0)[lb]{\smash{{\SetFigFont{10}{12.0}{\rmdefault}{\mddefault}{\updefault}{\color[rgb]{0,0,0}$v_{\sigma/\tau}$}%
}}}}
\put(1171,-1501){\makebox(0,0)[lb]{\smash{{\SetFigFont{10}{12.0}{\rmdefault}{\mddefault}{\updefault}{\color[rgb]{0,0,0}$v_{\sigma/\tau}$}%
}}}}
\end{picture}%
\caption{The star of a $(d-2)$-cone $\tau \in \cN(P)$ corresponds to a $2$-face $F_\tau \subset P$.}
\label{fig:ConeFace}
\end{figure}

\begin{dfn}
A \emph{$1$-weight} on $\Delta$ is a map $\sigma \mapsto a_\sigma$ on the set of $(d-1)$-cones of $\Delta$ such that
$$
\sum_{\sigma : \sigma \supset \tau} a_\sigma v_{\sigma/\tau} = 0 \quad \text{for all } \tau \in \Delta^{(d-2)}
$$
The set of all $1$-weights is denoted by $\Omega_1(\Delta)$. Denote by
$$
\Omega_1^+(\Delta) := \{a \in \Omega_1(\Delta) \mid a_\sigma > 0 \quad \text{for all }\sigma\}
$$
the set of positive $1$-weights.
\end{dfn}

Since $\Delta$ is polytopal, we have $\Omega_\Delta^+ \ne \emptyset$. Besides, since $\Omega_\Delta^+$ is open in $\Omega_\Delta$, we have $\aff(\Omega_\Delta^+) = \Omega_\Delta$.

\begin{thm}
\label{lem christ}
 For any $a\in \Omega_1^+(\Delta)$, there exists a convex polytope $P\in T(\Delta) $ with $\ell_\sigma(P)=a_\sigma$.
 Moreover, $P$ is unique up to translation.
\end{thm}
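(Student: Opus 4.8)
The plan is to recognize Theorem \ref{lem christ} as a reformulation of the statement that the map $P \mapsto (\ell_\sigma(P))_{\sigma \in \Delta^{(d-1)}}$ gives a bijection from $T(\Delta)$ onto $\Omega_1^+(\Delta)$. We already know from Lemma \ref{lem:EllLin} that each $\ell_\sigma$ extends to a linear function on $\span(\wT(\Delta))$, and from the lemma immediately preceding the theorem that the image of $\wT(\Delta)$ under $P \mapsto (\ell_\sigma(P))_\sigma$ lands in $\Omega_1(\Delta)$ (in fact in $\Omega_1^+(\Delta)$, since edge lengths of a genuine $d$-polytope in $T(\Delta)$ are positive, by Lemma \ref{lem:TypeConeIneq}). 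So the content is: this linear map is injective modulo translations, and it is surjective onto the \emph{positive} 1-weights.

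First I would handle injectivity. Suppose $P = P(h)$ and $P' = P(h')$ lie in $\wT(\Delta)$ with $\ell_\sigma(h) = \ell_\sigma(h')$ for all $\sigma \in \Delta^{(d-1)}$. By the first part of Lemma \ref{lem:EllLin}, for each $(d-1)$-cone $\sigma$ with adjacent full-dimensional cones $\rho_1, \rho_2$ we have $\|\grad(h_P|_{\rho_1}) - \grad(h_P|_{\rho_2})\| = \|\grad(h_{P'}|_{\rho_1}) - \grad(h_{P'}|_{\rho_2})\|$; but in fact the proof of that lemma shows the vector difference $p_1 - p_2 = \ell^\Delta_\sigma e_\sigma$ is determined, since $e_\sigma$ is the fixed unit normal to $\aff(\sigma)$ directed from $\rho_2$ into $\rho_1$, independent of $h$. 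Hence $\grad(h_P|_{\rho_1}) - \grad(h_P|_{\rho_2}) = \grad(h_{P'}|_{\rho_1}) - \grad(h_{P'}|_{\rho_2})$ for every pair of adjacent $d$-cones. Since $\Delta$ is complete and connected, walking from cone to cone shows $\grad(h_P|_\rho) - \grad(h_{P'}|_\rho)$ is the same vector $t \in \R^d$ for all $\rho \in \Delta^{(d)}$; that is, $h_P - h_{P'}$ is the linear function $v \mapsto \langle t, v\rangle$, i.e.\ $h - h' = Vt$, so $P' = P + t$ by \eqref{eqn:PTransl}. This gives uniqueness up to translation.

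Next, surjectivity onto $\Omega_1^+(\Delta)$. Given $a \in \Omega_1^+(\Delta)$, I want to reconstruct a polytope. Fix a base cone $\rho_0 \in \Delta^{(d)}$ and set $p_{\rho_0} = 0$. For any other $\rho \in \Delta^{(d)}$, choose a path $\rho_0 = \tau_0, \tau_1, \dots, \tau_k = \rho$ through adjacent $d$-cones, let $\sigma_i = \tau_{i-1} \cap \tau_i \in \Delta^{(d-1)}$, and define $p_\rho := \sum_{i=1}^k a_{\sigma_i} e_{\sigma_i}$, where $e_{\sigma_i}$ is the unit normal to $\aff(\sigma_i)$ pointing from $\tau_{i-1}$ to $\tau_i$. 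The first task is to show $p_\rho$ is path-independent: it suffices to check that the total displacement around the link of each codimension-$2$ cone $\tau \in \Delta^{(d-2)}$ vanishes, and this is exactly the $1$-weight condition $\sum_{\sigma \supset \tau} a_\sigma v_{\sigma/\tau} = 0$ rewritten in terms of the $e_\sigma$'s inside the $2$-plane $\aff(\tau)^\perp$ — i.e.\ precisely the computation in the proof of the lemma before the theorem, run backwards; since $\Delta$ is complete, $\pi_1$ of the relevant dual complex is generated by these small loops. Then define $h \in \R^n$ by $h_i := \langle v_i, p_\rho\rangle$ for any $\rho \ni v_i$ (well-defined because adjacent-cone gradients agree along shared rays by construction). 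Define $\widetilde{h_\Delta}$ to be the piecewise-linear function whose restriction to each $\rho$ is $v \mapsto \langle p_\rho, v\rangle$. By Corollary \ref{cor:PolFanChar} I then need: $\widetilde{h_\Delta}$ is convex with non-linearity locus exactly $\Delta^{(d-1)}$. The non-linearity is genuine across each $\sigma$ because $a_\sigma > 0$ forces $p_{\rho_1} \ne p_{\rho_2}$; global convexity then follows since strict convexity across every codimension-$1$ wall of a complete fan implies convexity (the standard "locally convex $\Rightarrow$ convex" argument for PL functions on a complete fan, as in Corollary \ref{cor:PolFanChar}). Hence $h \in \wT(\Delta)$, giving $P = P(h) \in T(\Delta)$, and by construction $\ell_\sigma(P) = a_\sigma$.

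The main obstacle is the convexity step in the surjectivity half: path-independence only delivers a \emph{closed} PL form, i.e.\ a well-defined PL function, but one must still argue that positivity of all the $a_\sigma$ upgrades "no monodromy, non-degenerate across every wall" to global convexity of $\widetilde{h_\Delta}$ on the complete fan $\Delta$. I would deduce this from the characterization in Corollary \ref{cor:PolFanChar} together with the observation that a PL function on a complete pointed fan that is strictly convex across each $(d-1)$-cone and linear on each $d$-cone is automatically convex — e.g.\ by considering, for any two points, the restriction to the segment joining them and checking the one-dimensional convexity inequality cone-by-cone, using that the directional derivative jumps in the convex direction at each wall crossing. This is exactly the geometric content behind "a regular subdivision of a vector configuration is the same as a polytopal fan" (Remark \ref{rem:RegSubdiv}), so the hard analytic work has effectively been done; the remaining effort is bookkeeping with the orientations of the $e_\sigma$ and $v_{\sigma/\tau}$.
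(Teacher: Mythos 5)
Your proposal is correct and follows essentially the same route as the paper's (sketched) proof: reconstruct the vertices $p_\rho$ by walking the dual graph of $\Delta$ with edge directions $e_\sigma$ and lengths $a_\sigma$, use the $1$-weight condition around codimension-$2$ cones for path-independence, and invoke Lemma \ref{lem:EllLin} together with positivity of $a$ and the local-to-global convexity of the resulting conewise linear function (as in Corollary \ref{cor:PolFanChar} and Lemma \ref{lem:TypeConeIneq}) to conclude $h \in \wT(\Delta)$. Your explicit injectivity argument via constancy of the gradient differences is just a spelled-out version of the uniqueness implicit in that construction.
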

\begin{proof}[Sketch of proof]
Choose an arbitrary point in $\R^d$ as a vertex of $P$ and follow the graph of $P$ to construct the other vertices. The fan $\Delta$ gives us the directions of the edges, and the weight $a$ gives their lengths. The weight condition ensures that this construction is well defined. One needs to check that the result is a convex polytope with the normal fan $\Delta$. This can be done with the help of Lemma \ref{lem:EllLin}: the construction yields a conewise linear function $h$ with the gradient jump across $\sigma$ equal to $a_\sigma$, the positivity of $a$ is equivalent to the convexity of $h$, which thus becomes a support function of a polytope with the normal fan $\Delta$.
\end{proof}

In terms of weights Lemma \ref{lem:TypeConeIneq} has the following reformulation.

\begin{lem}
\label{lem:HW}
The linear map
$$
\ell^\Delta \colon \span(T(\Delta)) \to \R^{\Delta^{(d-1)}}
$$
is injective and has image $\Omega_1(\Delta)$. Besides,
$$
\ell^\Delta(T(\Delta)) = \Omega_1^+(\Delta)
$$
\end{lem}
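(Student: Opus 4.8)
The plan is to deduce Lemma~\ref{lem:HW} almost entirely from Lemma~\ref{lem:EllLin} and Lemma~\ref{lem:TypeConeIneq}, with the 1-weight condition \eqref{eqn:MinkCond} providing exactly the image. First I would recall from Lemma~\ref{lem:EllLin} that each $\ell^\Delta_\sigma$ extends to a linear function on $\span(\wT(\Delta))$, hence the vector-valued map $\ell^\Delta = (\ell^\Delta_\sigma)_{\sigma \in \Delta^{(d-1)}}$ is a well-defined linear map on $\span(T(\Delta))$ (passing to the quotient is legitimate since all $\ell^\Delta_\sigma$ descend through $\pi\colon\R^n\to\R^n/\im V$).

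Next I would establish injectivity. A support function (equivalently, a point of $\wT(\Delta)$) is determined up to an additive constant on each maximal cone by the gradient jumps across the walls $\sigma\in\Delta^{(d-1)}$, and since $\Delta$ is complete and connected, fixing the value of $h_P$ at one vertex together with all the jumps $\ell^\Delta_\sigma e_\sigma$ reconstructs $h_P$ completely. Concretely: if $h, h' \in \wT(\Delta)$ have $\ell^\Delta_\sigma(h) = \ell^\Delta_\sigma(h')$ for all $\sigma$, then by the first part of Lemma~\ref{lem:EllLin} the gradient differences $\grad(h_P|_{\rho_1}) - \grad(h_P|_{\rho_2})$ agree for $P=P(h)$ and $P=P(h')$ across every wall, so $h_P - h_{P'}$ is a globally linear function, i.e.\ $h - h' \in \im V$, i.e.\ they represent the same point of $\span(T(\Delta))$. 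This extends by linearity to all of $\span(T(\Delta))$, giving injectivity of $\ell^\Delta$.

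For the image, the inclusion $\ell^\Delta(\span(T(\Delta))) \subseteq \Omega_1(\Delta)$ is the content of equation~\eqref{eqn:MinkCond}, which holds for all $h\in\wT(\Delta)$ and thus, both sides being linear in $h$, on the span. For the reverse inclusion I would use the dimension count: by the first part of Lemma~\ref{lem:HW} already proved, $\dim \ell^\Delta(\span(T(\Delta))) = \dim\span(T(\Delta))$, and by Lemma~\ref{lem:AffT} (or directly from Lemma~\ref{lem:TypeConeIneq} and the description of $\span(\wT(\Delta))$) this dimension equals $n - d - (\text{number of independent edge-length equations forced by non-simpliciality})$; matching this against $\dim\Omega_1(\Delta)$, which is computed the same way since the equations $\sum_{\sigma\supset\tau} a_\sigma v_{\sigma/\tau}=0$ are precisely the linearized closing conditions around each $2$-face, gives equality of dimensions and hence $\ell^\Delta(\span(T(\Delta))) = \Omega_1(\Delta)$. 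Alternatively, and more cleanly, I would invoke Theorem~\ref{lem christ}: given $a \in \Omega_1^+(\Delta)$ the theorem produces $P \in T(\Delta)$ with $\ell_\sigma(P) = a_\sigma$, so $\Omega_1^+(\Delta) \subseteq \ell^\Delta(T(\Delta))$; since $\Omega_1^+(\Delta)$ is open and nonempty in $\Omega_1(\Delta)$, it spans $\Omega_1(\Delta)$, forcing surjectivity onto $\Omega_1(\Delta)$ and, combined with injectivity, a linear isomorphism.

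Finally, for the statement $\ell^\Delta(T(\Delta)) = \Omega_1^+(\Delta)$: the inclusion $\subseteq$ is immediate since edge lengths of an honest polytope with normal fan $\Delta$ are strictly positive, and $\supseteq$ is exactly Theorem~\ref{lem christ}. I expect the main obstacle to be the image computation --- specifically, making rigorous that the linearized closing relations \eqref{eqn:MinkCond} around $(d-2)$-cones are not merely necessary but cut out precisely $\ell^\Delta(\span(T(\Delta)))$; relying on Theorem~\ref{lem christ} sidesteps this, but if one wants a self-contained argument one must carefully match the defining equations of $\Omega_1(\Delta)$ against the description of $\span(\wT(\Delta))$ from Lemma~\ref{lem:AffT}, checking that in the non-simplicial case no relations are lost or doubly counted.
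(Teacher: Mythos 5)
Your proposal is correct and takes essentially the route the paper intends: the paper states this lemma without a separate proof, as a reformulation of Lemma~\ref{lem:TypeConeIneq}, and the ingredients it has just assembled --- the Minkowski relation \eqref{eqn:MinkCond} for the inclusion of the image in $\Omega_1(\Delta)$, Theorem~\ref{lem christ} for $\Omega_1^+(\Delta)\subseteq\ell^\Delta(T(\Delta))$, and the observation that $\Omega_1^+(\Delta)$ is nonempty and open in $\Omega_1(\Delta)$ --- are exactly the ones your ``cleaner'' argument uses. Your direct injectivity argument via the gradient jumps $\ell^\Delta_\sigma(h)\,e_\sigma$ is just an unpacking of the uniqueness part of Theorem~\ref{lem christ} (and of Lemma~\ref{lem:EllLin}), so it is the same approach, and the admittedly sketchy dimension-count alternative is not needed.
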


\begin{rem}\label{rem weight}
Theorem~\ref{lem christ} is \cite[Lemma 8.1]{McM96}.
It first appeared in \cite{she63} (see also  \cite[Theorem 15.1.2]{gru03}), in the context of decomposition of convex polytopes into Minkowski sum.

An element of $\Omega_1^+(\Delta)$ can be viewed as a Borelian measure on
$\mathbb{S}^{d-1}$. In this context, Theorem~\ref{lem christ} solves the polyhedral version of the Christoffel problem (find a convex body with a given first area measure),
and was proved in \cite{Sch77}, see also \cite{Scn93}.

Theorem~\ref{lem christ} appears as a part of \cite[Theorem 15.5]{PRW08} that contains also other characterizations of $T(\Delta)$.

The space $\Omega_1(\Delta)$ is the linear space of virtual polytopes with the ``normal fan'' $\Delta$, consult Figure \ref{fig:but-move}.
\end{rem}

\begin{rem}
More generally, an $r$-weight on $\Delta$ is a map $a \colon \Delta^{(d-r)} \to \R$ such that
$$
\sum_{\sigma : \sigma \supset \tau} a_\sigma v_{\sigma/\tau} = 0 \quad \text{for all} \quad \tau \in \Delta^{(d-r-1)}
$$
McMullen \cite{McM96} defined product of weights (modelled on mixed volumes) that gives rise to a graded algebra
$$
\Omega(\Delta) := \bigoplus_{r=0}^d \Omega_r(\Delta)
$$
He defined quadratic forms generalizing the form $q_{\cC,\Delta}$ (see Definition \ref{dfn:qForm}) and proved a generalization of the signature theorem \ref{cor:AFNormEq}. This led him to a proof of the so called $g$-theorem for simple polytopes (previously proved in \cite{Sta80} using heavy machinery from algebraic geometry).
\end{rem}

\subsection{Regular subdivisions of constant curvature surfaces with cone singularities}
\label{sec:Variations}
A configuration $V$ of unit vectors in $\R^d$ is a finite set of points on the unit sphere $\Sph^{d-1}$, and a polytopal fan $\Delta$ with $\Delta^{(1)} = V$ yields a subdivision of $\Sph^{d-1}$ with the vertex set $V$. If $\Delta$ is a polytopal fan, then the corresponding subdivision is called regular. For every $V$ there is a distinguished fan $\Delta_V$, the normal fan of the circumscribed polytope with $v_i$ as tangent points between the facets and the sphere. (Equivalently, this is the central fan of the convex hull of $V$.) It is easy to show that the subdivision corresponding to $\Delta_V$ is the \emph{Delaunay subdivision} of the sphere with the vertex set $V$. Here a Delaunay subdivision is one where every cell is an inscribed polygon and the circumcircle of every cell contains no vertices in its interior.

Delaunay subdivisions can be constructed for (Euclidean, spherical, or hyperbolic) surfaces with cone singularities. (In the spherical case there is a restriction on the metric, \cite[Lemma 2.11]{FI11}.) As a vertex set $V$, one can choose any finite set containing the cone points. In the Euclidean case the proof was sketched in \cite[Proposition 3.1]{Thu98}, and a more detailed treatment was given in \cite{ILTC01,BS07}.

Assigning to every point $v_i \in V$ a weight $w_i \in \R$ allows to define a \emph{weighted Delaunay subdivision} $\Delta_V(w)$ by requiring that the extension of the map $v_i \mapsto w_i$, piecewise linear with respect to $\Delta_V(w)$, is convex. Compare Corollary \ref{cor:PolFanChar} and Remark \ref{rem:RegSubdiv}, where the role of weights is played by the support numbers $h_i$. The weighted Delaunay subdivision with equal weights is the usual Delaunay subdivision. 

Weighted Delaunay triangulations of Euclidean cone-surfaces were introduced in \cite{BI08}, and those of hyperbolic and spherical cone-surfaces in \cite{FI09,FI11}.
In the non-Euclidean case instead of piecewise linear extension one uses functions of a different sort.

\subsection{Relation to Thurston's space of shapes of polyhedra}\label{sub:th}
Let $P\subset\R^3$ be a simple polytope with $n$ facets. 
From the Euler formula, it has $s:=2n-4$ vertices.
Then the metric on the boundary of $P$ 
induced from the ambient Euclidean space is a Euclidean metric on the sphere $\Sph^2$ with conical singularities
of positive curvature. The cone angle around a vertex of $P$ 
is the sum of the adjacent face angles.

Let us denote by $C(\alpha)$, where $\alpha=(\alpha_1, \ldots, \alpha_s)$, 
the set of Euclidean metrics  on
the sphere with (marked) cone singularities of angles $\alpha_i$,
up to orientation-preserving similarity.
If we fix a polytopal simplicial fan $\Delta$, then all polytopes from $T(\Delta)$ have the same cone-angles.
Thus we have a map
\begin{equation}
\label{eqn:TtoC}
T(\Delta) \to C(\alpha)
\end{equation}
Together with the combinatorics $\Delta$, the induced metric determines the edge lengths of a polytope $P(h) \in T(\Delta)$, and this determines the polytope according to Theorem \ref{lem christ}. Therefore the map \eqref{eqn:TtoC} is injective on any affine slice of the type cone $T(\Delta)$.

The space $C(\alpha)$ can be endowed 
 with a structure of a
complex manifold of dimension $s-3$ as follows.
%The charts are constructed as follows. 
Any $m\in C(\alpha)$ can be 
geodesically triangulated so that the singularities are exactly
the vertices of the triangulation. After fixing the position of a vertex and the direction of one of its adjacent edges, the triangulation can be developed in $\mathbb{C}$.
In this way, to each edge a complex number is associated, and it can be shown that certain $s-2$ of these numbers suffice  
to recover the triangulation. Modulo scaling we have $s-3$ complex parameters.
This gives a local chart for $C(\alpha)$ around $m$. Changes of charts
corresponds to flip of the triangulation and are linear maps in the coordinates.
Any chart can be endowed with the restriction of a Hermitian form on $\mathbb{C}^{s-2}$, which
is given by the area of the Euclidean metrics (the sum of the area of each triangle).
This makes $C(\alpha)$ to a complex hyperbolic manifold of dimension $s-3$. See \cite{Thu98} for details.

It follows that the interior of the hyperbolic polyhedron $H(\Delta)$ associated with $T(\Delta)$ (see Section \ref{sec:HypPol}) embeds isometrically in $C(\alpha)$:
$$
\int H(\Delta) \subset C(\alpha)
$$
Note that $\int H(\Delta)$ has real dimension $\frac{s}2 - 2$, while $C(\alpha)$ has complex dimension $s-3$.

Also note that changing the type cone $\Delta$ while preserving the set of facet normals changes in general the collection of angles $\alpha$, see Figure~\ref{fig:flip-angle}.

\begin{figure}
\centering
\includegraphics[scale=0.4]{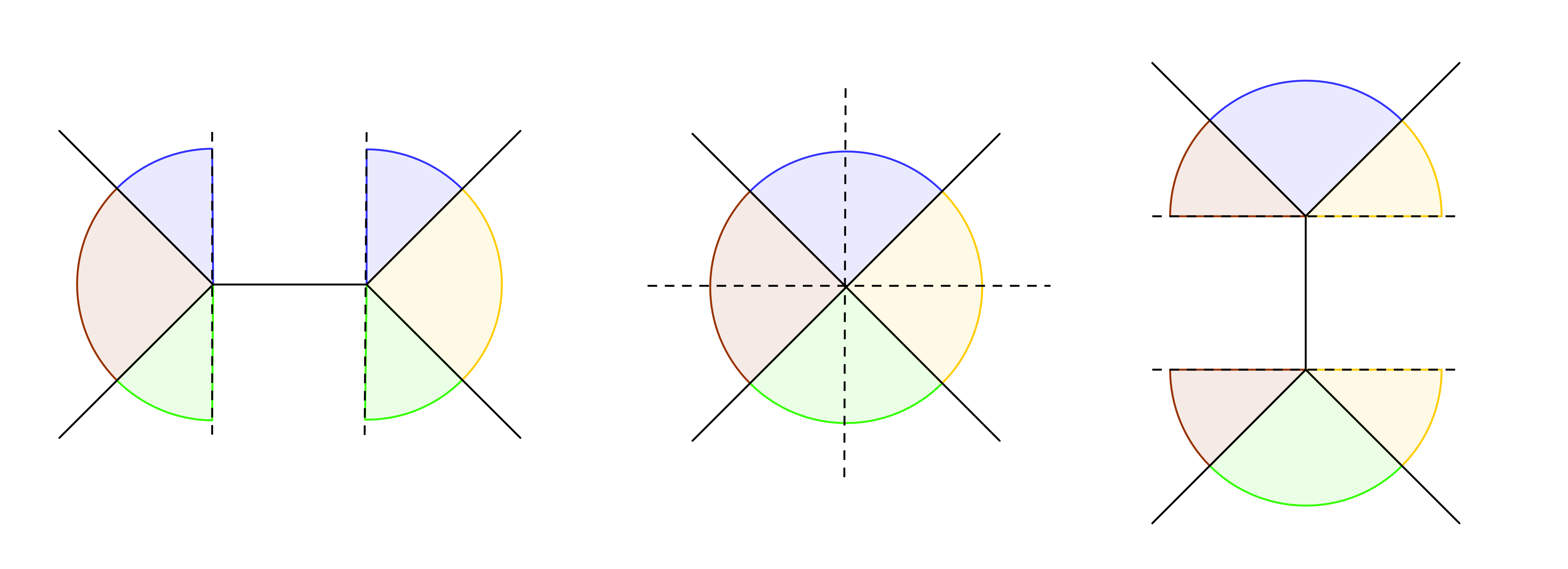}  
\caption{Cone angles of a simple polytope depend not only
on the facet normals but also on the combinatorics.
\label{fig:flip-angle}
} 
  \end{figure}

%   \begin{figure}
% \centering
% \includegraphics[scale=0.2]{fig/emb.png}  
% \caption{For $h\in T(\Delta)$ and a triangulation of the faces of $P(h)$, the map
% $h\mapsto (\cdots,\ell_{kj}e^{i\alpha_{lkj}} ,\cdots)$ from $\R^n$ into $\mathbb{C}^{s-2}$, given by the development 
% of the triangulation into the complex plane, is linear. This is because edge lengths are linear functions of $h$,
% and face angles $\alpha_{ijk}$ are fixed, given by $\Delta$ and the face triangulation.
% \label{fig:emb}
% } 
%   \end{figure}

% If we triangulate facets of $P\in T(\Delta)$ such that no new vertice appears,
% all the elements of $T(\Delta)$ can be endowed with a
% triangulation of same combinatorics. So
% $H(\Delta)$ can actually be embedded into a suitable chart of $C(\alpha)$.
% Moreover this map is the restriction of a linear map from
% $\R^n \supset T(\Delta)$ into $\mathbb{C}^{s-2}$, see Figure~\ref{fig:emb}.

\subsection{Shape of co-convex polyhedra, mixed covolumes and spherical geometry}

The following definitions and results are from \cite{KT13}, %(we are taking them from \cite{KK13}),
restricted to the polyhedral case.

Let $C$ be a simple pointed convex polyhedral cone.
A $C$-convex (or \emph{co-convex}) polyhedron $P$ is a convex polyhedral subset of $C$ such that $x+C\subset P$ for all $x\in P$.
If the volume
of $C\setminus P$ is finite, then it is called the \emph{covolume} of $P$.
Multiplication by positive scalars and Minkowski addition are well-defined for
$C$-convex polyhedra.
This leads to the notion of the mixed covolume for simple normally equivalent $C$-convex polyhedra. As shown in \cite{KT13}, the mixed covolume satisfies the inverse Alexandrov-Fenchel inequalities.

It follows that certain quadratic forms associated with the covolume in the way described in Section \ref{sec:AFSign} are positive definite. Thus, in the co-convex case the types cones give rise to 
convex spherical polyhedra.

A different point of view on $C$-convex sets is presented in \cite{Fil13}.
View $\R^d$  as the Lorentzian Minkowski space (that does not change notions of convexity,
polyhedrality or volume).
Take a convex polyhedral cone $C$ that is a fundamental 
domain for a cocompact action of a group of linear isometries (in other terms of a subgroup of the isometry group of the hyperbolic space $\mathbb{H}^{d-1}$).
The case $d=2$, that is a Lorentzian analog of the Bavard-Ghys construction \cite{BG92}  was analyzed in  \cite{FillEdM}. 
% One interest in considering the Lorentzian structure is that 
% the signature of the Hessian of the volume is then straightforward, compare with Subsection~\ref{sub:lapl}.
%Another interest is the following.
The case $d=3$ is of a special interest, since the boundary of a $C$-convex polyhedron quotiented by the group action is a closed surface of genus $>1$, and the induced metric on it is a euclidean metric with cone singularities of negative curvature.
Spaces of such metrics were studied for example in  \cite{Vee93, Tro07, NO12}.
The construction of the present paper would yield
spherical convex polyhedra isometrically embedded in spaces of flat metrics on compact surfaces.

% The facet normals of a co-convex polyhedron $P$ form a vector configuration that is contained in an open half-space. Thus by Lemma \ref{lem:GaleProp} its Gale dual spans

\begin{appendix}
\section{Computing the surface area of a 3-dimensional polytope}
\subsection{Face areas of a tetrahedron}
\label{sec:CompDecTetr}
Let $P \subset \R^3$ be a polytope whose facet normals belong to a set $V$. Then $\area(\partial P)$ can be represented as a sum of face areas of certain tetrahedra with facet normals in the set $V$. Indeed, this is true for a truncated tetrahedron; any other combinatorial type can be obtained by a sequence of flips, and a flip adds two faces of a tetrahedron and subtracts other two. For an illustration see Section \ref{sec:TriBiForm}. Lemma \ref{lem:FaceAreasFromV} below expresses the face areas of a tetrahedron in terms of its face normals and support numbers.

Let $v_0, v_1, v_2, v_3$ be positively spanning vectors in $\R^3$. (We are not assuming them to have length $1$.) There is a unique up to scaling positive linear dependency
$$
\lambda_0 v_0 + \lambda_1 v_1 + \lambda_2 v_2 + \lambda_3 v_3 = 0, \quad \lambda_i > 0\, \forall i
$$
Denote by
$$
\begin{aligned}
\Delta_0 := \{x \in \R^3 \mid &\langle v_0, x \rangle \le \|v_0\|,\\
&\langle v_i, x\rangle  \le 0 \text{ for } i = 1,2,3\}
\end{aligned}
$$
the tetrahedron with facet normals $v_0, v_1, v_2, v_3$ such that its altitude with respect to the face $F_0$ has length $1$.

\begin{lem}
\label{lem:FaceAreasFromV}
The face areas of $\Delta_0$ satisfy the following relations.
\begin{multline}
\frac{\area(F_0)}{\|v_0\|} : \frac{\area(F_1)}{\|v_1\|} : \frac{\area(F_2)}{\|v_2\|} : \frac{\area(F_3)}{\|v_3\|} = \lambda_0 : \lambda_1 : \lambda_2 : \lambda_3 \label{eqn:Proport} \\
= \det(v_1, v_2, v_3) : -\det(v_0, v_2, v_3) : \det(v_0, v_1, v_3) : -\det(v_0, v_1, v_2)
\end{multline}
\begin{equation}
\label{eqn:F0}
\area(F_0) = \frac{\|v_0\|^3 (\det(v_1, v_2, v_3))^2}{2 |\det(v_0,v_1,v_2) \det(v_0,v_2,v_3) \det(v_0,v_1,v_3)|}
\end{equation}
\end{lem}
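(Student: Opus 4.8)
The plan is to prove the two displayed formulas in sequence, deriving \eqref{eqn:F0} as a consequence of \eqref{eqn:Proport}. For the chain of proportionalities \eqref{eqn:Proport}, I would start from the Minkowski relation for the tetrahedron $\Delta_0$. The boundary $\partial\Delta_0$ is a closed polyhedral surface, so the vector area identity $\sum_{i=0}^3 \area(F_i)\,n_i = 0$ holds, where $n_i$ is the outward \emph{unit} normal to the face $F_i$. Since $n_i = v_i/\|v_i\|$, this reads $\sum_{i=0}^3 \frac{\area(F_i)}{\|v_i\|}\,v_i = 0$. But $(v_0,v_1,v_2,v_3)$ is a positively spanning quadruple in $\R^3$, hence a circuit, so its positive linear dependency is unique up to scaling; comparing with $\sum_i \lambda_i v_i = 0$ gives the first equality $\frac{\area(F_i)}{\|v_i\|} = c\,\lambda_i$ for a common constant $c>0$. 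The second equality, expressing $\lambda_i$ through $3\times 3$ determinants, is pure linear algebra: from $\lambda_0 v_0 = -(\lambda_1 v_1 + \lambda_2 v_2 + \lambda_3 v_3)$, take the determinant against $v_2, v_3$ (and cyclically) using multilinearity and antisymmetry, e.g. $\lambda_0 \det(v_0,v_2,v_3) = -\lambda_1\det(v_1,v_2,v_3)$, which rearranges to $\lambda_0 : \lambda_1 = \det(v_1,v_2,v_3) : -\det(v_0,v_2,v_3)$; the sign pattern $(+,-,+,-)$ is exactly the expansion of $\det$ of a $4\times 3$ matrix, consistent because all $\lambda_i>0$ when $V$ is positively spanning.

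For \eqref{eqn:F0}, I would compute $\area(F_0)$ directly from the normalization built into $\Delta_0$. By definition the altitude of $\Delta_0$ from the apex opposite to $F_0$ has length $1$, so $\vol(\Delta_0) = \tfrac13 \area(F_0)\cdot 1 = \tfrac13 \area(F_0)$. On the other hand, $\vol(\Delta_0)$ can be computed from the three edge vectors emanating from the vertex $p_0$ opposite $F_0$: that vertex is the intersection of the planes $\langle v_1,x\rangle = 0$, $\langle v_2,x\rangle = 0$, $\langle v_3,x\rangle = 0$, i.e.\ the origin, and the three vertices of $F_0$ are the points where pairs among $\{v_1,v_2,v_3\}$-planes meet the plane $\langle v_0,x\rangle = \|v_0\|$. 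Solving these $3\times 3$ systems by Cramer's rule expresses each such vertex as a ratio of determinants of the $v_i$; plugging into the standard volume-as-determinant formula and simplifying yields $\vol(\Delta_0) = \frac{\|v_0\|^3 (\det(v_1,v_2,v_3))^2}{6\,|\det(v_0,v_1,v_2)\det(v_0,v_2,v_3)\det(v_0,v_1,v_3)|}$, and multiplying by $3$ gives \eqref{eqn:F0}.

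I expect the main obstacle to be purely bookkeeping: keeping track of signs and of which determinant goes with which face in the Cramer's-rule computation of the vertices of $F_0$, and verifying that the absolute values in \eqref{eqn:F0} are placed correctly (the three determinants $\det(v_0,v_i,v_j)$ all have the same sign as $\lambda_0\lambda_i\lambda_j$ up to the fixed $(+,-,+,-)$ pattern, so their product's sign is determined, which is why the absolute value appears). An alternative, cleaner route to \eqref{eqn:F0} that avoids computing vertices is to combine the first line of \eqref{eqn:Proport} with the known formula for the area of a triangle in terms of the cross products of its edges, but I suspect the determinant bookkeeping is unavoidable either way; the volume argument above at least makes the $\|v_0\|^3$ and the symmetry among $v_1,v_2,v_3$ transparent.
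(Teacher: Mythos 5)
Your proposal is correct and follows essentially the same route as the paper: the proportionalities come from the Minkowski vector-area relation $\sum_i \area(F_i)\,v_i/\|v_i\| = 0$ together with uniqueness of the linear dependency and elementary determinant manipulations, and the area formula comes from $\vol(\Delta_0) = \tfrac13\area(F_0)$ combined with computing the volume from the three edge vectors at the apex (your Cramer's-rule vertices are exactly the paper's edge vectors $\|v_0\|(v_i\times v_j)/\det(v_0,v_i,v_j)$, and the ``simplification'' you defer is the identity $\det(v_1\times v_2, v_2\times v_3, v_3\times v_1) = (\det(v_1,v_2,v_3))^2$). No gaps beyond routine bookkeeping.
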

\begin{proof}
Equation \eqref{eqn:Proport} follows from the Minkowski formula
$$
\area(F_0) \frac{v_0}{\|v_0\|} + \area(F_1) \frac{v_1}{\|v_1\|} + \area(F_2) \frac{v_2}{\|v_2\|} + \area(F_3) \frac{v_3}{\|v_3\|} = 0
$$
and from the elementary linear algebra.

To prove \eqref{eqn:F0}, let $e_{12}$ denote the vector along the edge $F_{12}$ of $\Delta_0$, directed towards the face $F_0$. Then we have
$$
e_{12} = \frac{\|v_0\| (v_1 \times v_2)}{\det(v_0, v_1, v_2)}
$$
It is easy to show that $(v_1 \times v_2) \times (v_1 \times v_3) = \det(v_1, v_2, v_3) v_1$. This implies
$$
\det(v_1 \times v_2, v_2 \times v_3, v_3 \times v_1) = (\det(v_1,v_2,v_3))^2
$$
Therefore
$$
\vol(\Delta_0) = \frac16 |\det(e_{12}, e_{23}, e_{31})| = \frac{\|v_0\|^3 (\det(v_1, v_2, v_3))^2}{6 |\det(v_0,v_1,v_2) \det(v_0,v_2,v_3) \det(v_0,v_1,v_3)|}
$$
The formula for the area of $F_0$ follows from $\vol(\Delta_0) = \frac13 h_0 \area(F_0)$.
\end{proof}

Formulas of Lemma \ref{lem:FaceAreasFromV} imply the following formula for the area of the face $F_1$:
$$
\area(F_1)|_{h_0=1} = \frac{\|v_0\|^2 \|v_1\| \det(v_1, v_2, v_3)}{2 \det(v_2, v_1, v_0) \det(v_1, v_3, v_0)}
$$
In practice, if the coefficients $\lambda_i$ of the linear dependency are known, it is easier to compute one of the areas by the above formulas for $F_0$ and $F_1$, and then the others using the proportions from the first part of the Lemma.

\subsection{Formula in terms of the angles in the normal fan}
\label{sec:App2}
Denote by $q_\Delta$ the quadratic form in variables $h_1, \ldots, h_n$ that computes the area of a polyhedron with normal fan $\Delta$.
We have 
$$
q_\Delta(h) = \sum_{i,j} a_{ij} h_i h_j,
$$
where $a_{ij} \ne 0$ if and only if $i = j$ or faces $F_i$ and $F_j$ are adjacent (in other terms $\pos\{v_i,v_j\} \in \Delta$). To determine the coefficients $a_{ij}$, compute the partial derivatives of $q_\Delta(h)$. We are using notations on Figure \ref{fig:Nothijk}.

\begin{figure}
\begin{center}
\begin{picture}(0,0)%
\includegraphics{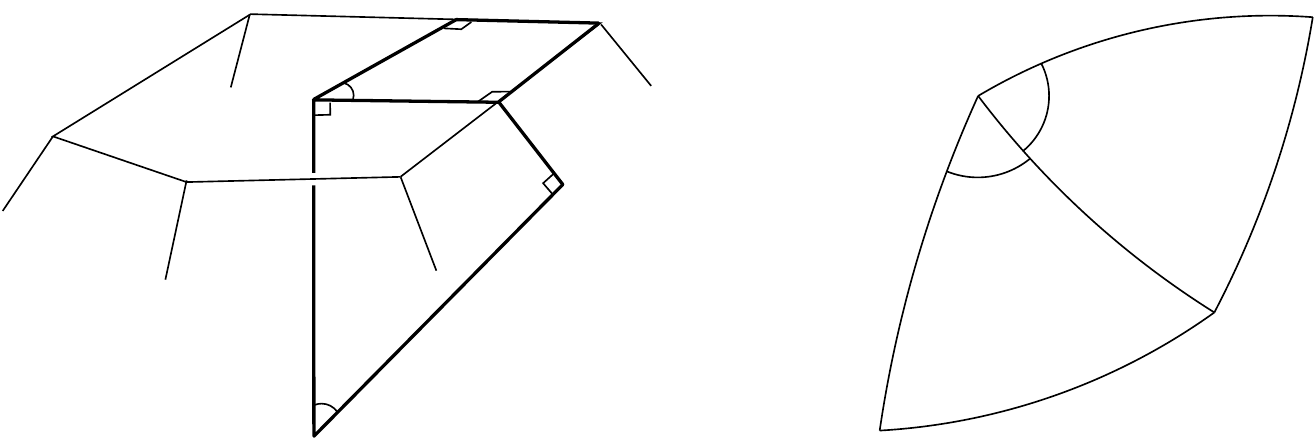}%
\end{picture}%
\setlength{\unitlength}{4144sp}%
\begingroup\makeatletter\ifx\SetFigFont\undefined%
\gdef\SetFigFont#1#2#3#4#5{%
  \reset@font\fontsize{#1}{#2pt}%
  \fontfamily{#3}\fontseries{#4}\fontshape{#5}%
  \selectfont}%
\fi\endgroup%
\begin{picture}(6010,2016)(2908,-1819)
\put(7830,-691){\makebox(0,0)[lb]{\smash{{\SetFigFont{8}{9.6}{\rmdefault}{\mddefault}{\updefault}{\color[rgb]{0,0,0}$\phi_{ij}$}%
}}}}
\put(7605, 74){\makebox(0,0)[lb]{\smash{{\SetFigFont{8}{9.6}{\rmdefault}{\mddefault}{\updefault}{\color[rgb]{0,0,0}$\phi_{ik}$}%
}}}}
\put(6750,-1006){\makebox(0,0)[lb]{\smash{{\SetFigFont{8}{9.6}{\rmdefault}{\mddefault}{\updefault}{\color[rgb]{0,0,0}$\phi_{il}$}%
}}}}
\put(7695,-376){\makebox(0,0)[lb]{\smash{{\SetFigFont{8}{9.6}{\rmdefault}{\mddefault}{\updefault}{\color[rgb]{0,0,0}$\phi_{ij,ik}$}%
}}}}
\put(7245,-781){\makebox(0,0)[lb]{\smash{{\SetFigFont{8}{9.6}{\rmdefault}{\mddefault}{\updefault}{\color[rgb]{0,0,0}$\phi_{ij,il}$}%
}}}}
\put(4190,-1126){\makebox(0,0)[lb]{\smash{{\SetFigFont{8}{9.6}{\familydefault}{\mddefault}{\updefault}{\color[rgb]{0,0,0}$h_i$}%
}}}}
\put(5074,-1197){\makebox(0,0)[lb]{\smash{{\SetFigFont{8}{9.6}{\familydefault}{\mddefault}{\updefault}{\color[rgb]{0,0,0}$h_j$}%
}}}}
\put(4927,-628){\rotatebox{40.0}{\makebox(0,0)[lb]{\smash{{\SetFigFont{8}{9.6}{\familydefault}{\mddefault}{\updefault}{\color[rgb]{0,0,0}$h_{ijl}$}%
}}}}}
\put(5432,-241){\rotatebox{40.0}{\makebox(0,0)[lb]{\smash{{\SetFigFont{8}{9.6}{\familydefault}{\mddefault}{\updefault}{\color[rgb]{0,0,0}$h_{ijk}$}%
}}}}}
\put(4677,-387){\makebox(0,0)[lb]{\smash{{\SetFigFont{8}{9.6}{\familydefault}{\mddefault}{\updefault}{\color[rgb]{0,0,0}$h_{ij}$}%
}}}}
\put(4368,-1551){\makebox(0,0)[lb]{\smash{{\SetFigFont{8}{9.6}{\familydefault}{\mddefault}{\updefault}{\color[rgb]{0,0,0}$\phi_{ij}$}%
}}}}
\put(4188,-179){\makebox(0,0)[lb]{\smash{{\SetFigFont{8}{9.6}{\familydefault}{\mddefault}{\updefault}{\color[rgb]{0,0,0}$\phi_{ij,ik}$}%
}}}}
\end{picture}%
\end{center}
\caption{Lengths and angles in a 3-dimensional polytope.}
\label{fig:Nothijk}
\end{figure}

Let $q_i$ be the area of the $i$-th face. Then we have \cite{BI08}
\begin{gather*}
\frac{\partial q_j}{\partial h_i} = \frac{\ell_{ij}}{\sin\phi_{ij}} \text{ for }i\ne j,\\
\frac{\partial q_i}{\partial h_i} = - \sum_{j \ne i} \ell_{ij} \cot\phi_{ij},
\end{gather*}
where $\ell_{ij} = h_{ijk} + h_{ijl}$ is the length of the $ij$-edge.
Hence
$$
\frac{\partial q}{\partial h_i} = \sum_{j \ne i} \ell_{ij} \frac{1-\cos\phi_{ij}}{\sin\phi_{ij}} = \sum_{j \ne i} \ell_{ij} \tan \frac{\phi_{ij}}2
$$
Differentiating again, we obtain
$$
\frac{\partial^2 q}{\partial h_i \partial h_j} = \frac{\partial \ell_{ik}}{\partial h_j} \tan\frac{\phi_{ik}}2 + \frac{\partial \ell_{il}}{\partial h_j} \tan\frac{\phi_{il}}2 + \frac{\partial \ell_{ij}}{\partial h_j} \tan\frac{\phi_{ij}}2
$$
Using
$$
h_{ijk} = h_{ik} \csc\phi_{ij,ik} - h_{ij} \cot\phi_{ij,ik}, \quad h_{ij} = h_j \csc\phi_{ij} - h_i \cot\phi_{ij}
$$
we compute
% From
% $$
% \ell_{ij} = h_{ik} \csc\phi_{ij,ik} + h_{il} \csc\phi_{ij,il} - h_{ij} (\cot\phi_{ij,ik} + \cot\phi_{ij,il})
% $$
% we compute
\begin{gather*}
\frac{\partial \ell_{ik}}{\partial h_j} = \csc\phi_{ij} \csc\phi_{ij,ik}\\
\frac{\partial \ell_{ij}}{\partial h_j} = - \csc\phi_{ij} (\cot\phi_{ij,ik} + \cot\phi_{ij,il})
\end{gather*}
Therefore
\begin{multline*}
2a_{ij} = \frac{\partial^2 q}{\partial h_i \partial h_j} = \tan\frac{\phi_{ik}}2 \csc\phi_{ij} \csc\phi_{ij,ik} + \tan\frac{\phi_{il}}2 \csc\phi_{ij} \csc\phi_{ij,il}\\
- \tan\frac{\phi_{ij}}2 \csc\phi_{ij} (\cot\phi_{ij,ik} + \cot\phi_{ij,il})
\end{multline*}
$$
2a_{ii} = \frac{\partial^2 q}{\partial h_i^2} = - \sum_{j \ne i} \tan\frac{\phi_{ij}}2 \csc\phi_{ij} (\cot\phi_{ji,jk} + \cot\phi_{ji,jl})
$$

\begin{exl}
For a dodecahedron we have $\cos\phi_{ij} = \frac1{\sqrt{5}}$, $\cos\phi_{ij,ik} = \frac{\sqrt{5}-1}4$. The coefficients of the quadratic form are
$$
a_{ij} = \tan\frac{\phi_{ij}}2 \csc\phi_{ij} (\csc\phi_{ij,ik} - \cot\phi_{ij,ik}), \quad a_{ii} = -5 \tan\frac{\phi_{ij}}2 \csc\phi_{ij} \cot\phi_{ij,ik}
$$
Hence $a_{ij} = C (1 - \cos\phi_{ij,ik}), a_{ii} = - 5C \cos\phi_{ij,ik}$ and $\frac{a_{ij}}{a_{ii}} = -\frac1{\sqrt{5}}$. Up to scaling, the quadratic form is thus equal
$$
q(h) = - \sum_i h_i^2 + \frac{2}{\sqrt{5}} \sum_{\{i,j\}} h_i h_j
$$
\end{exl}

\end{appendix}

\def\cprime{$'$} \def\cprime{$'$}

% \bibliographystyle{abbrv}
% \bibliography{BaGhTh}

\end{document}